\numberwithin{equation}{section}
\newcommand{\nchi}{{\raise.3ex\hbox{$\chi$}}}
\newcommand{\R}{\mathbb{R}}
\newcommand{\N}{\mathbb{N}}
\newcommand{\M}{\mathbb{M}}
\newcommand{\cW}{{\ensuremath{\mathcal W}} }
\newcommand{\EE}{\mathscr{E}}
\newcommand{\nn}{{\vec n}}
\newcommand{\vv}{{\vec v}}
\newcommand{\ww}{{\vec w}}
\newcommand{\zz}{{\vec z}}
\newcommand{\tauV}{{\kern-3pt\tau}}
\newcommand{\yy}{{\vec y}}
\newcommand{\ttheta}{{\pmb \vartheta}}
\newcommand{\mmu}{{\pmb \mu}}
\newcommand{\nnu}{{\pmb \nu}}
\newcommand{\xxi}{{\pmb\xi}}
\newcommand{\zzeta}{{\pmb  \zeta}}
\newcommand{\eeta}{{\pmb \eta}}
\newcommand{\supp}{\mathop{\rm supp}\nolimits}   
\newcommand{\Leb}[1]{{\mathscr L}^{#1}}      
\newcommand{\la}{{\langle}}                  
\newcommand{\ra}{{\rangle}}
\newcommand{\eps}{\varepsilon}  
\newcommand{\Rd}{{\R^d}}
\newcommand{\Rh}{{\R^h}}
\newcommand{\Rm}{{\R^m}}
\newcommand{\Scalar}[2]{\left\langle #1,#2\right\rangle}
\newcommand{\restr}[1]{\lower3pt\hbox{$|_{#1}$}}
\newcommand{\BorelSets}[1]{\mathcal B(#1)}
\newcommand{\Probabilities}[1]{\mathcal P(#1)}          
\newcommand{\FinalT}{T}
\newcommand{\weaksto}{{\rightharpoonup^*}}
\newcommand{\tdiv}{\nabla\kern-3pt_\gamma\cdot}
\newcommand{\ClosedDomain}{{\overline\Omega}}
\newcommand{\FPM}{\mathcal M^+}
\newcommand{\RPM}{\mathcal M^+_{\rm loc}}
\newcommand{\Mloc}{\mathcal M_{\rm loc}}
\newcommand{\Bloc}{\mathcal B_{\rm c}}
\newcommand{\ce}[3]{\mathcal{CE}(#1,#2)}
\newcommand{\CE}[5]{\mathcal{CE}_{#1,#2}(#3,#4)}
\newcommand{\cce}[5]{\mathcal {CE}(#1,#2;#4\to#5)}
\newcommand{\CCE}[7]{\mathcal{CE}_{#1,#2}(#3,#4;#6\to#7)}
\newcommand{\up}{\uparrow}
\newcommand{\down}{\downarrow}
\newcommand{\sfh}{{\sf h}}
\newcommand{\sfc}{{\sf c}}
\newcommand{\sfs}{{\sf s}}
\newcommand{\sft}{{\sf t}}
\newcommand{\sfm}{{\sf m}}
\newcommand{\sftm}{\tilde{\sf m}}
\newcommand{\sfC}{{\sf C}}
\newcommand{\sfH}{{\sf H}}
\newcommand{\sfG}{{\sf G}}
\renewcommand{\d}{\mathrm d}
\newcommand{\D}{\mathrm D}
\newcommand{\topref}[2]{\stackrel{\eqref{#1}}#2}
\newcommand{\kernel}{k}
\newcommand{\Ball}[1]{B_{#1}}
\newcommand{\Tan}[3]{\mathop{\rm Tan}\nolimits_{#1,#2}(#3)}
\newcommand{\Energy}{E}
\newcommand{\SZ}{Z}
\newcommand{\dphinorm}[1]{\|#1\|_*}
\newcommand{\phinorm}[1]{\|#1\|}
\begin{document}

\title{A new class of transport distances between
  measures}
\titlerunning{A new class of transport distances}

\author{Jean Dolbeault \and Bruno Nazaret \and Giuseppe Savar\'e}
\authorrunning{J. Dolbeault \and B. Nazaret \and G. Savar\'e}

\institute{J. Dolbeault, B. Nazaret \at Ceremade (UMR CNRS no. 7534),
Universit\'e Paris-Dauphine, place de Lattre de Tassigny, 75775 Paris
C\'edex~16, France.\\
\email{\{dolbeaul, nazaret\}@ceremade.dauphine.fr}
\and
G. Savar\'e \at Universit\`a degli studi di Pavia, Department of
Mathematics, Via Ferrata 1, 27100, Pavia, Italy.
\email{giuseppe.savare@unipv.it}
}
\date{\today}

\maketitle
\begin{abstract}
  We introduce a new class of distances between
  nonnegative Radon measures in $\Rd$. They are modeled
  on the dynamical characterization of the
  Kantorovich-Rubinstein-Wasserstein
  distances proposed by \textsc{Benamou-Brenier}
  \cite{Benamou-Brenier00}
  and provide a wide family interpolating
  between the Wasserstein and the homogeneous
  $W^{-1,p}_\gamma$-Sobolev distances.\\
  From the point of view of optimal transport theory,
  these distances minimize a dynamical cost
  to move a given initial distribution of mass
  to a final configuration.
  An important difference with the classical setting in mass transport
  theory is that the cost not only depends on
  the velocity of the moving particles but also on
  the densities of the intermediate configurations with respect to
  a given reference measure $\gamma$.\\
  We study the topological and
  geometric properties of these new distances,
  comparing them with the notion of weak convergence of
  measures and the well established
  Kantorovich-Rubinstein-Wasserstein theory.
  An example of possible applications
  to the geometric theory of gradient flows is also given.
  \keywords{Optimal transport \and Kantorovich-Rubinstein-Wasserstein
    distance \and
    Continuity equation \and Gradient flows}
\end{abstract}

\section{Introduction}

Starting from the contributions by
\textsc{Y.\ Brenier, R.\ McCann,
  W.\ Gangbo, L.C.\ Evans, F.\ Otto, C.\ Villani}
\cite{Brenier91,Gangbo-McCann96,Otto99,Evans-Gangbo99,Otto-Villani00},
the theory of Optimal Transportation has received a lot of attention
and many deep applications to various mathematical fields,
such as
PDE's, Calculus of Variations,
functional and geometric inequalities, geometry of metric-measure
spaces,
have been found (we refer here to the monographs
\cite{Rachev-Ruschendorf98I,Evans99,Villani03,Ambrosio-Gigli-Savare05,Villani08}).
Among all possible transportation costs,
those inducing the so-called $L^p$-\textsc{Kantorovich-Rubinstein-Wasserstein}
distances $W_p(\mu_0,\mu_1)$, $p\in (1,+\infty)$, between
two probability measures $\mu,\nu\in \Probabilities\Rd$
\begin{equation}\label{defwaspre}
W_p(\mu_0,\mu_1):=\inf\left\{
  \left(\int_{\Rd\times \Rd}|y-x|^p\,\d\Sigma\right)^{\frac1p}:\
  \Sigma\in\Gamma(\mu_0,\mu_1)
\right\}
\end{equation}
play a distinguished role.
Here $\Gamma(\mu_0,\mu_1)$ is the set of all \emph{couplings} between $\mu_0$ and $\mu_1$:
they are probability measures $\Sigma$ on $\Rd\times \Rd$ whose first
and second marginals are respectively $\mu_0$ and $\mu_1$, i.e. $\Sigma(B\times \Rd)=\mu_0(B)$
and $\Sigma(\Rd\times B)=\mu_1(B)$ for all Borel sets $B\in\BorelSets{\Rd}$.

It was one of the most surprising achievement of
\cite{Otto96,Otto99,Jordan-Kinderlehrer-Otto98,Otto01}
that many evolution partial differential equations 
of the form
\begin{equation}
  \label{eq:cap1:1}
    \partial_t\rho+\nabla\cdot\big(\rho \,
    \big|\xxi\big|^{q-2}\xxi\big)
    =0,\qquad
    \xxi
    =-\nabla\Big(\frac{\delta\mathscr F} {\delta \rho}\Big)
\qquad
  \text{in }\Rd\times (0,+\infty),
\end{equation}
can be, at least formally, interpreted as \emph{gradient flows} of
suitable integral functionals $\mathscr F$ with
respect to $W_p$ (see also the general approach developed in 
\cite{Villani03,Ambrosio-Gigli-Savare05,Villani08}).
In \eqref{eq:cap1:1} $\delta\mathscr F/\delta\rho$ is
the Euler first variation of $\mathscr F$, $q:=p/(p-1)$
is the H\"older's conjugate exponent of $p$, and
$t\mapsto \rho_t$ (a time dependent solution
of \eqref{eq:cap1:1}) can be interpreted as
a flow of probability measures $\mu_t=\rho_t\,\Leb d$
with density $\rho_t$ with respect to the Lebesgue measure $\Leb d$ in $\Rd$.

Besides showing deep relations with entropy estimates and functional
inequalities
\cite{Otto-Villani00}, this point of view provides a powerful
variational method to prove existence of solutions to
\eqref{eq:cap1:1},
by the so-called \emph{Minimizing movement} scheme
\cite{Jordan-Kinderlehrer-Otto98,DeGiorgi93,Ambrosio-Gigli-Savare05}:
given a time step $\tau>0$ and an initial datum $\mu_0=\rho_0\Leb d$,
the solution $\mu_t=\rho_t\Leb d$ at time $t\approx n\tau$ can be
approximated by the discrete solution $\mu_\tau^n$ obtained by
a recursive minimization of the functional
\begin{equation}
  \label{eq:DS:2}
  \mu\mapsto \frac 1{p\tau^{p-1}}W_p^p(\mu,\mu_\tau^k)+\mathscr
  F(\mu),\qquad
  k=0,1,\cdots
\end{equation}
The link between the Wasserstein distance and equations exhibiting the
characteristic structure of \eqref{eq:cap1:1}
(in particular the presence of the diffusion coefficient $\rho$,
the fact that $\xxi$ is a gradient vector field, and the
presence of the $q$-duality map
$\xxi\mapsto |\xxi|^{q-2}\,\xxi$), 
is well explained by the \emph{dynamic characterization} of $W_p$
introduced by
\textsc{Benamou-Brenier} \cite{Benamou-Brenier00}:
it relies in the minimization
of the ``action'' integral functional
\begin{equation}
  \label{eq:cap1:2}
  \begin{aligned}
    &W_p^p(\mu_0,\mu_1)= \inf\Big\{\int_0^1 \int_\Rd
    \rho_t(x)\,|\vv_t (x)|^p\,\d x\,\d t\,:\\
    &\quad
    \partial_t\rho_t+\nabla\cdot(\rho_t\mathbf\vv_t)=0\ \text{in
      $\Rd\times (0,1)$,}\quad \mu_0=\rho\restr{t=0}\Leb d,\quad
    \mu_1=\rho\restr{t=1}\Leb d\Big\}.
  \end{aligned}
\end{equation}
\paragraph{Towards more general cost functionals.}
If one is interested to study the more general class of diffusion equations
\begin{equation}
  \label{eq:cap1:5}
    \partial_t\rho+\nabla\cdot\big(h(\rho) \,
    \big|\xxi\big|^{q-2}\xxi\big)
    =0,\qquad
    \xxi
    =-\nabla\Big(\frac{\delta\mathscr F} {\delta \rho}\Big)
    \qquad
  \text{in }\Rd\times (0,+\infty),
\end{equation}
obtained from \eqref{eq:cap1:1} replacing the
mobility coefficient $\rho$ by an increasing nonlinear function
$h(\rho)$, $h:[0,+\infty)\to [0,+\infty)$ whose
typical examples are the functions $h(\rho)=\rho^\alpha$,
$\alpha\ge0$,
it is then natural to investigate the properties of the
``distance''
\begin{equation}
  \label{eq:cap1:2bis}
  \begin{aligned}
    &\widetilde W_p^p(\mu_0,\mu_1)= \inf\Big\{\int_0^1 \int_\Rd
    h\big(\rho_t(x)\big)|\vv_t(x)|^p\,\d x\,\d t:\\
    &\quad
   \partial_t\rho_t+\nabla\cdot(h(\rho_t)\,\vv_t)=0\ \text{in
    $\Rd\times (0,1)$,}\quad
   \mu_0=\rho\restr{t=0}\Leb d,\quad
  \mu_1=\rho\restr{t=1}\Leb d\Big\}.
  \end{aligned}
\end{equation}
In the limiting case $\alpha=0$, $h(\rho)\equiv 1$, one can easily 
recognize that \eqref{eq:cap1:2bis}
provides an equivalent description of
the homogeneous (dual) 
$\dot W^{-1,p}(\Rd)$ Sobolev (pseudo)-distance
\begin{equation}
  \label{eq:cap1:8}
  \|\mu_0-\mu_1\|_{\dot W^{-1,p}(\Rd)}:=
  \sup\Big\{\int_\Rd \zeta\,\d(\mu_0-\mu_1):
  \zeta\in C^1_{\rm c}(\Rd),\ 
  \int_\Rd |\D\zeta |^q\,\d x\le 1\Big\}.
\end{equation}
Thus the distances defined by \eqref{eq:cap1:2bis}
for $0\le\alpha\le1$ (we shall see that this is the natural range
for the parameter $\alpha$) can be
considered as a natural ``interpolating'' family  between the Wasserstein
and the (dual) Sobolev ones.

Notice that if one wants to keep the usual transport interpretation
given by a ``dynamic cost'' to be minimized along the solution
of the continuity equation, one can simply
introduce the velocity vector field
$ \tilde\vv_t:=\rho_t^{-1}
h(\rho_t)\vv_t$
and minimize the cost 
  \begin{equation}
    \label{eq:cap1:29}
    \int_0^1 \int_\Rd \rho \,f(\rho)\, |\tilde\vv_t|^p\,\d x\,\d t,\quad
    \text{where}\quad
    f(\rho):=\Big(\frac \rho{h(\rho)}\Big)^{p-1}.
  \end{equation}
Therefore, in this model the usual $p$-energy $\int_\Rd \rho_t|\tilde\vv_t|^p\,\d x$ of the moving
masses $\rho_t$ with velocity $\tilde\vv_t$ results locally
modified by a factor $f(\rho_t)$ depending on the local density
of the mass occupied at the time $t$. Different non-local models
have been considered in \cite{Brancolini-Buttazzo-Santambrogio06,Ambrosio-Santambrogio07}.

In the present paper we try to present a systematic
study of these families of intermediate distances, in view
of possible applications, e.g., to the study of evolution equations
like \eqref{eq:cap1:5}, the Minimizing movement approach
\eqref{eq:DS:2}, and
functional inequalities.
\paragraph{Examples: PDE's as gradient flows.}
Let us show a few examples evolution equations which can be \emph{formally}
interpreted as gradient flows of suitable integral functionals in this
setting: the scalar conservation law
\begin{displaymath}
  \partial_t\rho -\nabla\cdot \big(\rho^\alpha \nabla
   V\big)=0
\quad\text{corresponds to the linear functional}
  \quad
  \mathscr F(\rho):=\int_\Rd V(x)\,\rho\,\d x,
\end{displaymath}
 for some
 smooth potential $V:\Rd\to\R$ and $p=2$.
 Choosing for $m>0$
\begin{displaymath}
  p=2,\quad \mathscr F(\rho)=c_{\alpha,m}\int
\rho^{m+1-\alpha}\,\d x ,\quad
c_{\alpha,m}:=\frac m{(m+1-\alpha)(m-\alpha)},
\end{displaymath}
one gets the porous media/fast diffusion equation
\begin{equation}
  \label{eq:newclass_times:1}
  \partial_t\rho-\frac m{m-\alpha}\nabla\cdot\big(\rho^\alpha
  \nabla \rho^{m-\alpha}\big)=
  \partial_t\rho-\Delta \rho^m=0,
\end{equation}
and in particular the heat equation for the entropy functional
$\frac 1{(2-\alpha)(1-\alpha)}\int \rho^{2-\alpha}\,\d x$.
Choosing
\begin{displaymath}
  \mathscr F(\rho)=c_{\alpha,m,q}\int
  \rho^{\frac{m+2q-3-\alpha}{q-1}}\,\d x ,\quad
  c_{\alpha,m,q}:=\frac {m(q-1)^q}{(m+2q-3-\alpha)(m+q-2-\alpha)},
\end{displaymath}
one obtains the doubly nonlinear equation
\begin{equation}
  \label{eq:newclass_times:2}
  \partial_t\rho-m\nabla\cdot(\rho^{m-1}|\nabla\rho|^{q-2}\nabla \rho)=0
\end{equation}
and in particular the evolution equation for the $q$-Laplacian when
$m=1$.
The Dirichlet integral for $p=2$
\begin{equation}
  \label{eq:newclass_times:3}
  \mathscr F(\rho)=\frac 12\int |\nabla \rho|^2\,\d x\quad
  \text{yields}\quad
  \partial_t\rho +\nabla\cdot\big(\rho^\alpha \nabla \Delta \rho\big)=0,
\end{equation}
a thin-film like equation.
\paragraph{The measure-theoretic point of view: Wasserstein distance.}
We present now the main points of our approach
(see also, in a different context, \cite{Buttazzo-Jimenez-Oudet07}).
First of all, even if the language of densities
and vector fields (as $\rho$ and $\vv,\tilde\vv$ in \eqref{eq:cap1:2}
or (\ref{eq:cap1:2bis}))
is simpler and suggests interesting interpretations,
the natural framework for considering the variational
problems \eqref{eq:cap1:2} and \eqref{eq:cap1:2bis}
is provided by time dependent families of Radon measures in $\Rd$.
Following this point of view, one can replace $\rho_t$ by
a continuous curve $t\in [0,1]\mapsto \mu_t$
($\mu_t=\rho_t\,\Leb d$ in the absolutely continuous case)
in
the space $\RPM(\Rd)$ of nonnegative Radon measures in $\Rd$
endowed with the usual weak$^*$ topology induced by the duality
with functions in $C^0_{\rm c}(\Rd)$.
The (Borel) vector field $\vv_t$ in \eqref{eq:cap1:2}
induces a time dependent family of vector measures
$\nnu_t:=\mu_t\vv_t\ll\mu_t$.
In terms of the couple $(\mu,\nnu)$ the
continuity equation \eqref{eq:cap1:2} reads
\begin{equation}
  \label{eq:cap1:9}
  \partial_t \mu_t+\nabla\cdot\nnu_t=0\quad\text{in the sense of
    distributions
    in }\mathscr D'(\Rd\times(0,1)),
\end{equation}
and it is now a \emph{linear} equation.
Since $\vv_t=\d \nnu_t/\d \mu_t$
is the density of $\nnu_t$ w.r.t.\ $\mu_t$,
the action functional
which has to be minimized in \eqref{eq:cap1:2}
can be written as
\begin{equation}
  \label{eq:cap1:10}
  \mathscr E_{p,1}(\mu,\nnu)=\int_0^1
  \Phi_{p,1}(\mu_t,\nnu_t)\,\d t,\quad
  \Phi_{p,1}(\mu,\nnu):=\int_\Rd \left|\frac{\d \nnu}{\d
      \mu}\right|^p\,
  \d\mu.
\end{equation}
Notice that in the case of  absolutely continuous measures
with respect to $\Leb d$, i.e.
$\mu=\rho\Leb d$ and $\nnu=\ww\Leb d$,
the functional $\Phi_{p,1}$ can also be expressed as
\begin{equation}
  \label{eq:cap1:12}
  \Phi_{p,1}(\mu,\nnu):=\int_\Rd \phi_{p,1}(\rho,\ww)\,\d\Leb
  d(x),\quad
  \phi_{p,1}(\rho,\ww):=\rho\left|\frac \ww\rho\right|^p.
\end{equation}
Denoting by $\ce01\Rd$ the class of measure-valued
distributional solutions $\mu,\nnu$ of the continuity equation
\eqref{eq:cap1:9}, we end up with the equivalent
characterization of the Kanto\-ro\-vich-Rubinstein-Wasserstein distance
\begin{equation}
  \label{eq:cap1:13}
  W_p^p(\mu_0,\mu_1):=\inf\Big\{\mathscr E_{p,1}(\mu,\nnu):
  (\mu,\nnu)\in \ce01\Rd,\
  \mu\restr{t=0}=\mu_0,\
  \mu\restr{t=1}=\mu_1
  \Big\}.
\end{equation}
\paragraph{Structural properties and convexity issues.}
The density function $\phi=\phi_{p,1}:(0,+\infty)\times\Rd
\to\Rd$ appearing in \eqref{eq:cap1:12} exhibits some crucial features
\begin{enumerate}
\item $\ww\mapsto\phi(\cdot,\ww)$ is symmetric, positive
  (when $\ww\neq0$), and
  $p$-homogeneous with respect to
  the vector variable $\ww$: this ensures that
  $W_{p}$ is symmetric and satisfies the triangular inequality.
\item $\phi$ is jointly convex in $(0,+\infty)\times \Rd$:
  this ensures that the functional $\Phi_{p,1}$
  (and therefore also $\mathscr E$) defined
  in \eqref{eq:cap1:10} is lower semicontinuous
  with respect to the weak$^*$ convergence of Radon measures.
  It is then possible to show that the infimum
  in \eqref{eq:cap1:13} is attained, as soon as it is finite
  (i.e.\ when there exists at least one curve $(\mu,\nnu)\in \ce01\Rd$
  with finite energy $\mathscr E(\mu,\nnu)$ joining
  $\mu_0$ to $\mu_1$); in particular $W_{p}(\mu_0,\mu_1)=0$
  yields $\mu_0=\mu_1$.
  Moreover, the distance map $(\mu_0,\mu_1)\mapsto
  W_p(\mu_0,\mu_1)$ is lower semicontinuous
  with respect to the weak$^*$ convergence, a crucial property
  in many variational problems involving $W_p$, as
  \eqref{eq:DS:2}.
\item
  $\phi$ is jointly positively $1$-homogeneous:
  this a distinguished feature of the Wasserstein case,
  which shows that the functional
  $\Phi_{p,1}$ depends only on $\mu,\nnu$ and not
  on the Lebesgue measure $\Leb d$, even if it
  can be represented as in \eqref{eq:cap1:12}.
  In other words, suppose that
  $\mu=\tilde\rho\gamma$ and $\nnu=\tilde\ww\gamma$, where
  $\gamma$ is another reference (Radon, nonnegative) measure
  in $\Rd$.
  Then
  \begin{equation}
    \label{eq:cap1:14}
    \Phi_{p,1}(\mu,\nnu)=\int_\Rd \phi_{p,1}(\tilde\rho,\tilde\ww)\,\d\gamma.
  \end{equation}
  As we will show in this paper, the $1$-homogeneity assumption
  yields also two ``quantitative'' properties: if $\mu_0$ is a probability
  measure, then any solution $(\mu,\nnu)$ of
  the continuity equation \eqref{eq:cap1:9}
  with finite energy $\mathscr E(\mu,\nnu)<+\infty$
  still preserves the mass
  $\mu_t(\Rd)\equiv 1$ for every time $t\ge0$
  (and it is therefore equivalent to assume this condition in the
  definition of $\ce01\Rd$, see e.g.\
  \cite[Chap.\ 8]{Ambrosio-Gigli-Savare05}).
  Moreover, if the $p$-moment of $\mu_0$
  $    \sfm_p(\mu_0):=\int_\Rd |x|^p\,\d \mu_0(x)$ is finite,
  then $W_p(\mu_0,\mu_1)<+\infty$ if and only if
  $\sfm_p(\mu_1)<+\infty.$
\end{enumerate}
\paragraph{Main definitions.}
Starting from the above remarks, it is then
natural to consider the more general case
when the density functional $\phi:(0,+\infty)\times
\Rd\to[0,+\infty)$
still satisfies 1.~($p$-homogeneity w.r.t.\ $\ww$) and 2.~(convexity),
but not 3.~($1$-homogeneity).
Due to this last choice, the associated integral functional $\Phi$
is no more independent of a reference measure $\gamma$
and it seems therefore too restrictive to
consider only the case of the Lebesgue
measure $\gamma=\Leb d$.

In the present paper we will thus introduce 
a further nonnegative \emph{reference}
Radon measure $\gamma\in \RPM(\Rd)$ and
a general convex functional  $\phi:(0,+\infty)\times\Rd
\to [0,+\infty)$ which is $p$-homogeneous w.r.t.\ its second (vector)
variable and non degenerate (i.e.\ $\phi(\rho,\ww)>0$ if $\ww\neq0$).
Particularly interesting examples of density functionals $\phi$,
corresponding
to \eqref{eq:cap1:2bis}, are given by
\begin{equation}
  \label{eq:cap1:20}
  \phi(\rho,\ww):=h(\rho)\left|\frac\ww{h(\rho)}\right|^p,
\end{equation}
where $h:(0,+\infty)\to(0,+\infty)$ is an increasing and
\emph{concave}
function; the concavity of $h$ is a necessary and sufficient
condition for the convexity of $\phi$ in \eqref{eq:cap1:20} (see
\cite{Rockafellar68} and \S \ref{sec:action}).
Choosing $h(\rho):=\rho^\alpha$, $\alpha\in (0,1)$, one obtains
\begin{equation}
  \label{eq:cap1:21}
  \phi_{p,\alpha}(\rho,\ww):=\rho^\alpha\,
  \left|\frac\ww{\rho^\alpha}\right|^p=\rho^{\theta-p}\,|\ww|^p,\qquad
  \theta:=(1-\alpha)\,p+\alpha\in (1,p),
\end{equation}
which is jointly $\theta$-homogeneous in $(\rho,\ww)$.

In the case, e.g., when $\alpha<1$ in
\eqref{eq:cap1:21}
or more generally
$\lim_{\rho\up\infty}h(\rho)/\rho=0$,
the \emph{recession function} of $\phi$ satisfies 
\begin{equation}
  \label{eq:cap1:18}
  \phi^\infty(\rho,\ww)=
  \lim_{\lambda\up+\infty}\lambda^{-1}\phi(\lambda\rho,\lambda\ww)=+\infty\qquad
  \text{if }\rho,\ww\neq0,
\end{equation}
so that
the associated integral functional reads as
\begin{equation}
  \label{eq:cap1:17}
  \Phi(\mu,\nnu|\gamma):=
  \int_\Rd \phi(\rho,\ww)\,\d\gamma\quad
  \mu=\rho\gamma+\mu^\perp,\quad
  \nnu=\ww\gamma\ll \gamma,
\end{equation}
extended to $+\infty$ when $\nnu$ is not absolutely continuous
with respect to $\gamma$ or $\supp(\mu)\not\subset\supp(\gamma)$.
Notice that only the
density $\rho$ of the $\gamma$-absolutely continuous
part of $\mu$ enters in the functional, but the functional
could be finite even if $\mu$ has a singular part $\mu^\perp$.
This choice is crucial in order to obtain a lower semicontinuous functional
w.r.t.\ weak$^*$ convergence of measures.
The associated $(\phi,\gamma)$-Wasserstein distance is therefore
\begin{equation}
  \label{eq:cap1:16}
  \cW_{\phi,\gamma}^p(\mu_0,\mu_1):=\inf\Big\{\mathscr E_{\phi,\gamma}(\mu,\nnu):
  (\mu,\nnu)\in \ce01\Rd,\
  \mu\restr{t=0}=\mu_0,\
  \mu\restr{t=1}=\mu_1
  \Big\},
\end{equation}
where
the energy $\EE_{\phi,\gamma}$ of a curve
$(\mu,\nnu)\in \ce01\Rd$ is
\begin{equation}
  \label{eq:cap1:19}
  \mathscr
  E_{\phi,\gamma}(\mu,\nnu):=\int_0^1\Phi(\mu_t,\nnu_t|\gamma)\,\d t.
\end{equation}
The most important case associated to the functional
\eqref{eq:cap1:21} deserves the distinguished notation
\begin{equation}
  \label{eq:cap1:25}
  W_{p,\alpha;\gamma}(\cdot,\cdot):=
  \cW_{\phi_{p,\alpha},\gamma}(\cdot,\cdot).
\end{equation}
The limiting case $\alpha=\theta=1$ corresponds to the $L^p$-Wasserstein
distance,
the Sobolev $\dot W^{-1,p}_\gamma$ corresponds to $\alpha=0$,
$\theta=p$.
The choice of $\gamma$ allows for a great flexibility:
besides the Lebesgue measure in $\Rd$, we quote
\begin{itemize}
\item $\gamma:=\Leb d\restr{\Omega}$, $\Omega$ being
  an open subset of $\Rd$. The measures are then supported
  in $\bar\Omega$ and, 
  with the choice \eqref{eq:cap1:20} and $\vv=\ww/h(\rho)$,
  \eqref{eq:cap1:9} is a weak formulation of
  the continuity equation ($\nn_{\partial\Omega}$ being
  the exterior unit normal to $\partial\Omega$)
  \begin{equation}
    \label{eq:cap1:22}
    \partial_t\rho_t+\nabla\cdot\big(h(\rho_t)\,\vv_t\big)=0\quad
    \text{in }\Omega\times(0,1),\quad
    \vv_t\cdot \nn_{\partial\Omega}=0\quad\text{on }\partial\Omega.
  \end{equation}
  This choice is useful for studying equations
  \eqref{eq:newclass_times:1} (see \cite{Carrillo-Lisini-Savare08}),
  \eqref{eq:newclass_times:2},
  \eqref{eq:newclass_times:3} in bounded domains with Neumann
  boundary conditions.
\item $\gamma:=e^{-V}\Leb d$ for some $C^1$ potential $V:\Rd\to
  \R$.  With the choice \eqref{eq:cap1:20} and $\vv=\ww/h(\rho)$
  \eqref{eq:cap1:9} is a weak formulation of
  the equation
  \begin{equation}
    \label{eq:cap1:23}
    \partial_t\rho_t +\nabla\cdot \big(h(\rho_t)\,\vv_t\big)-
    h(\rho_t)\nabla V\cdot\vv_t=0\quad
    \text{in }\Rd\times(0,1).
  \end{equation}
  When $h(\rho)=\rho^\alpha,\, p=2$, the gradient flow of
  $\mathscr F(\mu):=\frac 1{(2-\alpha)(1-\alpha)}\int_{\Rd}
  \rho^{2-\alpha}\,d\gamma$ is
  the Kolmogorov-Fokker-Planck equations 
  \cite{Dolbeault-Nazaret-Savare08b}
  \begin{displaymath}
    \partial_t \mu-\Delta\mu-\nabla\cdot(\mu\nabla V)=0,\quad
    \partial_t\rho-\Delta \rho+\nabla V\cdot \nabla\rho=0,
  \end{displaymath}
  which in the Wasserstein framework is generated by
  the logarithmic entropy
  (\cite{Jordan-Kinderlehrer-Otto98,Ambrosio-Gigli-Savare05,Ambrosio-Savare-Zambotti07}).
\item $\gamma:=\mathscr H^k\restr{\M}$, $\M$ being
  a smooth $k$-dimensional manifold embedded in $\Rd$
  with the Riemannian metric induced by the
  Euclidean distance; $\mathscr H^k$ denotes the
  $k$-dimensional Hausdorff measure.
  \eqref{eq:cap1:9} is a weak formulation of
  \begin{equation}
    \label{eq:cap1:24}
    \partial_t \rho_t+{\rm div}_{\M}\big(h(\rho)\vv_t\big)=0\quad
    \text{on }\M\times(0,1).
  \end{equation}
  Thanks to Nash embedding theorems \cite{Nash54,Nash56},
  the study of the continuity equation and
  of the weighted Wasserstein distances
  on arbitrary Riemannian manifolds
  can be reduced to this case, which could be therefore applied to study
  equations
  \eqref{eq:newclass_times:1},
  \eqref{eq:newclass_times:2},
  \eqref{eq:newclass_times:3}
  on Riemannian manifolds.
\end{itemize}
\paragraph{Main results.}
Let us now summarize some of the
main properties of $W_{p,\alpha;\gamma}(\cdot,\cdot)$
we will prove in the last section of the present paper.
In order to deal with distances (instead of
pseudo-distances, possibly assuming
the value $+\infty$), for
a nonnegative Radon measure $\sigma$ we
will denote by
$\mathcal M_{p,\alpha;\gamma}[\sigma]$
the set of all measures $\mu$
with $W_{p,\alpha;\gamma}(\mu,\sigma)<+\infty$ endowed
with the
$W_{p,\alpha;\gamma}$-distance.
\begin{enumerate}
\item $\mathcal M_{p,\alpha;\gamma}[\sigma]$ is a complete metric space
  (Theorem \ref{thm:completeness}).  
\item $W_{p,\alpha;\gamma}$ induces a stronger convergence
  than the usual weak$^*$ one (Theorem \ref{thm:1}).
\item Bounded sets in $\mathcal M_{p,\alpha;\gamma}[\sigma]$ are
  weakly$^*$ relatively compact (Theorem \ref{thm:1}).
\item The map $(\mu_0,\mu_1)\mapsto
  W_{p,\alpha;\gamma}(\mu_0,\mu_1)$ is weakly$^*$ lower semicontinuous
  (Theorem \ref{thm:2}),
  convex (Theorem \ref{thm:3}), and subadditive
  (Theorem \ref{thm:subadditivity}).
  It enjoys some useful monotonicity properties
  with respect to $\gamma$ (Proposition \ref{prop:monotonicity})
  and to convolution (Theorem \ref{thm:wass_conv}).
\item The infimum in \eqref{eq:cap1:13}
  is attained, $\mathcal M_{p,\alpha;\gamma}[\sigma]$ is a geodesic space
  (Theorem \ref{thm:0}),
  and constant speed geodesics connecting two measures
  $\mu_0,\mu_1$ are unique (Theorem \ref{thm:3}).
\item
  If
  \begin{equation}
    \label{eq:cap1:27}
    \int_{|x|\ge1}|x|^{-p/(\theta-1)}\,\d\gamma(x)<+\infty\qquad
    \theta=(1-\alpha)p+\alpha,\quad
    \frac{p}{\theta-1}=\frac q{1-\alpha},   
  \end{equation}
  and $\sigma\in \Probabilities\Rd$, then
  $\mathcal M_{p,\alpha;\gamma}[\sigma]\subset \Probabilities\Rd$
  (Theorem \ref{thm:narrow_convergence}).
  If moreover $\gamma$ satisfies stronger summability
  assumptions, then the distances $W_{p,\alpha;\gamma}$
  provide a control of various moments of the measures
  (Theorem \ref{thm:moment_convergence}).
  Comparison results with $W_p$ and $\dot W^{-1,p}$ are also discussed 
  in \S \ref{subsec:comp}.
\item Absolutely continuous curves w.r.t. $W_{p,\alpha;\gamma}$
  can be characterized in completely analogous ways
  as in the Wasserstein case
  (\S \ref{subsec:curves}).
\item In the case $\gamma=\Leb d$ the functional
  \begin{equation}
    \label{eq:cap1:30}
    \Psi_\alpha(\mu|\gamma):=
    \frac1{(2-\alpha)(1-\alpha)}\int_\Rd \rho^{2-\alpha}\,\d x\qquad
    \mu=\rho\Leb d\ll\Leb d,
\end{equation}
is geodesically convex w.r.t.\ the distance $W_{2,\alpha;\Leb d}$
and the heat equation in $\Rd$ is its gradient flow, as formally suggested by
\eqref{eq:newclass_times:1} (\S
\ref{subsec:Leb}: we prove this
property in the case
$\alpha>1-2/d$, when $\Probabilities\Rd$ is complete
w.r.t.\ $W_{2,\alpha;\Leb d}$.)
\end{enumerate}

\paragraph{Plan of the paper.}
Section 2 recalls some basic notation and preliminary facts about
weak$^*$ convergence and integral functionals
of Radon measures;  \ref{subsec:duality} recalls 
a simple duality result in convex analysis, which plays a crucial
role in the analysis of the integrand $\phi(\rho,\ww)$.

The third section is devoted to 
the class of admissible action integral functionals
$\Phi$ like \eqref{eq:cap1:17}
and their density $\phi$. Starting from a few basic structural
assumptions on $\phi$ we deduce its main properties and we present
some important examples in Section \ref{subsec:phi_examples}.
The corresponding properties of $\Phi$ (in particular, lower
semicontinuity
and relaxation with respect to weak$^*$ convergence, monotonicity, etc)
are considered in Section \ref{subsec:action_functional}.

Section \ref{sec:continuity} is devoted to the study
of measure-valued solutions of the continuity equation
\eqref{eq:cap1:9}. It starts with some preliminary basic results,
which extend the theory presented in
\cite{Ambrosio-Gigli-Savare05} to the
case of general Radon measures: this extension is motivated by
the fact that the class of probability measures (and therefore
with finite mass) is too restrictive to study
the distances $W_{p,\alpha;\gamma}$, in particular when
$\gamma(\Rd)=+\infty$
as in the case of the Lebesgue measure.
We shall see (Remark \ref{rem:LebComp}) that $\Probabilities\Rd$ with
the distance $W_{p,\alpha;\Leb d}$ is not complete
if $d>p/(\theta-1)=q/(1-\alpha)$.
We consider in Section \ref{subsec:FE} the class of solutions
of \eqref{eq:cap1:9} with finite energy $\mathscr E_{\phi,\gamma}$
\eqref{eq:cap1:19}, deriving all basic estimates
to control their mass and momentum.

As we briefly showed, Section \ref{sec:modWass} contains all main results of the paper
concerning the modified Wasserstein distances.

\section{Notation and preliminaries}

Here is a list of the main
notation used throughout the paper:
\medskip

 \halign{$#$\hfil\qquad &#\hfil\cr
   \Ball R&The open ball (in some $\Rh$)
   of radius $R$ centered at $0$\cr
   \mathcal B(\Rh)\quad(\text{resp. }\Bloc(\Rh))&
   Borel subsets of $\Rh$ (resp. with compact closure)\cr
   \Probabilities\Rh&Borel probability measures in $\Rh$\cr
   \FPM(\Rh)\quad
   (\text{resp. }\RPM(\Rh))&Finite (resp.\ Radon), nonnegative Borel
   measures
   on $\Rh$\cr
   \Probabilities\Rh&Borel probability measures in $\Rh$\cr
   \mathcal M(\Rh;\Rm)
   &
   $\Rm$-valued Borel measures
   with finite variation\cr
   \Mloc(\Rh;\Rm)&$\Rm$-valued Radon measures\cr
   \|\mmu\|&Total variation of $\mmu\in \Mloc(\Rh;\Rm)$,
   see \eqref{eq:scheme3:28}\cr
   C^0_b(\Rh)&Continuous and \emph{bounded} real functions\cr
   \sf m_p(\mu)&$p$-moment $\int_{\R^d}|x|^p\,\d\mu$ of $\mu\in \FPM(\Rh)$
   \cr
   \psi^\infty&Recession function of $\psi$, see \eqref{eq:cap2:1}\cr
   \Psi(\mmu|\gamma),\ \Phi(\mu,\nnu|\gamma)&Integral functionals on
   measures,
   see \ref{sec:convex_functionals}
   and
   \ref{subsec:action_functional}\cr
   \mu(\zeta),\ \la\mu,\zeta\ra, \ \la \mmu,\zzeta\ra&
   the integrals $\int_\Rd \zeta\,\d\mu$, $\int_\Rd
   \zzeta\cdot\d \mmu$\cr
   \ce0T\Rd,\CE\phi\gamma0T\Rd,&Classes of measure-valued
   solutions of
   the continuity 
   \cr
   \ \cce0T\Rd{\mu_0}{\mu_1}&\quad equation, see Def.\ \ref{def:CE} and
   Sec.\ \ref{subsec:FE}.\cr   
 }

\subsection{\bfseries Measures and weak convergence}
We recall some basic notation and properties of weak convergence
of (vector) radon measures (see e.g.\ \cite{Ambrosio-Fusco-Pallara00}).
A Radon vector measure
in $\Mloc(\Rh;\Rm)$ is a $\Rm$-valued map
$\mmu:\mathcal B_{\rm c}(\Rh)\to
\Rm$ defined on the Borel sets of $\Rh$ with compact closure.
We identify 
$\mmu\in \Mloc(\Rh;\Rm)$ with
a vector
$(\mmu^1,\mmu^2,\cdots,\mmu^m)$ of $m$
measures in $\Mloc(\Rh)$:
its integral with a continuous
vector valued function with compact support
$\zzeta\in C^0_{\rm c}(\Rh;\Rm)$
is given by
\begin{equation}
  \label{eq:scheme3:20}
  \la \mmu,\zzeta\ra:=\int_\Rh \zzeta\cdot \,\d\mmu=
  \sum_{i=1}^m\int_\Rh \zzeta^i(x)\,\d\mmu^i(x).
\end{equation}
It is well known that $\Mloc(\Rh;\Rm)$
can be identified with the dual of $C^0_{\rm c}(\Rh;\Rm)$
by the above duality pairing and it is therefore endowed with the
corresponding
of weak$^*$ topology.
If $\|\cdot\|$ is a norm in $\Rd$ with dual $\|\cdot\|_*$
(in particular the euclidean norm $|\cdot|$)
for every open subset $A\subset \Rh$ we have
\begin{equation}
  \label{eq:scheme3:28}
  \|\mmu\|(A)=\sup\Big\{
  \int_\Rh \zzeta\cdot \,\d\mmu:\quad
  \supp(\zzeta)\subset A,\quad
  \|\zzeta(x)\|_*\le 1\quad
  \forall\, x\in \Rh\Big\}.
\end{equation}
$\|\mmu\|$ is in fact a Radon positive measure
in $\RPM(\Rh)$
and $\mmu$ admits the polar decomposition
$\mmu=\ww \|\mmu\|$ where the
Borel vector field $\ww$ belongs to 
$L^1_{\rm loc}(\|\mmu\|;\Rm)$. We thus have
\begin{equation}
  \label{eq:cap2:22}
  \la \mmu,\zzeta\ra=\int_\Rh \zzeta\cdot \,\d\mmu=
  \int_\Rh \zzeta\cdot \ww\,\d\|\mmu\|.
\end{equation}
If $(\mmu_k)_{k\in \N}$
is a sequence in $\Mloc(\Rh;\Rm)$
with 
$\sup_n \|\mmu\|(\Ball R)<+\infty$ for every
open ball $\Ball R$,
then it is possible to extract a subsequence
$\mmu_{k_n}$  weakly$^*$ convergent to $\mmu\in \mathcal
M(\Rh;\Rm)$, whose total variation
$\|\mmu_{k_n}\|$ weakly$^*$ converges to $\lambda\in \mathcal
M^+(\Rh)$
with
$\|\mmu\|\le \lambda$.
\subsection{\bfseries Convex functionals defined on Radon measures}
\label{sec:convex_functionals}
Let $\psi:\Rm\to [0,+\infty]$ be a convex and lower semicontinuous
function with $\psi(0)=0$, 
whose proper domain $D(\psi):=\{x\in \Rm:\psi(x)<+\infty\}$
has non empty interior.
Its \emph{recession function} (see e.g.\ \cite{Ambrosio-Fusco-Pallara00})
$\psi^\infty:\Rm\to[0,+\infty]$ is defined as
\begin{equation}
  \label{eq:cap2:1}
  \psi^\infty(y):=\lim_{r\to+\infty}\frac{\psi(ry)}r=
  \sup_{r>0}\frac{\psi(ry)}r.
\end{equation}
$\psi^\infty$ is still convex, lower semicontinuous, and
positively $1$-homogeneous, so that its proper
domain $D(\psi^\infty)$ is a convex cone always containing $0$.
We say that
\begin{equation}
  \label{eq:cap2:25}
  \begin{gathered}
    \text{$\psi$ has a \emph{superlinear growth} if
$\psi^\infty(y)=\infty$ for every $y\neq 0$:
$D(\psi^\infty)=\{0\}$,}\\
\text{$\psi$ has a \emph{sublinear growth} if
$\psi^\infty(y)\equiv 0$ for every $y\in\Rm$.}
  \end{gathered}
\end{equation}
Let now $\gamma\in \RPM(\Rh)$ and $\mmu\in \Mloc(\Rh;\Rm)$ with
$\supp(\mmu)\subset \supp(\gamma)$;
the Lebesgue decomposition of $\mmu$ w.r.t.\ $\gamma$ reads
$\mmu=\ttheta\gamma+\mmu^\perp$, where $\ttheta=\d\mmu/\d\gamma$.
We can introduce a nonnegative Radon measure
$\sigma\in \RPM(\Rh)$ such
that
$\mmu^\perp=\ttheta^\perp\sigma\ll \sigma $, e.g. $\sigma=|\mmu^\perp|$
and we set
\begin{equation}
  \label{eq:cap2:4}
  \Psi^a(\mmu|\gamma):=\int_\Rh \psi(\ttheta(x))\,\d\gamma(x),\qquad
  \Psi^\infty(\mmu|\gamma):=\int_\Rh \psi^\infty(\ttheta^\perp(y))
  \,\d\sigma(y),  
\end{equation}
and finally
\begin{equation}
  \label{eq:cap2:7}
  \Psi(\mmu|\gamma):=
  \Psi^a(\mmu|\gamma)+
  \Psi^\infty(\mmu|\gamma);\quad
  \Psi^\infty(\mmu|\gamma)
  =+\infty\
  \text{if}\
  \supp(\mmu)\not\subset\supp(\gamma).
\end{equation}
Since $\psi^\infty$ is $1$-homogeneous, the definition of
$\Psi^\infty$ depends on $\gamma$ only through its support and
it is independent of the particular choice of
$\sigma$ in \eqref{eq:cap2:4}. When $\psi$ has a
superlinear growth then the functional $\Psi$ is finite iff
$\mmu\ll\gamma$ and $\Psi^a(\mmu|\gamma)$ is finite;
in this case $\Psi(\mu|\gamma)=\Psi^a(\mmu|\gamma)$.
\begin{theorem}[L.s.c.\ and relaxation of
  integral functionals of measures \cite{Ambrosio-Buttazzo88,Ambrosio-Fusco-Pallara00}]
  \label{thm:lsc_functional_measures}
  Let us consider two sequences
  $\gamma_n\in \RPM(\Rh),\mmu_n\in \Mloc(\Rh;\Rm)$
  weakly$^*$ converging to $\gamma\in \RPM(\Rh)$ and
  $\mmu\in \Mloc(\Rh;\Rm)$ respectively.
  We have
  \begin{equation}
    \label{eq:cap2:8}
    \liminf_{n\up+\infty}\Psi(\mmu_n|\gamma_n)\ge 
    \Psi(\mmu|\gamma).
  \end{equation}
  Let conversely $\mmu,\gamma$ be such that $\Psi(\mmu|\gamma)<+\infty$.
  Then there exists a sequence $\mmu_n=\ttheta_n\gamma\ll\gamma$
  weakly$^*$ converging to $\mmu$ such that
  \begin{equation}
    \label{eq:cap2:9}
    \lim_{n\up+\infty}\Psi^a(\mmu_n|\gamma)=
    \lim_{n\up+\infty}\int_\Rh \psi(\ttheta_n(x))\,\d \gamma(x)=
    \Psi(\mmu|\gamma).
  \end{equation}
\end{theorem}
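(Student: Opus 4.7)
The plan is to base both halves of the statement on a Fenchel--Moreau-type dual representation of $\Psi$. Let $\psi^*\colon\Rm\to[0,+\infty]$ be the Legendre conjugate of $\psi$. The hypotheses that $\psi$ is convex, lsc, nonnegative with $\psi(0)=0$ and nonempty interior of $\mathrm{dom}(\psi)$ imply $\psi^*(\mathbf 0)=0$, $\psi=\psi^{**}$, and the identity $\psi^\infty(y)=\sup\{y\cdot\xx:\psi^*(\xx)<+\infty\}$ relating the recession of $\psi$ to the support function of $\mathrm{dom}(\psi^*)$. A Bouchitt\'e--Valadier/Rockafellar-type argument (pointwise Fenchel inequality plus a measurable selection) then upgrades this to the measure-theoretic dual representation
$$
\Psi(\mmu|\gamma) \;=\; \sup\Bigl\{\int_\Rh \zzeta\cdot\d\mmu - \int_\Rh \psi^*(\zzeta)\,\d\gamma \;:\; \zzeta\in C^0_{\rm c}(\Rh;\Rm),\ \psi^*\!\circ\zzeta\in C^0_{\rm c}(\Rh)\Bigr\}.
$$
The key point is that the absolutely continuous and singular parts of $\mmu$ get bundled into a single supremum of functionals that are linear in $\mmu$ and linear in $\gamma$.

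The lower semicontinuity \eqref{eq:cap2:8} is then immediate: for any admissible test $\zzeta$ in the display above, both $\zzeta$ and $\psi^*\!\circ\zzeta$ are continuous with compact support, so weak-$*$ convergence of $(\mmu_n,\gamma_n)$ gives
$$
\int_\Rh\zzeta\cdot\d\mmu - \int_\Rh\psi^*(\zzeta)\,\d\gamma \;=\; \lim_{n\to\infty}\Bigl(\int_\Rh\zzeta\cdot\d\mmu_n - \int_\Rh\psi^*(\zzeta)\,\d\gamma_n\Bigr) \;\le\; \liminf_n\Psi(\mmu_n|\gamma_n),
$$
and taking the supremum over $\zzeta$ produces \eqref{eq:cap2:8}.

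For the recovery sequence \eqref{eq:cap2:9} I would decompose $\mmu=\ttheta\gamma+\mmu^\perp$ with $\mmu^\perp=\ttheta^\perp\sigma$, $\sigma\perp\gamma$, and approximate the two pieces separately. The absolutely continuous part is handled by truncating $\ttheta$ against balls and convolving with a smooth kernel adapted to $\supp(\gamma)$: continuity of $\psi$ on the interior of its domain plus dominated convergence make the contribution converge to $\int\psi(\ttheta)\,\d\gamma$. For the singular part, using $\supp(\mmu^\perp)\subset\supp(\gamma)$ (forced by $\Psi(\mmu|\gamma)<+\infty$), pick for each $n$ a finite Borel partition $\{B^n_i\}$ of $\supp(\mmu^\perp)$ of vanishing mesh with $\gamma(B^n_i)>0$, and set
$$
\mmu^\perp_n := \sum_i \frac{\mmu^\perp(B^n_i)}{\gamma(B^n_i)}\,\gamma|_{B^n_i}\;\ll\;\gamma.
$$
Weak-$*$ convergence $\mmu^\perp_n \weaksto \mmu^\perp$ is a Riemann-sum check on $C^0_{\rm c}$ test functions. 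Since $\sigma\perp\gamma$ the ratios $|\mmu^\perp|(B^n_i)/\gamma(B^n_i)$ diverge as the mesh shrinks, so positive $1$-homogeneity gives $\psi(t\yy)\sim t\psi^\infty(\yy)$ uniformly on compact subsets of $\Rm\setminus\{0\}$, and a Besicovitch differentiation of $\mmu^\perp$ with respect to $\sigma$ upgrades this to
$$
\sum_i \psi\!\Bigl(\frac{\mmu^\perp(B^n_i)}{\gamma(B^n_i)}\Bigr)\gamma(B^n_i) \;=\; \sum_i \psi^\infty(\mmu^\perp(B^n_i))(1+o(1)) \;\longrightarrow\; \int_\Rh\psi^\infty(\ttheta^\perp)\,\d\sigma.
$$
The main obstacle is precisely this singular-part step: the partition must be fine enough simultaneously for the weak-$*$ limit and for the $\sigma$-a.e.\ differentiation, while maintaining $\gamma(B^n_i)>0$. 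A Besicovitch covering of $\sigma$-a.e.\ point of $\supp(\mmu^\perp)$ by balls centered in $\supp(\gamma)$, followed by a diagonal extraction, handles all three requirements, and a final diagonal procedure glues the absolutely continuous and singular recovery sequences into one $\mmu_n=\ttheta_n\gamma$ witnessing \eqref{eq:cap2:9}.
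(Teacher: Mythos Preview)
The paper does not actually prove this theorem: it is stated with a citation to \cite{Ambrosio-Buttazzo88,Ambrosio-Fusco-Pallara00} and no proof block follows. So there is no ``paper's own proof'' to compare against; the result is imported as a known fact from those references.

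That said, your approach is essentially the classical one from those same references. The dual representation
\[
\Psi(\mmu|\gamma)=\sup\Big\{\la\mmu,\zzeta\ra-\int_{\Rh}\psi^*(\zzeta)\,\d\gamma:\ \zzeta\in C^0_{\rm c}(\Rh;\Rm),\ \psi^*\!\circ\zzeta\in C^0_{\rm c}(\Rh)\Big\}
\]
is exactly the device used in \cite[Thm.~2.34]{Ambrosio-Fusco-Pallara00} (it simultaneously encodes the absolutely continuous part via Fenchel duality and the singular part via $\psi^\infty=\sigma_{D(\psi^*)}$), and lower semicontinuity \eqref{eq:cap2:8} follows from it in one line, as you wrote. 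For the relaxation part your averaging construction $\sum_i\frac{\mmu^\perp(B^n_i)}{\gamma(B^n_i)}\gamma|_{B^n_i}$ together with $\psi(t\yy)/t\to\psi^\infty(\yy)$ is again the strategy of \cite{Ambrosio-Buttazzo88}; Jensen's inequality for the $1$-homogeneous convex $\psi^\infty$ gives the $\limsup$ bound, and the already-proved l.s.c.\ closes the limit.

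Two places in the relaxation half deserve tightening before one could call it a proof rather than a sketch. First, the assertion that $|\mmu^\perp|(B^n_i)/\gamma(B^n_i)\to+\infty$ needs the cells $B^n_i$ to be Besicovitch balls centred at $\sigma$-a.e.\ points (so that $\gamma(B)/\sigma(B)\to0$), not an arbitrary fine partition; the standard cure is a Vitali/Besicovitch covering of $\sigma$-almost all of $\supp(\mmu^\perp)$ by disjoint closed balls of small radius contained in $\supp(\gamma)$, exactly as you indicate. Second, the passage $\psi(t\yy)\sim t\,\psi^\infty(\yy)$ is only uniform on compact subsets of $\{\yy:\psi^\infty(\yy)<+\infty\}$, so when $\psi$ has superlinear growth in some directions the argument must first localise to directions where $\psi^\infty(\ttheta^\perp)$ is finite (which is automatic on the set where $\Psi^\infty(\mmu|\gamma)<+\infty$). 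With these two points made explicit, your outline matches the arguments in the cited references.
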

\begin{theorem}[Montonicity w.r.t.\ $\gamma$]
  If $\gamma_1\le \gamma_2$ then
  \begin{equation}
    \label{eq:cap2:28}
    \Psi(\mmu|\gamma_2)\le \Psi(\mmu|\gamma_1).
  \end{equation}
\end{theorem}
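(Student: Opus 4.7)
The plan is to reduce the inequality to a pointwise comparison between integrands after expressing both functionals relative to a common reference measure, the natural choice being $\gamma_2$.

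First I would dispose of the trivial case: if $\supp(\mmu)\not\subset\supp(\gamma_1)$ then $\Psi(\mmu|\gamma_1)=+\infty$ by the convention in \eqref{eq:cap2:7} and there is nothing to prove. So assume $\supp(\mmu)\subset\supp(\gamma_1)\subset\supp(\gamma_2)$. Since $\gamma_1\le\gamma_2$, we have $\gamma_1\ll\gamma_2$, so by the Radon--Nikodym theorem there is a Borel function $f:\Rh\to[0,1]$ with $\gamma_1=f\gamma_2$. Writing the Lebesgue decomposition of $\mmu$ with respect to $\gamma_2$ as $\mmu=\ttheta_2\gamma_2+\mmu_2^\perp$, I would then identify the decomposition of $\mmu$ with respect to $\gamma_1$: on $\{f>0\}$ the part $\ttheta_2\gamma_2$ rewrites as $(\ttheta_2/f)\,\gamma_1$; on $\{f=0\}$ the measure $\ttheta_2\mathbf 1_{\{f=0\}}\gamma_2$ is singular with respect to $\gamma_1$; and $\mmu_2^\perp$, being singular with respect to $\gamma_2$, is a fortiori singular with respect to $\gamma_1$.

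Using this identification together with the fact that $\psi^\infty$ is $1$-homogeneous (so its singular integral depends only on the polar decomposition of the singular part, not on the reference measure chosen to represent it), I would obtain the explicit expressions
\begin{equation*}
\Psi(\mmu|\gamma_2)=\int_{\Rh}\psi(\ttheta_2)\,\d\gamma_2+\Psi^\infty(\mmu_2^\perp|\gamma_2),
\end{equation*}
\begin{equation*}
\Psi(\mmu|\gamma_1)=\int_{\{f>0\}}\psi(\ttheta_2/f)\,f\,\d\gamma_2+\int_{\{f=0\}}\psi^\infty(\ttheta_2)\,\d\gamma_2+\Psi^\infty(\mmu_2^\perp|\gamma_2).
\end{equation*}
The singular contributions from $\mmu_2^\perp$ cancel, so it suffices to prove the pointwise bound
\begin{equation*}
\psi(\ttheta_2)\le f\,\psi(\ttheta_2/f)\mathbf 1_{\{f>0\}}+\psi^\infty(\ttheta_2)\mathbf 1_{\{f=0\}}.
\end{equation*}

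On $\{f>0\}$, writing $\ttheta_2=f\cdot(\ttheta_2/f)+(1-f)\cdot 0$ and invoking convexity of $\psi$ together with $\psi(0)=0$ gives $\psi(\ttheta_2)\le f\psi(\ttheta_2/f)+(1-f)\psi(0)=f\psi(\ttheta_2/f)$. On $\{f=0\}$, the convexity of $\psi$ together with $\psi(0)=0$ implies that $r\mapsto \psi(ry)/r$ is nondecreasing on $(0,\infty)$, so $\psi^\infty(y)=\sup_{r>0}\psi(ry)/r\ge\psi(y)$, which yields $\psi(\ttheta_2)\le\psi^\infty(\ttheta_2)$. Integrating against $\gamma_2$ then gives the desired inequality. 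The only conceptually delicate step is the bookkeeping in the second paragraph: correctly splitting $\ttheta_2\gamma_2$ according to $\{f>0\}$ versus $\{f=0\}$, and verifying that the singular part of $\mmu$ with respect to $\gamma_1$ produces exactly the $\{f=0\}$ integral plus the original $\Psi^\infty(\mmu_2^\perp|\gamma_2)$ term; once this is settled, the convexity and monotonicity facts are classical.
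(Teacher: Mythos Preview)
Your proof is correct, but it takes a different route from the paper's. The paper invokes the relaxation half of Theorem~\ref{thm:lsc_functional_measures} to approximate $\mmu$ by measures $\mmu_n=\ttheta_n\gamma_1\ll\gamma_1$ with $\Psi^a(\mmu_n|\gamma_1)\to\Psi(\mmu|\gamma_1)$, then uses lower semicontinuity on the $\gamma_2$ side; this reduces everything to the absolutely continuous case, where the single pointwise inequality $\theta\,\psi(\theta^{-1}x)\ge\psi(x)$ for $\theta\le1$ (equivalent to your $\psi(\ttheta_2)\le f\psi(\ttheta_2/f)$) suffices. You instead work directly with the full Lebesgue decomposition, splitting $\Rh$ into $\{f>0\}$ and $\{f=0\}$ and handling the singular part explicitly via the bound $\psi\le\psi^\infty$.

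The trade-off: the paper's argument is shorter but leans on a nontrivial cited relaxation result; yours is self-contained and uses only the definitions plus elementary convexity, at the price of the decomposition bookkeeping you flag as ``conceptually delicate.'' That bookkeeping is indeed correct---the key points being that $\mmu_2^\perp\perp\gamma_2$ forces $\mmu_2^\perp\perp\gamma_1$, and that the $1$-homogeneity of $\psi^\infty$ makes the singular contribution from $\mmu_2^\perp$ the same integral regardless of whether one references $\gamma_1$ or $\gamma_2$ (both supports contain $\supp(\mmu_2^\perp)$ under your standing assumption). One small wording point: the singular contributions do not ``cancel''; rather, they are equal on both sides and hence drop out of the comparison.
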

\begin{proof}
  Thanks to Theorem \ref{thm:lsc_functional_measures},
  it is sufficient to prove
  the above inequality for $\mmu\ll\gamma^1$.
  Since $\gamma_1=\theta\gamma_2$, with density $\theta\le 1$
  $\gamma_2$-a.e., we have
  $\mmu=\ttheta^i\gamma^i$ with
  $\ttheta^2=\theta\,\ttheta^1$, and therefore
  \begin{equation}
    \label{eq:scheme3:42}
    \int_\Rd \psi(\ttheta_1)\,\d\gamma_1=
    \int_\Rd \psi(\theta^{-1}\ttheta_2)\theta\,\d\gamma_2\ge
    \int_\Rd \psi(\ttheta_2)\,\d\gamma_2,
  \end{equation}
  where we used the property $\theta\psi(\theta^{-1} x)\ge
  \psi(x)$ for $\theta\le 1$, being $\psi(0)=0$.
  \qed\end{proof}
\begin{theorem}[Monotonicity with respect to convolution]
  \label{thm:mono_conv}
  If $\kernel\in C^\infty_{\rm c}(\Rd)$ is a convolution kernel
  satisfying $
    \kernel(x)\ge 0,\quad
  \int_{\Rd}\kernel(x)\,\d x=1,$
  then
  \begin{equation}
    \label{eq:cap2:57}
    \Psi(\mmu\ast \kernel|\gamma\ast\kernel)\le
    \Psi(\mmu|\gamma).
  \end{equation}
\end{theorem}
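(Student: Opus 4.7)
The plan is to establish the inequality first in the absolutely continuous case $\mmu\ll\gamma$ by a pointwise Jensen argument, and then extend to general $\mmu$ via the relaxation part of Theorem~\ref{thm:lsc_functional_measures} combined with the lower semicontinuity \eqref{eq:cap2:8}.

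First I would consider the case $\mmu=\ttheta\gamma$, so that $\Psi(\mmu|\gamma)=\int\psi(\ttheta)\,\d\gamma$. Since $\kernel\in C^\infty_{\rm c}(\Rd)$, both $\mmu\ast\kernel$ and $\gamma\ast\kernel$ are represented by smooth densities with respect to Lebesgue measure:
\begin{equation*}
(\mmu\ast\kernel)(x)=\int \kernel(x-y)\,\ttheta(y)\,\d\gamma(y),\qquad
(\gamma\ast\kernel)(x)=\int \kernel(x-y)\,\d\gamma(y).
\end{equation*}
At any $x$ with $(\gamma\ast\kernel)(x)=0$ the nonnegativity of $\kernel$ forces $\kernel(x-\cdot)=0$ $\gamma$-almost everywhere, hence also $(\mmu\ast\kernel)(x)=0$; therefore $\mmu\ast\kernel\ll\gamma\ast\kernel$ with density
\begin{equation*}
\tilde\ttheta(x)=\int \ttheta(y)\,\d\pi_x(y),\qquad
\d\pi_x(y):=\frac{\kernel(x-y)\,\d\gamma(y)}{(\gamma\ast\kernel)(x)},
\end{equation*}
a convex average of $\ttheta$ against the probability kernel $\pi_x$. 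Applying Jensen's inequality to the convex function $\psi$, multiplying by $(\gamma\ast\kernel)(x)$, integrating in $x$ and swapping the order of integration with the help of $\int\kernel\,\d x=1$ gives
\begin{equation*}
\int \psi(\tilde\ttheta)\,\d(\gamma\ast\kernel)\le\int\psi(\ttheta)\,\d\gamma,
\end{equation*}
which is \eqref{eq:cap2:57} in the AC case.

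To treat the general case with $\Psi(\mmu|\gamma)<+\infty$, I would invoke the relaxation statement of Theorem~\ref{thm:lsc_functional_measures} to produce $\mmu_n=\ttheta_n\gamma$ with $\mmu_n\weaksto\mmu$ and $\Psi(\mmu_n|\gamma)\to\Psi(\mmu|\gamma)$. Since $\kernel\in C^\infty_{\rm c}$, convolution with $\kernel$ is weakly$^*$ continuous on $\Mloc(\Rd;\Rm)$ (test against $\zzeta\in C^0_{\rm c}$ and note that $\zzeta\ast\check{\kernel}$ is still in $C^0_{\rm c}$, where $\check{\kernel}(x):=\kernel(-x)$), so $\mmu_n\ast\kernel\weaksto\mmu\ast\kernel$ while $\gamma\ast\kernel$ is held fixed. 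Combining the AC-case estimate with \eqref{eq:cap2:8} yields
\begin{equation*}
\Psi(\mmu\ast\kernel|\gamma\ast\kernel)\le\liminf_{n\to\infty}\Psi(\mmu_n\ast\kernel|\gamma\ast\kernel)\le\liminf_{n\to\infty}\Psi(\mmu_n|\gamma)=\Psi(\mmu|\gamma).
\end{equation*}

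The most delicate point is the verification that $\mmu\ast\kernel$ has no singular part relative to $\gamma\ast\kernel$ in the AC case, so that the full $\Psi$ reduces to its absolutely continuous component on which Jensen's inequality applies directly; everything else is then a matter of assembling the lower semicontinuity estimates. A more symmetric alternative, which I would fall back on if the relaxation route ran into trouble, is to exploit the dual representation $\Psi(\mmu|\gamma)=\sup_{\zzeta}\{\la\mmu,\zzeta\ra-\int\psi^\ast(\zzeta)\,\d\gamma\}$ and apply Jensen's inequality to the convex conjugate $\psi^\ast$ acting on $\check{\kernel}\ast\zzeta$; this handles singular and absolutely continuous parts simultaneously and sidesteps relaxation altogether.
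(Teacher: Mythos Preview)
Your proof is correct. The paper's own argument (a reference to \cite[Lemma 8.1.10]{Ambrosio-Gigli-Savare05}) takes a slightly more direct route: it passes through the perspective function $(s,y)\mapsto s\,\psi(y/s)$, which is jointly convex and positively $1$-homogeneous on $(0,+\infty)\times\Rm$ and extends to $s=0$ by $\psi^\infty$. Writing $\gamma=g\sigma$, $\mmu=m\sigma$ against any common dominating $\sigma$, one has $\Psi(\mmu|\gamma)=\int \hat\psi(g,m)\,\d\sigma$ with $\hat\psi(s,y):=s\psi(y/s)$, and the absolutely continuous and singular parts are captured in a single formula. Sublinearity of $\hat\psi$ then gives the convolution inequality in one shot, with no relaxation step.

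Your organisation---direct Jensen for $\psi$ against the probability kernels $\pi_x$ in the case $\mmu\ll\gamma$, followed by the relaxation/lower-semicontinuity sandwich from Theorem~\ref{thm:lsc_functional_measures}---reaches the same conclusion by trading one auxiliary object (the perspective function) for another (the approximating sequence $\mmu_n$). The dual-representation fallback you mention at the end is also valid and is in fact closer in spirit to the paper's one-step approach, since testing $\la\mmu,\zzeta\ra-\int\psi^*(\zzeta)\,\d\gamma$ against $\zzeta=\check\kernel\ast\eta$ and applying Jensen to the convex $\psi^*$ handles singular and absolutely continuous parts simultaneously.
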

The \emph{proof} follows the same argument
of \cite[Lemma 8.1.10]{Ambrosio-Gigli-Savare05}, by observing that
the map $(x,y)\mapsto x\psi(y/x)$ is convex and
positively $1$-homogeneous in $(0,+\infty)\times\Rd$.

\subsection{\bfseries A duality result in convex analysis}
\label{subsec:duality}
Let $X,Y$ be Banach spaces and let $A$ be an open
convex subset of $X$.
We consider a convex (and a fortiori continuous)
function $\phi:A\times Y\to \R$ and its partial Legendre transform
\begin{equation}
  \label{eq:cap2:41}
  \tilde\phi(x,y^*):=\sup_{y\in Y} \la y^*,y\ra-\phi(x,y)\in
  (-\infty,+\infty],
  \quad
  \forall\, x\in A,\ y^*\in Y^*.
\end{equation}
The following duality result is well known in the framework of minimax problems
\cite{Rockafellar68}.
\begin{theorem}
  \label{thm:convex_analysis}
  $\tilde\phi$ is a l.s.c.\ function
  and there exists a convex set $Y^*_o\subset Y^*$ such that
  \begin{equation}
    \label{eq:cap2:42}
    \tilde\phi(x,y^*)<+\infty\quad
    \Leftrightarrow\quad
    y^*\in Y^*_o,
  \end{equation}
  so that $\tilde\phi(\cdot,y^*)\equiv +\infty$ for every $y^*\in
  Y^*\setminus Y^*_o$ and $\phi$ admits the dual representation
  formula
  \begin{equation}
    \label{eq:cap2:44}
    \phi(x,y)=\sup_{y^*\in Y^*_o}\la y,y^*\ra-\tilde\phi(x,y^*)\quad
    \forall\, x\in A,\ y\in Y.
  \end{equation}
  For every $y^*\in Y^*_o$ we have
  \begin{equation}
    \label{eq:cap2:43}
    \text{the map}\quad
    x\mapsto \tilde\phi(x,y^*)\quad\text{is concave (and continuous)
      in $A$}.
  \end{equation}
  Conversely, a function $\phi:A\times Y\to \R$ is convex if
  it admits the dual representation \eqref{eq:cap2:44}
  for a function $\tilde\phi$ satisfying \eqref{eq:cap2:43}.
\end{theorem}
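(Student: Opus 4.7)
The plan is to combine the classical Fenchel-Moreau biconjugation, applied slice-by-slice in the variable $y$, with a concavity argument in $x$ that exploits the \emph{joint} convexity of $\phi$.

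First I would verify lower semicontinuity. Since each map $(x,y^*)\mapsto \la y^*,y\ra-\phi(x,y)$ is jointly continuous for fixed $y$, the function $\tilde\phi$ is a supremum of continuous functions on $A\times Y^*$ and hence l.s.c. For every fixed $x\in A$, $y^*\mapsto \tilde\phi(x,y^*)$ is the Legendre transform of the continuous convex function $y\mapsto \phi(x,y)$, so it is convex and proper on $Y^*$; in particular the finiteness set $\{y^*:\tilde\phi(x,y^*)<+\infty\}$ is convex. Moreover the trivial lower bound $\tilde\phi(x,y^*)\ge -\phi(x,0)>-\infty$ will be useful below.

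Next I would establish the concavity of $x\mapsto \tilde\phi(x,y^*)$ for each $y^*\in Y^*$. Given $x_0,x_1\in A$ and $\lambda\in[0,1]$, write $x_\lambda=(1-\lambda)x_0+\lambda x_1$. For arbitrary $y_0,y_1\in Y$ and $y_\lambda=(1-\lambda)y_0+\lambda y_1$, joint convexity of $\phi$ yields
\begin{equation*}
  \tilde\phi(x_\lambda,y^*)\ge \la y^*,y_\lambda\ra-\phi(x_\lambda,y_\lambda)\ge (1-\lambda)\bigl[\la y^*,y_0\ra-\phi(x_0,y_0)\bigr]+\lambda\bigl[\la y^*,y_1\ra-\phi(x_1,y_1)\bigr],
\end{equation*}
and the desired concavity follows after taking suprema in $y_0,y_1$. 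The main obstacle is to then deduce that the finiteness domain of $\tilde\phi(\cdot,y^*)$ is independent of $x$, so that the set $Y^*_o:=\{y^*\in Y^*:\tilde\phi(\cdot,y^*)\not\equiv+\infty\}$ characterizes \eqref{eq:cap2:42}. Suppose $\tilde\phi(x_0,y^*)<+\infty$ and, by contradiction, $\tilde\phi(x_1,y^*)=+\infty$ for some $x_0,x_1\in A$. Since $A$ is open, I can choose $x_2=x_0+t(x_0-x_1)\in A$ with $t>0$ small, so that $x_0$ lies strictly between $x_1$ and $x_2$. Concavity combined with the finite lower bound $\tilde\phi(x_2,y^*)\ge -\phi(x_2,0)>-\infty$ forces $\tilde\phi(x_0,y^*)=+\infty$, a contradiction. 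Convexity of $Y^*_o$ then comes from that of $\tilde\phi(x,\cdot)$; continuity of the finite concave function $\tilde\phi(\cdot,y^*)$ on the open convex set $A$ is a standard consequence of local boundedness (automatic in finite dimensions, which is the setting ultimately relevant to the paper).

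To close the proof, I would invoke Fenchel-Moreau: for fixed $x$, the function $\phi(x,\cdot)$ is finite, convex and continuous, hence its biconjugate coincides with itself, giving
\begin{equation*}
  \phi(x,y)=\sup_{y^*\in Y^*}\bigl(\la y,y^*\ra-\tilde\phi(x,y^*)\bigr)=\sup_{y^*\in Y^*_o}\bigl(\la y,y^*\ra-\tilde\phi(x,y^*)\bigr),
\end{equation*}
the restriction to $Y^*_o$ being harmless since the term equals $-\infty$ elsewhere. The converse is immediate: if $\phi$ admits such a representation with $\tilde\phi(\cdot,y^*)$ concave, each integrand $\la y,y^*\ra-\tilde\phi(x,y^*)$ is jointly convex in $(x,y)$ as the sum of a linear function of $y$ and a convex function of $x$, and the supremum of jointly convex functions is jointly convex.
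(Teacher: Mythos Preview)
Your proof is correct and follows essentially the same route as the paper's: the core step---deriving concavity of $x\mapsto\tilde\phi(x,y^*)$ from the joint convexity of $\phi$ by plugging in $(1-\lambda)y_0+\lambda y_1$ and taking suprema---is identical, and the converse is handled the same way. You supply more detail than the paper (the l.s.c.\ argument, the explicit use of openness of $A$ together with the lower bound $\tilde\phi(x,y^*)\ge -\phi(x,0)$ for the all-or-nothing dichotomy, and the invocation of Fenchel--Moreau for \eqref{eq:cap2:44}), all of which are implicit in the paper's terse treatment.
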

\begin{proof}
  Let us first show that
  \eqref{eq:cap2:43} holds. For a fixed $y^*\in Y^*$,
  $x_0,x_1>0$, $\theta\in [0,1]$, and arbitrary
  $y_{i}\in Y$, 
  we get
  \begin{align*}
    &\tilde\phi((1-\vartheta)x_0+\vartheta x_1,y^*)
    \ge
    \la y^*,(1-\vartheta)y_{0}+\vartheta y_{1}\ra-
    \phi((1-\vartheta)x_{0}+\vartheta x_{1},
    (1-\vartheta)y_{0}+\vartheta y_{1})\\
    &\quad
    \ge
    (1-\vartheta)\Big(\la y^*,y_{0}\ra-
    \phi(x_{0},
    y_{0})\Big)+
    \vartheta\Big(\la y^*,y_{1}\ra-
    \phi(x_{1},
    y_{1})\Big).
  \end{align*}
  Taking the supremum with respect to $y_0,y_1$ we eventually get
  \begin{equation}
    \label{eq:cap2:45}
    \tilde\phi((1-\vartheta)x_0+\vartheta x_1,y^*)\ge
    (1-\vartheta)\tilde\phi(x_0,y^*)+
    \vartheta\tilde\phi(x_1,y^*)
  \end{equation}
  and we conclude that $\tilde\phi(\cdot,y^*)$ is
  concave. In particular, if it takes the value $+\infty$ at some
  point it should be identically $+\infty$ so that \eqref{eq:cap2:42}
  holds.

  The converse implication is even easier, since
  \eqref{eq:cap2:44} exhibits $\phi$ as
  a supremum of continuous and convex functions (jointly in $x\in A, y\in Y$).
\qed\end{proof}


\section{Action functionals}
\label{sec:action}
The aim of this section is to study some property of
integral functionals of the type
\begin{equation}
  \label{eq:cap2:32}
  \Phi^a(\mu,\nnu|\gamma):=\int_\Rd \phi(\rho,\ww)\,\d\gamma,\quad
  \mu=\rho\gamma\in \RPM(\Rd),\ \nnu=\ww\gamma\in \Mloc(\Rd;\Rd)
\end{equation}
and their relaxation,
when $\phi$ satisfies suitable convexity and
homogeneity
properties.

\subsection{\bfseries Action density functions}
\label{subsec:action_density}
Let us therefore consider a nonnegative
density function $\phi:(0,+\infty)\times \Rd\to [0,+\infty)$
and an exponent $p\in (1,+\infty)$
satisfying the following assumptions 
\begin{subequations}
  \label{subeq:phi_prop}
  \begin{equation}
    \label{eq:cap2:10}
    \phi\quad\text{is convex and (a fortiori) continuous,}
  \end{equation}
  \begin{equation}
    \label{eq:cap2:11}
    \begin{gathered}
      \text{$\ww\mapsto \phi(\cdot,\ww)$ is homogeneous of
        degree $p$, i.e.}\\
      \phi(\rho,\lambda \ww)=|\lambda|^p
      \phi(\rho,\ww)\quad \forall\,\rho>0,\, \lambda\in \R,\,\ww\in \Rd,
    \end{gathered}
  \end{equation}
  \begin{equation}
    \label{eq:cap2:12}
    \exists\, \rho_0>0:\quad
    \phi(\rho_0,\cdot)\quad\text{is non degenerate, i.e.}\qquad
    \phi(\rho_0,\ww)>0\quad
    \forall\, \ww\in \Rd\setminus\{0\}.
  \end{equation}
Let $q=p/(p-1)\in (1,+\infty)$ be
the usual conjugate exponent of $p$.
We denote by $\tilde\phi:(0,+\infty)\times \Rd\to (-\infty,+\infty]$
the partial Legendre transform
\begin{equation}
  \label{eq:cap2:13}
  \frac 1q\tilde\phi(\rho,\zz):=\sup_{\ww\in
    \Rd}\zz\cdot\ww-\frac 1p\phi(\rho,\ww)\quad
  \forall\, \rho>0,\zz\in \Rd.
\end{equation}
\end{subequations}
We collect 
some useful properties of such functions in the following result.
\begin{theorem}
  \label{le:phi_list}
  Let $\phi:(0,+\infty)\times \Rd\to\Rd$ satisfy
  \emph{(\ref{subeq:phi_prop}a,b,c)}. Then
  \begin{enumerate}
  \item For every $\rho>0$ the function $\ww\mapsto
    \phi(\rho,\ww)^{1/p}$ is a norm of $\Rd$ whose dual norm
    is given by $\zz\mapsto \tilde\phi(\rho,\zz)^{1/q}$, i.e.\
    \begin{equation}
      \label{eq:cap3:47}
      \tilde\phi(\rho,\zz)^{1/q}=
      \sup_{\ww\neq 0}\,\frac{\ww\cdot\zz}{\phi(\rho,\ww)^{1/p}},\qquad
      \phi(\rho,\ww)^{1/p}=
      \sup_{\zz\neq
        0}\,\frac{\ww\cdot\zz}{\tilde\phi(\rho,\zz)^{1/q}}.
    \end{equation}
     In particular $\tilde\phi(\cdot,\zz)$ is $q$-homogeneous with
     respect to
     $\zz$.
  \item The marginal conjugate function $\tilde\phi$
    takes its values in $[0,+\infty)$ and for every $\zz\in \Rd$
    \begin{equation}
      \label{eq:cap2:14}
      \text{the map }
      \rho\mapsto \tilde\phi(\rho,\zz)
      \quad\text{is \emph{concave and non decreasing} in }(0,+\infty).
    \end{equation}
    In particular, for every $\ww\in \Rd$
    \begin{equation}
      \label{eq:cap2:49}
      \text{the map }
      \rho\mapsto \phi(\rho,\ww)
      \quad\text{is \emph{convex and non increasing} in }(0,+\infty).
    \end{equation}
  \item There exist constants $a,b\ge 0$ such that
  \begin{equation}
    \label{eq:cap2:15}
    \tilde\phi(\rho,\zz)\le \big(a+b\,\rho\big) |\zz|^q,\quad
    \phi(\rho,\zz)\ge  \big(a+b\,\rho\big)^{1-p}|\ww|^p\qquad
    \forall\, \rho>0,\ \zz,\ww\in \Rd.
  \end{equation}
\item
  For every closed interval $[\rho_0,\rho_1]\subset (0,+\infty)$
  there exists a constant $C=C_{\rho_0,\rho_1}>0$ such that for every
  $\rho\in [\rho_0,\rho_1]$
  \begin{equation}
    \label{eq:cap3:45}
    C^{-1}|\ww|^p\le \phi(\rho,\ww)\le C|\ww|^p,\quad
    C^{-1}|\zz|^q\le \tilde\phi(\rho,\zz)\le C|\zz|^q
    \quad
    \forall\,\ww,\zz\in \Rd.
  \end{equation}
  \end{enumerate}
  Equivalently, a function $\phi$ satisfies
  \emph{(\ref{subeq:phi_prop}a,b,c)} if and only if it admits
  the dual representation formula
  \begin{equation}
    \label{eq:cap2:20}
    \frac 1p\phi(\rho,\ww)=\sup_{\zz\in \Rd}\ww\cdot\zz-
    \frac 1q\tilde\phi(\rho,\zz)
    \quad
    \forall\, \rho>0,\ww\in \Rd,
  \end{equation}
  where
  $\tilde\phi:(0,+\infty)\times\Rd\to (0,+\infty)$ is a nonnegative
  function which is 
  convex and $q$-homogeneous
  w.r.t.\ $\zz$ and \emph{concave} with respect to $\rho$.
\end{theorem}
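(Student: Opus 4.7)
The starting point is Theorem \ref{thm:convex_analysis} applied to the convex function $\phi$ on the open convex set $(0,+\infty)\times\Rd$: it immediately yields that $\tilde\phi$ is lower semicontinuous, that $\rho\mapsto\tilde\phi(\rho,\zz)$ is concave whenever finite, and that the proper domain $Y^*_o$ in \eqref{eq:cap2:42} is independent of $\rho$. The remaining task is to exploit the $p$-homogeneity \eqref{eq:cap2:11} and the single-point non-degeneracy \eqref{eq:cap2:12} to propagate these structural properties to every $\rho>0$. For part~1, the norm axioms of $\phi(\rho,\cdot)^{1/p}$ reduce to standard arguments: symmetry comes from \eqref{eq:cap2:11} with $\lambda=-1$; the triangle inequality follows from convexity in $\ww$ combined with $p$-homogeneity via the rescaling $\vv_i:=\ww_i/\phi(\rho,\ww_i)^{1/p}$; the duality identity \eqref{eq:cap3:47} and the $q$-homogeneity of $\tilde\phi(\rho,\cdot)$ in $\zz$ are direct scaling computations in \eqref{eq:cap2:13}. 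The only missing ingredient is strict positivity of $\phi(\rho,\cdot)$ at \emph{every} $\rho>0$, which I defer to after establishing monotonicity.

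For part~2, concavity of $\rho\mapsto\tilde\phi(\rho,\zz)$ is given by Theorem \ref{thm:convex_analysis}, and non-negativity follows by plugging $\ww=0$ into \eqref{eq:cap2:13} and using $\phi(\rho,0)=0$. The monotonicity then rests on an elementary lemma: every non-negative concave $f$ on $(0,+\infty)$ is non-decreasing, since otherwise the chord inequality extrapolated to $\rho\to+\infty$ would force $f<0$. Dually, $\phi(\cdot,\ww)$ is non-increasing, and its convexity in $\rho$ is a restriction of the joint convexity. With monotonicity in hand I return to positivity: since $\phi(\rho_0,\cdot)^{1/p}$ is a finite-dimensional norm, its dual $\tilde\phi(\rho_0,\cdot)^{1/q}$ is finite and positive on $\Rd\setminus\{0\}$; Theorem \ref{thm:convex_analysis} then propagates finiteness to $\tilde\phi(\rho,\cdot)$ for all $\rho$, and a second elementary fact (a non-negative concave function with $f(\rho^*)>0$ is positive on all of $(0,+\infty)$, for otherwise the concavity chord joining $(\rho_1,0)$ and $(\rho^*,f(\rho^*))$ would force $f<0$ on $(0,\rho_1)$) propagates positivity of $\tilde\phi(\cdot,\zz)$ and hence, via \eqref{eq:cap3:47}, of $\phi(\cdot,\ww)$ to every $\rho>0$.

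For part~3, concavity and non-negativity of $\tilde\phi(\cdot,\zz)$ give the affine upper bound $\tilde\phi(\rho,\zz)\le\tilde\phi(1,\zz)+s_\zz(\rho-1)$ with a supergradient $s_\zz\in[0,\tilde\phi(1,\zz)]$ (the upper bound being forced by $\tilde\phi\ge0$ as $\rho\to0^+$); combined with $q$-homogeneity and continuity of the norm $\tilde\phi(1,\cdot)^{1/q}$ on the unit sphere, which gives $\tilde\phi(1,\zz)\le M^q|\zz|^q$, this yields the desired bound $\tilde\phi(\rho,\zz)\le(a+b\rho)|\zz|^q$. The dual inequality for $\phi$ is then a one-line consequence of \eqref{eq:cap3:47} and the identity $p/q=p-1$. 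Part~4 follows at once from the continuity and strict positivity of $\phi(\rho,\cdot)^{1/p}$ and $\tilde\phi(\rho,\cdot)^{1/q}$ as norms on the compact set $[\rho_0,\rho_1]\times S^{d-1}$. Finally, the converse direction is the easy implication of Theorem \ref{thm:convex_analysis}: $\phi$ defined by \eqref{eq:cap2:20} is automatically convex, its $p$-homogeneity in $\ww$ follows by the substitution $\zz\leftarrow\lambda^{p-1}\zz$ (using $(p-1)q=p$ together with the $q$-homogeneity of $\tilde\phi$ in $\zz$), and non-degeneracy at an arbitrary $\rho_0$ is obtained by optimizing \eqref{eq:cap2:20} over $\zz=t\,\ww/|\ww|$. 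I expect the main obstacle throughout to be the propagation of non-degeneracy from the single point $\rho_0$ to all $\rho>0$, which is the only step where the interplay between \eqref{eq:cap2:12} and the concavity structure of $\tilde\phi$ is genuinely used.
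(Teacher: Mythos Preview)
Your proposal is correct and follows essentially the same route as the paper: both invoke Theorem~\ref{thm:convex_analysis} for the concavity of $\rho\mapsto\tilde\phi(\rho,\zz)$ and the $\rho$-independence of its proper domain, then use $p$-homogeneity and the single-point non-degeneracy \eqref{eq:cap2:12} to obtain the affine upper bound on $\tilde\phi$ and the norm equivalence on compact $\rho$-intervals. Your treatment is in fact more explicit than the paper's in two places: you spell out the elementary lemma ``nonnegative concave on $(0,+\infty)$ $\Rightarrow$ nondecreasing'' and the propagation of strict positivity of $\tilde\phi(\cdot,\zz)$ from $\rho_0$ to all $\rho>0$, whereas the paper simply asserts these as consequences of Theorem~\ref{thm:convex_analysis}; your supergradient bound at $\rho=1$ is a cosmetic variant of the paper's split at $\rho_0$ via \eqref{eq:cap2:17}--\eqref{eq:cap2:18}.
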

\begin{proof}
  Let us first assume that $\phi$ satisfies
  (\ref{subeq:phi_prop}a,b,c).
  The function $\ww\mapsto \phi(\rho,\ww)^{1/p}$ is
  $1$-homogeneous and its sublevels are convex, i.e.\ it is 
  the gauge function of a (symmetric) convex set
  and therefore it is a (semi)-norm.
  The concavity of $\tilde\phi$ follows from
  Theorem \ref{thm:convex_analysis};
  taking $\ww=0$ in \eqref{eq:cap2:13}, we easily get that
  $\tilde\phi$ is nonnegative; \eqref{eq:cap2:12} yields,
  for a suitable constant $c_0>0$,
  \begin{equation}
    \label{eq:cap2:48}
    \phi(\rho_0,\ww)\ge c_0|\ww|^p\quad\forall\, \ww\in \Rd,\quad
    \text{so that}\quad
    \tilde\phi(\rho_0,\zz)\le  c_0|\zz|^q<+\infty\quad \forall\,
    \zz\in \Rd.
  \end{equation}
  Still applying Theorem \ref{thm:convex_analysis},
  we obtain that $\rho\mapsto \tilde\phi(\rho,\zz)$
  is finite, strictly
  positive and nondecreasing 
  in the interval $(0,+\infty)$. Since
  $\tilde\phi(\rho,0)=0$
  we easily get
  \begin{equation}
    \label{eq:cap2:17}
    \tilde\phi(\rho,\zz)\le \tilde\phi(\rho_0,\zz)\le 
    c_0|\zz|^q
    \quad\forall\, \zz\in \Rd,\ \rho\in (0,\rho_0);
  \end{equation}
  \begin{equation}
    \label{eq:cap2:18}
    \tilde\phi(\rho,\zz)\le \frac{\rho}\rho_0
    \tilde\phi(\rho_0,\zz)\le \frac{c_0}{\rho_0}\rho|\zz|^q
    \quad\forall\, \zz\in \Rd,\ \rho\in (\rho_0,+\infty).
  \end{equation}
  Combining the last two bounds we get \eqref{eq:cap2:15}.
  \eqref{eq:cap3:45} follows by homogeneity and
  by the fact that 
  the continuous map $\phi$ has a maximum and a strictly positive minimum
  on the compact set $[\rho_0,\rho_1]\times\{\ww\in \Rd:|\ww|=1\}$.
  
  The final assertion concerning 
  \eqref{eq:cap2:20} still follows
  by Theorem \ref{thm:convex_analysis}.
\qed\end{proof}
\subsection{\bfseries Examples}
\label{subsec:phi_examples}
\begin{example}
  \upshape
  \label{ex:main_alpha}
  Our main example is provided by the function
  \begin{equation}
    \label{eq:scheme3:1}
    \phi_{2,\alpha}(\rho,\ww)=\frac{|\ww|^2}{\rho^\alpha},
    \quad
    \tilde\phi_{2,\alpha}(\rho,\zz):=\rho^\alpha|\zz|^2,
    \qquad
    0\le \alpha\le 1.
  \end{equation}
  Observe that $\phi_{2,\alpha}$
  is positively $\theta$-homogeneous,
      $\theta:=2-\alpha$, i.e.
  \begin{equation}
    \label{eq:cap2:36}
    \phi_{2,\alpha}(\lambda\rho,\lambda \ww)=\lambda^\theta \phi(\rho,\ww)\quad
    \forall\, \lambda,\rho>0,\ \ww\in \Rd.
  \end{equation}
  It can be considered as
  a family of
  interpolating densities
  between the case 
  $\alpha=0$, when
  \begin{equation}
    \label{eq:Cap2:24}
    \phi_{2,0}(\rho,\ww):=|\ww|^2,
  \end{equation}
  and $\alpha=1$, corresponding to the $1$-homogeneous functional
  \begin{equation}
    \label{eq:Cap2:25}
    \phi_{2,1}(\rho,\ww):=\frac{|\ww|^2}\rho.
  \end{equation}
\end{example}
\begin{example}
  \upshape
  \label{ex:main_h}
  More generally,
  we introduce a concave function $h:(0,+\infty)\to (0,+\infty)$,
  which is a fortiori continuous and nondecreasing, and
  we consider the density function
  \begin{equation}
    \label{eq:cap2:34}
    \phi(\rho,\ww):=\frac{|\ww|^2}{h(\rho)},\quad
    \tilde\phi(\rho,\zz):=h(\rho)|\ww|^2.
  \end{equation}
  If $h$ is of class $C^2$, we can express
  the concavity condition in terms of the function $g(\rho):=1/h(\rho)$
  as
  \begin{equation}
    \label{eq:cap2:35}
    \text{$h$ is concave}\quad
    \Leftrightarrow\quad
    g''(\rho)g(\rho)\ge 2\big(g'(\rho)\big)^2\quad
    \forall\, \rho>0,
  \end{equation}
  which is related to a condition introduced in
  \cite[Section 2.2, (2.12c)]{Arnold-Markowich-Toscani-Unterreiter00}
  to study entropy functionals.
\end{example}
\begin{example}
  \upshape
    We consider matrix-valued
    functions $\sfH,\sfG :(0,+\infty)\to \M^{d\times d}$ such that
\begin{equation}
  \label{eq:Cap2:26}
  \sfH(\rho),\sfG(\rho)
  \quad\text{are symmetric and positive definite,
    $\sfH(\rho)=\sfG^{-1}(\rho)$}
  \quad
  \forall\, \rho>0.
\end{equation}
They induce the action density
$\phi:(0,+\infty)\times \R^d\to [0,+\infty)$ defined as
\begin{equation}
  \label{eq:scheme:9}
  \phi(\rho,\ww):=\Scalar{\sfG(\rho)\ww}\ww=
  \Scalar{\sfH^{-1}(\rho)\ww}\ww.
\end{equation}
Taking into account Theorem \ref{le:phi_list},
$\phi$ satisfies conditions (\ref{subeq:phi_prop}) if
and only if the maps
\begin{equation}
  \label{eq:Cap2:27}
  \rho\mapsto \Scalar{\sfH(\rho)\ww}\ww\quad\text{are concave in
    $(0,+\infty)$}
  \quad
  \forall\,\ww\in \R^d.
\end{equation}
Equivalently,
\begin{equation}
  \label{eq:cap2:38}
  \sfH((1-\vartheta)\rho_0+\vartheta\rho_1)\ge
  (1-\vartheta)\sfH(\rho_0)+
  \vartheta\sfH(\rho_1)\quad
  \text{as quadratic forms.}
\end{equation}
When $\sfG$ is of class $C^2$ 
this is also equivalent to ask that
\begin{equation}
  \label{eq:scheme:3}
  \sfG''(\rho)\ge 2\sfG'(\rho)\sfH(\rho)\sfG'(\rho)
    \qquad
    \forall\, \rho>0,
  \end{equation}
  in the sense of the associated quadratic forms.
  In fact, differentiating $\sfH=\sfG^{-1}$ 
  with respect to $\rho$ we get
  \begin{displaymath}
    \sfH'=-\sfH\,\sfG'\,\sfH,\qquad
    \sfH''=-\sfH\,\sfG''\,\sfH +2\sfH\,\sfG'\,\sfH\,\sfG'\,\sfH,
  \end{displaymath}
  so that
  \begin{displaymath}
    \frac{\d^2}{\d^2 \rho}\Scalar{\sfH(\rho)\ww}\ww=
    -\Scalar{\sfG''\tilde\ww}{\tilde\ww}+2
    \Scalar{\sfG'\sfH\sfG'\tilde\ww}{\tilde\ww}\quad\text{where}\quad
    \tilde\ww:=\sfH\ww;
  \end{displaymath}
  we eventually recall that $\sfH(\rho)$ is invertible
  for every $\rho>0$.
\end{example}
\begin{example}
  \label{ex:hp}
  \upshape
  Let $\|\cdot\|$ be any norm in $\Rd$ with
  dual norm $\|\cdot\|_*$, and let $h:(0,+\infty)\to
  (0,+\infty)$ be a concave (continuous, nondecreasing) function
  as in Example \ref{ex:main_h}.
  We can thus consider
  \begin{equation}
    \label{eq:cap2:39}
    \phi(\rho,\ww):=
    h(\rho)\left\|\frac{\ww}{h(\rho)}\right\|^p,\qquad
    \tilde\phi(\rho,\zz):=h(\rho)\|\zz\|_*^q.
  \end{equation}
  See \cite{MarechalI05,MarechalII05} for a in-depth study of this class 
  of functions.
\end{example}
\begin{example}[$(\alpha\text{-}\theta)$-homogeneous functionals]
  \label{ex:hom}
  \upshape
  In the particular case $h(\rho):=\rho^\alpha$
  the functional $\phi$ of the previous example is jointly positively
  $\theta$-homogeneous, with $\theta:=\alpha+(1-\alpha)p$.
  This is in fact the most general example of $\theta$-homogeneous
  functional,
  since
  if $\phi$ is $\theta$-positively homogeneous, $1\le\theta\le p$, then
  \begin{equation}
    \label{eq:cap2:40}
    \phi(\rho,\ww)=\rho^\theta\phi(1,\ww/\rho)=
    \rho^{\theta-p}\phi(1,\ww)=
    \rho^\alpha\|\ww/\rho^\alpha\|^p,\quad
    \alpha=\frac{p-\theta}{p-1},
  \end{equation}
  where $\|\ww\|:=\phi(1,\ww)^{1/p}$ is a norm in $\Rd$
  by Theorem \ref{le:phi_list}.
  The dual marginal density $\tilde\phi$ in this case takes the form
  \begin{equation}
    \label{eq:cap2:53}
    \tilde\phi(\rho,\zz)=\rho^\alpha \|\zz\|^q_*\quad
    \forall\, \rho>0,\ \zz\in \Rd,
  \end{equation}
  and it is $q+\alpha$-homogeneous.
  Notice that $\alpha$ and $\theta$ are related by
  \begin{equation}
    \label{eq:cap2:55}
    \frac\theta p+\frac\alpha q=1.
  \end{equation}
  In the particular case when $\|\cdot\|=\|\cdot\|_*=|\cdot|$
  is the Euclidean norm, we set
  as in \eqref{eq:cap2:34}
  \begin{equation}
    \label{eq:cap3:42}
    \phi_{p,\alpha}(\rho,\ww):=\rho^\alpha\left|\frac{\ww}{\rho^\alpha}
    \right|^p,\quad
    \tilde\phi_{q,\alpha}(\rho,\zz):=\rho^\alpha |\zz|^q,\qquad
    0\le \alpha\le 1.
  \end{equation}
\end{example}

\subsection{\bfseries The action functional on measures}
\label{subsec:action_functional}
\paragraph{Lower semicontinuity envelope and recession function.}

Thanks to the monotonicity property \eqref{eq:cap2:49}, we can extend
$\phi$ also for $\rho=0$ by setting
for every $\ww\in \Rd$
\begin{equation}
  \label{eq:cap2:50}
  \phi(0,\ww)=\sup_{\rho>0}\phi(\rho,\ww)=
  \lim_{\rho\down0}\phi(\rho,\ww);\quad
  \text{in particular }
  \begin{cases}
    \phi(0,{\bf 0})=0,&\\
    \phi(0,\ww)>0&\text{if $\ww\neq{\bf 0}$}.
  \end{cases}
\end{equation}
When $\rho<0$ we simply set $\phi(\rho,\ww)=+\infty$,
observing that this extension is lower semicontinuous in
$\R\times\Rd$.
It is not difficult to check that
$\tilde\phi(0,\cdot)$ satisfies an analogous
formula
\begin{equation}
  \label{eq:cap2:51}
  \tilde\phi(0,\zz)=\sup_{\ww\in\Rd}\zz\cdot\ww-\phi(0,\ww)=
  \inf_{\rho>0}\tilde\phi(\rho,\zz)=
  \lim_{\rho\down0}\tilde\phi(\rho,\zz)\qquad
  \forall\, \zz\in \Rd.
\end{equation}
Observe that, as in the $(\alpha\text{-}\theta)$-homogeneous case
of Example \ref{ex:hom} with
$\alpha>0$,
\begin{equation}
  \label{eq:cap3:43}
  \tilde\phi(0,\zz)\equiv 0\quad
  \Rightarrow\quad
  \phi(0,\ww)=
  \begin{cases}
    +\infty&\text{if }\ww\neq 0\\
    0&\text{if }\ww=0.
  \end{cases}
\end{equation}
As in \eqref{eq:cap2:1},
we also introduce the \emph{recession functional}
\begin{equation}
  \label{eq:scheme:4}
  \phi^\infty(\rho,\ww)=\sup_{\lambda >0}
  \frac 1\lambda\phi(\lambda\rho,\lambda \ww)
  =\lim_{\lambda \uparrow+\infty}
  \frac 1\lambda\phi(\lambda\rho,\lambda \ww)=
  \lim_{\lambda \uparrow+\infty}
  \lambda^{p-1}\phi(\lambda\rho,\ww).
\end{equation}
$\phi^\infty$ is still convex, $p$-homogeneous w.r.t.\ $\ww$, and l.s.c.
with values in $[0,+\infty]$; moreover, it is $1$-homogeneous
so that it can be expressed as
\begin{equation}
  \label{eq:scheme:6bis}
  \phi^\infty(\rho,\ww)=
  \begin{cases}
    \frac{\varphi^\infty(\ww)}{\rho^{p-1}}=
    \rho\,\varphi^\infty(\ww/\rho)
    &\quad\text{if }\rho\neq 0,\\
    +\infty&\quad\text{if }\rho=0\ \text{and}\ \ww\neq 0,
  \end{cases}  
\end{equation}
where $\varphi^\infty:\Rd\to[0,+\infty]$ is a convex
and $p$-homogeneous function which is non degenerate,
i.e.\ $\varphi^\infty(\ww)>0$ if $\ww\neq0$.
$\varphi^\infty$ admits a dual representation,
based on
\begin{equation}
  \label{eq:cap2:52}
  \tilde\varphi^\infty(\zz):=
  \inf_{\lambda>0} \frac 1\lambda \tilde\phi(\lambda,\zz)
  =\lim_{\lambda\up+\infty} \frac 1\lambda \tilde\phi(\lambda\rho,\zz).
\end{equation}
$\tilde\varphi^\infty$ is finite, convex, nonnegative, and $q$-homogeneous,
so that $\tilde\varphi^\infty(\zz)^{1/q}$
is a seminorm, which does not vanish at $\zz\in \Rd$
if and only if $\rho\mapsto \tilde\phi(\rho,\zz)$ has a linear
growth when $\rho\up+\infty$.
It is easy to check that 
\begin{equation}
  \label{eq:cap3:48}
  \varphi^\infty(\ww)^{1/p}=\sup\Big\{\ww\cdot\zz:\tilde\varphi^\infty(\zz)\le 1\Big\}.
\end{equation}
In the case $\tilde\phi$ has a sublinear
growth w.r.t.\ $\rho$, as for $(\alpha\text{-}\theta)$-homogeneous
functionals with $\alpha<1$ (see Example \ref{ex:hom}), 
we have in particular
\begin{equation}
  \label{eq:scheme:6}
  \text{when $\tilde\varphi^\infty(\zz)\equiv0$},\qquad
  \varphi^\infty(\ww)=
  \begin{cases}
    +\infty&\quad\text{if }\ww\neq 0,\\
    0&\quad\text{if }\ww=0.
  \end{cases}
 \end{equation}

\paragraph{The action functional.}
Let $\gamma,\mu\in \RPM(\Rd)$ be
nonnegative Radon measures and let
$\nnu\in \Mloc(\Rd;
\R^d)$
be a vector Radon measure on $\Rd$.
We assume that $\supp(\mu),\supp(\nnu)\subset \supp(\gamma)$,
and we write their Lebesgue decomposition with respect
to the reference measure $\gamma$
\begin{equation}
  \label{eq:scheme:7}
  \mu:=\rho\gamma+ \mu^\perp,\quad
  \nnu:=\ww\gamma+ \nnu^\perp.
\end{equation}
We can always introduce a nonnegative Radon measure $\sigma\in
\FPM(\ClosedDomain)$ such that
$\mu^\perp =\rho^\perp\sigma\ll \sigma, \nnu^\perp=
\ww^\perp\sigma\ll \sigma$, e.g.
$\sigma:=\mu^\perp+|\nnu^\perp|$.
We can thus define the \emph{action functional}
\begin{equation}
  \label{eq:scheme:8}
  \Phi(\mu,\nnu|\gamma)=\Phi^a(\mu,\nnu|\gamma)+
  \Phi^\infty(\mu,\nnu|\gamma):=
  \int_{\Rd}\phi(\rho,\ww)\,\d\gamma+
  \int_\Rd \phi^\infty(\rho^\perp,\ww^\perp)\,\d\sigma.
\end{equation}
Observe that, being $\phi^\infty$ $1$-homogeneous,
this definition is independent of $\sigma$.
We will also use a localized version of $\Phi$: if $B\in \BorelSets{\Rd}$ we set
\begin{equation}\label{eq:localized_action}
 \Phi(\mu,\nnu|\gamma,B):=\int_{B}\phi(\rho,\ww)\,\d\gamma+
  \int_B \phi^\infty(\rho^\perp,\ww^\perp)\,\d\sigma.
\end{equation} 
\begin{lemma}
  \label{le:Phi_char}
  Let $\mu=\rho\gamma+\mu^\perp,
  \nnu=\ww\gamma+\nnu^\perp$ be such that $\Phi(\mu,\nnu|\gamma)$ is
  finite.
  Then $\nnu^\perp=\ww^\perp\mu^\perp\ll\mu^\perp$ and
  \begin{equation}
    \label{eq:scheme:17}
    \Phi^\infty(\mu,\nnu|\gamma)=
    \int_\Rd \varphi^\infty(\ww^\perp)\,\d\mu^\perp,\
    \Phi(\mu,\nnu|\gamma)=
    \int_\Rd \phi(\rho,\ww)\,\d\gamma+
    \int_\Rd \varphi^\infty(\ww^\perp)\,\d\mu^\perp.
  \end{equation}
  Moreover, if
  $\tilde\phi$ has a \emph{sublinear growth with respect to $\rho$}
  (e.g.\ in the $(\alpha\text{-}\theta)$-homogeneous case of
  Example \ref{ex:hom}, with $\alpha<1$)
  then $\tilde \varphi^\infty(\cdot)\equiv0$
  and
  \begin{equation}
    \label{eq:scheme:16}
    \Phi(\mu,\nnu)<+\infty\
    \Rightarrow\
    \nnu=\ww\cdot \gamma\ll\gamma,\quad
    \Phi(\mu,\nnu)=
    \Phi^a(\mu,\nnu)=\int_\Rd \phi(\rho,\ww)\,\d\gamma,
  \end{equation}
  independently on the singular part $\mu^\perp$. 
\end{lemma}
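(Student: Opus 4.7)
The plan is to exploit two structural features of $\phi^\infty$ encoded in \eqref{eq:scheme:6bis}: first, that $\phi^\infty(0,\ww)=+\infty$ for $\ww\neq 0$, and second, its joint positive $1$-homogeneity. I would fix the auxiliary measure $\sigma=\mu^\perp+|\nnu^\perp|$ used in the definition \eqref{eq:scheme:8} and work with the Radon-Nikodym densities $\rho^\perp:=\d\mu^\perp/\d\sigma$ and $\ww^\perp_\sigma:=\d\nnu^\perp/\d\sigma$, so that $\Phi^\infty(\mu,\nnu|\gamma)=\int_\Rd\phi^\infty(\rho^\perp,\ww^\perp_\sigma)\,\d\sigma$.

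The first step is to deduce $\nnu^\perp\ll\mu^\perp$ from finiteness of $\Phi^\infty$. Since the above integral is finite, its integrand is finite $\sigma$-a.e., and by \eqref{eq:scheme:6bis} this forces $\ww^\perp_\sigma=0$ $\sigma$-a.e.\ on the set $\{\rho^\perp=0\}$. Because $\sigma$ dominates both $\mu^\perp$ and $|\nnu^\perp|$, this immediately gives $\nnu^\perp\ll\mu^\perp$; invoking Radon-Nikodym I obtain a Borel vector field $\ww^\perp$ with $\nnu^\perp=\ww^\perp\mu^\perp$, and the relations between the densities yield $\ww^\perp_\sigma=\ww^\perp\rho^\perp$ $\sigma$-a.e.

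The second step is a direct computation using $1$-homogeneity of $\phi^\infty$: on $\{\rho^\perp>0\}$ one has
\begin{equation*}
\phi^\infty(\rho^\perp,\ww^\perp_\sigma)
=\rho^\perp\,\varphi^\infty(\ww^\perp_\sigma/\rho^\perp)
=\rho^\perp\,\varphi^\infty(\ww^\perp),
\end{equation*}
while on $\{\rho^\perp=0\}$ both sides vanish $\sigma$-a.e.\ by the first step. Integrating over $\Rd$ against $\sigma$ and using $\rho^\perp\sigma=\mu^\perp$ produces the first identity in \eqref{eq:scheme:17}, and adding the absolutely continuous contribution $\Phi^a(\mu,\nnu|\gamma)=\int\phi(\rho,\ww)\,\d\gamma$ gives the second.

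For the sublinear growth conclusion, I would feed the hypothesis into \eqref{eq:cap2:52} to get $\tilde\varphi^\infty\equiv 0$, and then read off from the dual representation \eqref{eq:cap3:48} that $\varphi^\infty(\ww)=+\infty$ for every $\ww\neq 0$. Finiteness of $\int\varphi^\infty(\ww^\perp)\,\d\mu^\perp$, already established in the first part, then forces $\ww^\perp=0$ $\mu^\perp$-a.e., so $\nnu^\perp=\ww^\perp\mu^\perp=0$ and $\nnu=\ww\gamma\ll\gamma$, yielding \eqref{eq:scheme:16}. The only genuinely delicate point is in the first step: one must establish absolute continuity of the \emph{vector} measure $\nnu^\perp$ and not merely of its total variation, which is precisely why the choice $\sigma=\mu^\perp+|\nnu^\perp|$ is essential, since it forces $\sigma$-null sets to coincide for $\nnu^\perp$ and $|\nnu^\perp|$.
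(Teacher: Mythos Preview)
Your proof is correct and follows essentially the same approach as the paper's: both use the fact that $\phi^\infty(0,\ww)=+\infty$ for $\ww\neq 0$ (from \eqref{eq:scheme:6bis}) to force $\nnu^\perp\ll\mu^\perp$, then exploit the $1$-homogeneity of $\phi^\infty$ to obtain the representation \eqref{eq:scheme:17}, with the sublinear case following from \eqref{eq:scheme:6}. The only cosmetic difference is that after establishing $\nnu^\perp\ll\mu^\perp$ the paper switches reference measure to $\tilde\sigma=\mu^\perp$ (so $\tilde\rho^\perp\equiv 1$ and $\phi^\infty(1,\ww^\perp)=\varphi^\infty(\ww^\perp)$ directly), whereas you keep the original $\sigma$ and invoke homogeneity explicitly; also, your closing remark overstates the role of the particular choice $\sigma=\mu^\perp+|\nnu^\perp|$, since any $\sigma$ dominating both singular parts would do equally well.
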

\begin{proof}
  Let $\tilde\sigma\in \RPM(\Rd)$ any
  measure such that $\mu^\perp\ll\tilde\sigma,|\nnu^\perp|\ll\tilde\sigma$ 
  so that
  $\Phi^\infty(\mu,\nnu|\gamma)$ can be represented as
  \begin{displaymath}
    \Phi^\infty(\mu,\nnu|\gamma)=
    \int_\Rd
    \phi^\infty(\tilde\rho^\perp,\tilde\ww^\perp)\,\d\tilde\sigma,\quad
    \tilde\rho^\perp=\frac{\d\mu^\perp}{\d\tilde\sigma},\
    \tilde\ww^\perp=\frac{\d\nnu^\perp}{\d\tilde\sigma}.
  \end{displaymath}
  When $\Phi^\infty(\mu,\nnu|\gamma)<+\infty$,
  \eqref{eq:scheme:6bis} yields
  $\tilde\ww^\perp(x)=0$ for 
  $\tilde\sigma$-a.e.\ $x$ such that $\tilde\rho^\perp(x)=0$.
  It follows that
  \begin{equation}
    \label{eq:cap3:49}
    \Phi(\mu,\nnu)<+\infty\quad\Rightarrow\quad
    \nnu^\perp\ll\mu^\perp,
  \end{equation}
  so that one can always choose $\tilde\sigma=\mu^\perp$, $\tilde\rho^\perp=1$,
  and decompose $\nnu^\perp$ as $\ww^\perp\mu^\perp$
  obtaining \eqref{eq:scheme:17}.
  \eqref{eq:scheme:16} is then an immediate consequence of
  \eqref{eq:scheme:6}.
\qed\end{proof}
\begin{remark}
  \label{rem:w_abs}
  \upshape
  When $\tilde\phi(0,\zz)\equiv 0$ (e.g. in the
  $(\alpha\text{-}\theta)$-homogeneous
  case of Example \ref{ex:hom}, with $\alpha>0$)
  the density $\ww$ of $\nnu$ w.r.t.\ $\gamma$ vanishes
  if $\rho$ vanishes, i.e.
  \begin{equation}
    \label{eq:cap3:44}
    \Phi(\mu,\nnu|\gamma)<+\infty\quad
    \Rightarrow\quad
    \ww(x)=0\text{ if $\rho(x)=0$, for $\gamma$-a.e.\ $x\in \Rd$.}
  \end{equation}
  In particular $\nnu^a$ is absolutely continuous also with respect to
  $\mu$.
\end{remark}

Applying Theorem \ref{thm:lsc_functional_measures} we immediately get
\begin{lemma}[Lower semicontinuity and
  approximation of the action functional]
  \label{le:action_lsc}
  The action functional is lower semicontinuous with respect to
  weak$^*$ convergence of measures, i.e.\ if
  \begin{displaymath}
    \mu_n\weaksto \mu,\ \gamma_n\weaksto \gamma\quad
    \text{weakly$^*$ in }\RPM(\Rd),\qquad
    \nnu_n\weaksto \nnu\quad
    \text{in $\Mloc(\Rd;\R^d)$ as $n\up+\infty$},
  \end{displaymath}
  then
  \begin{displaymath}
    \liminf_{n\uparrow \infty}\Phi(\mu_n,\nnu_n|\gamma_n)\ge
    \Phi(\mu,\nnu|\gamma).
  \end{displaymath}
\end{lemma}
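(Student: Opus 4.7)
\begin{Proof}[Proposal]
The plan is to reduce the claim to a single application of Theorem \ref{thm:lsc_functional_measures} by coupling the measures $\mu_n$ and $\nnu_n$ into one vector-valued Radon measure. Concretely, set $m:=1+d$ and consider the combined measures
\begin{equation*}
  \mmu_n:=(\mu_n,\nnu_n)\in \Mloc(\Rd;\R^{1+d}),\qquad
  \mmu:=(\mu,\nnu)\in \Mloc(\Rd;\R^{1+d}).
\end{equation*}
Since $\mu_n\weaksto\mu$ in $\RPM(\Rd)$ and $\nnu_n\weaksto \nnu$ in $\Mloc(\Rd;\R^d)$, componentwise testing shows $\mmu_n\weaksto\mmu$ in $\Mloc(\Rd;\R^{1+d})$. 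Likewise $\gamma_n\weaksto \gamma$ in $\RPM(\Rd)$.

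Next I would define the density $\psi:\R^{1+d}\to[0,+\infty]$ by extending $\phi$ as
\begin{equation*}
  \psi(\rho,\ww):=\begin{cases}\phi(\rho,\ww)&\text{if }\rho>0,\\[2pt]\phi(0,\ww)&\text{if }\rho=0,\\[2pt]+\infty&\text{if }\rho<0,\end{cases}
\end{equation*}
where $\phi(0,\cdot)$ is the extension introduced in \eqref{eq:cap2:50}. The key verifications, all of which are short, are: (i) $\psi$ is convex on $\R^{1+d}$, because on $(0,+\infty)\times\Rd$ it coincides with $\phi$, and on $\{0\}\times\Rd$ it is the pointwise supremum (in $\rho\downarrow 0$) of the convex functions $\phi(\rho,\cdot)$; (ii) $\psi$ is lower semicontinuous on $\R^{1+d}$, since by the monotonicity \eqref{eq:cap2:49} we have $\phi(0,\ww)=\lim_{\rho\downarrow 0}\phi(\rho,\ww)=\sup_{\rho>0}\phi(\rho,\ww)$, so $\psi$ equals the supremum of the continuous functions $(\rho,\ww)\mapsto\phi(\max(\rho,\eps),\ww)$ (plus the indicator of $\{\rho\ge 0\}$); (iii) $\psi(0,\mathbf{0})=\phi(0,\mathbf{0})=0$ by \eqref{eq:cap2:50}; (iv) the proper domain of $\psi$ contains $(0,+\infty)\times\Rd$, which is open and nonempty. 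Hence $\psi$ satisfies the hypotheses of Theorem \ref{thm:lsc_functional_measures}.

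I would then identify the action functional with $\Psi$ from Section \ref{sec:convex_functionals}. The Lebesgue decomposition of $\mmu$ with respect to $\gamma$ reads $\mmu=(\rho,\ww)\,\gamma+\mmu^\perp$ with $\mmu^\perp=(\rho^\perp,\ww^\perp)\sigma$, matching the decomposition used in \eqref{eq:scheme:7}–\eqref{eq:scheme:8}. By the very definition of the recession function \eqref{eq:cap2:1} applied to $\psi$, one checks $\psi^\infty=\phi^\infty$ on $\R^{1+d}$ (with $\psi^\infty=+\infty$ whenever $\rho<0$, which is harmless because $\mu^\perp\ge 0$). Therefore
\begin{equation*}
  \Psi(\mmu|\gamma)=\int_\Rd\psi(\rho,\ww)\,\d\gamma+\int_\Rd \psi^\infty(\rho^\perp,\ww^\perp)\,\d\sigma=\Phi(\mu,\nnu|\gamma),
\end{equation*}
and the analogous identity holds for each $n$.

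Finally I would invoke the lower semicontinuity assertion \eqref{eq:cap2:8} of Theorem \ref{thm:lsc_functional_measures} with the sequences $\mmu_n\weaksto\mmu$, $\gamma_n\weaksto\gamma$ to conclude
\begin{equation*}
  \liminf_{n\uparrow\infty}\Phi(\mu_n,\nnu_n|\gamma_n)=\liminf_{n\uparrow\infty}\Psi(\mmu_n|\gamma_n)\ge \Psi(\mmu|\gamma)=\Phi(\mu,\nnu|\gamma).
\end{equation*}
The only non-bookkeeping point is the lower semicontinuity of the extended $\psi$ at $\rho=0$, which is where the monotonicity property \eqref{eq:cap2:49} is essential; everything else is an unwinding of definitions.
\end{Proof}
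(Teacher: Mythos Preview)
Your proposal is correct and follows essentially the same route as the paper: the paper states the lemma as an immediate consequence of Theorem~\ref{thm:lsc_functional_measures}, and what you have done is precisely to spell out the coupling $\mmu=(\mu,\nnu)\in\Mloc(\Rd;\R^{1+d})$ and the verification that the extended density $\psi$ (already introduced in the paragraph around \eqref{eq:cap2:50}) meets the hypotheses of that theorem. The only comment is that your convexity check for $\psi$ at the boundary $\rho=0$ is a bit informal; it is cleanest to note that the extension described in \eqref{eq:cap2:50} is the lower semicontinuous envelope of a convex function on an open convex set, hence automatically convex and l.s.c.\ on all of $\R^{1+d}$.
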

\paragraph{Equiintegrability estimate.}
We collect in this section some basic estimates
on $\phi$ which will turn to be useful in the sequel.
Let us first introduce the notation 
\begin{gather}
  \label{eq:phinorm}
  \dphinorm\zz:=\tilde\phi(1,\zz)^{1/q},\quad
  \phinorm\ww:=\phi(1,\ww)^{1/p},\quad
  \eta^{-1}|\zz|\le \dphinorm\zz\le \eta|\zz|,\\
  \label{eq:cap2:29}
  \Gamma_\phi:=\Big\{(a,b):\sup_{\dphinorm\zz=1}\tilde\phi(\rho,\zz)\le
  a+b\rho\Big\},\quad
  \sfh(\rho):=\inf \Big\{a+b\rho:(a,b)\in \Gamma_\phi\Big\},\\
  \label{eq:cap2:54}
  H(s,\rho):=s\sfh(\rho/s)=\inf \Big\{as+b\rho:(a,b)\in \Gamma_\phi\Big\}.
\end{gather}
Observe that $\sfh$ is a concave increasing function defined in
$[0,+\infty)$, satisfying, 
in the homogeneous case
$\sfh(\rho)=h(\rho)=\rho^\alpha$.
It
provides the bounds 
\begin{equation}
  \label{eq:cap2:30}
  \begin{aligned}
    \tilde\phi(\rho,\zz)&\le \sfh(\rho)\dphinorm\zz^q,&\quad
    \phinorm\ww &\le
    \sfh(\rho)^{1/q}\phi(\rho,\ww)^{1/p},\\
    \tilde\varphi^\infty(\zz)&\le \sfh^\infty
    \dphinorm\zz^q,&\quad \phinorm\ww &\le
    \big(\sfh^\infty\big)^{1/q}\varphi^\infty(\ww)^{1/p},\quad
    \text{if }\sfh^\infty
    :=\lim\limits_{\lambda\up+\infty}\lambda^{-1}\sfh(\lambda)>0.
  \end{aligned}
\end{equation}
Observe that when $\sfh^\infty=0$ then $\tilde\varphi^\infty\equiv0$ and
$\varphi^\infty(\ww)$ is given by \eqref{eq:scheme:6}.
\begin{proposition}[Integrability estimates]
  Let $\zeta$ be a nonnegative
  Borel function such that
  $$\mu(\zeta^q):=\int_\Rd \zeta^q\,\d\mu\quad
  \text{and}\quad
  \gamma(\zeta^q):=
  \int_\Rd\zeta^q\,\d\gamma\quad
  \text{are finite},$$
  and let
  $Z:=\big\{x\in \Rd:\zeta(x)>0\big\}$.
  If $\Phi(\mu,\nnu|\gamma)<+\infty$ we have
  \begin{equation}
    \label{eq:Cap3:1}
    \int_\Rd\zeta(x)\,\d\phinorm\nnu(x)
    \le \Phi^{1/p}\big(\mu,\nnu|\gamma,Z\big)\,
    H^{1/q}\big(\gamma(\zeta^q),\mu(\zeta^q)\big).
  \end{equation}
  In particular, for every Borel set $A\in \BorelSets\Rd$ we have
   \begin{equation}
    \label{eq:scheme3:5}
    \phinorm\nnu(A)\le
    \Phi^{1/p}\big(\mu,\nnu|\gamma,A\big)\,H^{1/q}(\gamma(A),\mu(A)\big).
  \end{equation}
\end{proposition}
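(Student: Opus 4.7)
The plan is to combine the pointwise duality estimates from \eqref{eq:cap2:30} with a double H\"older inequality.

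First I would use Lemma \ref{le:Phi_char} to write $\nnu=\ww\gamma+\ww^\perp\mu^\perp$. Since $\gamma\perp\mu^\perp$ and $\zeta$ vanishes outside $Z$, this yields the splitting
\[
\int_\Rd\zeta\,\d\phinorm\nnu = \int_Z\zeta\,\phinorm\ww\,\d\gamma + \int_Z\zeta\,\phinorm{\ww^\perp}\,\d\mu^\perp,
\]
and the pointwise bounds in \eqref{eq:cap2:30} control the two integrands by $\sfh(\rho)^{1/q}\phi(\rho,\ww)^{1/p}$ and $(\sfh^\infty)^{1/q}\varphi^\infty(\ww^\perp)^{1/p}$ respectively. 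Note that when $\sfh^\infty=0$ the formula \eqref{eq:scheme:6} for $\varphi^\infty$ combined with $\Phi(\mu,\nnu|\gamma)<+\infty$ forces $\ww^\perp=0$, so the second contribution automatically vanishes and the bound remains consistent.

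Next, I would apply H\"older's inequality with conjugate exponents $q,p$ to each of the two integrals separately and then combine them by the elementary discrete inequality $a_1^{1/q}b_1^{1/p}+a_2^{1/q}b_2^{1/p}\le(a_1+a_2)^{1/q}(b_1+b_2)^{1/p}$. The $p$-factors sum precisely to $\Phi(\mu,\nnu|\gamma,Z)$, yielding
\[
\int_\Rd\zeta\,\d\phinorm\nnu \le \Phi(\mu,\nnu|\gamma,Z)^{1/p}\left(\int_Z\zeta^q\sfh(\rho)\,\d\gamma+\int_Z\zeta^q\sfh^\infty\,\d\mu^\perp\right)^{1/q}.
\]

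To finish, I would fix an arbitrary pair $(a,b)\in\Gamma_\phi$; the defining inequalities $\sfh(\rho)\le a+b\rho$ and $\sfh^\infty\le b$, together with the identity $\mu=\rho\gamma+\mu^\perp$, bound the parenthesis above by $a\gamma(\zeta^q)+b\mu(\zeta^q)$, and taking the infimum over $\Gamma_\phi$ produces $H(\gamma(\zeta^q),\mu(\zeta^q))$ by \eqref{eq:cap2:54}. This proves \eqref{eq:Cap3:1}, and the specialization \eqref{eq:scheme3:5} follows by choosing $\zeta=\chi_A$ (approximated from below by nonnegative continuous functions with compact support if one wants to justify the bound directly from the duality representation of $\phinorm\nnu$). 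I expect the only subtle point to be a careful treatment of the sublinear regime $\sfh^\infty=0$, which as noted above is already handled by Lemma \ref{le:Phi_char}; everything else is a direct unwinding of the definitions of $\sfh$, $\Gamma_\phi$ and $H$.
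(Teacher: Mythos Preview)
Your proposal is correct and follows essentially the same route as the paper: split $\phinorm\nnu$ via the decomposition $\nnu=\ww\gamma+\ww^\perp\mu^\perp$ from Lemma~\ref{le:Phi_char}, apply the pointwise bounds \eqref{eq:cap2:30} and H\"older on each piece, combine with the two-point H\"older inequality, then bound $\sfh(\rho)\le a+b\rho$, $\sfh^\infty\le b$ for $(a,b)\in\Gamma_\phi$ and optimize. Your treatment is in fact slightly more explicit than the paper's (you spell out the discrete H\"older step and the sublinear case $\sfh^\infty=0$), but the argument is the same.
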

\begin{proof}
  It is sufficient to prove
  \eqref{eq:Cap3:1}.
  Observe that
   if $(a,b)\in \Gamma_\phi$ then
   $a\ge 0,$ and $\sfh^\infty\le b$ so that by \eqref{eq:cap2:30}
   we have
  \begin{align*}
    &\int_\Rd \zeta(x)\,\d\phinorm\nnu(x)\le
    \int_Z\zeta \phinorm\ww\,\d\gamma+ \int_Z
    \zeta \phinorm{\ww^\perp}\,\d\mu^\perp
    \\&\le
    \Big(\int_Z \phi(\rho,\ww)\,\d\gamma\Big)^{1/p}
    \Big(\int_Z \zeta^q \sfh(\rho)\,\d\gamma\Big)^{1/q}
    +\!
    \Big(\int_Z \varphi^\infty(\ww^\perp)\,\d\mu^\perp\Big)^{1/p}
    \Big(\sfh^\infty\!\!\!\int_Z \zeta^q\,\d\mu^\perp\Big)^{1/q}
    \\&\le
    \Big(\Phi(\mu,\nnu|\gamma,Z)\Big)^{1/p}
    \Big(a\int_\Rd \zeta^q\,\d\gamma+
    b\int_\Rd \zeta^q\,\d\mu\Big)^{1/q},
  \end{align*}
  Taking the infimum of the last
  term over all the couples $(a,b)\in \Gamma_\phi$ we obtain
  \eqref{eq:scheme3:5}.
\qed\end{proof}


\section{Measure valued solutions of the
  continuity equation in $\R^d$}
\label{sec:continuity}
In this section
we collect some results
on the continuity equation 
\begin{equation}
  \label{eq:continuity1}
  \partial_t {\mu}_t+\nabla\cdot \nnu_t=0
  \qquad\hbox{\text in $\R^d\times (0,\FinalT)$,}
\end{equation}
which we will need in the sequel.
Here $\mu_t,\nnu_t$ are Borel families of measures
(see e.g.\ \cite{Ambrosio-Gigli-Savare05})
in $\RPM(\Rd)$
and $\Mloc(\Rd;\Rd)$ respectively,
defined for $t$ in the open interval $(0,\FinalT)$,
such that
\begin{equation}
  \label{eq:integrability_of_vt}
  \int_0^\FinalT \mu_t(\Ball R)\,\d t<+\infty,\qquad
  V_R:=\int_0^T |\nnu_t|(\Ball R)\,\d t<+\infty
  \qquad
  \forall\, R>0,
\end{equation}
and we suppose that \eqref{eq:continuity1} holds in the sense
of distributions, i.e.
\begin{equation}
  \label{eq:distribution_sense}
    \int_0^\FinalT\int_{\R^d}\partial_t\zeta(x,t)
    \,\d\mu_t(x)\,\d t+
    \int_0^\FinalT\int_{\R^d}
    \nabla_x\zeta(x,t)\cdot \,\d\nnu_t(x)\,\d t
   =0
 \end{equation}
 for every $\zeta\in C^1_c(\R^d\times (0,\FinalT)).$
 Thanks to the disintegration theorem
 \cite[4, III-70]{Dellacherie-Meyer78}, we can identify $(\nnu_t)_{t\in
   (0,T)}$
 with the
 measure $\nnu=\int_0^T \nnu_t\,\d t
 \in \Mloc(\Rd\times(0,T);\Rd)$ defined by the formula
 \begin{equation}
   \label{eq:DS:4}
   \la \nnu,\zzeta\ra=
   \int_0^T
   \left(\int_\Rd \zzeta(x,t)\cdot \d\nnu_t(x)\right)\,\d t\qquad
   \forall\, \zzeta\in C^0_{\rm c}(\Rd\times(0,T);\Rd).
 \end{equation}
 \subsection{\bfseries Preliminaries}
\label{subsec:cont1}
Let us first adapt the results of
\cite[Chap. 8]{Ambrosio-Gigli-Savare05} (concerning a family of
\emph{probability}
measures $\mu_t$) to the more general case of Radon measures.
First of all we recall some  (technical) preliminaries.
\begin{lemma}[Continuous representative]
  \label{le:continuous_representative}
  Let $\mu_t,\nnu_t$ be Borel families of measures
  satisfying
  \eqref{eq:integrability_of_vt}
  and \eqref{eq:distribution_sense}.
  Then there exists a unique \emph{weakly$^*$ continuous} curve
  $t\in [0,\FinalT]\mapsto \tilde\mu_t\in \RPM(\Rd)$
  such that $\mu_t=\tilde \mu_t$ for $\Leb 1$-a.e.\ $t\in (0,\FinalT)$;
  if $\zeta\in C^1_c(\R^d\times[0,\FinalT])$
  and $t_1\leq t_2\in [0,\FinalT]$, we have
  \begin{equation}
    \label{eq:traces}
    \begin{aligned}
      \int_{\R^d}\zeta_{t_2}\,\d\tilde\mu_{t_2}-
      \int_{\R^d}\zeta_{t_1}\,\d\tilde\mu_{t_1}=
      \int_{t_1}^{t_2}\int_{\R^d}
      \partial_t\zeta\,
      \d\mu_t(x)\,\d t
      +
      \int_{t_1}^{t_2}\int_{\R^d}
      \nabla\zeta\cdot \,\d\nnu_t(x)\,\d t,
  \end{aligned}
\end{equation}
and the mass of $\tilde\mu_t$ can be uniformly bounded by
\begin{equation}
  \label{eq:cap4:33}
  \sup_{t\in [0,T]}\tilde\mu_t(\Ball {R})\le
  \tilde\mu_s(\Ball{2R})+2R^{-1}V_{2R}\qquad
  \forall\, s\in [0,T].
\end{equation}
Moreover, if $\tilde\mu_s(\Rd)<+\infty$ for some $s\in [0,T]$ and
$\lim_{R\up+\infty}R^{-1}V_R=0$, 
then the total mass $\tilde\mu_t(\Rd)$
is (finite and) constant.
\end{lemma}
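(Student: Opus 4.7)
My approach is the standard one used for the probability case in \cite{Ambrosio-Gigli-Savare05}, adapted to handle Radon measures via a cutoff-in-space argument. The idea is to test the equation first against product functions $\zeta(x,t)=\varphi(x)\eta(t)$, obtain an absolutely continuous representative of $t\mapsto \mu_t(\varphi)$ for each fixed $\varphi$, then glue these representatives together using a countable dense family and the mass bound \eqref{eq:cap4:33}, and finally recover \eqref{eq:traces} and mass conservation.

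\textbf{Step 1 (scalar representatives).} For every $\varphi\in C^1_{\mathrm c}(\Rd)$, choosing $\zeta(x,t):=\varphi(x)\eta(t)$ with $\eta\in C^1_{\mathrm c}(0,T)$ in \eqref{eq:distribution_sense} yields
\[
\int_0^T \eta'(t)\,\mu_t(\varphi)\,\d t \;=\;-\int_0^T \eta(t)\,\nnu_t(\nabla\varphi)\,\d t.
\]
Since $R\mapsto V_R$ is finite, $t\mapsto \nnu_t(\nabla\varphi)$ is in $L^1(0,T)$, so $t\mapsto \mu_t(\varphi)$ admits an absolutely continuous representative $m_\varphi$ with $m_\varphi(t_2)-m_\varphi(t_1)=\int_{t_1}^{t_2}\nnu_r(\nabla\varphi)\,\d r$.

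\textbf{Step 2 (cutoff and uniform mass bound).} For each $R>0$ pick $\chi_R\in C^1_{\mathrm c}(\Rd)$ with $0\le \chi_R\le 1$, $\chi_R\equiv 1$ on $\Ball R$, $\supp\chi_R\subset \Ball{2R}$, and $|\nabla\chi_R|\le 2/R$. Applying Step~1 to $\chi_R$ gives, outside a negligible set $N_R$,
\[
\mu_t(\Ball R)\le \mu_t(\chi_R)=\mu_s(\chi_R)+\int_s^t\nnu_r(\nabla\chi_R)\,\d r\le \mu_s(\Ball{2R})+\tfrac{2}{R}V_{2R},
\]
which will become \eqref{eq:cap4:33} once we pass to the continuous representative.

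\textbf{Step 3 (gluing into a weakly$^\ast$ continuous curve).} Pick a countable family $\{\varphi_k\}\subset C^1_{\mathrm c}(\Rd)$ that is dense in $C^0_{\mathrm c}(\Rd)$ for uniform convergence with uniformly bounded supports (e.g.\ taking a dense subfamily in every $C^1_{\mathrm c}(\Ball n)$ and using a diagonal argument), and include every $\chi_{R}$ with $R\in\N$. Let $N:=\bigcup_k N_{\varphi_k}$; it is Lebesgue-negligible. For $t\in [0,T]\setminus N$, set $\tilde\mu_t:=\mu_t$. For $t\in N$, pick any sequence $t_n\to t$ with $t_n\notin N$: the Step~2 bound gives $\sup_n \mu_{t_n}(\Ball R)<\infty$ for each $R$, so up to subsequences $\mu_{t_n}$ weakly$^\ast$ converges in $\RPM(\Rd)$. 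The limit $\tilde\mu_t$ is uniquely determined because $\tilde\mu_t(\varphi_k)=m_{\varphi_k}(t)$ for every $k$, and density of $\{\varphi_k\}$ identifies the action on all $C^0_{\mathrm c}(\Rd)$. The same argument shows $t\mapsto \tilde\mu_t$ is weakly$^\ast$ continuous on $[0,T]$; uniqueness of the representative follows because any other weakly$^\ast$ continuous extension must agree on the dense family.

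\textbf{Step 4 (integration-by-parts formula and mass conservation).} For $\zeta\in C^1_{\mathrm c}(\Rd\times[0,T])$, approximate $\zeta(\cdot,t)$ uniformly by finite linear combinations $\sum_j a_j(t)\varphi_{k_j}$ (a partition-of-unity/tensorization argument on the compact $t$-interval), apply the identity of Step~1 to each summand, and pass to the limit using weak$^\ast$ continuity of $\tilde\mu_t$ and the $L^1$-bound on $|\nnu_t|(\Ball R)$; this yields \eqref{eq:traces}. For the final claim, apply the trace identity to $\zeta(x,t):=\chi_R(x)$ obtaining
\[
\tilde\mu_t(\chi_R)-\tilde\mu_s(\chi_R)=\int_s^t \nnu_r(\nabla\chi_R)\,\d r,\qquad \Big|\int_s^t \nnu_r(\nabla\chi_R)\,\d r\Big|\le \tfrac{2}{R}V_{2R}.
\]
If $\tilde\mu_s(\Rd)<\infty$ and $R^{-1}V_R\to 0$, letting $R\uparrow\infty$ and using monotone convergence $\chi_R\uparrow 1$ gives $\tilde\mu_t(\Rd)=\tilde\mu_s(\Rd)<+\infty$ for all $t$.

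\textbf{Main obstacle.} The genuine difficulty is not in any single step but in the combinatorics of controlling countably many null sets while the measures have infinite total mass: the weak$^\ast$ limits used to define $\tilde\mu_t$ on $N$ require a priori local mass bounds, and these bounds are themselves derived from the continuity equation through the cutoffs $\chi_R$. Making this bootstrap rigorous — extracting the representative, verifying the uniform bound on it, and then promoting the integration-by-parts identity from test functions of tensor form to general $\zeta\in C^1_{\mathrm c}(\Rd\times[0,T])$ — is the bulk of the work.
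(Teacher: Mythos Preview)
Your proposal is correct and follows essentially the same strategy as the paper's proof: test against products $\eta(t)\varphi(x)$ to get $t\mapsto\mu_t(\varphi)\in W^{1,1}$, exploit a countable dense family together with cutoff functions $\chi_R$ to obtain the uniform local mass bound and build the weakly$^*$ continuous representative, then derive \eqref{eq:traces} and mass conservation. The only cosmetic differences are that the paper extends $\tilde\mu_t$ to the exceptional set by uniform continuity of the functionals on $C^1_0(B_{R_n})$ (rather than weak$^*$ compactness plus uniqueness of limits), and proves \eqref{eq:traces} directly by mollifying $\chi_{[t_1,t_2]}$ in time and applying the distributional identity to $\eta_\eps(t)\zeta(x,t)$, instead of your tensor-product density argument; both routes are standard and equivalent in strength.
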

\begin{proof}
  Let us take $\zeta(x,t)=\eta(t)\zeta(x)$, $\eta\in C^\infty_c(0,\FinalT)$
  and $\zeta\in C^\infty_c(\R^d)$ with
  $\supp\zeta\subset \Ball R$; we have
  \begin{displaymath}
    -\int_0^{\FinalT}\eta'(t)\Big(\int_{\R^d}\zeta(x)\,\d \mu_t(x)\Big)\,\d t=
    \int_0^{\FinalT}\eta(t)
    \Big(\int_{\R^d}\nabla\zeta(x)\cdot \,\d \nnu_t(x)
    \Big)\,\d t,
  \end{displaymath}
  so that the map
  $  t\mapsto \mu_t(\zeta)=\int_{\R^d}\zeta\,\d \mu_t$
  belongs to $W^{1,1}(0,\FinalT)$ with distributional derivative
  \begin{equation}
    \label{eq:derivative_mu}
    \dot\mu_t(\zeta)=\int_{\R^d}
    \nabla\zeta(x)\cdot \,\d\nnu_t(x)
    \quad\text{for $\Leb{1}$-a.e.\ }t\in (0,\FinalT),   
  \end{equation}
  satisfying
  \begin{equation}
     \label{eq:uniform_bound}
    |\dot\mu_t(\zeta)|\le
    V_R(t)\sup_{\R^d}|\nabla\zeta|
    ,\quad
    V_R(t):=|\nnu_t|(\Ball R),\quad
    \int_0^T V_R(t)\,\d t=V_R<+\infty.
  \end{equation}
  If $L_\zeta$ is the set of
  its Lebesgue points, we know that
  $\Leb 1((0,\FinalT)\setminus L_\zeta)=0$.
  Let us now take an increasing sequence $R_n:=2^n\up+\infty$
  and countable sets $Z_n\subset C^\infty_{\rm c}(\Ball{R_n})$
  which are dense in $C^1_0(\Ball{R_n}):=
  \{\zeta\in C^1(\Rd):\supp(\zeta)\subset \overline{\Ball{R_n}}\}$,
  the closure of $C^1_{\rm c}(\Ball{R_n})$ with respect the usual
  $C^1$ norm $\|\zeta\|_{C^1}=\sup_{\R^d}(|\zeta|,|\nabla\zeta|)$.
  We also set $L_Z:=\cap_{n\in\N,\zeta\in Z_n}L_\zeta$.
  The restriction of the curve $\mu$ to $L_Z$ provides
  a uniformly continuous family of functionals
  on each space $C^1_0(\Ball{R_n})$, since \eqref{eq:uniform_bound} shows
  \begin{displaymath}
    |\mu_t(\zeta)-\mu_s(\zeta)|\le
    \|\zeta\|_{C^1}\int_s^t V_{R_n}(\lambda)\,\d \lambda\quad
    \forall\, s,t\in L_Z\quad
    \forall\, \zeta\in Z_n.
  \end{displaymath}
  Therefore, for every $n\in \N$
  it can be extended in a unique way
  to a continuous curve $\{\tilde\mu^n_t\}_{t\in [0,\FinalT]}$
  in $[C^1_0(\Ball{R_n})]'$
  which is uniformly bounded and satisfies
  the compatibility condition
  \begin{equation}
    \label{eq:cap4:2}
    \tilde\mu^m_t (\zeta)=\tilde\mu^n(\zeta)\quad
    \text{if $m\le n$ and $\zeta\in C^1_{\rm c}(\Ball{R_m})$}.
  \end{equation} 
 If $\zeta\in C^1_{\rm c}(\Rd)$ we can thus define
  \begin{equation}
    \label{eq:cap4:27}
    \tilde\mu_t(\zeta):=\tilde\mu^n_t(\zeta)\quad
    \text{for every $n\in \N$ such that $\supp(\zeta)\subset
      \Ball {R_n}$}.    
  \end{equation}
  If we show that $\{\mu_t(\Ball{R_n})\}_{t\in L_Z}$ is uniformly
  bounded
  for every $n\in\N$,
  the extension provides a continuous curve in $\RPM(\R^d)$.
  To this aim, let us consider
  nonnegative, smooth functions
  \begin{subequations}
    \begin{gather}
      \label{eq:cap4:21}
      \text{$\zeta_k:\R^d\to [0,1]$,
        such that}
      \quad
      \zeta_k(x):=\zeta_0(x/2^k),\\
      \label{eq:cap4:29}
      \zeta_k(x)=1\ \text{if }|x|\le 2^k,\quad
      \zeta_k(x)=0\ \text{if }|x|\ge 2^{k+1},\quad
      |\nabla\zeta_k(x)|\le A\, 2^{-k},
    \end{gather}
  \end{subequations}
  for some constant $A>1$.
  It is not restrictive to suppose that $\zeta_k\in Z_{k+1}$.
  Applying the previous formula \eqref{eq:derivative_mu},
  for $t,\,s\in L_Z$ we have
  \begin{equation}
    \label{eq:cap3:2}
    |\mu_t(\zeta_k)-\mu_s(\zeta_k)|\le
    a_k:=
    2^{1-k}\int_0^\FinalT |\nnu_r|\big(\Ball{2R_k}\setminus
    \Ball{R_k}\big)\,\d r\le
    A\,2^{-k}V_{2R_k}.
  \end{equation}
  It follows that
  \begin{equation}
    \label{eq:cap4:57}
    \mu_t(\Ball{R_k})\le \mu_t(\zeta_k)\le \mu_s(\zeta_k)+
    A\,2^{-k}V_{2R_{k}}\le \mu_s(\Ball{2R_{k}})+
    A\,2^{-k}\,V_{2R_{k}}\quad
    \forall\, t\in L_Z.
  \end{equation}
  Integrating with respect to $s$ we end up with the uniform bound
  \begin{displaymath}
    \mu_t(\Ball{R_k})\le A\,2^{-k} \,V_{R_{k+1}}+\int_0^T
    \mu_s(\Ball{2R_k})\,\d s<+\infty\quad
    \forall\, t\in L_Z.
  \end{displaymath}
  Observe that the extension $\tilde\mu_t$ satisfies
  \eqref{eq:cap4:57} (and therefore, in a completely analogous way,
  \eqref{eq:cap4:33}) and \eqref{eq:cap3:2}
  for every $s,t\in [0,T]$.
  
  Now we show \eqref{eq:traces}.
  Let us choose $\zeta\in C^1_c(\R^d\times[0,\FinalT])$
  and
  $\eta_\eps\in C^\infty_c(t_1,t_2)$ such that
  \begin{displaymath}
    0\le \eta_\eps(t)\le 1,\quad
    \lim_{\eps\down0}\eta_\eps(t)=\chi_{(t_1,t_2)}(t)\quad
    \forall\, t\in [0,T],\quad
    \lim_{\eps\down0}\eta_\eps'=\delta_{t_1}-\delta_{t_2}
  \end{displaymath}
  in the duality with continuous functions in $[0,\FinalT]$.
  We get
  \begin{align*}
    0&=\int_0^\FinalT\int_{\R^d}\partial_t(\eta_\eps\zeta)
    \,\d \mu_t(x)\,\d t+\int_0^\FinalT\int_{\R^d}
    \nabla_x(\eta_\eps\zeta)\cdot \,\d\nnu_t\,\d t\\
    &=\int_0^\FinalT\eta_\eps(t)\int_{\R^d}\partial_t\zeta
    \,\d\mu_t\,\d t
    +\int_0^\FinalT\eta_\eps(t)\int_{\R^d}
    \nabla_x\zeta\cdot\,\d\nnu_t\,\d t+
    \int_0^{\FinalT}\eta_\eps'(t)\int_{\R^d}
    \zeta\,\d\tilde\mu_{t}\,\d t.
  \end{align*}
  Passing to the limit as $\eps$ vanishes and invoking
  the continuity of $\tilde\mu_t$, we get \eqref{eq:traces}.

  Finally, if $\lim_{R\up+\infty}R^{-1}V_R=0$
  we can pass to the limit as $R_k\up+\infty$ in 
  the inequality \eqref{eq:cap3:2}, which also holds
  for every $t,s\in [0,T]$ if we
  replace $\mu$ by $\tilde\mu$, by choosing $s$ 
  so that
  \begin{displaymath}
    m:=\tilde\mu_s(\Rd)=\lim_{k\up+\infty}
    \tilde\mu_s(\zeta_k)<+\infty .
  \end{displaymath}
  It follows that
  $\tilde\mu_t(\Rd)=\lim_{k\up+\infty}
    \tilde\mu_s(\zeta_k)= m$ for every $t\in [0,T]$.
\qed\end{proof}
Thanks to Lemma
\ref{le:continuous_representative}
we can introduce the following class of solutions
of the continuity equation.
\begin{definition}[Solutions of the continuity equation]
  \label{def:CE}
  We denote by $\ce 0T\Rd$ the set of time dependent measures
  $(\mu_t)_{t\in [0,T]}, (\nnu_t)_{t\in (0,T)}$ such that
  \begin{enumerate}
  \item $t\mapsto \mu_t$ is weakly$^*$ continuous in $\RPM(\Rd)$ (in
    particular,
    $\sup_{t\in [0,T]}\mu_t(\Ball R)<+\infty$ for every $R>0$),
  \item
    $(\nnu_t)_{t\in (0,T)}$
    is a Borel family with
    $\displaystyle 
      \int_0^T |\nnu_t|(\Ball R)\,\d t<+\infty\qquad
      \forall\, R>0;
    $
   \item
    $(\mu,\nnu)$ is a distributional solution of \eqref{eq:continuity1}.
  \end{enumerate}
  $\cce0T\Rd\sigma\eta$ denotes the subset of $(\mu,\nnu)\in \ce0T\Rd$
  such that $\mu_0=\sigma,\ \mu_1=\eta$.
\end{definition}
Solutions of the continuity equation can be rescaled in time:
\begin{lemma}[Time rescaling]
  \label{le:time_rescaling}
  Let ${\sft}:s\in [0,T']\to {\sft}(s)\in [0,\FinalT]$
  be a strictly increasing absolutely continuous map
  with absolutely continuous inverse ${\sfs}:={\sft}^{-1}$.
  Then $(\mu,\nnu)$ is a distributional solution of
  \eqref{eq:continuity1} if and only if
  \begin{displaymath}
    \hat\mu:=\mu\circ {\sft},\ 
    \hat \nnu:={\sft}'\big(\nnu\circ {\sft}\big),\ 
    \quad\text{is a distributional solution of
      \eqref{eq:continuity1} on }(0,T').
  \end{displaymath}
\end{lemma}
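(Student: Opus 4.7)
\begin{Proof}[Plan]
The plan is to show directly that the distributional formulation \eqref{eq:distribution_sense} is preserved under the change of variable $t=\sft(s)$, by using test functions in each time variable and performing a one-dimensional change of variables in the time integrals. Since both implications are symmetric (the inverse $\sfs=\sft^{-1}$ satisfies the same hypotheses as $\sft$), it suffices to prove one direction.

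First, I would check that the new family $(\hat\mu,\hat\nnu)$ is well defined and satisfies the local integrability \eqref{eq:integrability_of_vt} on $(0,T')$. Since $\sft$ is AC and strictly increasing, $\sft'$ exists $\Leb1$-a.e.\ and is nonnegative, and $(\hat\mu_s,\hat\nnu_s):=(\mu_{\sft(s)},\sft'(s)\nnu_{\sft(s)})$ is a Borel family in $\RPM(\Rd)\times\Mloc(\Rd;\Rd)$. The estimate
\[
\int_0^{T'}|\hat\nnu_s|(\Ball R)\,\d s=\int_0^{T'}\sft'(s)|\nnu_{\sft(s)}|(\Ball R)\,\d s=\int_0^{T}|\nnu_t|(\Ball R)\,\d t<+\infty,
\]
by the area formula for AC maps, gives the required integrability; the bound on $\hat\mu_s(\Ball R)$ is even easier, using $\d s=\sfs'(t)\,\d t$.

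Next, given a test function $\hat\zeta\in C^1_{\rm c}(\Rd\times(0,T'))$, I would set $\zeta(x,t):=\hat\zeta(x,\sfs(t))$. Then $\zeta$ has compact $x$-support uniformly in $t$, and $t\mapsto \zeta(x,t)$ is absolutely continuous with $\partial_t\zeta(x,t)=\sfs'(t)\,\partial_s\hat\zeta(x,\sfs(t))$ for $\Leb1$-a.e.\ $t$, while $\nabla_x\zeta(x,t)=\nabla_x\hat\zeta(x,\sfs(t))$. Changing variables $t=\sft(s)$ (so $\d t=\sft'(s)\,\d s$) in both terms of \eqref{eq:distribution_sense} gives
\[
\int_0^{T}\!\!\int_\Rd\partial_t\zeta\,\d\mu_t\,\d t=\int_0^{T'}\!\!\int_\Rd\partial_s\hat\zeta(x,s)\,\d\hat\mu_s(x)\,\d s,
\]
\[
\int_0^{T}\!\!\int_\Rd\nabla_x\zeta\cdot\d\nnu_t\,\d t=\int_0^{T'}\!\!\int_\Rd\nabla_x\hat\zeta(x,s)\cdot\sft'(s)\,\d\nnu_{\sft(s)}(x)\,\d s=\int_0^{T'}\!\!\int_\Rd\nabla_x\hat\zeta\cdot\d\hat\nnu_s\,\d s,
\]
so the two distributional equations are equivalent.

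The only subtlety, and what I expect to be the main (though mild) obstacle, is that $\zeta$ constructed above is only AC (not $C^1$) in $t$, so it is not literally admissible as a test function in \eqref{eq:distribution_sense}. I would handle this by invoking Lemma \ref{le:continuous_representative}: formula \eqref{eq:traces} extends by standard approximation to test functions that are merely AC in time with bounded derivatives and compactly supported in $x$, which covers $\zeta=\hat\zeta\circ\sfs$. Alternatively, one regularizes $\sfs$ and $\sft$ by smooth strictly increasing approximations with derivatives converging in $L^1$, applies the identity for smooth $\zeta$, and passes to the limit using the uniform local mass/momentum bounds. Either route delivers the equivalence.
\end{Proof}
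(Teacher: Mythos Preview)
Your argument is correct and is essentially the standard proof; the paper does not give its own proof but simply refers to \cite[Lemma 8.1.3]{Ambrosio-Gigli-Savare05}, whose argument is precisely the change-of-variables computation you outline, with the same handling of the test-function regularity issue. One minor slip: the composed test function $\zeta(x,t)=\hat\zeta(x,\sfs(t))$ has $\partial_t\zeta$ merely in $L^1(0,T)$ (not bounded) since $\sfs'$ is only $L^1$, but the approximation of \eqref{eq:traces} you invoke works just as well under this weaker hypothesis.
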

We refer to \cite[Lemma 8.1.3]{Ambrosio-Gigli-Savare05} for the proof.

The proof of the next lemma follows directly from
\eqref{eq:traces}.
\begin{lemma}[Glueing solutions]
  \label{le:glueing}
  Let $(\mu^i,\nnu^i)\in \ce0{T_i}\Rd$, $i=1,2$, with
  $\mu^1_{T_1}=\mu^2_0$.
  Then the new family $(\mu_t,\nnu_t)_{t\in (0,T_1+T_2)}$ defined as
  \begin{equation}
    \label{eq:cap3:33}
    \mu_t:=
    \begin{cases}
      \mu^1_t&\text{if }0\le t\le T_1\\
      \mu^2_{t-T_1}&\text{if }T_1\le t\le T_1+T_2
    \end{cases}
    \qquad
    \nnu_t:=
    \begin{cases}
      \nnu^1_t&\text{if }0\le t\le T_1\\
      \nnu^2_{t-T_1}&\text{if }T_1\le t\le T_1+T_2
    \end{cases}
  \end{equation}
  belongs to $\ce0{T_1+T_2}\Rd$.
\end{lemma}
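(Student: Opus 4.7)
The plan is to verify the three conditions in Definition \ref{def:CE} for the glued pair $(\mu,\nnu)$. Conditions (1) and (2) are essentially bookkeeping: weak$^*$ continuity of $t\mapsto \mu_t$ on $[0,T_1]$ and on $[T_1,T_1+T_2]$ is inherited from the two pieces, and the matching condition $\mu^1_{T_1}=\mu^2_0$ gives continuity at the junction $t=T_1$; local uniform boundedness of $\mu_t(\Ball R)$ on $[0,T_1+T_2]$ and the local integrability
\[
\int_0^{T_1+T_2}|\nnu_t|(\Ball R)\,\d t=\int_0^{T_1}|\nnu^1_t|(\Ball R)\,\d t+\int_0^{T_2}|\nnu^2_t|(\Ball R)\,\d t<+\infty
\]
follow directly by splitting the integral. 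Borel measurability of $t\mapsto \nnu_t$ is obvious from the piecewise definition.

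The only real content is condition (3), the distributional identity \eqref{eq:distribution_sense} on $(0,T_1+T_2)$. My plan is to apply the trace formula \eqref{eq:traces} from Lemma \ref{le:continuous_representative} to each of the two pieces separately. Fix $\zeta\in C^1_c(\Rd\times(0,T_1+T_2))$ and split the time integral at $t=T_1$. On $[0,T_1]$, applying \eqref{eq:traces} to $(\mu^1,\nnu^1)$ with $t_1=0$, $t_2=T_1$ gives
\[
\int_{\Rd}\zeta_{T_1}\,\d\mu^1_{T_1}-\int_{\Rd}\zeta_0\,\d\mu^1_0=\int_0^{T_1}\!\!\int_{\Rd}\partial_t\zeta\,\d\mu^1_t\,\d t+\int_0^{T_1}\!\!\int_{\Rd}\nabla\zeta\cdot\,\d\nnu^1_t\,\d t,
\]
and since $\zeta$ vanishes at $t=0$ the left-hand side reduces to $\int_{\Rd}\zeta_{T_1}\,\d\mu^1_{T_1}$. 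Analogously, for $(\mu^2,\nnu^2)$ on $[0,T_2]$ with the shifted test function $\hat\zeta(x,s):=\zeta(x,s+T_1)$, which vanishes at $s=T_2$, one obtains
\[
-\int_{\Rd}\zeta_{T_1}\,\d\mu^2_0=\int_{T_1}^{T_1+T_2}\!\!\int_{\Rd}\partial_t\zeta\,\d\mu^2_{t-T_1}\,\d t+\int_{T_1}^{T_1+T_2}\!\!\int_{\Rd}\nabla\zeta\cdot\,\d\nnu^2_{t-T_1}\,\d t.
\]
Adding the two identities, the boundary terms cancel by the glueing condition $\mu^1_{T_1}=\mu^2_0$, and the right-hand side is precisely the distributional pairing of $(\mu,\nnu)$ against $\zeta$, which therefore vanishes.

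No step looks genuinely delicate; the main (minor) obstacle is ensuring the correct use of test functions with compact support in the open interval, so that the trace contributions at $t=0$ and $t=T_1+T_2$ vanish and only the internal trace at $T_1$ has to be dealt with via the matching hypothesis. Once this is handled, the three defining properties of $\ce{0}{T_1+T_2}{\Rd}$ are all verified and the lemma follows.
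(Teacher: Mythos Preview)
Your proof is correct and follows exactly the approach the paper indicates: the paper simply states that the lemma ``follows directly from \eqref{eq:traces}'', and your argument spells out precisely this, applying the trace identity to each piece and using the matching condition $\mu^1_{T_1}=\mu^2_0$ to cancel the internal boundary terms at $t=T_1$.
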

\begin{lemma}[Compactness for solutions of the continuity equation (I)]
  \label{le:compactness_continuity}
  Let $(\mu^n,\nnu^n)$ be a sequence 
  in $\ce0T\Rd$ such that
  \begin{enumerate}
  \item
    for some $s\in [0,T]$\quad
    $      \sup_{n\in \N}\mu^n_s(\Ball R)<+\infty\quad
      \forall\, R>0;
   $
  \item
    the sequence of maps
    $t\mapsto |\nnu^n_t|(\Ball R)$ is equiintegrable
    in $(0,T)$, for every $R>0$.
     \end{enumerate}
  Then there exists a subsequence (still indexed by $n$)
  and a couple $(\mu_t,\nnu_t)\in \ce0T\Rd$
  such that (recall \eqref{eq:DS:4})
  \begin{equation}
    \label{eq:scheme3:34}
    \begin{aligned}
      \mu^n_t\weaksto \mu_t\quad&\text{weakly$^*$ in }\RPM(\Rd)
      \quad\forall\, t\in [0,T],\\
      \nnu^n\weaksto \nnu\quad&\text{weakly$^*$ in }
      \Mloc(\Rd\times (0,T);\Rd).
    \end{aligned}
  \end{equation}
  \eqref{eq:scheme3:34} yields in particular
  \begin{equation}
    \label{eq:cap4:32}
    \int_0^T \Phi(\mu_t,\nnu_t|\gamma)\,\d t\le
    \liminf_{n\up+\infty}
    \int_0^T \Phi(\mu^n_t,\nnu^n_t|\gamma^n)\,\d t
  \end{equation}
  for every sequence of Radon measures
  $\gamma^n\weaksto\gamma$ in $\RPM(\Rd)$, where $\Phi$ is an integral
  functional as in \eqref{eq:scheme:8}.
\end{lemma}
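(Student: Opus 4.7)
The plan is to deduce the result in three stages: establish uniform bounds that furnish candidate limits, promote them to genuinely pointwise-in-$t$ limits for $\mu^n$, and finally transfer everything to the space-time level to read off the continuity equation and the semicontinuity bound. The main technical obstacle is the sharpening from $L^1$-in-time convergence to the pointwise statement $\mu^n_t\weaksto\mu_t$ for \emph{every} $t\in[0,T]$, which requires an Arzelà--Ascoli argument driven by the equiintegrability hypothesis.

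\textbf{Uniform bounds and extraction.} By equiintegrability the quantities $V^n_R:=\int_0^T|\nnu^n_t|(\Ball R)\,\d t$ are uniformly bounded in $n$ for every $R>0$. Combining hypothesis (1) with the mass estimate \eqref{eq:cap4:33} of Lemma \ref{le:continuous_representative}, this yields $M_R:=\sup_n\sup_{t\in[0,T]}\mu^n_t(\Ball R)<+\infty$ for every $R>0$. Identifying $\nnu^n$ with the space-time vector measure of \eqref{eq:DS:4}, the bound $|\nnu^n|(\Ball R\times(0,T))\le V^n_R$ allows a first extraction so that $\nnu^n\weaksto\nnu$ weakly$^*$ in $\Mloc(\Rd\times(0,T);\Rd)$. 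For $\zeta\in C^1_c(\Ball R)$, identity \eqref{eq:traces} applied to $\mu^n$ gives
\[
|\mu^n_t(\zeta)-\mu^n_{t'}(\zeta)|\le \|\nabla\zeta\|_\infty\int_{t'\wedge t}^{t'\vee t}|\nnu^n_r|(\Ball R)\,\d r,
\]
and equiintegrability turns the right-hand side into a uniform modulus of continuity in $t$. A diagonal Arzelà--Ascoli argument over a countable set dense in each $C^1_0(\Ball{R_n})$ (as in the proof of Lemma \ref{le:continuous_representative}), combined with the uniform bound $M_R$, produces a further subsequence and a weakly$^*$ continuous curve $t\mapsto\mu_t$ with $\mu^n_t\weaksto\mu_t$ in $\RPM(\Rd)$ for \emph{every} $t\in[0,T]$.

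\textbf{Continuity equation and lower semicontinuity.} Define the space-time lifts $\tilde\mu^n:=\int_0^T\mu^n_t\otimes\delta_t\,\d t$ and $\tilde\mu:=\int_0^T\mu_t\otimes\delta_t\,\d t$. For any $\zeta\in C^0_c(\Rd\times(0,T))$ with space-support in $\Ball R$, the integrand $t\mapsto\int\zeta(\cdot,t)\,\d\mu^n_t$ converges pointwise by Step 2 and is dominated by $\|\zeta\|_\infty M_R$, hence by dominated convergence $\tilde\mu^n\weaksto\tilde\mu$ in $\RPM(\Rd\times(0,T))$. Passing to the limit in the distributional identity \eqref{eq:distribution_sense}, where $\partial_t\zeta$ is tested against $\tilde\mu^n$ and $\nabla\zeta$ against $\nnu^n$, yields $\partial_t\mu+\nabla\cdot\nnu=0$, so $(\mu,\nnu)\in\ce0T\Rd$. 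Finally, since $\phi$ is $t$-independent the Lebesgue decomposition factorizes, giving the identity
\[
\int_0^T\Phi(\mu^n_t,\nnu^n_t|\gamma^n)\,\d t=\Phi(\tilde\mu^n,\nnu^n|\gamma^n\otimes\Leb 1),
\]
and analogously for the limit. Because $\gamma^n\otimes\Leb 1\weaksto\gamma\otimes\Leb 1$, Lemma \ref{le:action_lsc} applied on $\Rd\times(0,T)$ delivers \eqref{eq:cap4:32}.
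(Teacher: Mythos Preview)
Your argument is correct, and the treatment of the lower-semicontinuity inequality \eqref{eq:cap4:32} via the space-time lift $\tilde\mu^n$ and Lemma \ref{le:action_lsc} is exactly what the paper does. The genuine difference is in how you obtain $\mu^n_t\weaksto\mu_t$ for \emph{every} $t$. You run an Arzel\`a--Ascoli argument: equiintegrability of $t\mapsto|\nnu^n_t|(\Ball R)$ converts the estimate from \eqref{eq:traces} into a uniform modulus of continuity for the curves $t\mapsto\mu^n_t(\zeta)$, and a diagonal extraction over a countable dense family of test functions does the rest. The paper instead extracts convergence only at the single time $s$ and of the space-time measure $\nnu^n$; it then uses equiintegrability to show that the limit $\nnu$ has an $\Leb 1$-absolutely-continuous time marginal, hence disintegrates as $\int_0^T\nnu_t\,\d t$ and gives no mass to time slices. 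This legitimizes testing $\nnu^n$ against the discontinuous function $\chi_{[s,t]}(r)\nabla\zeta(x)$ (via \cite[Prop.~5.1.10]{Ambrosio-Gigli-Savare05}), and the representation \eqref{eq:traces} then forces convergence of $\mu^n_t(\zeta)$ for each fixed $t$. Your route is more self-contained, avoiding the external result on discontinuous integrands; the paper's route, on the other hand, produces the disintegration of $\nnu$ as an explicit byproduct.

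That disintegration is in fact the one point you should not skip: when you write ``so $(\mu,\nnu)\in\ce0T\Rd$'' you are implicitly claiming that the limit space-time measure $\nnu$ has the form $\nnu=\int_0^T\nnu_t\,\d t$ for a Borel family $(\nnu_t)$ with $\int_0^T|\nnu_t|(\Ball R)\,\d t<\infty$, as required by Definition \ref{def:CE}. This does follow from your hypotheses---for any Borel $I\subset(0,T)$ one has $|\nnu|(\Ball R\times I)\le\liminf_n\int_I|\nnu^n_t|(\Ball R)\,\d t$, and equiintegrability makes the right-hand side small when $\Leb 1(I)$ is small, so $I\mapsto|\nnu|(\Ball R\times I)$ is absolutely continuous with respect to $\Leb 1$ and the disintegration theorem applies---but the step should be made explicit.
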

\begin{proof}
  Since $\nnu^n:=\int_0^T\nnu^n_t\,\d t$ and
  $\mu^n_s$ have total variation uniformly
  bounded on each compact subset of $\Rd\times[0,T]$,
  we can extract a subsequence (still
  denoted by $\mu^n_s,\nnu^n$) such that
  $\mu^n_s\weaksto \mu_s$ in $\Mloc(\Rd)$ and
  $\nnu^n\weaksto \nnu$ in $\Mloc(\Rd\times[0,T];\Rd)$.
  The estimate \eqref{eq:cap4:33} shows that
  \begin{equation}
    \label{eq:cap3:7}
    \sup_{n\in \N}\mu^n_t(\Ball R)<+\infty\quad
    \forall\, t\in [0,T],\ R>0.
  \end{equation}
  The equiintegrability condition on $\nnu^n$ shows that
  $\nnu$ satisfies
  \begin{displaymath}
    |\nnu|(B_R\times I)=\int_I m_R(t)\,\d t\quad
    \quad\forall\, I\in \mathcal B(0,T),\ R>0,\quad
    \text{for some }m_R\in L^1(0,T),
  \end{displaymath}
  so that by
  the disintegration theorem 
  we can represent it as $\nnu=\int_0^T \nnu_t$ for a Borel family
  $\{\nnu_t\}_{t\in (0,T)}$ still satisfying \eqref{eq:integrability_of_vt}.
  Let us now consider a function $\zeta\in C^1_c(\Rd)$
  and for a given interval $I=[t_0,t_1]\subset [0,T]$
  the time dependent function
  $  \zzeta(t,x):=\nchi_I(t)\nabla\zeta(x).$
  Since the discontinuity set of $\zzeta$ is concentrated on
  $N=\Rd\times\{t_0,t_1\}$ and $|\nnu|(N)=0$, general convergence
  theorems (see e.g.\ \cite[Prop.\ 5.1.10]{Ambrosio-Gigli-Savare05} yields
  \begin{equation}
    \label{eq:scheme3:35}
    \begin{aligned}
      \lim_{n\to\infty}&\int_{I}\int_{\Rd}\nabla \zeta(x)\cdot
      \d\nnu^n_t(x)\,\d t= \lim_{n\to\infty}\int_{\Rd\times (0,T)}
      \zzeta\cdot \d\nnu^n(t,x)
      \\&= \int_{\Rd\times(0,T)
        }\zzeta\cdot \d\nnu(t,x)=
      \int_{I}\int_{\Rd}\nabla \zeta(x)\cdot
      \d\nnu_t(x)\,\d t.
    \end{aligned}
  \end{equation}
  Applying \eqref{eq:traces} with $\zeta(t,x):=\zeta(x)$ and
  $t_0:=s$
  and the estimate \eqref{eq:cap3:7}
  we thus obtain the weak convergence of $\mu^n_t$ to
  a measure $\mu_t\in \FPM(\Rd)$ for every $t\in [0,T]$.
  It is immediate to check that the couple
  $(\mu_t,\nnu_t)$ belongs to $\ce0T\Rd$.
    \eqref{eq:cap4:32} follows now by the representation 
  \begin{displaymath}
    \int_0^T\Phi(\mu_t,\nnu_t|\gamma)\,\d t
    =
    \Phi(\mu,\nnu|\bar\gamma),\quad
    \mu:=\int_0^T \mu_t\, \d t,\
    \bar\gamma=\gamma\otimes\Leb 1\in \RPM(\Rd\times(0,T) )
  \end{displaymath}
  and the lower semicontinuity property
  stated in Theorem \ref{thm:lsc_functional_measures}.
\qed\end{proof}

\subsection{\bfseries Solutions of the continuity equation with finite
  $\Phi$-energy}
\label{subsec:FE}
For all this section we
will assume that $\phi:(0,+\infty)\times\Rd\to
(0,+\infty)$ is an admissible action density function
as in (\ref{subeq:phi_prop}a,b,c) for some $p\in (1,+\infty)$,
$\gamma\in \RPM(\Rd)$ is a given reference Radon measure, and
$\Phi$ is the corresponding integral functional as in
\eqref{eq:scheme:8}.
We want to study the properties of
measure valued solutions $(\mu,\nnu)$ of the continuity equation
\eqref{eq:continuity1} with finite $\Phi$-energy
\begin{equation}
  \label{eq:FE}
  \Energy:=\int_0^T \Phi(\mu_t,\nnu_t|\gamma)\,\d t<+\infty.
\end{equation}
We denote by $\CE\phi\gamma0T\Rd$ the subset of $\ce0T\Rd$ whose
elements $(\mu,\nnu)$ satisfies \eqref{eq:FE}.
\begin{remark}
  \label{rem:obvious}
  \upshape
  If $(\mu_t)_{t\in [0,T]}$ is weakly$^*$ continuous in $\RPM(\Rd)$
  and \eqref{eq:FE} holds,   
  then $\mu_t,\nnu_t$ also satisfy \eqref{eq:integrability_of_vt}:
  in fact, the weak$^*$ continuity of $\mu_t$ yields
  for every $R>0$
  $\sup_{t\in [0,T]}\mu_t(\Ball R)=M_R<+\infty$,
  and the estimate \eqref{eq:scheme3:5} yields
  (recall \eqref{eq:phinorm})
  \begin{equation}
    \label{eq:cap4:24}
    V_R\le \eta\,\int_0^T \phinorm{\nnu_t}(\Ball R)\,\d t\le
    \eta\,T^{1/q}\,\Energy^{1/p}\, H(\gamma(\Ball
    R),M_R)^{1/q}<+\infty.
  \end{equation}
  \end{remark}
Recalling that the function $\sfh$ is defined by
\eqref{eq:cap2:29}, we also introduce
the concave function
\begin{equation}
  \label{eq:cap3:14}
  \omega(s):=\int_0^s \frac{1}{\sfh(r)^{1/q}}\,\d r,\quad
  \omega(0)=0,\quad
  \omega'(s)=\frac{1}{\sfh(s)^{1/q}},\quad
  \lim_{s\to\infty}\omega(s)=+\infty.
\end{equation}
In the homogeneous case $\phi(\rho,\zz)=\rho^\alpha\dphinorm\zz^q$ we
have
\begin{equation}
  \label{eq:cap3:15}
  \omega(s)=\int_0^s r^{-\alpha/q}\,\d r=\frac
  q{q-\alpha}s^{1-\alpha/q}=
  \frac p\theta s^{\theta/p}.
\end{equation}
For given nonnegative $\zeta\in
C^1_{\rm c}(\Rd)$ and $\mu\in \RPM(\Rd)$
we will use the short notation
\begin{equation}
  \label{eq:cap3:12}
  \begin{gathered}
    \SZ:=\supp(\D\zeta)\subset\Rd,\quad
    G_p(\zeta):=\int_{\SZ}\zeta^p\,\d\gamma, \quad
    D(\zeta):=\sup_{\Rd}\dphinorm{\D\zeta}.
  \end{gathered}
\end{equation}
\begin{theorem}
  \label{thm:crucial_estimate}
  Let $\zeta\in C^1_{\rm c}(\Rd)$ be a nonnegative function
  with $Z,G(\zeta),D(\zeta)$ defined as in \eqref{eq:cap3:12}, and let $\mu,\nnu\in
  \CE\phi\gamma0T\Rd$.
  Setting
  \begin{equation}
    \label{eq:cap3:13}
    \Energy_Z:=\int_0^T\Phi(\mu_t,\nnu_t|\gamma,\SZ)\,\d t\le
    \Energy<+\infty,
  \end{equation}
  we have
  \begin{equation}
    \label{eq:cap3:20}
    \left|\tfrac \d{\d t}\mu_{t}(\zeta^p)\right|\le
    p\, D(\zeta)\,\Phi(\mu_t,\nnu_t|\gamma,\SZ)^{1/p}\,
    H\big(G_p(\zeta),\mu_t(\zeta^p)\big)^{1/q}.
  \end{equation}
  In particular,
  there exists a constant $\sfC_1>0$ only depending (in a monotone way) on
  $\sfh, p, T$ such that
   \begin{equation}
    \label{eq:cap4:2bis}
    \sup_{t\in [0,T]}\mu_t(\zeta^p)\le
    \sfC_1\Big(\mu_0(\zeta^p)+D(\zeta)G_p(\zeta)^{1/q}
    \Energy^{1/p}_\SZ+
    D^p(\zeta)\Energy_\SZ\Big).
  \end{equation}
 Moreover, if $G_p(\zeta)>0$,
  \begin{equation}
    \label{eq:cap3:16}
    \left|\tfrac \d{\d t}\omega\left(\mu_t(\zeta^p)/G_p(\zeta)\right)\right|\le
    \frac
    {p\,D(\zeta)}{G_p(\zeta)^{1/p}}\Phi(\mu_t,\nnu_t|\gamma,\SZ)^{1/p}\quad
    \text{for a.e.\ $t\in (0,T)$}.
  \end{equation}
  In particular, in the $(\alpha\text{-}\theta)$-homogeneous case,
  for every $0\le s\le t\le T$ we have
  \begin{equation}
    \label{eq:cap4:15}
    \Big|\|\zeta\|^\theta_{L^p(\mu_t)}-
    \|\zeta\|^\theta_{L^p(\mu_s)}\Big|\le
    \theta\, D(\zeta)\, \|\zeta\|_{L^p(\gamma,\SZ)}^{\theta-1}\,
    \int_s^t\Phi(\mu_r,\nnu_r|\gamma,\SZ)^{1/p}\,\d r.
  \end{equation}
  \end{theorem}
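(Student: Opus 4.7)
The starting point is to apply the traces identity \eqref{eq:traces} to the time-independent test function $(x,t)\mapsto \zeta(x)^p$ (compactly supported since $\zeta$ is), which shows that $t\mapsto \mu_t(\zeta^p)$ is absolutely continuous with
\begin{equation*}
\frac{\d}{\d t}\mu_t(\zeta^p)=p\int_{\Rd}\zeta^{p-1}\nabla\zeta\cdot\d\nnu_t\qquad\text{for a.e. }t\in(0,T).
\end{equation*}
The duality \eqref{eq:cap3:47} gives $|\nabla\zeta\cdot\ww|\le\dphinorm{\nabla\zeta}\,\phinorm{\ww}$, so this derivative is bounded in absolute value by $pD(\zeta)\int_\SZ\zeta^{p-1}\,\d\phinorm{\nnu_t}$, the integration being restricted to $\SZ$ because $\nabla\zeta$ vanishes outside. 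Applying the integrability estimate \eqref{eq:Cap3:1} to the Borel function $\zeta^{p-1}\nchi_\SZ$, using $(p-1)q=p$ together with the definition $G_p(\zeta)=\int_\SZ\zeta^p\,\d\gamma$, and the monotonicity of $H$ (inherited from $\sfh$) to replace $\int_\SZ\zeta^p\,\d\mu_t$ by $\mu_t(\zeta^p)$, yields \eqref{eq:cap3:20}.

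For the uniform bound \eqref{eq:cap4:2bis}, set $y(t):=\mu_t(\zeta^p)$ and fix any pair $(a,b)\in\Gamma_\phi$. Subadditivity of $r\mapsto r^{1/q}$ gives $H(G_p,y)^{1/q}\le(aG_p)^{1/q}+(by)^{1/q}$. Integrating \eqref{eq:cap3:20} from $0$ to $t$, and applying H\"older's inequality with exponents $p,q$ both to $\int_0^t\Phi(\mu_r,\nnu_r|\gamma,\SZ)^{1/p}\,\d r$ and to $\int_0^t\Phi(\mu_r,\nnu_r|\gamma,\SZ)^{1/p}\,y(r)^{1/q}\,\d r$, produces an inequality of the form
\begin{equation*}
\|y\|_\infty\le y(0)+c_1D(\zeta)G_p(\zeta)^{1/q}\Energy_\SZ^{1/p}+c_2D(\zeta)\Energy_\SZ^{1/p}\,\|y\|_\infty^{1/q},
\end{equation*}
with $c_1,c_2$ depending only on $a,b,T,p$. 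Since $1/q<1$, Young's inequality absorbs the $\|y\|_\infty^{1/q}$ term into $\|y\|_\infty$ at the cost of an additional summand proportional to $D(\zeta)^p\Energy_\SZ$; optimizing over $(a,b)\in\Gamma_\phi$ then produces \eqref{eq:cap4:2bis} with a constant $\sfC_1$ depending monotonically only on $\sfh,p,T$.

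The logarithmic estimate \eqref{eq:cap3:16} follows by a direct chain-rule computation: by \eqref{eq:cap3:14},
\begin{equation*}
\frac{\d}{\d t}\omega\!\left(\frac{y(t)}{G_p(\zeta)}\right)=\frac{y'(t)}{G_p(\zeta)\,\sfh(y(t)/G_p(\zeta))^{1/q}},
\end{equation*}
and the key identity $H(G_p,y)=G_p\,\sfh(y/G_p)$ from \eqref{eq:cap2:54} cancels the $\sfh$-factor produced by \eqref{eq:cap3:20}, leaving exactly the clean bound \eqref{eq:cap3:16}. Finally, in the $(\alpha\text{-}\theta)$-homogeneous case formula \eqref{eq:cap3:15} gives $\omega(s)=(p/\theta)s^{\theta/p}$: integrating \eqref{eq:cap3:16} from $s$ to $t$ and multiplying by $(\theta/p)G_p(\zeta)^{\theta/p}$ gives \eqref{eq:cap4:15}, using $G_p(\zeta)^{(\theta-1)/p}=\|\zeta\|_{L^p(\gamma,\SZ)}^{\theta-1}$. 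The only delicate point is the Young-type absorption step in \eqref{eq:cap4:2bis}: the sublinear factor $\|y\|_\infty^{1/q}$ on the right-hand side must be balanced carefully to extract the three canonical summands in the stated bound. The remaining estimates are essentially bookkeeping applications of convex duality and H\"older's inequality.
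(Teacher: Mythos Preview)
Your proof is correct and follows essentially the same route as the paper's own argument: differentiate $\mu_t(\zeta^p)$ via the traces identity, bound by the integrability estimate \eqref{eq:Cap3:1} (using $(p-1)q=p$), then derive \eqref{eq:cap4:2bis} by integrating and absorbing the $\|y\|_\infty^{1/q}$ term via Young's inequality, and obtain \eqref{eq:cap3:16}, \eqref{eq:cap4:15} by the chain rule through $\omega$ and the identity $H(G_p,y)=G_p\,\sfh(y/G_p)$. The only cosmetic difference is that the paper fixes a single pair $(a,b)\in\Gamma_\phi$ rather than ``optimizing'' (the constant $\sfC_1$ then depends on $\sfh$ through any such choice), but this does not affect the argument.
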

\begin{proof}
  Setting $m_t:=\mu_t(\zeta^p),\ G=G_p(\zeta),\ D=D(\zeta)$
  we easily have by \eqref{eq:Cap3:1}
  \begin{displaymath}
    \frac \d{\d t}m_t=\frac \d{\d t}
    \int_\Rd \zeta^p\,\d \mu_t=
    p\, \int_\SZ \zeta^{p-1}\nabla\zeta\cdot
    \d\nnu_t\le p\,D\,
    \Phi(\mu_t,\nnu_t|\gamma,\SZ)^{1/p}\,
    H\big(G,m_t\big)^{1/q},
  \end{displaymath}
  since $(\zeta^{p-1})^q=\zeta^p$.
  Since $H\big(G,m_t)=G\sfh (m_t/G),$
  we get
  \begin{displaymath}
    \sfh^{-1/q}(m_t/G)\frac \d{\d t}m_t\le
    p\,D\,G^{1/q}\Phi(\mu_t,\nnu_t|\gamma,\SZ)^{1/p}.
  \end{displaymath}
  Recalling that $\frac \d{\d r}\omega(r)=\sfh^{-1/q}(r)$
  we get
  \eqref{eq:cap3:16}.

  In order to prove \eqref{eq:cap4:2bis} we set
  $M:=\sup_{t\in [0,T]}m_t$ and we
  choose constants $(a,b)\in \Gamma_\phi$;
  integrating \eqref{eq:cap3:20} we get
  \begin{equation}
    \label{eq:cap4:7}
    \sup_{t\in [0,T]}\big|m_t-m_0\big|\le
    p\,D\, T^{1/q}\,\Big( \big(a G\big)^{1/q}\Energy^{1/p}_\SZ+
    \big(b  M\big)^{1/q}\Energy^{1/p}_\SZ\Big).
  \end{equation}
  By using the inequality $xy\le p^{-1}x^p+q^{-1}y^q$ we obtain
  \begin{equation}
    \label{eq:cap4:13}
    M\le m_0+ p\, D\,\big(a \,T \,G\big)^{1/q}\Energy^{1/p}_\SZ+
    \frac 1q M+p^{p-1}\,D^p\,\big(b\, T \big)^{p/q}\Energy_\SZ
  \end{equation}
  which yields \eqref{eq:cap4:2bis} with
  $\sfC_1:=p\max\big(1,p(aT)^{1/q},p^{p-1}(b T)^{p/q}\big)$.

  Finally, let us assume that $\phi$ satisfies the
  $(\alpha\text{-}\theta)$-homogeneity condition, so that $\omega(s)=\frac
  p\theta s^{\theta/p}$ as in \eqref{eq:cap3:15}.
  It follows that
  \begin{equation}
    \label{eq:cap4:16}
    \omega(G^{-1}\,m_t)=\frac p\theta
    \|\zeta\|_{L^p(\mu_t)}^\theta \|\zeta\|_{L^p(\gamma,\SZ)}^{-\theta}.
  \end{equation}
  Integrating \eqref{eq:cap3:16} we conclude.
\qed\end{proof}
We extend the definition of $\sfm_r(\mu)$ also for negative values
of $r$ by setting
\begin{equation}
  \label{eq:cap4:69}
  \sftm_r(\mu):=\mu(\Ball 1)+\int_{\Rd\setminus \Ball
    1}|x|^r\,\d\mu(x)=
  \int_\Rd \big(1\lor |x|\big)^r\,\d \mu(x)\quad
  \forall\, r\in \R.
\end{equation}
Notice that $\sftm_0(\mu)=\mu(\Rd)$ and $\sfm_r(\mu)\le \sftm_r(\mu)\le 
\mu(B_1)+\sfm_r(\mu)$ when $r>0$.
\begin{theorem}
\label{thm:moment_estimate}
  Let us assume that $\sftm_r(\gamma)<+\infty$ for some $r\le p$
  and let $(\mu,\nnu)\in \CE\phi\gamma0T\Rd$ satisfy
  \eqref{eq:FE}.
  For every $\delta\le  1+r/q$,
  if $\sftm_\delta(\mu_0)<+\infty$
  then also $\sftm_\delta(\mu_t)<+\infty$ and
  there exists a constant $\sfC_2$ only depending
  in a monotone way 
  on $\sfh, p, T, A,|\delta|$ such that
  \begin{equation}
    \label{eq:cap4:70}
    \sftm_\delta(\mu_t)\le \sfC_2\Big(
    \sftm_\delta(\mu_0)+\sftm_r(\gamma)^{1/q}\Energy^{1/p}+\Energy\Big).
  \end{equation}
  Moreover, if $r\ge -q$ and $\mu_0(\Rd)<+\infty$,
  then $\mu_t(\Rd)$ is finite and constant for every $t\in [0,T]$.
  \end{theorem}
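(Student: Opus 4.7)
The plan is to test the continuity equation against a smoothly truncated version of the moment weight $(1\vee|x|)^\delta$ and close a Grönwall-type inequality. Fix $\chi\in C^1_c([0,+\infty))$ with $\chi\equiv1$ on $[0,1]$ and $\chi\equiv0$ on $[2,+\infty)$, let $\psi(x):=(1+|x|^2)^{\delta/(2p)}$, and set $\zeta_R(x):=\chi(|x|/R)\,\psi(x)\in C^1_c(\Rd)$, so that $\zeta_R^p(x)\nearrow (1+|x|^2)^{\delta/2}\asymp(1\vee|x|)^\delta$ as $R\uparrow+\infty$. A direct computation exploiting the identity $1+q/p=q$ yields the crucial pointwise bound
\begin{equation*}
\zeta_R^p(x)\,|\nabla\zeta_R(x)|^q\le C\,(1\vee|x|)^{q(\delta-1)}\qquad\text{uniformly in }R,
\end{equation*}
with contributions both from the weight gradient ($\psi^p|\nabla\psi|^q\asymp |x|^\delta\,|x|^{q(\delta/p-1)}=|x|^{q(\delta-1)}$ for $|x|\ge 1$) and from the cutoff derivative (which gives $R^{q(\delta-1)}\asymp|x|^{q(\delta-1)}$ on $[R,2R]$). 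Since $\delta\le 1+r/q$ is \emph{equivalent} to $q(\delta-1)\le r$, this bound integrates against $\gamma$ uniformly:
\begin{equation*}
\int_\Rd \zeta_R^p|\nabla\zeta_R|^q\,\d\gamma\le C\,\tilde{\sf m}_r(\gamma);
\end{equation*}
simultaneously, since $|\nabla\zeta_R|$ itself is uniformly bounded (because $\delta\le 1+r/q\le p$), we also get the self-referential estimate $\zeta_R^p|\nabla\zeta_R|^q\le C\,\zeta_R^p$, hence $\mu_t(\zeta_R^p|\nabla\zeta_R|^q)\le C\,\mu_t(\zeta_R^p)$.

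Using \eqref{eq:traces} we have $\frac{\d}{\d t}\mu_t(\zeta_R^p)=p\int\zeta_R^{p-1}\nabla\zeta_R\cdot\d\nnu_t$, and an application of the integrability estimate \eqref{eq:Cap3:1} to the function $(\zeta_R^{p-1}|\nabla\zeta_R|)$, combined with the bound $H(s,\rho)\le as+b\rho$ from \eqref{eq:cap2:29}, gives
\begin{equation*}
\Big|\frac{\d}{\d t}\mu_t(\zeta_R^p)\Big|\le C\,\Phi(\mu_t,\nnu_t|\gamma)^{1/p}\,\bigl(\tilde{\sf m}_r(\gamma)+\mu_t(\zeta_R^p)\bigr)^{1/q}.
\end{equation*}
Setting $y_t:=\tilde{\sf m}_r(\gamma)+C\mu_t(\zeta_R^p)$ converts this into $\frac{\d}{\d t}(p\,y_t^{1/p})\le C\,\Phi^{1/p}$; integrating and applying Hölder in time yields $y_t^{1/p}\le y_0^{1/p}+C\,T^{1/q}\Energy^{1/p}$, and the sharp binomial expansion $(a+b)^p\le a^p+p(a+b)^{p-1}b$, combined with Young's inequality, gives the three-term bound
\begin{equation*}
\mu_t(\zeta_R^p)\le \sfC_2\bigl(\tilde{\sf m}_\delta(\mu_0)+\tilde{\sf m}_r(\gamma)^{1/q}\Energy^{1/p}+\Energy\bigr)
\end{equation*}
uniformly in $R$. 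Monotone convergence as $R\uparrow+\infty$ passes $\mu_t(\zeta_R^p)$ to $\int_\Rd(1+|x|^2)^{\delta/2}\,\d\mu_t\asymp\tilde{\sf m}_\delta(\mu_t)$, producing \eqref{eq:cap4:70}.

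For the mass conservation statement, the choice $\delta=0$ is compatible with $\delta\le 1+r/q$ precisely when $r\ge-q$, so the previous estimate gives $\sup_{t}\mu_t(\Rd)<+\infty$. To show constancy of $t\mapsto\mu_t(\Rd)$ it is enough to test \eqref{eq:traces} with the simple cutoff $\tilde\zeta_R(x):=\chi(|x|/R)$ and estimate the right-hand side via \eqref{eq:Cap3:1} on the shrinking annular support $Z_R=B_{2R}\setminus B_R$. Since $|\nabla\tilde\zeta_R|\le CR^{-1}\nchi_{Z_R}$ and $\gamma(Z_R)\le(2R)^{-r}\int_{|x|\ge R}|x|^r\,\d\gamma=(2R)^{-r}o(1)$, one gets $R^{-q}\gamma(Z_R)\le CR^{-q-r}o(1)=o(1)$ exactly when $r\ge-q$, while dominated convergence (using the already-established finiteness of $\sup_\tau\mu_\tau(\Rd)$) handles the $\mu_\tau(Z_R)$ contribution; hence $|\mu_t(\tilde\zeta_R)-\mu_s(\tilde\zeta_R)|\to0$ and $\mu_t(\Rd)=\mu_s(\Rd)$ for every $s,t\in[0,T]$.

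\textbf{Main obstacle.} The delicate point is that the naive specialization of \eqref{eq:cap4:2bis} (which uses the product $D(\zeta)\cdot G_p(\zeta)^{1/q}$ of sup of $|\nabla\zeta|$ and total integral of $\zeta^p$) is not sharp enough, because for $\zeta_R\asymp(1\vee|x|)^{\delta/p}$ both factors typically grow with $R$ in a way that cannot be balanced when $\delta>r$. One must instead exploit the \emph{pointwise} product $\zeta_R^p|\nabla\zeta_R|^q$, which is much smaller than $\sup|\nabla\zeta_R|^q\cdot\zeta_R^p$, and which is precisely what \eqref{eq:Cap3:1} produces when applied to the natural test function $\zeta_R^{p-1}|\nabla\zeta_R|$. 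The condition $\delta\le 1+r/q$ emerges naturally as $q(\delta-1)\le r$, the exact scaling at which $\zeta_R^p|\nabla\zeta_R|^q$ is dominated by $(1\vee|x|)^r$; the boundary case $r=-q$ of mass conservation requires the explicit cutoff argument above rather than the generic criterion $V_R/R\to0$ of Lemma \ref{le:continuous_representative}.
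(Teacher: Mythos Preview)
Your proof is correct and takes a genuinely different route from the paper's.

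The paper argues by \emph{dyadic annular decomposition}: it introduces test functions $\upsilon_n$ supported on the annuli $\{2^{n-1}\le|x|\le 2^{n+2}\}$, applies the local estimate \eqref{eq:cap4:2bis} on each annulus (where $D(\upsilon_n)\sim 2^{-n}$ and $G_p(\upsilon_n)\lesssim 2^{-nr}K_n'$ with $K_n'$ the $r$-moment contribution of $\gamma$ on the annulus), multiplies by $2^{n\delta}$, and sums. The condition $\delta\le 1+r/q$ appears as the requirement that the exponent $\delta-1-r/q$ in the cross term be nonpositive so the sum converges via H\"older over $n$. You instead keep a \emph{single} truncated moment weight $\zeta_R$ and apply \eqref{eq:Cap3:1} with the test function $\zeta_R^{p-1}|\nabla\zeta_R|$ rather than pulling out $\sup|\nabla\zeta_R|$ as in \eqref{eq:cap3:20}; this produces the pointwise product $\zeta_R^p|\nabla\zeta_R|^q$, and the identity $1+q/p=q$ makes the scaling come out exactly as $(1\vee|x|)^{q(\delta-1)}$, so the condition $q(\delta-1)\le r$ is immediate. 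Your observation in the ``main obstacle'' paragraph is exactly right: the paper's global estimate \eqref{eq:cap4:2bis} with a single $\zeta_R$ would fail, and the paper resolves this by localizing rather than by sharpening the test function.

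What each approach buys: the paper's dyadic decomposition makes the three terms in \eqref{eq:cap4:70} appear directly as sums of the three terms in \eqref{eq:cap4:2bis}, and it transfers verbatim to the $\theta$-homogeneous refinement (Theorem~\ref{thm:moment_estimate2}), where the sharper annular estimate \eqref{eq:cap4:15} replaces \eqref{eq:cap4:2bis}. Your single-weight approach is more direct and makes the role of the threshold $\delta=1+r/q$ completely transparent; the slight cost is that recovering the precise cross term $\sftm_r(\gamma)^{1/q}\Energy^{1/p}$ (rather than the weaker $\sftm_r(\gamma)+\Energy$) requires the extra binomial/Young manipulation you sketch, which is correct but a bit delicate. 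For the mass-conservation part both proofs are essentially the same cutoff-on-annuli argument.
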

\begin{proof}
  Let us first set
  \begin{equation}
    \label{eq:cap4:30}
    K_{n}:=2^{nr}\gamma(\Ball{2^{n+1}}\setminus\Ball {2^n})
  \end{equation}
  observing that
  \begin{equation}
    \label{eq:cap4:31}
    K_{n}\le \sum_{j=0}^{+\infty}K_j\le 2^{r^-}\,\sftm_r(\gamma) ,\quad
    \limsup_{n\up+\infty}K_{n}=0.
  \end{equation}
  We consider the usual cutoff functions $\zeta_n\in C^\infty_{\rm
    c}(\Rd)$
  as in (\ref{eq:cap4:21},b) and we set 
  \begin{equation}
  \label{eq:cap4:20bis}
    D_n=
    D(\zeta_n)=\sup\dphinorm{\D\zeta_n}\le A\,2^{-n},\quad
    G_n=G_p(\zeta_n)\le 
    \gamma(\Ball{2^{n+1}}\setminus\Ball {2^n})
    =2^{-nr}\,K_n.
  \end{equation}
  By \eqref{eq:cap4:2bis}
  we obtain
  \begin{equation}
    \label{eq:post}
    \sup_{t\in [0,T]}\mu_t(\Ball{2^n})\le
    \sfC_{1}\Big(\mu_0(\Ball{2^{n+1}})+
    A\,2^{-n(1+r/q)}K_n^{1/q}E^{1/q}+A^p\,2^{-np}E\Big);
  \end{equation}
  in particular, if $r\ge -q$ and $\mu_0(\Rd)<+\infty$,
  we can derive the uniform upper bound
  $\mu_t(\Rd)\le \sfC_{1}\, \mu_0(\Rd)$
  letting $n\up+\infty$.
  We can then deduce that $\mu_t(\Rd)$
  is constant by applying the estimate \eqref{eq:cap3:20}, which
  yields after an integration in time and for every $(a,b)\in \Gamma_\phi$
  \begin{displaymath}
    \sup_{t\in [0,T]}\big|\mu_t(\zeta_n^p)-\mu_0(\zeta_n^p)\big|
    \le p\,A\,T^{1/q}\,\Energy^{1/p}\, 2^{-n}\big(a 2^{-nr}\,K_n +b\sfC_1
    \mu_0(\Rd)\big)^{1/q}.
  \end{displaymath}
  In order to show \eqref{eq:cap4:70}, we argue
  as before, by introducing the new family of
  test functions induced by $\upsilon_n(x):=\upsilon_0(x/2^n)\in C^\infty_{\rm c}(\Rd)$
  \begin{equation}
    \label{eq:cap4:20tris}
    0\le \upsilon_n\le 1,\quad
    \begin{cases}
      \upsilon_n(x)\equiv 1&\text{if }2^n\le |x|\le 2^{n+1},\\
      \upsilon_n(x)\equiv 0&\text{if }|x|\le 2^{n-1}\ \text{or }|x|\ge
      2^{n+2},
    \end{cases}
    \quad
    \dphinorm{\D\upsilon_n}\le A\,2^{-n}.
  \end{equation}
  Observe that $    1\le
  \sum_{n=1}^{+\infty}\big(\upsilon_n(x)\big)^p\le 3 $
  and for some constant $A_\delta>1$
  \begin{equation}
    \label{eq:cap4:34}
    A_\delta^{-1}\,|x|^\delta\le
	\sum_{n=1}^{+\infty}
    2^{\delta n}\big(\upsilon_n(x)\big)^p\le
	A_{\delta}\, |x|^\delta\quad
    \forall\, x\in \Rd,\ |x|\ge 2.
  \end{equation}
  As before, setting $K_n':=K_{n+1}+K_{n-1},$ we have
  $D(\upsilon_n)\le A\,2^{-n}$ and
  \begin{equation}
    G_p(\upsilon_n)\le \Big(2^{-(n+1)r}K_{n+1}+2^{-(n-1)r}K_{n-1}\Big)\le
   2^{|r|}\, 2^{-nr}\, K'_n.
  \end{equation} 
  Applying \eqref{eq:cap4:2bis} we get for every $t\in [0,T]$ 
  \begin{align*}
   2^{n\delta}\mu_t(\upsilon_n^p)\le 
   \sfC_1 \Big(2^{n\delta}\mu_0(\upsilon_n^p)+
   A\,2^{(\delta-1-r/q)n}(K'_n)^{1/q}\,(\Energy'_n)^{1/p}+
   A^p\,2^{(\delta-p)n}\Energy_n'\Big),
  \end{align*}
  where
  \begin{equation}
   \label{eq:cap4:100} 
   \Energy_n:=\int_0^T\Phi(\mu_t,\nnu_t|\gamma,B_{2^{n+1}}\setminus B_{2^{n}})\,\d t,\quad
   \Energy'_n:=\Energy_{n+1}+\Energy_{n-1}.
  \end{equation}
  Since $\delta\le 1+r/q$ and $\delta\le p$, summing up with respect
  to $n$
  and recalling \eqref{eq:post}
  we get
  \begin{equation}
    \label{eq:cap4:34bis}
    \sftm_\delta(\mu_t)\le 
    \sfC_2\Big(\sftm_\delta(\mu_0)+
    (\sftm_r(\gamma))^{1/q}\Energy^{1/p}+\Energy\Big).\quad\qed
  \end{equation} 
\end{proof}
In the the $\theta$-homogeneous case we have a more refined estimate:
\begin{theorem}
\label{thm:moment_estimate2}
Let us assume that
$\phi$ is $\theta$-homogeneous for some $\theta\in (1,p]$, 
the measure $\gamma$ satisfies
the $r$-moment condition $\sftm_r(\gamma)<+\infty$,
and let $(\mu,\nnu)\in \CE\phi\gamma0T\Rd$ satisfy \eqref{eq:FE}.
For every $\delta\le \bar\delta:= \frac 1\theta p+(1-\frac
1\theta)r,$
if $\sftm_\delta(\mu_0)<+\infty$ 
then $\sftm_\delta(\mu_t)$ is finite and there exists a constant
$\sfC_3>0$ such that
\begin{equation}
  \label{eq:cap4:102tris} 
  \sftm_{\delta}(\mu_t)
  \le \sfC_3\Big( \sftm_\delta(\mu_0)
  +\sftm_r(\gamma)^{1-1/\theta}\Energy^{1/\theta}\Big).
\end{equation}
Moreover, if $\bar\delta\ge0$ (i.e.\ $r\ge -p/(\theta-1)$)
and $\mu_0(\Rd)<+\infty$ 
then $\mu_t(\Rd)$ is finite and constant for $t\in [0,T]$.
  \end{theorem}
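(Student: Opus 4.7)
The plan is to refine Theorem \ref{thm:moment_estimate} by replacing the generic estimate \eqref{eq:cap4:2bis} with the sharper $\theta$-homogeneous estimate \eqref{eq:cap4:15}: the latter controls $\|\zeta\|_{L^p(\mu_t)}^\theta$ by a quantity \emph{linear} in $\int_0^T\Phi(\mu_r,\nnu_r|\gamma,\SZ)^{1/p}\,\d r$, so that after raising to the power $p/\theta$ one recovers $\mu_t(\zeta^p)$ with a global dependence on the action through the exponent $1/\theta$ instead of $1/p$. This is exactly the source of the improved moment threshold $\bar\delta$.

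First I apply \eqref{eq:cap4:15} to the annular cutoffs $\upsilon_n$ of \eqref{eq:cap4:20tris}, using the bounds $D(\upsilon_n)\le A\,2^{-n}$ and $G_p(\upsilon_n)\le 2^{|r|}\,2^{-nr}\,K'_n$ already recorded in the proof of Theorem \ref{thm:moment_estimate}; this yields
\begin{equation*}
  \mu_t(\upsilon_n^p)^{\theta/p}\le \mu_0(\upsilon_n^p)^{\theta/p}
  + C\,2^{-n[1+r(\theta-1)/p]}\,(K'_n)^{(\theta-1)/p}\,(\Energy_n')^{1/p}.
\end{equation*}
Since $\theta\le p$, the elementary inequality $(x+y)^{p/\theta}\le 2^{p/\theta-1}(x^{p/\theta}+y^{p/\theta})$ then gives
\begin{equation*}
  \mu_t(\upsilon_n^p)\le C'\Bigl(\mu_0(\upsilon_n^p)
  +2^{-n[p+r(\theta-1)]/\theta}\,(K'_n)^{(\theta-1)/\theta}\,(\Energy_n')^{1/\theta}\Bigr).
\end{equation*}

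Next I multiply by $2^{n\delta}$, sum over $n\ge 1$, and apply Hölder's inequality to the energy contribution with conjugate exponents $\theta/(\theta-1)$ and $\theta$, producing
\begin{equation*}
  \sum_{n\ge 1} 2^{n\delta-n[p+r(\theta-1)]/\theta}(K'_n)^{(\theta-1)/\theta}(\Energy_n')^{1/\theta}
  \le\Bigl(\sum_{n\ge 1}2^{n[\delta\theta-p-r(\theta-1)]/(\theta-1)}K'_n\Bigr)^{(\theta-1)/\theta}\Bigl(\sum_{n\ge 1}\Energy_n'\Bigr)^{1/\theta}.
\end{equation*}
The second factor is bounded by $(3\Energy)^{1/\theta}$, while the first is finite precisely when $\delta\theta-p-r(\theta-1)\le 0$, i.e.\ when $\delta\le\bar\delta$, and in that regime is controlled by $C\,\sftm_r(\gamma)^{(\theta-1)/\theta}$ via \eqref{eq:cap4:31}. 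Combining with the analogous estimate for $\mu_t(\Ball 1)$, obtained by applying \eqref{eq:cap4:15} directly to $\zeta_0$, and with the equivalence \eqref{eq:cap4:34}, I obtain \eqref{eq:cap4:102tris}.

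Finally, the mass-conservation claim corresponds to the choice $\delta=0$, where $\bar\delta\ge 0$ translates into $r\ge -p/(\theta-1)$; the previous step already gives $\mu_t(\Rd)<+\infty$. For constancy I apply \eqref{eq:cap4:15} to the global cutoffs $\zeta_n$ of \eqref{eq:cap4:21}, whose right-hand side is bounded by $C\,T^{1/q}\,2^{-n[1+r(\theta-1)/p]}\,K_n^{(\theta-1)/p}\,\Energy_n^{1/p}$. Under $\bar\delta\ge 0$ the prefactor is bounded, while $K_n\to 0$ by \eqref{eq:cap4:31} and $\Energy_n\to 0$ since $\sum_n\Energy_n<\infty$, so the bound vanishes and monotone convergence as $n\uparrow\infty$ yields $\mu_t(\Rd)^{\theta/p}=\mu_0(\Rd)^{\theta/p}$. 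The main obstacle is the careful bookkeeping of Hölder exponents needed to identify $\delta=\bar\delta$ as the precise boundary of geometric-series summability; the analytical ingredients are otherwise standard once the sharper dyadic estimate is in place.
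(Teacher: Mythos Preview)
Your proposal is correct and follows essentially the same route as the paper: both apply the sharper $\theta$-homogeneous estimate \eqref{eq:cap4:15} first to the global cutoffs $\zeta_n$ (for mass conservation) and then to the annular cutoffs $\upsilon_n$ (for the moment bound), raise to the power $p/\theta$, multiply by $2^{n\delta}$, and sum using H\"older with exponents $\theta/(\theta-1),\theta$. The only differences are presentational: the paper treats mass conservation before the moment estimate and leaves the H\"older summation implicit, whereas you reverse the order and spell out the summability threshold $\delta\le\bar\delta$ explicitly.
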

\begin{proof}
  We argue as in the proof of Theorem \eqref{thm:moment_estimate2},
  keeping the same notation and using the crucial estimate 
  \eqref{eq:cap4:15}.   
   If $\zeta_n$ are the
   test functions  of (\ref{eq:cap4:21},b), 
  \begin{equation}
    \label{eq:cap4:22}
    \|\zeta_n\|_{L^p(\gamma,Z_n)}^{\theta-1}=G_n^{(\theta-1)/p}
    \topref{eq:cap4:20bis}=
    2^{-nr(\theta-1)/p}\big(K_n\big)^{(\theta-1)/p},
  \end{equation}
  so that, since $\bar\delta\theta/p=1+(\theta-1)r/p$, \eqref{eq:cap4:15} yields  \begin{equation}
  \label{eq:cap4:101} 
   \Big|\big(\mu_t(\zeta^p_n)\big)^{\theta/p}-
   \big(\mu_0(\zeta^p_n)\big)^{\theta/p}\Big|\le 
   A\,\theta 2^{-\bar\delta \theta n/p}\,\big(K_n\big)^{(\theta-1)/p} \Energy^{1/p}.
  \end{equation} 
  Since $\bar\delta\ge0$, passing to the limit as $n\up\infty$ and recalling
  \eqref{eq:cap4:31},
  we get  $\mu_t(\Rd)\equiv\mu_0(\Rd)$.
  Concerning the moment estimate, we replace $\zeta_n$ by
  $\upsilon_n$,
  defined by
  in \eqref{eq:cap4:20tris}, obtaining
  \begin{equation}
  \label{eq:cap4:101bis} 
   \Big|\big(\mu_t(\upsilon^p_n)\big)^{\theta/p}-
   \big(\mu_0(\upsilon^p_n)\big)^{\theta/p}\Big|\le 
   \sfC_{3.1} 2^{-\bar\delta \theta n/p}\,\big(K'_n\big)^{(\theta-1)/p} \big(\Energy'_n\big)^{1/p},
  \end{equation}
  and therefore
  \begin{equation}
  \label{eq:cap4:102} 
   \mu_t(\upsilon^p_n)\le \sfC_{3.2}\Big( \mu_0(\upsilon^p_n)
   +2^{-\bar\delta n} \big(K_n'\big)^{1-1/\theta}\big(\Energy_n'\big)^{1/\theta}\Big).
  \end{equation} 
  Multiplying this inequality by $2^{n\delta}$, summing up w.r.t.\ $n$,
  and recalling \eqref{eq:cap4:34}, we obtain
  \begin{equation}
  \label{eq:cap4:102bis} 
   \sftm_{\delta}(\mu_t)
   \le \sfC_3\Big( \sftm_\delta(\mu_0)
   +\sftm_r(\gamma)^{1-1/\theta}\Energy^{1/\theta}\Big).
   \quad\qed
  \end{equation} 
\end{proof}
\begin{corollary}[Compactness for
  solutions of the continuity equation (II)]
  \label{cor:compactness_continuity2}
  Let $(\mu^n,\nnu^n)$ be a sequence in $\CE\phi\gamma 0T\Rd$
  and let $\gamma^n\weaksto \gamma$ in $\RPM(\Rd)$ such that
  \begin{equation}
    \label{eq:cap3:6bis}
    \sup_{n\in \N}\mu^n_0(\Ball R)<+\infty\quad
    \forall\, R>0,\qquad
    \sup_{n\in \N}\int_0^T \Phi(\mu^n_t,\nnu^n_t|\gamma^n)\,\d t<+\infty.
  \end{equation}
  Then conditions 1. and 2. of Lemma \ref{le:compactness_continuity} are
  satisfied and
  therefore there exists a subsequence (still indexed by $n$)
  and a couple $(\mu_t,\nnu_t)\in \CE\phi\gamma0T\Rd$
  such that
  \begin{equation}
    \label{eq:scheme3:34bis}
    \begin{aligned}
      \mu^n_t\weaksto \mu_t\quad
      &\text{weakly$^*$ in }\RPM(\Rd)
      \quad\forall\, t\in [0,T],\\
      \nnu^n\weaksto \nnu\quad&\text{weakly$^*$ in }
      \Mloc(\Rd\times(0,T) ;\Rd),
    \end{aligned}
  \end{equation}
  \begin{equation}
    \label{eq:cap4:32bis}
    \int_0^T \Phi(\mu_t,\nnu_t|\gamma)\,\d t\le
    \liminf_{n\up+\infty}
    \int_0^T \Phi(\mu^n_t,\nnu^n_t|\gamma^n)\,\d t.
  \end{equation}
  Suppose moreover that
  $\mu^n_0(\Rd)\to \mu_0(\Rd)$
  and $\sup_n\sftm_\kappa(\gamma^n)<+\infty$ where $\kappa=-q$ or
  $\kappa =-p/(\theta-1)$ in the $\theta$-homogeneous case, then 
  (along the same subsequence)
  $\mu^n_t(\Rd)
  \to \mu_t(\Rd)$ for every $t\in [0,T]$.
 \end{corollary}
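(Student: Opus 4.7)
\begin{Proof}
The plan is to reduce to Lemma~\ref{le:compactness_continuity} by verifying its two hypotheses uniformly in~$n$, and then to invoke Theorem~\ref{thm:moment_estimate} (or Theorem~\ref{thm:moment_estimate2} in the $\theta$-homogeneous case) to upgrade weak$^*$ convergence of $\mu^n_t$ to convergence of total masses.

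Hypothesis~(1) of Lemma~\ref{le:compactness_continuity} holds at $s=0$ by the first half of \eqref{eq:cap3:6bis}. For hypothesis~(2), I first establish a uniform bound $\sup_{n\in\N,\,t\in[0,T]}\mu^n_t(B_R)<+\infty$ for every $R>0$. Given $R$, I pick a cutoff $\zeta\in C^1_{\rm c}(\Rd)$ with $0\le\zeta\le 1$, $\zeta\equiv 1$ on $B_R$, $\supp\zeta\subset B_{2R}$. Since $\gamma^n\weaksto\gamma$, the local uniform boundedness principle for Radon measures gives $\sup_n\gamma^n(\zeta^p)<+\infty$, hence $\sup_n G_p(\zeta)<+\infty$ when computed against $\gamma^n$; combining this with the uniform energy bound in~\eqref{eq:cap3:6bis}, estimate~\eqref{eq:cap4:2bis} of Theorem~\ref{thm:crucial_estimate} produces the desired uniform bound on $\mu^n_t(B_R)\le \mu^n_t(\zeta^p)$. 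Equiintegrability of $t\mapsto |\nnu^n_t|(B_R)$ then follows from the pointwise estimate~\eqref{eq:scheme3:5} together with H\"older's inequality applied on any Borel $I\subset(0,T)$:
\begin{equation*}
\int_I |\nnu^n_t|(B_R)\,\d t\le \eta\Big(\int_I\Phi(\mu^n_t,\nnu^n_t|\gamma^n,B_R)\,\d t\Big)^{1/p}\Big(\int_I H\big(\gamma^n(B_R),\mu^n_t(B_R)\big)\,\d t\Big)^{1/q},
\end{equation*}
where the first factor is uniformly bounded by the energy assumption and the second factor is bounded by $C\,|I|^{1/q}$ thanks to the previous step and the continuity of $H$. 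Lemma~\ref{le:compactness_continuity} now yields a subsequence with the convergences~\eqref{eq:scheme3:34bis} and the lower semicontinuity~\eqref{eq:cap4:32bis}, which in particular forces $(\mu,\nnu)\in\CE\phi\gamma0T\Rd$.

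For the last assertion, the extra moment hypothesis $\sup_n\sftm_\kappa(\gamma^n)<+\infty$ with $\kappa=-q$ (resp.\ $\kappa=-p/(\theta-1)$ in the $\theta$-homogeneous case) together with $\mu^n_0(\Rd)<+\infty$ makes Theorem~\ref{thm:moment_estimate} (resp.\ Theorem~\ref{thm:moment_estimate2}) applicable to each $(\mu^n,\nnu^n)$, so that $\mu^n_t(\Rd)=\mu^n_0(\Rd)$ for every $t\in[0,T]$. The limit measure $\gamma$ inherits $\sftm_\kappa(\gamma)<+\infty$ by lower semicontinuity of $\sftm_\kappa$ under weak$^*$ convergence, and $\mu_0(\Rd)=\lim_n\mu^n_0(\Rd)<+\infty$ by hypothesis, so the same theorem applies to the limit, giving $\mu_t(\Rd)=\mu_0(\Rd)$ for every $t$. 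Concatenating these constant-in-$t$ identities with the hypothesis $\mu^n_0(\Rd)\to\mu_0(\Rd)$ yields $\mu^n_t(\Rd)\to \mu_t(\Rd)$ for every $t\in[0,T]$.

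The only delicate step is the equiintegrability argument, where one must pass constants through the chain~\eqref{eq:scheme3:5}--\eqref{eq:cap4:2bis} in a way that is uniform in the varying reference measures $\gamma^n$; this is handled by the weak$^*$ convergence $\gamma^n\weaksto\gamma$, which provides the needed uniform local bounds on $\gamma^n(B_R)$.
\end{Proof}
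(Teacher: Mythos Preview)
Your proof is correct and follows essentially the same route as the paper's: bound $\sup_{n,t}\mu^n_t(B_R)$ via the a~priori estimate of Theorem~\ref{thm:crucial_estimate} (the paper phrases this through \eqref{eq:cap4:70} with $\delta=0$, which is derived from \eqref{eq:cap4:2bis}), then feed this into \eqref{eq:scheme3:5} to get a uniform $L^p(0,T)$ dominator for $t\mapsto|\nnu^n_t|(B_R)$, and finally invoke the mass--conservation part of Theorem~\ref{thm:moment_estimate}/\ref{thm:moment_estimate2}. Your treatment of the last assertion is in fact more explicit than the paper's, since you also verify that the limit curve inherits $\sftm_\kappa(\gamma)<+\infty$ and hence constant mass, which is needed to conclude $\mu^n_t(\Rd)\to\mu_t(\Rd)$.
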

 \begin{proof}
   Since $P_R:=\sup_n\gamma^n(\Ball R)<+\infty$ for every $R>0$,
   the estimate \eqref{eq:cap4:70} for $\delta=0$ and the assumption
   \eqref{eq:cap3:6bis} show that
   $M_R=\sup_{n\in \N,t\in [0,T]}\mu^n_t(\Ball R)<+\infty$ for every $R>0$.
   We can therefore obtain a bound of $\phinorm{\nnu^n_t}(\Ball R)$
   by \eqref{eq:scheme3:5}, which yields 
   \begin{displaymath}
     \phinorm{\nnu^n_t}(\Ball R)\le
     H(P_R,M_R)^{1/q} \Phi(\mu_t,\nnu_t|\gamma)^{1/p},
   \end{displaymath}
   so that the maps $t\mapsto \phinorm{\nnu^n_t}(\Ball R)$ are uniformly
   bounded by
   a function in $L^p(0,T)$. 
   The last assertion follows by the fact that
   $t\mapsto \mu^n_t(\Rd)$ is independent of time, thanks to
   Theorem \ref{thm:moment_estimate2} (in the
   $(\alpha\text{-}\theta)$-homogeneous case) or
   Theorem \ref{thm:moment_estimate} (for general density functions
   $\phi$).
 \qed\end{proof}


\section{The $(\phi$-$\gamma)$-weighted Wasserstein distance}
\label{sec:modWass}

As we already mentioned in the Introduction,
\textsc{Benamou-Brenier} \cite{Benamou-Brenier00}
showed that
the Wasserstein distance $W_p$ \eqref{defwaspre}
can be equivalently characterized by a ``dynamic''
point of view through \eqref{eq:cap1:13}, involving the 
$1$-homogeneous action functional
\eqref{eq:cap1:10}.
%
The same approach can be applied to arbitrary
action functionals.
\begin{definition}[Weighted Wasserstein distances]
  \label{def:main}
  Let $\gamma\in \RPM(\Rd)$ be a fixed reference measure
  and $\phi:(0,+\infty)\times\Rd\to
  [0,+\infty)$ a function satisfying Conditions
  (\ref{subeq:phi_prop}a,b,c).
  The $(\phi,\gamma)$-Wasserstein (pseudo-) distance
  between $\mu_0,\mu_1\in \RPM(\Rd)$ is defined as
  \begin{equation}
    \label{eq:10}
    \begin{aligned}
      \cW^p_{\phi,\gamma}(\mu_0,\mu_1):=\inf\Big\{
      \int_0^1\Phi(\mu_t,\nnu_t|\gamma)\,\d t:
      \quad (\mu,\nnu)\in \cce01\Rd{\mu_0}{\mu_1}
      \Big\}.
    \end{aligned}
  \end{equation}
  We denote by $\mathcal M_{\phi,\gamma}[\mu_0]$
  the set of all the measures $\mu\in \RPM(\Rd)$
  which are at finite $\cW_{\phi,\gamma}$-distance from $\mu_0$.  
\end{definition}
\begin{remark}
  \label{rem:known_cases}
  \upshape
  Let us recall the notation $W_{p,\alpha;\gamma}$ of \eqref{eq:cap1:25}
  in the case
  $\phi_{p,\alpha}(\rho,\ww)=\rho^\alpha|\ww/\rho^\alpha|^p$.
  When $\alpha=0$ we find the dual homogeneous Sobolev
  (pseudo-)distance
  \eqref{eq:cap1:8} and in the case
  $\alpha=1$ and $\supp(\gamma)=\Rd$ we get
  the usual Wasserstein distance:
  \begin{displaymath}
    \|\mu_0-\mu_1\|_{\dot
      W^{-1,p}_\gamma}=W_{p,0;\gamma}(\mu_0,\mu_1),\qquad
     W_p(\mu_0,\mu_1)=W_{p,1;\gamma}(\mu_0,\mu_1).
  \end{displaymath}
\end{remark}
\begin{remark}
  \label{rem:energy_rep}
  \upshape
  Taking into account Lemma \ref{le:time_rescaling},
  a linear time rescaling shows that
  \begin{equation}
    \label{eq:cap3:31}
     W^p_{\phi,\gamma}(\mu_0,\mu_T):=\inf\Big\{
    T^{p-1}\int_0^T \Phi(\mu_t,\nnu_t|\gamma)\,\d t:
    (\mu,\nnu)\in \cce0T\Rd{\mu_0}{\mu_T}
    \Big\}.
  \end{equation}
\end{remark}
\begin{theorem}[Existence of minimizers]
  \label{thm:0}
  Whenever the infimum in \eqref{eq:10} is a
  finite value $W<+\infty$, it is attained
  by a curve $(\mu,\nnu)\in \CE\phi\gamma01\Rd$
  such that
  \begin{equation}
    \label{eq:cap3:37bis}
    \Phi(\mu_t,\nnu_t|\gamma)= W
    \quad\text{for $\Leb 1$-a.e.\ $t\in (0,1)$}.
  \end{equation}
  The curve $(\mu_t)_{t\in [0,1]}$ associated to a minimum for
  \eqref{eq:10}
  is a constant speed mimimal geodesic for $\cW_{\phi,\gamma}$
  since it satisfies
  \begin{equation}
     \label{eq:cap5:7}
     \cW_{\phi,\gamma}(\mu_s,\mu_t)=|t-s|\,\cW_{\phi,\gamma}(\mu_0,\mu_1)
     \quad\forall\, s,t\in [0,1].
   \end{equation}
   We have also the equivalent characterization
  \begin{equation}
    \label{eq:cap3:29}
     \cW_{\phi,\gamma}(\sigma,\eta)=\inf\Big\{
    \int_0^T \Big(\Phi(\mu_t,\nnu_t|\gamma)\Big)^{1/p}\,\d t:
    (\mu,\nnu)\in \cce0T\Rd\sigma\eta
    \Big\}.
  \end{equation}
\end{theorem}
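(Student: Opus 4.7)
The plan is to proceed in three passes: existence of a minimizer by the direct method, the equivalent $1$-homogeneous characterization \eqref{eq:cap3:29} via Hölder and a time-reparametrization trick, and the constant-speed geodesic identity \eqref{eq:cap5:7} as a consequence. For existence, I would fix a minimizing sequence $(\mu^n,\nnu^n)\in\cce01\Rd{\mu_0}{\mu_1}$ with $\int_0^1\Phi(\mu^n_t,\nnu^n_t|\gamma)\,\d t\to W<+\infty$. Since $\mu^n_0\equiv\mu_0$ and the energies are uniformly bounded, Corollary \ref{cor:compactness_continuity2} produces a subsequence and a limit $(\mu,\nnu)\in\CE\phi\gamma01\Rd$ with $\mu^n_t\weaksto\mu_t$ for every $t\in[0,1]$ and $\nnu^n\weaksto\nnu$ in $\Mloc(\Rd\times(0,1);\Rd)$. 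The pointwise convergence at $t=0,1$ preserves the endpoint conditions, and the lower semicontinuity bound \eqref{eq:cap4:32bis} gives $\int_0^1\Phi(\mu_t,\nnu_t|\gamma)\,\d t\leq W$, so $(\mu,\nnu)$ is a minimizer.

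The crucial tool for \eqref{eq:cap3:29} and \eqref{eq:cap3:37bis} is the $p$-homogeneity of $\phi$ in its vector variable \eqref{eq:cap2:11}, which makes the $1/p$-action $\int_0^T\Phi^{1/p}\,\d t$ invariant under the time rescalings of Lemma \ref{le:time_rescaling}: since $\hat\nnu_s=\sft'(s)\nnu_{\sft(s)}$, a direct computation gives $\Phi(\hat\mu_s,\hat\nnu_s|\gamma)=(\sft'(s))^p\Phi(\mu_{\sft(s)},\nnu_{\sft(s)}|\gamma)$, hence $\int_0^{T'}\Phi(\hat\mu,\hat\nnu|\gamma)^{1/p}\,\d s=\int_0^T\Phi(\mu,\nnu|\gamma)^{1/p}\,\d t$ by change of variables. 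Hölder's inequality $\int_0^T\Phi^{1/p}\,\d t\leq T^{1/q}\bigl(\int_0^T\Phi\,\d t\bigr)^{1/p}$, with equality iff $\Phi$ is essentially constant, combined with \eqref{eq:cap3:31} immediately bounds the infimum in \eqref{eq:cap3:29} above by $\cW_{\phi,\gamma}(\sigma,\eta)$. For the reverse bound, given any competitor $(\mu,\nnu)\in\cce0T\Rd\sigma\eta$ with finite $\int_0^T\Phi^{1/p}\,\d t$, I choose $\sft\colon[0,1]\to[0,T]$ so that $\sft'(s)\Phi(\mu_{\sft(s)},\nnu_{\sft(s)}|\gamma)^{1/p}$ is constant in $s$; the reparametrized pair lies in $\cce01\Rd\sigma\eta$, has constant $\Phi$, and its action equals $\bigl(\int_0^T\Phi^{1/p}\,\d t\bigr)^p$, so taking the infimum yields \eqref{eq:cap3:29}. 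Applying the same argument to any minimizer of \eqref{eq:10} forces the Hölder equality case, proving that its action density is essentially constant and equal to $W$, i.e.\ \eqref{eq:cap3:37bis}.

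Finally, for the geodesic identity \eqref{eq:cap5:7}, restricting a minimizer to $[s,t]\subset[0,1]$ and combining \eqref{eq:cap3:37bis} with \eqref{eq:cap3:31} gives $\cW_{\phi,\gamma}^p(\mu_s,\mu_t)\leq(t-s)^{p-1}\int_s^t\Phi\,\d r=(t-s)^pW$. The triangle inequality for $\cW_{\phi,\gamma}$---which I would derive from the glueing Lemma \ref{le:glueing} combined with the $1$-homogeneous formulation \eqref{eq:cap3:29}, by concatenating two admissible curves and noting that the $1/p$-action is additive---then forces equality in $\cW_{\phi,\gamma}(\mu_0,\mu_1)\leq\cW_{\phi,\gamma}(\mu_0,\mu_s)+\cW_{\phi,\gamma}(\mu_s,\mu_t)+\cW_{\phi,\gamma}(\mu_t,\mu_1)$, yielding \eqref{eq:cap5:7}. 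The most delicate step I expect is the time reparametrization when $\Phi^{1/p}$ vanishes on a set of positive measure, so that the natural $\sft$ is only nondecreasing rather than strictly increasing; the standard workaround is to regularize $\Phi^{1/p}$ by adding $\eps>0$ to obtain a bilipschitz $\sft_\eps$ satisfying the hypotheses of Lemma \ref{le:time_rescaling}, and then pass to the limit $\eps\downarrow 0$ using the lower semicontinuity \eqref{eq:cap4:32bis} together with the $1$-homogeneity of $\Phi^{1/p}$ under rescaling.
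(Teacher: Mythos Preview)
Your proposal is correct and follows essentially the same route as the paper: existence via Corollary~\ref{cor:compactness_continuity2} and lower semicontinuity, the equivalent characterization \eqref{eq:cap3:29} via H\"older plus an $\eps$-regularized time reparametrization through Lemma~\ref{le:time_rescaling}, and \eqref{eq:cap3:37bis} from the H\"older equality case. The paper's proof is slightly terser (it does not spell out \eqref{eq:cap5:7}, leaving it implicit), but your derivation of the geodesic identity from the restriction bound and the triangle inequality is the expected argument.
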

\begin{proof}
  When $\cW_{\phi,\gamma}(\mu_0,\mu_1)<+\infty$,
  Corollary \ref{cor:compactness_continuity2}
  immediately yields the existence
  of a minimizing curve $(\mu,\nnu)$.
  Just for the proof of \eqref{eq:cap3:29},
  let us denote by $\bar \cW_{\phi,\gamma}(\sigma,\eta)$
  the infimum of the right-hand side of
  \eqref{eq:cap3:29}.
  H\"older inequality 
  immediately shows that $\cW_{\phi,\gamma}(\sigma,\eta)\ge \bar
  \cW_{\phi,\gamma}(\sigma,\eta)$.
  In order to prove the opposite inequality,
  we argue as in \cite[Lemma 1.1.4]{Ambrosio-Gigli-Savare05}, defining
  for $(\mu,\nnu)\in \cce0T\Rd\sigma\eta$
  \begin{equation}
    \label{eq:cap3:20bis}
    \sfs_{\eps}(t):=\int_0^t \Big(\eps+
    \Phi(\mu_r,\nnu_r|\gamma)\Big)^{1/p}\,\d r,\quad
    t\in [0,T];
  \end{equation}
  $\sfs_\eps$ is strictly increasing with $\sfs_\eps'\ge \eps$,
  $\sfs_\eps(0,T)=(0,S_\eps)$ with $S_\eps:=\sfs_\eps(T)$, so
  that its inverse map $\sft_\eps:[0,S_\eps]\to [0,T]$ is well
  defined
  and Lipschitz continuous, with
  \begin{equation}
    \label{eq:cap3:32}
    \sft_\eps'\circ \sfs_\eps=\Big(\eps+\Phi(\mu_t,\nnu_t)\Big)^{-1/p}
    \quad \text{a.e.\ in $(0,T)$}.
  \end{equation}
  If $\hat \mu^\eps=\mu\circ \sft_\eps,
  \hat\nnu^\eps:=\sft_\eps'\,\nnu\circ \sft_\eps$, we
  know that $(\hat\mu^\eps,\hat\nnu^\eps)\in \cce0{S_\eps}\Rd\sigma\eta$ so
  that
  \begin{align*}
    \cW_{\phi,\gamma}^p(\sigma,\eta)\le
    S_\eps^{p-1}\int_0^{S_\eps}\Phi(\hat\mu^\eps_s,\hat\nnu_s^\eps|\gamma)
    \,\d s=
    S_\eps^{p-1}\int_0^{T}
    \frac{\Phi(\mu^\eps_t,\nnu_t^\eps|\gamma)}
    {\eps+\Phi(\mu^\eps_t,\nnu^\eps_t|\gamma)}
    \Big(\eps+\Phi(\mu_t,\nnu_t)\Big)^{1/p}
    \,\d t,
  \end{align*}
  the latter integral being less than $S_\eps^p.$
  Passing to the limit as $\eps\downarrow0$, we get
  \begin{equation}
    \label{eq:newclass_times:4}
    \cW_{\phi,\gamma}(\sigma,\eta)\le \int_0^T
    \Phi(\mu_t,\nnu_t|\gamma)^{1/p}
    \,\d t\quad
    \forall\, (\mu,\nnu)\in \cce0T\Rd\sigma\eta,
  \end{equation}
  and therefore $\cW_{\phi,\gamma}(\sigma,\eta)\le
  \bar\cW_{\phi,\gamma}(\sigma,\eta)$.
  If $(\mu,\nnu)\in\CCE\phi\gamma01\Rd{\mu_0}{\mu_1}$ is a minimizer of
  \eqref{eq:10}, then \eqref{eq:newclass_times:4} yields
  \begin{displaymath}
    W^{1/p}=\cW_{\phi,\gamma}(\mu_0,\mu_1)= \Big(\int_0^1
    \Phi(\mu_t,\nnu_t|\gamma)\,\d t\Big)^{1/p}=
    \int_0^1
    \Phi(\mu_t,\nnu_t|\gamma)^{1/p}\,\d t,
  \end{displaymath}
  so that \eqref{eq:cap3:37bis} holds.
\qed\end{proof}
\subsection{\bfseries Topological properties}
\label{subsec:top}
\begin{theorem}[Distance and weak convergence]
  \label{thm:1}
  The functional
  $\cW_{\phi,\gamma}$ is a (pseudo)-distance on $\RPM(\Rd)$ which
  induces
  a stronger topology than the weak$^*$ one. Bounded sets 
  with respect to $\cW_{\phi,\gamma}$ are weakly$^*$ relatively compact.
\end{theorem}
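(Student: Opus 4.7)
The plan is to verify the three claims in turn—first the pseudo-distance axioms, then continuity of the identity map from $(\RPM(\Rd),\cW_{\phi,\gamma})$ into $\RPM(\Rd)$ with the weak$^*$ topology, finally relative compactness of bounded sets—each time leveraging Theorem \ref{thm:crucial_estimate} together with the reparametrization-invariant characterization \eqref{eq:cap3:29}.

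First I would establish the pseudo-distance properties. Reflexivity $\cW_{\phi,\gamma}(\mu,\mu)=0$ follows from the constant curve $\mu_t\equiv\mu$, $\nnu_t\equiv 0$, since $\phi(\rho,0)=0$ by \eqref{eq:cap2:11}. Symmetry is obtained from the time reversal $(\mu_t,\nnu_t)\mapsto(\mu_{1-t},-\nnu_{1-t})$: this preserves membership in $\cce01\Rd$ with endpoints swapped, and preserves $\Phi$ because $\phi$, hence also $\phi^\infty$, is even in $\ww$ by \eqref{eq:cap2:11}. If $\cW_{\phi,\gamma}(\mu_0,\mu_1)=0$, Theorem \ref{thm:0} produces a minimizer with $\Phi(\mu_t,\nnu_t|\gamma)=0$ for a.e.\ $t$; since $\ww\mapsto\phi(\rho,\ww)^{1/p}$ is a norm for every $\rho>0$ by Theorem \ref{le:phi_list}(1), extended to $\rho=0$ by \eqref{eq:cap2:50}, and $\varphi^\infty$ is non-degenerate, this forces $\nnu_t\equiv 0$, and the continuity equation yields $\mu_0=\mu_1$. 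For the triangle inequality I use \eqref{eq:cap3:29}: given $\mu_0,\mu_1,\mu_2$ at finite distance and $\eps>0$, I pick $(\mu^i,\nnu^i)\in\cce0{T_i}\Rd{\mu_{i-1}}{\mu_i}$ with $\int_0^{T_i}\Phi(\mu^i_t,\nnu^i_t|\gamma)^{1/p}\,\d t\le\cW_{\phi,\gamma}(\mu_{i-1},\mu_i)+\eps$ for $i=1,2$ and concatenate them via Lemma \ref{le:glueing}; the resulting curve in $\cce0{T_1+T_2}\Rd{\mu_0}{\mu_2}$ has total integral equal to the sum, and letting $\eps\to 0$ gives $\cW_{\phi,\gamma}(\mu_0,\mu_2)\le\cW_{\phi,\gamma}(\mu_0,\mu_1)+\cW_{\phi,\gamma}(\mu_1,\mu_2)$.

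Next, weak$^*$ control. Given $\zeta\in C^1_c(\Rd)$ with support $K$ and a minimizer $(\mu,\nnu)\in\CCE\phi\gamma01\Rd{\mu_0}{\mu_1}$, testing \eqref{eq:continuity1} against $\zeta$ and applying \eqref{eq:scheme3:5} gives
\begin{equation*}
|\mu_1(\zeta)-\mu_0(\zeta)|\le \sup_{\Rd}\dphinorm{\D\zeta}\int_0^1\Phi(\mu_t,\nnu_t|\gamma,K)^{1/p}\,H\bigl(\gamma(K),\mu_t(K)\bigr)^{1/q}\,\d t.
\end{equation*}
Fix a nonnegative cutoff $\chi\in C^1_c(\Rd)$ with $\chi\equiv 1$ on $K$; then \eqref{eq:cap4:2bis} applied to $\chi$ with energy $\Energy=\cW_{\phi,\gamma}(\mu_0,\mu_1)^p$ yields $\sup_t\mu_t(K)\le\sup_t\mu_t(\chi^p)\le C(\chi,\mu_0(\chi^p),\cW_{\phi,\gamma}(\mu_0,\mu_1))$, with monotone dependence on the distance. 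Hölder's inequality in $t$ then gives $|\mu_1(\zeta)-\mu_0(\zeta)|\le C(\zeta,\mu_0)\,\cW_{\phi,\gamma}(\mu_0,\mu_1)$. Consequently, if $\cW_{\phi,\gamma}(\mu^n,\mu_0)\to 0$ then $\mu^n(\zeta)\to\mu_0(\zeta)$ for every $\zeta\in C^1_c(\Rd)$; combined with the uniform bound $\sup_n\mu^n(\Ball R)<\infty$ (obtained by the same application of \eqref{eq:cap4:2bis}) and the density of $C^1_c(\Rd)$ in $C^0_c(\Rd)$, this gives $\mu^n\weaksto\mu_0$ in $\RPM(\Rd)$.

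Finally, relative compactness: if $B\subset\RPM(\Rd)$ is $\cW_{\phi,\gamma}$-bounded, fix $\sigma\in B$ and let $K=\sup_{\mu\in B}\cW_{\phi,\gamma}(\mu,\sigma)<\infty$. Applying \eqref{eq:cap4:2bis} along minimizing curves from $\sigma$ to each $\mu\in B$, tested against cutoffs $\chi_R\in C^1_c(\Rd)$ with $\chi_R\ge\chi_{\Ball R}$, produces $\sup_{\mu\in B}\mu(\Ball R)\le C(\chi_R,\sigma,K)<\infty$ for every $R>0$, so $B$ is weakly$^*$ relatively compact by Banach–Alaoglu. The main technical point throughout is converting a bound on the $p$-action into a time-uniform mass bound on arbitrary compact sets via \eqref{eq:cap4:2bis}; the implicit inequality generated by the nonlinear growth of $H$ in its second argument is exactly what Theorem \ref{thm:crucial_estimate} resolves, and that is the mechanism allowing distance control to propagate to weak$^*$ control.
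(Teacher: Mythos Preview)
Your proof is correct and follows the same overall strategy as the paper: pseudo-distance axioms via time-reversal, \eqref{eq:cap3:29}, and Lemma~\ref{le:glueing}; topological claims via Theorem~\ref{thm:crucial_estimate}. The difference lies in which estimate from that theorem you invoke. You use \eqref{eq:scheme3:5} together with the mass bound \eqref{eq:cap4:2bis}, which requires a two-step argument: first bootstrap a uniform bound on $\sup_t\mu_t(K)$ along the geodesic, then feed it back into $H(\gamma(K),\mu_t(K))^{1/q}$ to control $|\mu_1(\zeta)-\mu_0(\zeta)|$. The paper instead integrates the $\omega$-estimate \eqref{eq:cap3:16} directly, obtaining the single clean inequality
\[
\Big|\omega\big(\mu_1(\zeta^p)/G_p(\zeta)\big)-\omega\big(\mu_0(\zeta^p)/G_p(\zeta)\big)\Big|\le \frac{p\,D(\zeta)}{G_p(\zeta)^{1/p}}\,\cW_{\phi,\gamma}(\mu_0,\mu_1)
\]
for nonnegative $\zeta\in C^1_c(\Rd)$ with $G_p(\zeta)>0$. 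Since $\omega$ is strictly increasing with $\omega(+\infty)=+\infty$, this one inequality simultaneously yields both conclusions: convergence of $\mu_n(\zeta^p)$ when $\cW_{\phi,\gamma}(\mu_n,\mu_0)\to0$, and boundedness of $\mu(\zeta^p)$ on $\cW_{\phi,\gamma}$-bounded sets, with no need to control $\sup_t\mu_t(K)$ separately. Your route is slightly more explicit about constants; the paper's is shorter and avoids the bootstrapping.
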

\begin{proof}
  It is immediate to check that $\cW_{\phi,\gamma}$ is symmetric (since
  $\phi(\rho,-\ww)=\phi(\rho,\ww)$)
  and $\cW_{\phi,\gamma}(\sigma,\eta)=0\ \Rightarrow\ \sigma\equiv
  \eta$.
  The triangular inequality follows as well from
  the characterization \eqref{eq:cap3:29} and
  the gluing Lemma \ref{le:glueing}.

  From \eqref{eq:cap3:16} (keeping the same notation
  \eqref{eq:cap3:12})
  and \eqref{eq:cap3:29} we immediately get
  for every $\mu_0,\mu_1\in \RPM(\Rd)$ and nonnegative
  $\zeta\in C^1_{\rm c}(\Rd)$
  with $\|\zeta\|_{L^p(\gamma)}>0$
  \begin{displaymath}
    \Big|\omega\big(\mu_1(\zeta^p)/G_p(\zeta)\big)-\omega
    \big(\mu_0(\zeta^p)/G_p(\zeta)\big)\Big|\le
    \frac{p\, D(\zeta)}{\|\zeta\|_{L^p(\gamma)}}
    \cW_{\phi,\gamma}(\sigma,\eta),
  \end{displaymath}
  which shows the last assertion, since $\omega$ is strictly
  increasing and
  the set
  \begin{displaymath}
    \big\{\zeta^p:\zeta\in C^1_{\rm c}(\Rd),\quad \zeta\ge0,\quad
    \|\zeta\|_{L^p(\gamma)}>0\big\}
  \end{displaymath}
  is dense in the space of nonnegative continuous functions
  with compact support (endowed with the uniform topology).
\qed\end{proof}
\begin{theorem}[Lower semicontinuity]
  \label{thm:2}
  The map $(\mu_0,\mu_1)\mapsto \cW_{\phi,\gamma}(\mu_0,\mu_1)$
  is lower semicontinuous with respect to weak$^*$ convergence
  in $\RPM(\Rd)$. More generally,
  suppose that $\gamma^n\weaksto \gamma$ in $\RPM(\Rd)$,
  $\phi^n$ is monotonically increasing w.r.t.\ $n$
  and pointwise converging to $\phi$,
  and $\mu^n_0\weaksto\mu_0,\mu^n_1\weaksto\mu_1$ in $\RPM(\Rd)$ as
  $n\up+\infty$.
  Then
  \begin{equation}
    \label{eq:cap5:9}
    \liminf_{n\up+\infty}\cW_{\phi^n,\gamma^n}(\mu^n_0,\mu^n_1)\ge
    \cW_{\phi,\gamma}(\mu_0,\mu_1).
  \end{equation}
\end{theorem}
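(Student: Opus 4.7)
The plan is to derive the general statement \eqref{eq:cap5:9}; the first assertion is the particular case $\phi^n\equiv\phi$, $\gamma^n\equiv\gamma$. Assume the liminf in \eqref{eq:cap5:9} is finite (otherwise there is nothing to prove), set $W:=\liminf_n\cW_{\phi^n,\gamma^n}(\mu^n_0,\mu^n_1)$, and extract a subsequence (not relabeled) for which the liminf is attained as a limit. Using Theorem \ref{thm:0} applied to each pair $(\phi^n,\gamma^n)$, pick minimizing curves $(\mu^n,\nnu^n)\in\CCE{\phi^n}{\gamma^n}01{\Rd}{\mu^n_0}{\mu^n_1}$ with
\[
\int_0^1 \Phi^{\phi^n}(\mu^n_t,\nnu^n_t|\gamma^n)\,\d t=\cW^p_{\phi^n,\gamma^n}(\mu^n_0,\mu^n_1)\longrightarrow W^p.
\]

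Next, I would invoke the compactness Corollary \ref{cor:compactness_continuity2}: the assumption $\mu^n_0\weaksto\mu_0$ supplies the uniform local bound $\sup_n \mu^n_0(\Ball R)<+\infty$, and the energies $\int_0^1\Phi^{\phi^n}(\mu^n_t,\nnu^n_t|\gamma^n)\,\d t$ are uniformly bounded, so (up to a further subsequence) there is a limit couple $(\mu,\nnu)\in\ce 01{\Rd}$ with $\mu^n_t\weaksto\mu_t$ for every $t\in[0,1]$ and $\nnu^n\weaksto\nnu$ in $\Mloc(\Rd\times(0,1);\Rd)$. In particular the endpoints are exactly $\mu_0$ and $\mu_1$, so $(\mu,\nnu)$ is admissible in the infimum defining $\cW_{\phi,\gamma}(\mu_0,\mu_1)$.

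The key idea is now a double limit with monotonicity. Fix $m\in\N$: since $\phi^n\ge\phi^m$ pointwise for $n\ge m$, and since in the integrand defining $\Phi$ the dependence on $\phi$ is pointwise monotone (both in the density term and in the recession term), one has
\[
\int_0^1\Phi^{\phi^n}(\mu^n_t,\nnu^n_t|\gamma^n)\,\d t\;\ge\;\int_0^1\Phi^{\phi^m}(\mu^n_t,\nnu^n_t|\gamma^n)\,\d t.
\]
Applying the joint lower semicontinuity Lemma \ref{le:action_lsc} for the fixed density $\phi^m$ (integrated in $t$, as in \eqref{eq:cap4:32}) yields
\[
W^p\;\ge\;\liminf_{n\to\infty}\int_0^1\Phi^{\phi^m}(\mu^n_t,\nnu^n_t|\gamma^n)\,\d t\;\ge\;\int_0^1\Phi^{\phi^m}(\mu_t,\nnu_t|\gamma)\,\d t.
\]
Finally, I would let $m\uparrow\infty$. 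Here $(\phi^m)^\infty(\rho,\ww)=\sup_\lambda\lambda^{-1}\phi^m(\lambda\rho,\lambda\ww)$ is itself monotonically increasing in $m$, and exchanging the two suprema one checks $(\phi^m)^\infty\uparrow\phi^\infty$ pointwise; monotone convergence then gives $\Phi^{\phi^m}(\mu_t,\nnu_t|\gamma)\uparrow\Phi^{\phi}(\mu_t,\nnu_t|\gamma)$, and a second monotone convergence in $t$ gives
\[
W^p\;\ge\;\int_0^1\Phi^{\phi}(\mu_t,\nnu_t|\gamma)\,\d t\;\ge\;\cW^p_{\phi,\gamma}(\mu_0,\mu_1),
\]
which is the claim.

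The main obstacle I anticipate is organizing the double-limit correctly: I cannot hope to replace $\gamma^n$ by $\gamma$ and $\phi^n$ by $\phi$ in one go, because the lower semicontinuity of the integral functional in Lemma \ref{le:action_lsc} requires a fixed integrand $\phi^m$ in order to conclude \eqref{eq:cap2:8}. The monotonicity $\phi^m\le\phi^n$ for $n\ge m$ is precisely what allows to freeze the integrand during the weak$^*$ passage to the limit in $n$, and the pointwise monotone convergence $\phi^m\uparrow\phi$ (together with its transfer to recession functions) is what makes the subsequent passage $m\to\infty$ harmless.
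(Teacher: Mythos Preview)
Your proof is correct and follows the same approach as the paper: extract minimizing curves, apply the compactness Corollary~\ref{cor:compactness_continuity2} to obtain a limit curve, use lower semicontinuity of the action with a frozen integrand $\phi^m$, then pass $m\to\infty$ by monotone convergence. The only nuance is that Corollary~\ref{cor:compactness_continuity2} is stated for a \emph{fixed} action density, which both you and the paper handle (implicitly) via the monotonicity $\Phi^{\phi^m}\le\Phi^{\phi^n}$ for $m\le n$, giving a uniform energy bound for, say, $\phi^1$.
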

\begin{proof}
  It is not restrictive to assume that
  $\cW_{\phi^n,\gamma^n}(\mu^n_0,\mu^n_1)< S<+\infty$,
  so that
  we can find a sequence
  $(\mu^n,\nnu^n)\in \CCE{\phi^n}{\gamma^n}01\Rd{\mu^n_0}{\mu^n_1}$ such that
  \begin{equation}
    \label{eq:cap3:36}
    \Phi^m(\mu^n_t,\nnu^n_t|\gamma^n)\le S\quad
    \text{a.e.\ in }(0,1),\quad
    \forall\, m\le n\in\N,
  \end{equation}
  where $\Phi^m$ denotes the integral functional
  associated to $\phi^m$.
  We can apply Theorem \ref{cor:compactness_continuity2}
  and we can extract a suitable
  subsequence (still denoted bu $\mu^n,\nnu^n$)
  and a limit curve $(\mu,\nnu)\in \CCE\phi\gamma01\Rd{\mu_0}{\mu_1}$ such that
  \eqref{eq:scheme3:34bis} holds.
  We eventually have
  \begin{equation}
    \label{eq:cap3:38}
    \cW_{\phi^m,\gamma}^p(\mu_0,\mu_1)\le
    \int_0^1 \Phi^m(\mu_t,\nnu_t|\gamma)\,\d t\le S.
  \end{equation}
  Passing to the limit w.r.t.\ $m\up+\infty$ we conclude.
\qed\end{proof}
\begin{theorem}[Completeness]
 \label{thm:completeness}
For every $\sigma\in \RPM(\Rd)$ 
the space $\mathcal M_{\phi,\gamma}[\sigma]$
endowed with the distance $\cW_{\phi,\gamma}$ is complete.
\end{theorem}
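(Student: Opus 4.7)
The plan is to run the standard argument that produces a limit in a metric space from a Cauchy sequence, by combining the weak$^*$ relative compactness of bounded sets (Theorem \ref{thm:1}) with the weak$^*$ lower semicontinuity of $\cW_{\phi,\gamma}$ (Theorem \ref{thm:2}).

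Concretely, let $(\mu^n)_{n\in\N}\subset \mathcal M_{\phi,\gamma}[\sigma]$ be Cauchy with respect to $\cW_{\phi,\gamma}$. A Cauchy sequence is always bounded in its metric, so
\[
  \sup_{n\in\N}\cW_{\phi,\gamma}(\mu^n,\sigma)<+\infty.
\]
First I would invoke the compactness statement at the end of Theorem \ref{thm:1}: bounded sets for $\cW_{\phi,\gamma}$ are weakly$^*$ relatively compact in $\RPM(\Rd)$, so we can extract a subsequence $\mu^{n_k}$ with $\mu^{n_k}\weaksto \mu$ for some $\mu\in\RPM(\Rd)$.

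Next I would upgrade weak$^*$ convergence to convergence in $\cW_{\phi,\gamma}$ using Theorem \ref{thm:2}, applied with $\gamma^n\equiv \gamma$ and $\phi^n\equiv\phi$. For each fixed $n$, the lower semicontinuity gives
\[
  \cW_{\phi,\gamma}(\mu^n,\mu)\le \liminf_{k\to\infty}\cW_{\phi,\gamma}(\mu^n,\mu^{n_k}).
\]
Given $\eps>0$, the Cauchy property provides $N=N(\eps)$ with $\cW_{\phi,\gamma}(\mu^n,\mu^m)<\eps$ for all $n,m\ge N$; taking $n\ge N$ and passing to the liminf along $n_k\ge N$ yields $\cW_{\phi,\gamma}(\mu^n,\mu)\le \eps$. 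Hence the whole sequence converges: $\cW_{\phi,\gamma}(\mu^n,\mu)\to 0$. Finally, by the triangle inequality (Theorem \ref{thm:0} ensures $\cW_{\phi,\gamma}$ is a genuine distance on the set of measures at finite distance from each other),
\[
  \cW_{\phi,\gamma}(\mu,\sigma)\le \cW_{\phi,\gamma}(\mu,\mu^N)+\cW_{\phi,\gamma}(\mu^N,\sigma)<+\infty,
\]
so $\mu\in\mathcal M_{\phi,\gamma}[\sigma]$, proving completeness.

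There is no real obstacle: the two nontrivial ingredients (weak$^*$ relative compactness of bounded sets and weak$^*$ lower semicontinuity) have already been established in Theorems \ref{thm:1} and \ref{thm:2}. The only point requiring care is the verification that the limit actually lies in $\mathcal M_{\phi,\gamma}[\sigma]$ rather than merely in $\RPM(\Rd)$, but this is immediate from the triangle inequality once the distance to some $\mu^N$ is finite.
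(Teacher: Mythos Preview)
Your proof is correct and follows essentially the same route as the paper: extract a weak$^*$ convergent subsequence using the relative compactness of bounded sets (Theorem~\ref{thm:1}), then apply the weak$^*$ lower semicontinuity of $\cW_{\phi,\gamma}$ (Theorem~\ref{thm:2}) together with the Cauchy condition to upgrade to convergence in the metric. The only minor slip is that the triangle inequality is established in Theorem~\ref{thm:1} (where $\cW_{\phi,\gamma}$ is shown to be a pseudo-distance), not in Theorem~\ref{thm:0}.
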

\begin{proof}
 Let $(\mu_n)_{n\in \N}$ be a Cauchy sequence in $\mathcal M_{\phi,\gamma}[\sigma]$
 w.r.t.\ the distance $\cW_{\phi,\gamma}$; in particular, $(\mu_n)$
 is bounded so that we can extract a suitable convergence subsequence 
 $\mu_{n_k}$ weakly$^*$ converging to $\mu_\infty$ in 
 $\RPM(\Rd)$. Thanks to Theorem \ref{thm:2} we easily get
$
  \cW_{\phi,\gamma}(\mu_m,\mu_\infty)\le
  \liminf_{k\to\infty}\cW_{\phi,\gamma}(\mu_m,\mu_{n_k}),$
  and therefore, taking into account the Cauchy condition,\\
  $\limsup_{m\to\infty}\cW_{\phi,\gamma}(\mu_m,\mu_\infty)\le
  \limsup_{n,m\to\infty}\cW_{\phi,\gamma}(\mu_m,\mu_n)=0$ so that
$\mu_n$ converges to $\mu_\infty$.
\qed\end{proof}
Let us now consider the case of measures with
finite mass (just to fix
the constant, probability measures in $\Probabilities\Rd$).
We introduce the parameter
\begin{equation}
  \label{eq:cap4:71}
  \kappa:=
  \begin{cases}
    \displaystyle\frac p{\theta-1}=\frac{q}{1-\alpha}&\text{if $\phi$ is
      $(\alpha$-$\theta)$-homogeneous},\\
    \frac p{p-1}=q&\text{otherwise.}
  \end{cases}
\end{equation}
\begin{theorem}[Distance and total mass]
\label{thm:narrow_convergence}
 Let us assume that 
$   \tilde\sfm_{-\kappa}(\gamma)<+\infty$
 and let us suppose that $\sigma\in \Probabilities\Rd$.
 Then $\mathcal M_{\phi,\gamma}[\sigma]\subset \Probabilities\Rd$, 
 the weighted Wasserstein distance $\cW_{\phi,\gamma}$ is stronger than
 the
 narrow convergence
 in $\Probabilities\Rd$, and $\Probabilities\Rd$ endowed with
 the (pseudo-) distance $\cW_{\phi,\gamma}$ is a complete (pseudo-)metric space.
\end{theorem}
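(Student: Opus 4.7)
\begin{Proof}[Proposal]
The plan is to combine the mass-preservation part of the moment estimates (Theorems~\ref{thm:moment_estimate} and~\ref{thm:moment_estimate2}) with the topological results already established (Theorems~\ref{thm:1} and~\ref{thm:completeness}). The choice of the exponent $\kappa$ in \eqref{eq:cap4:71} is tuned precisely so that the hypothesis $\tilde\sfm_{-\kappa}(\gamma)<+\infty$ matches the threshold under which the total mass of any finite-energy solution of the continuity equation is conserved.

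\emph{Step 1: Mass conservation.} Fix $\mu\in \mathcal M_{\phi,\gamma}[\sigma]$; by Theorem~\ref{thm:0} there exists a curve $(\mu_t,\nnu_t)\in\CCE\phi\gamma01\Rd\sigma\mu$ with finite energy $E<+\infty$, with $\mu_0=\sigma\in\Probabilities\Rd$. In the general case we apply Theorem~\ref{thm:moment_estimate} with $r=-q=-\kappa$: the condition $r\ge -q$ is met with equality and the hypothesis $\tilde\sfm_{-\kappa}(\gamma)<+\infty$ is exactly what the theorem requires, so $\mu_t(\Rd)$ is finite and constant, i.e.\ $\mu(\Rd)=\sigma(\Rd)=1$. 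In the $(\alpha\text{-}\theta)$-homogeneous case we apply Theorem~\ref{thm:moment_estimate2} with $r=-p/(\theta-1)=-\kappa$: then $\bar\delta=\tfrac{p}{\theta}+(1-\tfrac{1}{\theta})r=0$, so the threshold $\bar\delta\ge 0$ for mass conservation is satisfied (again at the boundary), and once more $\mu(\Rd)=1$. This proves $\mathcal M_{\phi,\gamma}[\sigma]\subset\Probabilities\Rd$.

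\emph{Step 2: Narrow convergence.} Suppose $\mu_n\to\mu_\infty$ in $\cW_{\phi,\gamma}$ with $\sigma,\mu_n,\mu_\infty\in\Probabilities\Rd$. Theorem~\ref{thm:1} already yields $\mu_n\weaksto\mu_\infty$ in $\RPM(\Rd)$ (duality with $C^0_{\rm c}(\Rd)$). By Step~1 the total masses satisfy $\mu_n(\Rd)=\mu_\infty(\Rd)=1$ for every $n$, so the standard fact that weak$^*$ convergence together with conservation of total mass upgrades to narrow convergence applies: given $\zeta\in C^0_b(\Rd)$ and $\eps>0$, choose a cut-off $\chi_R\in C^0_{\rm c}(\Rd)$ with $0\le\chi_R\le 1$, $\chi_R\equiv 1$ on $\Ball R$; writing $\zeta=\zeta\chi_R+\zeta(1-\chi_R)$ and using $\int(1-\chi_R)\,\d\mu_n\to\int(1-\chi_R)\,\d\mu_\infty$ (by weak$^*$ convergence applied to $1-\chi_R$ via mass subtraction) shows that the tails are uniformly small for $R$ large, and the main term passes to the limit by weak$^*$ convergence of the compactly supported measures $\chi_R\mu_n$. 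Hence $\cW_{\phi,\gamma}$ is stronger than narrow convergence on $\Probabilities\Rd$.

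\emph{Step 3: Completeness.} Let $(\mu_n)\subset\Probabilities\Rd$ be a Cauchy sequence for $\cW_{\phi,\gamma}$. Then $\sup_n\cW_{\phi,\gamma}(\mu_1,\mu_n)<+\infty$, so $\mu_n\in\mathcal M_{\phi,\gamma}[\mu_1]$ for every $n$. Since $\mu_1\in\Probabilities\Rd$, Theorem~\ref{thm:completeness} provides a limit $\mu_\infty\in\mathcal M_{\phi,\gamma}[\mu_1]$, and Step~1 applied to $\sigma=\mu_1$ gives $\mu_\infty\in\Probabilities\Rd$. This proves completeness of $(\Probabilities\Rd,\cW_{\phi,\gamma})$.

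The main delicate point is Step~1: the moment-conservation thresholds in Theorems~\ref{thm:moment_estimate}--\ref{thm:moment_estimate2} are attained with equality by the present choice of $\kappa$, so one must be careful that the hypothesis $r\ge -q$ (resp.\ $\bar\delta\ge 0$) is indeed satisfied and not merely approached; the rest of the argument is a standard assembly of the already proven lower semicontinuity, compactness and completeness properties.
\end{Proof}
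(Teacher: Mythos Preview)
Your proposal is correct and follows essentially the same route as the paper: invoke Theorem~\ref{thm:moment_estimate2} (in the $(\alpha\text{-}\theta)$-homogeneous case) or Theorem~\ref{thm:moment_estimate} (in the general case) to get mass conservation and hence $\mathcal M_{\phi,\gamma}[\sigma]\subset\Probabilities\Rd$, then use Theorem~\ref{thm:1} for the topological comparison and Theorem~\ref{thm:completeness} for completeness. The only difference is that the paper dispatches Step~2 in one line by remarking that the narrow and weak$^*$ topologies coincide on $\Probabilities\Rd$, whereas you unpack this via the standard cut-off argument; both are fine.
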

\begin{proof}
 If $\eta\in \mathcal M_{\phi,\gamma}[\sigma]$ then Theorem \ref{thm:moment_estimate2}
 (in the $\theta$-homogeneous case) or \ref{thm:moment_estimate}
 (in the general case) yields $\eta(\Rd)=\sigma(\Rd)=1$, so that 
 $\mathcal M_{\phi,\gamma}[\sigma]\subset \Probabilities\Rd$.
 Since the narrow topology coincide with the weak$^*$ one in
 $\Probabilities\Rd$, Theorem \ref{thm:1} proves the second statement.
 The completeness of $\Probabilities\Rd$ with respect to 
 the (pseudo) distance $\cW_{\phi,\gamma}$ follows by Theorem \ref{thm:completeness}. 
\qed\end{proof}
We can also prove 
some useful moment estimates.
\begin{theorem}[Moment estimates]
\label{thm:moment_convergence}
 Let us assume that 
$   \tilde\sfm_r(\gamma)<+\infty$
for some $r\in \R$
and let us set
 \begin{equation}
   \bar\delta:=
  \begin{cases}
    \frac 1\theta p+(1-\frac 1\theta)r=
    \frac p\theta(1+r/\kappa)&\text{if $\phi$ is $\theta$-homogeneous},\\
    1+r/q\le p&\text{otherwise.}
  \end{cases}
 \end{equation}
 If 
 $\sftm_\delta(\sigma)<+\infty$ for some $\delta\le \bar\delta$, 
 and  $\eta\in \mathcal M_{\phi,\gamma}[\sigma]$,
 then $\sftm_\delta(\eta)$ is finite and there exists a constant $\sfC$
 only depending on $\phi,\delta$  such that
 \begin{equation}
   \label{eq:cap4:35}
   \left\{
   \begin{aligned}
     \sftm_\delta(\eta)&\le
     \sfC\Big(\sftm_\delta(\sigma)+
     \sftm_r(\gamma)+\cW_{\phi,\gamma}^p(\sigma,\eta)\Big)\\
     \sftm_\delta(\eta)&\le
     \sfC\Big(\sftm_\delta(\sigma)+
     \sftm_r(\gamma)^{1-1/\theta}
     W_{p,\alpha}^{p/\theta}(\sigma,\eta)\Big).
   \end{aligned}
\right.
\end{equation}
Moreover, when $\delta\ge1$,
the topology induced by $\cW_{\phi,\gamma}$ in
$\mathcal M_{\phi,\gamma}[\sigma]$ is stronger
than the one induced by the Wasserstein distance $W_\delta$.
\end{theorem}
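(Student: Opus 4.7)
The plan is to derive both inequalities in \eqref{eq:cap4:35} by applying the continuity--equation moment estimates (Theorem \ref{thm:moment_estimate} in the general case, Theorem \ref{thm:moment_estimate2} in the $\theta$-homogeneous case) to an optimal geodesic joining $\sigma$ and $\eta$, and then to deduce the topological comparison with $W_\delta$ from those bounds combined with the narrow convergence guaranteed by Theorem \ref{thm:1}.

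For the first part, given $\eta \in \mathcal M_{\phi,\gamma}[\sigma]$, Theorem \ref{thm:0} provides an optimal couple $(\mu,\nnu) \in \CCE{\phi}{\gamma}{0}{1}{\Rd}{\sigma}{\eta}$ satisfying
\[
\int_0^1 \Phi(\mu_t,\nnu_t|\gamma)\,\d t \;=\; \cW_{\phi,\gamma}^p(\sigma,\eta) \;=:\; E,\qquad \mu_0=\sigma,\ \mu_1=\eta.
\]
Evaluating Theorem \ref{thm:moment_estimate} at $t=1$ along this curve (with $T=1$) yields
\[
\sftm_\delta(\eta) \le \sfC_2\bigl(\sftm_\delta(\sigma) + \sftm_r(\gamma)^{1/q} E^{1/p} + E\bigr),
\]
and Young's inequality $a^{1/q}b^{1/p} \le q^{-1}a + p^{-1}b$ applied to $a=\sftm_r(\gamma)$, $b=E$ absorbs the cross term into $\sftm_r(\gamma)+E$, giving the first estimate in \eqref{eq:cap4:35}. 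In the $\theta$-homogeneous setting, substituting the same optimal geodesic into Theorem \ref{thm:moment_estimate2} produces the second estimate directly, with no cross term to absorb.

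For the topology statement, let $(\sigma_n) \subset \mathcal M_{\phi,\gamma}[\sigma]$ with $\cW_{\phi,\gamma}(\sigma_n,\sigma)\to 0$. Theorem \ref{thm:1} gives $\sigma_n \weaksto \sigma$, and the mass-preservation part of Theorem \ref{thm:moment_estimate} (resp.\ \ref{thm:moment_estimate2}), applied again along optimal geodesics, ensures that $\sigma_n(\Rd)=\sigma(\Rd)$, so the weak$^*$ convergence is in fact narrow. Since $\delta \ge 1$, convergence in $W_\delta$ is equivalent to narrow convergence together with uniform $\delta$-integrability of the map $x\mapsto|x|^\delta$ under $\sigma_n$ (a standard characterization; see e.g.\ \cite[Ch.\ 7]{Ambrosio-Gigli-Savare05}). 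Picking $\delta' \in (\delta,\bar\delta]$ (which is available whenever $\delta<\bar\delta$) and applying the first part of the theorem with exponent $\delta'$ yields $\sup_n \sftm_{\delta'}(\sigma_n) < +\infty$, and the Chebyshev-type bound
\[
\int_{|x|>R}|x|^\delta\,\d\sigma_n \;\le\; R^{\delta-\delta'}\sftm_{\delta'}(\sigma_n)
\]
produces the required uniform integrability.

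The main obstacle is the borderline case $\delta=\bar\delta$, in which no strictly higher moment bound is available from the hypothesis, so the Chebyshev trick fails. I would handle it by exploiting the sharp estimate \eqref{eq:cap4:15} for $\theta$-homogeneous densities, which gives a $\cW_{\phi,\gamma}$-Lipschitz control of $\mu\mapsto \|\zeta\|_{L^p(\mu)}^\theta$ in terms of $\|\zeta\|_{L^p(\gamma,Z)}^{\theta-1}$, and then approximate $|x|^\delta$ by a family of compactly supported cutoffs of the type $\zeta_n^p$ from \eqref{eq:cap4:21} to obtain $\sftm_\delta(\sigma_n)\to \sftm_\delta(\sigma)$ by combining the uniform estimate with lower semicontinuity under narrow convergence.
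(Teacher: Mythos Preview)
Your derivation of the moment bounds \eqref{eq:cap4:35} is correct and matches the paper: feed an optimal geodesic into Theorems \ref{thm:moment_estimate} and \ref{thm:moment_estimate2}, and in the general case absorb the cross term by Young's inequality.

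The topology argument, however, has a genuine gap that is not confined to the borderline $\delta=\bar\delta$. When you ``apply the first part of the theorem with exponent $\delta'$'' for some $\delta'\in(\delta,\bar\delta]$, you need $\sftm_{\delta'}(\sigma)<+\infty$ as a hypothesis of that first part; but the theorem only assumes $\sftm_\delta(\sigma)<+\infty$, and nothing prevents $\sftm_{\delta'}(\sigma)=+\infty$ for every $\delta'>\delta$. So the Chebyshev step $\int_{|x|>R}|x|^\delta\,\d\sigma_n\le R^{\delta-\delta'}\sftm_{\delta'}(\sigma_n)$ cannot be justified from the stated hypotheses. Your diagnosis that the obstruction appears only at $\delta=\bar\delta$ is therefore too optimistic.

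The paper's remedy is exactly the dyadic idea you sketch for the borderline, but carried out uniformly: instead of seeking a higher moment, one revisits the proof of Theorem \ref{thm:moment_estimate} (resp.\ \ref{thm:moment_estimate2}) and sums the annular estimates only over $n\ge N$. This produces a tail bound of the form
\[
\int_{|x|\ge 2^N}|x|^\delta\,\d\eta
\;\le\;\sfC\Big(\int_{|x|\ge 2^{N-1}}|x|^\delta\,\d\sigma
+\sftm_r(\gamma)^{1/q}\,\cW_{\phi,\gamma}(\sigma,\eta)
+\cW_{\phi,\gamma}^p(\sigma,\eta)\Big),
\]
which uses only $\sftm_\delta(\sigma)$ on the right. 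For a sequence $\sigma_n\to\sigma$ in $\cW_{\phi,\gamma}$ the last two terms vanish, and the first is made small by choosing $N$ large; this gives equi-integrability of $|x|^\delta$ directly, for every $\delta\le\bar\delta$. Your borderline fix via \eqref{eq:cap4:15} is heading in this direction, but you should recognize that it is the correct argument for \emph{all} $\delta$, not a patch for an exceptional case.
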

\begin{proof}
  Let us first consider the general case:
  applying \eqref{eq:cap4:70} 
  we easily obtain \eqref{eq:cap4:35}.
  In order to prove the assertion about the convergence of the
  moments induced by $\cW_{\phi,\gamma}$ (which is equivalent
  to the convergence in $W_\delta$ when $\delta\ge1$),
  a simple modification of \eqref{eq:cap4:34bis} yields
  \begin{equation}
    \label{eq:cap4:36}
    \int_{|x|\ge 2^n}|x|^\delta\,\d \eta\le
    \sfC_3\Big(
    \int_{|x|\ge 2^{n-1}}|x|^\delta\,\d \sigma+
    \sftm_r(\gamma)^{1/q}\cW_{\phi,\gamma}(\sigma,\eta)+
    W^p_{\phi,\gamma}(\sigma,\eta)\Big),
  \end{equation}
  which shows that every sequence $\eta_n$ converging to $\sigma$ has
  $\delta$-moments equi-integrable and therefore it is
  relatively compact with respect to the $\delta$-Wasserstein
  distance when $\delta\ge1$.

  The $\theta$-homogeneous case
  follows by the same argument and Theorem
  \ref{thm:moment_estimate2}.
\qed\end{proof}
\begin{remark}
  \label{rem:discussion}
  \upshape
  There are interesting particular cases
  covered by the previous result:
  \begin{enumerate}
  \item When $\gamma(\Rd)<+\infty$ then
    $\cW_{\phi,\gamma}$ 
    is always stronger than the
    $1$-Wasserstein distance $W_1$; in
    the $\theta$-homogeneous case,
    $W_{p,\alpha;\gamma}$ also controls the $W_{p/\theta}$
    distance.
  \item When $\sfm_p(\gamma)<+\infty$, then
    $\cW_{\phi,\gamma}$ is always stronger than $W_p$.
  \item When $\phi$ is $\theta$-homogeneous with $\theta>1$ and
    $\gamma$ is a probability measure with finite moments of arbitrary orders
    (this is the case of a log-concave probability measure),
    then all the measures $\sigma\in \mathcal M_{\phi,\gamma}[\gamma]$
    have finite moments of arbitrary orders
    and the convergence with respect to $\cW_{\phi,\gamma}$
    yields the convergence in $\mathcal P_\delta(\Rd)$ for every $\delta>0$.
  \end{enumerate}
\end{remark}

\subsection{\bfseries Geometric properties}
\label{subsec:geo}
\begin{theorem}[Convexity of the distance and uniqueness of geodesics]
  \label{thm:3}
  $\cW^p_{\phi,\gamma}(\cdot,\cdot)$ is 
  convex, i.e.
  for every $\mu_i^j\in \RPM(\Rd)$, $i,j=0,1$,
  and $\tau\in [0,1]$, if
  $\mu^\tau_i=(1-\tau)\mu^0_i+\tau\mu^1_i$, 
  \begin{equation}
    \label{eq:40}
    \cW^p_{\phi,\gamma}(\mu^\tau_0,\mu^\tau_1)\le
    (1-\tau)\cW^p_{\phi,\gamma}(\mu^0_0,\mu^0_1)+\tau
    \cW^p_{\phi,\gamma}(\mu^1_0,\mu^1_1).
   \end{equation}
   If $\phi$ is strictly convex and $\tilde\phi$ has a
   sublinear growth w.r.t.\ $\rho$
   (i.e.\ $\tilde\varphi_\infty\equiv0$), then
   for every $\mu_0,\mu_1\in \RPM(\Rd)$ with
   $\cW_{\phi,\gamma}(\mu_0,\mu_1)<+\infty$ there exists
   a \emph{unique} mimimizer $(\mu,\nnu)\in \CE\phi\gamma01\Rd$
   of \eqref{eq:10}.
\end{theorem}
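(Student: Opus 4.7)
My approach rests on two ingredients: the linearity of the continuity equation in $(\mu,\nnu)$ and the joint convexity of the action functional $\Phi(\cdot,\cdot|\gamma)$. The latter is the only nontrivial preliminary point: given two admissible pairs $(\mu^j,\nnu^j)$, $j=0,1$, I would choose a common dominating Radon measure $\sigma$ for all the singular parts $(\mu^j)^\perp,|\nnu^j|^\perp$, write the Lebesgue decompositions with respect to $\gamma$ and $\sigma$, and bound the integrand of $\Phi((1-\tau)\mu^0+\tau\mu^1,(1-\tau)\nnu^0+\tau\nnu^1|\gamma)$ pointwise by pure convexity of $\phi$ on the $\gamma$-absolutely continuous part and by convexity and $1$-homogeneity of $\phi^\infty$ on the $\sigma$-part; the latter being $1$-homogeneous, the result is independent of $\sigma$.

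For the convexity inequality \eqref{eq:40}, I invoke Theorem \ref{thm:0} to pick minimizers $(\mu^j,\nnu^j)\in \CCE\phi\gamma 01\Rd{\mu^j_0}{\mu^j_1}$ for $j=0,1$, form
\[
\hat\mu_t:=(1-\tau)\mu^0_t+\tau\mu^1_t,\qquad \hat\nnu_t:=(1-\tau)\nnu^0_t+\tau\nnu^1_t,
\]
which belongs to $\CCE\phi\gamma 01\Rd{\mu^\tau_0}{\mu^\tau_1}$ by linearity of \eqref{eq:continuity1}, and estimate
\[
\cW^p_{\phi,\gamma}(\mu^\tau_0,\mu^\tau_1)\le \int_0^1\!\Phi(\hat\mu_t,\hat\nnu_t|\gamma)\,\d t\le (1-\tau)\int_0^1\!\Phi(\mu^0_t,\nnu^0_t|\gamma)\,\d t+\tau\int_0^1\!\Phi(\mu^1_t,\nnu^1_t|\gamma)\,\d t
\]
using the joint convexity of $\Phi$ pointwise in $t$; the two right-hand integrals equal $\cW^p_{\phi,\gamma}(\mu^j_0,\mu^j_1)$.

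For uniqueness, suppose $(\mu^0,\nnu^0)$ and $(\mu^1,\nnu^1)$ are two minimizers of common value $W=\cW^p_{\phi,\gamma}(\mu_0,\mu_1)$; applying the previous argument to their midpoint $(\hat\mu,\hat\nnu)$ sandwiches $\int_0^1\Phi(\hat\mu_t,\hat\nnu_t|\gamma)\,\d t$ between $W$ and $W$, so the joint convexity inequality must be an equality for a.e.\ $t$. Now the sublinear growth assumption $\tilde\varphi^\infty\equiv 0$ gives, via Lemma \ref{le:Phi_char} and \eqref{eq:scheme:16}, that $\nnu^j_t=\ww^j_t\gamma\ll\gamma$; moreover the singular parts $(\mu^j_t)^\perp$ are paired with vanishing $\ww^\perp$, and $\phi^\infty(\rho^\perp,0)=0$ (by the formula \eqref{eq:scheme:6bis} and $\varphi^\infty(0)=0$), so the singular contributions vanish. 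The equality in convexity thus reduces to pointwise equality
\[
\phi\bigl(\tfrac{\rho^0_t+\rho^1_t}{2},\tfrac{\ww^0_t+\ww^1_t}{2}\bigr)=\tfrac{1}{2}\phi(\rho^0_t,\ww^0_t)+\tfrac{1}{2}\phi(\rho^1_t,\ww^1_t)\qquad\gamma\text{-a.e.\ for a.e.\ }t,
\]
which, by strict convexity of $\phi$, forces $(\rho^0_t,\ww^0_t)=(\rho^1_t,\ww^1_t)$ $\gamma$-a.e., hence $\nnu^0_t=\nnu^1_t$ as Radon measures. Subtracting the two continuity equations yields $\partial_t(\mu^0_t-\mu^1_t)=0$ in $\mathscr D'(\Rd\times(0,1))$, so $\mu^0_t-\mu^1_t$ is distributionally constant in $t$; since $\mu^0_0=\mu^1_0=\mu_0$, Lemma \ref{le:continuous_representative} gives $\mu^0\equiv\mu^1$, and therefore $(\mu^0,\nnu^0)=(\mu^1,\nnu^1)$.

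\medskip
\emph{Main obstacle.} The delicate step is the handling of singular parts in the equality case for uniqueness: strict convexity of $\phi$ on $(0,+\infty)\times \Rd$ only produces equality of the $\gamma$-absolutely continuous densities, so \emph{a priori} the singular parts of $\mu^0_t$ and $\mu^1_t$ might still differ. The sublinear growth assumption is exactly what is needed to bypass this: it forces $\nnu^j$ to be absolutely continuous with respect to $\gamma$, so that equality of densities already determines $\nnu^0=\nnu^1$, and the continuity equation with common initial datum then propagates equality to the (possibly singular) measures $\mu^j_t$ without any further information on their singular parts.
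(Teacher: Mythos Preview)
Your proposal is correct and follows essentially the same route as the paper: take optimal curves, form the $\tau$-convex combination (an admissible competitor by linearity of the continuity equation), and apply joint convexity of $\Phi(\cdot,\cdot|\gamma)$ pointwise in $t$ to get \eqref{eq:40}; for uniqueness, use the midpoint of two putative minimizers to force equality in the convexity inequality, invoke sublinearity (so that $\nnu^j_t=\ww^j_t\gamma$ and the singular parts of $\mu^j_t$ carry no action), and then strict convexity of $\phi$ to identify the densities $\ww^0_t=\ww^1_t$, hence $\nnu^0_t=\nnu^1_t$, after which the continuity equation propagates $\mu^0_0=\mu^1_0$ to $\mu^0_t=\mu^1_t$ for all $t$. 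Your explicit discussion of how the singular contributions drop out (via Lemma~\ref{le:Phi_char} and \eqref{eq:scheme:6bis}) and your ``main obstacle'' paragraph make more transparent a point the paper leaves implicit, and your reduction of the convexity of $\Phi$ to a pointwise estimate using a common dominating measure $\sigma$ for the singular parts is the right way to justify that step. One small omission: you should note that if either $\cW^p_{\phi,\gamma}(\mu^0_0,\mu^0_1)$ or $\cW^p_{\phi,\gamma}(\mu^1_0,\mu^1_1)$ is infinite, \eqref{eq:40} is trivial, so invoking Theorem~\ref{thm:0} is legitimate only in the nontrivial case.
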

\begin{proof}
  Let $(\mu^j,\nnu^j)\in \CCE\phi\gamma01\Rd{\mu_0^j}{\mu_1^j}$
  be two minimizers of \eqref{eq:10}, $j=0,1$.
  For $\tau\in[0,1]$ we set
  $\mu^\tau_t:=(1-\tau)\mu^0_t+\tau\mu^1_t$, $\nnu^\tau_t:=
  (1-\tau)\nnu^0_t+\tau\nnu^1_t$.
  Since $(\mu^\tau,\nnu^\tau)\in \cce01\Rd{\mu_0^\tau}{\mu_1^\tau}$,
  the convexity of $\phi$ yields
  \begin{align*}
    \cW^p_{\phi,\gamma}(\mu_0^\tau,\mu^\tau_1)&\le
    \int_0^1\Phi(\mu^\tau_t,\nnu^\tau_t|\gamma)\,\d t\le 
    \int_0^1 
    \Big((1-\tau)\Phi(\mu^0_t,\nnu^0_t|\gamma)+
    \tau\Phi(\mu^1_t,\nnu^1_t|\gamma)\Big)\,\d t
    \\&=
    (1-\tau)\cW^p_{\phi,\gamma}(\mu^0_0,\mu^0_1)+\tau
    \cW^p_{\phi,\gamma}(\mu^1_0,\mu^1_1).
  \end{align*}
  Let us now suppose that $\phi$ is strictly convex
  and sublinear. Setting, as usual,
  $\mu^\tau_t=\rho^\tau_t\gamma+(\mu^\tau_t)^\perp$,
  $\nnu^\tau_t=\ww^\tau_t\gamma$,
  we have for a.e.\ $t\in (0,1)$
  \begin{equation}
    \label{eq:cap5:6}
    \Phi(\mu_t^\tau,\nnu^\tau_t|\gamma)\le
    (1-\tau)\int_\Rd \phi(\rho^0_t,\ww^0_t)\,\d\gamma+
    \tau\int_\Rd \phi(\rho^1_t,\ww^1_t)\,\d\gamma
  \end{equation}
  and the inequality is strict unless
  $\rho^0_t\equiv \rho^1_t$ and $\ww^0_t\equiv \ww^1_t$ for
  $\gamma$-a.e.\ $x\in \Rd$.
  If $\mu^0_0=\mu^1_0$ and $\mu^0_1=\mu^1_1$,
  two minimizers should satisfy
  \begin{displaymath}
    \rho^0_t(x)=\rho^1_t(x),\
    \ww^0_t(x)=\ww^1_t(x)\quad \gamma\text{-a.e.,}
    \quad 
    \nnu^0_t=\nnu^1_t\quad\text{for $\Leb 1$-a.e. }t\in (0,1).
  \end{displaymath}
  Since $(\mu^i,\nnu^i)$ are solutions of the continuity equation,
  taking the difference we obtain
  \begin{displaymath}
    \partial_t\big((\mu^0_t)^\perp
    -(\mu^1_t)^\perp\big)=
    \partial_t (\mu^0_t-\mu^1_t)=
    -\nabla\cdot(\nnu^0_t-\nnu^1_t)=0\quad\text{in }\Rd\times (0,1).
  \end{displaymath}
  The difference $(\mu^0_t)^\perp-(\mu^1_t)^\perp$ is then
  independent of time and vanishes
  at $t=0$, so that $\mu^0_t=\mu^1_t$ for
  every $t\in [0,1]$.  
\qed\end{proof}
\begin{theorem}[Subadditivity]
  \label{thm:subadditivity}
  For every $\mu_i^j\in \RPM(\Rd)$, $i,j=0,1$,
  we have
  \begin{equation}
    \label{eq:cap4:44}
    \cW_{\phi,\gamma}(\mu^0_0+\mu_0^1,\mu_1^0+\mu_1^1)\le
    \cW_{\phi,\gamma}(\mu_0^0,\mu_1^0)+\cW_{\phi,\gamma}(\mu_0^1,\mu_1^1).
  \end{equation}
  In particular
  \begin{equation}
    \label{eq:cap4:45}
    \cW_{\phi,\gamma}(\mu_0+\sigma,\mu_1+\sigma)\le
    \cW_{\phi,\gamma}(\mu_0,\mu_1)\quad
    \forall\, \sigma\in \RPM(\Rd).
  \end{equation}
\end{theorem}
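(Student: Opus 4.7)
The plan is to reduce \eqref{eq:cap4:44} to a pointwise subadditivity property of $\Phi(\cdot,\cdot|\gamma)^{1/p}$, exploiting the equivalent ``length-type'' characterization \eqref{eq:cap3:29} of $\cW_{\phi,\gamma}$ proved in Theorem \ref{thm:0}. The second inequality \eqref{eq:cap4:45} will then follow immediately from \eqref{eq:cap4:44} applied with $\mu_0^1=\mu_1^1=\sigma$, since the constant path $(\mu_t,\nnu_t)\equiv(\sigma,0)$ is admissible with zero action, giving $\cW_{\phi,\gamma}(\sigma,\sigma)=0$.

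Given $\varepsilon>0$, I would first choose curves $(\mu^j,\nnu^j)\in\CCE\phi\gamma01\Rd{\mu_0^j}{\mu_1^j}$ for $j=0,1$ with
$\int_0^1\Phi(\mu^j_t,\nnu^j_t|\gamma)^{1/p}\,\d t\le \cW_{\phi,\gamma}(\mu_0^j,\mu_1^j)+\varepsilon$,
which exist by Theorem \ref{thm:0}. By the linearity of \eqref{eq:continuity1}, the sum $(\mu^0+\mu^1,\nnu^0+\nnu^1)$ lies in $\cce01\Rd{\mu_0^0+\mu_0^1}{\mu_1^0+\mu_1^1}$, so \eqref{eq:cap3:29} applied to this admissible curve yields
\begin{displaymath}
\cW_{\phi,\gamma}(\mu_0^0+\mu_0^1,\mu_1^0+\mu_1^1)\le
\int_0^1\Phi(\mu^0_t+\mu^1_t,\nnu^0_t+\nnu^1_t|\gamma)^{1/p}\,\d t.
\end{displaymath}

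The crux is the pointwise inequality (for a.e.\ $t\in(0,1)$)
\begin{displaymath}
\Phi(\mu^0_t+\mu^1_t,\nnu^0_t+\nnu^1_t|\gamma)^{1/p}\le
\Phi(\mu^0_t,\nnu^0_t|\gamma)^{1/p}+\Phi(\mu^1_t,\nnu^1_t|\gamma)^{1/p}.
\end{displaymath}
To prove this I would fix $t$, introduce the common reference measure $\lambda:=\gamma+(\mu^0_t)^\perp+(\mu^1_t)^\perp$ (where $(\cdot)^\perp$ denotes the $\gamma$-singular component), and use Lemma \ref{le:Phi_char} to express each $\Phi(\mu^j_t,\nnu^j_t|\gamma)$ as $\int_\Rd F_j^p\,\d\lambda$, where $F_j$ equals $\phi(\rho^j,\ww^j)^{1/p}$ on $\supp(\gamma)$ and $\phi^\infty(\rho^\perp_j,\ww^\perp_j)^{1/p}$ on the complementary singular support. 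The pointwise subadditivity of the integrands $F_0+F_1\ge F_{01}$ then reduces to the scalar inequality
\begin{displaymath}
\phi(\rho^0+\rho^1,\ww^0+\ww^1)^{1/p}\le \phi(\rho^0,\ww^0)^{1/p}+\phi(\rho^1,\ww^1)^{1/p}
\end{displaymath}
together with its analogue for $\phi^\infty$. Setting $\rho:=\rho^0+\rho^1$, this scalar inequality follows by combining two facts from Theorem \ref{le:phi_list}: (i) $\ww\mapsto \phi(\rho,\ww)^{1/p}$ is a norm, so $\phi(\rho,\ww^0+\ww^1)^{1/p}\le \phi(\rho,\ww^0)^{1/p}+\phi(\rho,\ww^1)^{1/p}$; and (ii) the monotonicity \eqref{eq:cap2:49}, which gives $\phi(\rho,\ww^j)\le\phi(\rho^j,\ww^j)$ since $\rho\ge\rho^j$. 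A single application of Minkowski's inequality in $L^p(\lambda)$ upgrades the pointwise bound $F_{01}\le F_0+F_1$ to the integrated inequality.

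The main obstacle is the careful bookkeeping of the Lebesgue decomposition along this argument, because the $\gamma$-singular parts of $\mu^0_t$ and $\mu^1_t$ need not be mutually absolutely continuous. The choice $\lambda=\gamma+(\mu^0_t)^\perp+(\mu^1_t)^\perp$ handles this cleanly, but one must verify that $\phi^\infty$ inherits both the norm-in-$\ww$ property and the monotonicity-in-$\rho$ needed above. This is read off from the perspective representation \eqref{eq:scheme:6bis}, $\phi^\infty(\rho,\ww)=\rho\,\varphi^\infty(\ww/\rho)$: the convexity and $p$-homogeneity of $\varphi^\infty$ (together with Theorem \ref{le:phi_list} applied to $\varphi^\infty$ as a degenerate case) show that $\ww\mapsto \phi^\infty(\rho,\ww)^{1/p}$ is a seminorm and Euler's relation shows $\partial_\rho\phi^\infty=-(p-1)\varphi^\infty(\ww/\rho)\le 0$, so $\rho\mapsto\phi^\infty(\rho,\ww)$ is non-increasing. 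The scalar subadditivity then transfers verbatim. Integrating the pointwise inequality in $t$, combining with the $\varepsilon$-optimality of $(\mu^j,\nnu^j)$, and sending $\varepsilon\downarrow0$ concludes the proof of \eqref{eq:cap4:44}; \eqref{eq:cap4:45} follows as noted.
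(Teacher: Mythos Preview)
Your proof is correct and follows essentially the same strategy as the paper's: both use the length characterization \eqref{eq:cap3:29}, add two admissible curves, and derive the key inequality $\Phi(\mu^0+\mu^1,\nnu^0+\nnu^1|\gamma)^{1/p}\le\Phi(\mu^0,\nnu^0|\gamma)^{1/p}+\Phi(\mu^1,\nnu^1|\gamma)^{1/p}$ from the norm property of $\ww\mapsto\phi(\rho,\ww)^{1/p}$ combined with the monotonicity \eqref{eq:cap2:49} of $\rho\mapsto\phi(\rho,\ww)$. The paper applies these two ingredients directly at the functional level (first splitting in $\nnu$, then dropping from $\mu^0+\mu^1$ to $\mu^j$), whereas you pass through a common dominating measure $\lambda$ and a pointwise scalar inequality before invoking Minkowski in $L^p(\lambda)$; you also use $\varepsilon$-optimal curves rather than the minimizers guaranteed by Theorem~\ref{thm:0}---these are cosmetic differences only.
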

\begin{proof}
  Let $(\mu^j,\nnu^j)\in \CCE\phi\gamma01\Rd{\mu_0^j}{\mu_1^j}$
  be as in the proof
  of the previous Theorem. Since $(\mu^0+\mu^1,\nnu^0+\nnu^1)
  \in \cce01\Rd{\mu_0^0+\mu_0^1}{\mu_1^0+\mu_1^1}$,
  we get
  \begin{align*}
    &\cW_{\phi,\gamma}(\mu^0_0+\mu_0^1,\mu_1^0+\mu_1^1)\le
    \int_0^1\Big(\Phi(\mu^0_t+\mu^1_t,\nnu^0_t+\nnu^1_t|\gamma)\Big)^{1/p}\,\d t
    \\&\le 
    \int_0^1\Big[\Big(\Phi(\mu^0_t+\mu^1_t,\nnu^0_t|\gamma)\Big)^{1/p}
    +
    \Big(\Phi(\mu^0_t+\mu^1_t,\nnu^1_t|\gamma)\Big)^{1/p}\Big]\,\d t
    \\&\le
    \int_0^1\Big[\Big(\Phi(\mu^0_t,\nnu^0_t|\gamma)\Big)^{1/p}+
    \Big(\Phi(\mu^1_t,\nnu^1_t|\gamma)\Big)^{1/p}\Big]\,\d t
    =
    \cW_{\phi,\gamma}(\mu^0_0,\mu^0_1)+\cW_{\phi,\gamma}(\mu^1_0,\mu^1_1).
    \quad\qed
  \end{align*}
\end{proof}
\begin{proposition}[Rescaling]
 For every $\mu_0,\mu_1\in \RPM(\Rd)$ and $\lambda>0$ we have
 \begin{equation}
 \label{eq:resc1}
  W^p_{\phi,\lambda\gamma}(\lambda\mu_0,\lambda\mu_1)=
  \lambda \cW_{\phi,\gamma}^p(\mu_0,\mu_1),
\end{equation}
\begin{equation}
\label{eq:resc2}
 \begin{cases}
  W^p_{\phi,\gamma}(\lambda\mu_0,\lambda\mu_1)\le
  \lambda^p \cW_{\phi,\gamma}^p(\mu_0,\mu_1)&\text{if }\lambda\ge1\\
  W^p_{\phi,\gamma}(\lambda\mu_0,\lambda\mu_1)\le
  \lambda \cW_{\phi,\gamma}^p(\mu_0,\mu_1)&\text{if }\lambda\le 1.
 \end{cases}
\end{equation}
\end{proposition}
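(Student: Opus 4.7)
The plan is to exploit the \emph{linearity} of the continuity equation: multiplying $(\mu,\nnu)\in\cce01\Rd{\mu_0}{\mu_1}$ by $\lambda$ produces $(\lambda\mu,\lambda\nnu)\in\cce01\Rd{\lambda\mu_0}{\lambda\mu_1}$, and conversely. So in both \eqref{eq:resc1} and \eqref{eq:resc2} the set of admissible curves is in bijection via multiplication by $\lambda$, and everything reduces to computing how the action $\Phi$ transforms under this rescaling.

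For \eqref{eq:resc1}, I would write the Lebesgue decompositions $\mu=\rho\gamma+\mu^\perp$, $\nnu=\ww\gamma+\nnu^\perp$, and choose $\sigma\in\RPM(\Rd)$ with $\mu^\perp=\rho^\perp\sigma$, $\nnu^\perp=\ww^\perp\sigma$. Then the decomposition of $\lambda\mu$, $\lambda\nnu$ with respect to $\lambda\gamma$ reads
\begin{equation*}
\lambda\mu=\rho\,(\lambda\gamma)+\lambda\mu^\perp,\qquad \lambda\mu^\perp=\rho^\perp(\lambda\sigma),
\end{equation*}
and analogously for $\lambda\nnu$, i.e.\ the Radon–Nikodym densities $\rho,\ww,\rho^\perp,\ww^\perp$ are \emph{unchanged}. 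Substituting into \eqref{eq:scheme:8} gives directly $\Phi(\lambda\mu,\lambda\nnu|\lambda\gamma)=\lambda\,\Phi(\mu,\nnu|\gamma)$. Integrating in time and taking the infimum over curves joining $\lambda\mu_0$ to $\lambda\mu_1$ (using the bijection $(\mu,\nnu)\mapsto(\lambda\mu,\lambda\nnu)$) yields both inequalities in \eqref{eq:resc1}.

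For \eqref{eq:resc2}, the reference measure $\gamma$ is kept fixed, so the densities of $\lambda\mu$ and $\lambda\nnu$ with respect to $\gamma$ become $\lambda\rho$ and $\lambda\ww$, and the singular parts scale by $\lambda$. The key observation is the following two pointwise bounds on $\phi$ (and the analogous ones for $\phi^\infty$, which is $1$-homogeneous by \eqref{eq:scheme:6bis}): when $\lambda\le 1$, convexity of $\phi$ together with $\phi(0,0)=0$ (a consequence of \eqref{eq:cap2:11} and \eqref{eq:cap2:50}) gives
\begin{equation*}
\phi(\lambda\rho,\lambda\ww)=\phi\bigl(\lambda(\rho,\ww)+(1-\lambda)(0,0)\bigr)\le \lambda\,\phi(\rho,\ww);
\end{equation*}
when $\lambda\ge 1$, the $p$-homogeneity in $\ww$ of property \eqref{eq:cap2:11} combined with the monotonicity \eqref{eq:cap2:49} of $\rho\mapsto\phi(\rho,\ww)$ gives
\begin{equation*}
\phi(\lambda\rho,\lambda\ww)=\lambda^p\phi(\lambda\rho,\ww)\le \lambda^p\phi(\rho,\ww).
\end{equation*}
For the recession term, $1$-homogeneity of $\phi^\infty$ yields $\phi^\infty(\lambda\rho^\perp,\lambda\ww^\perp)=\lambda\,\phi^\infty(\rho^\perp,\ww^\perp)$, which is bounded by $\lambda\,\phi^\infty$ in the case $\lambda\le 1$ and by $\lambda^p\phi^\infty$ in the case $\lambda\ge 1$ (since $\lambda\le\lambda^p$ for $\lambda\ge 1$, $p\ge 1$).

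Combining these bounds, $\Phi(\lambda\mu,\lambda\nnu|\gamma)\le \lambda\,\Phi(\mu,\nnu|\gamma)$ when $\lambda\le 1$ and $\Phi(\lambda\mu,\lambda\nnu|\gamma)\le \lambda^p\,\Phi(\mu,\nnu|\gamma)$ when $\lambda\ge 1$. Integrating in $t$ and taking the infimum over $(\mu,\nnu)\in\cce01\Rd{\mu_0}{\mu_1}$ yields \eqref{eq:resc2}. The only mildly delicate point is the extension of $\phi$ at the boundary $\rho=0$, which is already prepared in \eqref{eq:cap2:50}; beyond that, the calculation is purely algebraic.
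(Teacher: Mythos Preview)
Your proof is correct and follows essentially the same approach as the paper: both arguments reduce \eqref{eq:resc1} to the identity $\Phi(\lambda\mu,\lambda\nnu|\lambda\gamma)=\lambda\,\Phi(\mu,\nnu|\gamma)$, and both derive \eqref{eq:resc2} from the pointwise bounds $\phi(\lambda\rho,\lambda\ww)\le\lambda^p\phi(\rho,\ww)$ for $\lambda\ge1$ (via monotonicity in $\rho$ and $p$-homogeneity in $\ww$) and $\phi(\lambda\rho,\lambda\ww)\le\lambda\,\phi(\rho,\ww)$ for $\lambda\le1$ (via convexity and $\phi(0,0)=0$). Your treatment is in fact slightly more complete, since you also spell out how the recession part $\Phi^\infty$ behaves under the rescaling, a detail the paper leaves implicit.
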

\begin{proof}
 \eqref{eq:resc1} follows from the corresponding property
$ \Phi(\lambda\mu,\lambda\nnu|\lambda\gamma)=\lambda \Phi(\mu,\nnu|\gamma)$.
Analogously, the monotonicity and homogeneity properties of $\phi$ yield
\begin{displaymath}
 \phi(\lambda\rho,\lambda\ww)\le \phi(\rho,\lambda\ww)=\lambda^p\phi(\rho,\ww)\quad
 \text{if }\lambda>1;
\end{displaymath}
the convexity of $\phi$ and the fact that $\phi(0,0)=0$ yield
\begin{displaymath}
 \phi(\lambda\rho,\lambda\ww)\le \lambda \phi(\rho,\ww)
 \quad\text{if }\lambda<1.
\end{displaymath}
\eqref{eq:resc2} follows immediately by the previous inequalities.
\qed\end{proof}

\begin{proposition}[Monotonicity]
  \label{prop:monotonicity}
If $\gamma_1\ge \gamma_2$ and $\phi_1\le \phi_2$, then
for every $\mu_0,\mu_1\in \RPM(\Rd)$ we have
\begin{equation}
 \label{eq:comparison}
 \cW_{\phi_1,\gamma_1}(\mu_0,\mu_1)\le 
  \cW_{\phi_2,\gamma_2}(\mu_0,\mu_1).
\end{equation}
\end{proposition}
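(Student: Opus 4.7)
The plan is straightforward: reduce the inequality \eqref{eq:comparison} to a pointwise-in-$t$ comparison of action densities applied to the same admissible pair $(\mu,\nnu)\in\cce01\Rd{\mu_0}{\mu_1}$, and then integrate and take the infimum in \eqref{eq:10}.

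Writing $\Phi_j$ for the action functional built from $\phi_j$ ($j=1,2$), the key intermediate step is the chain
\begin{equation*}
\Phi_1(\mu_t,\nnu_t|\gamma_1)\le \Phi_1(\mu_t,\nnu_t|\gamma_2)\le \Phi_2(\mu_t,\nnu_t|\gamma_2),
\end{equation*}
valid for a.e.\ $t\in(0,1)$. The first inequality is a direct application of the Monotonicity Theorem w.r.t.\ $\gamma$ \eqref{eq:cap2:28}: since $\gamma_2\le\gamma_1$, the theorem applied to the jointly convex, lower semicontinuous integrand $\phi_1$ (viewed as a function on $\R^{1+d}$, extended by $+\infty$ on $\{\rho<0\}$ and satisfying $\phi_1(0,\mathbf 0)=0$ by \eqref{eq:cap2:50}) yields precisely the claimed decrease of the functional when the reference measure is enlarged. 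The second inequality follows at once from the pointwise bound $\phi_1\le \phi_2$, together with the companion bound $\phi_1^\infty\le \phi_2^\infty$ on the recession functions — an immediate consequence of the defining formula $\phi^\infty(\rho,\ww)=\sup_{\lambda>0}\lambda^{-1}\phi(\lambda\rho,\lambda\ww)$ in \eqref{eq:scheme:4}. Since $\Phi_1$ and $\Phi_2$ are evaluated against the common reference $\gamma_2$, the Lebesgue decompositions of $\mu_t$ and $\nnu_t$ are the same in both, so the inequality reduces to an integrated pointwise comparison of the integrands and of their recession parts.

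Once the chain is established, integrating in $t\in(0,1)$ gives
\begin{equation*}
\int_0^1\Phi_1(\mu_t,\nnu_t|\gamma_1)\,\d t\le \int_0^1\Phi_2(\mu_t,\nnu_t|\gamma_2)\,\d t
\end{equation*}
for every $(\mu,\nnu)\in\cce01\Rd{\mu_0}{\mu_1}$; taking the infimum of the right-hand side over this class produces $\cW_{\phi_1,\gamma_1}^p(\mu_0,\mu_1)\le \cW_{\phi_2,\gamma_2}^p(\mu_0,\mu_1)$, which is \eqref{eq:comparison}.

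The main — and essentially only — delicate point is to verify that the Monotonicity Theorem of \S\ref{sec:convex_functionals}, stated for a scalar convex integrand $\psi$ on $\Rm$, transfers without change to the vector-valued action integrand $\phi$ on $\R^{1+d}$ and to its $1$-homogeneous recession $\phi^\infty$. The proof given there, however, uses only the elementary scalar inequality $\theta\,\psi(\theta^{-1}x)\ge \psi(x)$ for $\theta\in(0,1]$ and $\psi(0)=0$, which holds verbatim for $\phi$, while the $1$-homogeneity of $\phi^\infty$ handles the singular part in a completely analogous fashion via the usual identification $\sigma=\mu^\perp+|\nnu^\perp|$. No compactness or approximation is needed beyond this straightforward verification.
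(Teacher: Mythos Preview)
Your proof is correct and is precisely the argument the paper has in mind: the proposition is stated in the paper without proof, relying implicitly on the Monotonicity Theorem \eqref{eq:cap2:28} of \S\ref{sec:convex_functionals} applied to the pair $(\mu_t,\nnu_t)$ viewed as an $\R^{1+d}$-valued measure, together with the pointwise comparison $\phi_1\le\phi_2$ (hence $\phi_1^\infty\le\phi_2^\infty$). Your two-step chain $\Phi_1(\cdot|\gamma_1)\le\Phi_1(\cdot|\gamma_2)\le\Phi_2(\cdot|\gamma_2)$ followed by integration and infimum is exactly this route.
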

\begin{theorem}[Convolution]
  \label{thm:wass_conv}
  Let $k\in C^\infty_{\rm c}(\Rd)$ be a nonnegative convolution kernel
  with $\int_\Rd k(x)\,\d x=1$ and let
  $k_\eps(x):=\eps^{-d}k(x/\eps)$.
  For every $\mu_0,\mu_1\in \RPM(\Rd)$ we have
  \begin{align}
    \label{eq:wass_kernel}
    \cW_{\phi,\gamma\ast k_\eps}(\mu_0\ast k_\eps,\mu_1\ast k_\eps)&\le 
    \cW_{\phi,\gamma}(\mu_0,\mu_1)\quad
    \forall\, \eps>0;\\
    \label{eq:wass_kernel2}
    \lim_{\eps\down0} \cW_{\phi,\gamma\ast k_\eps}(\mu_0\ast k_\eps,\mu_1\ast k_\eps)&= 
    \cW_{\phi,\gamma}(\mu_0,\mu_1).
\end{align}
\end{theorem}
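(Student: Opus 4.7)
The plan is to establish the pointwise inequality \eqref{eq:wass_kernel} as a direct consequence of the monotonicity of the action functional under convolution (Theorem \ref{thm:mono_conv}), and then to deduce the convergence \eqref{eq:wass_kernel2} by combining this upper bound with the joint lower semicontinuity of $\cW_{\phi,\gamma}$ in the reference measure $\gamma$ and in the endpoints (Theorem \ref{thm:2}).

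For \eqref{eq:wass_kernel}, I fix an almost-optimal curve $(\mu,\nnu)\in\cce01\Rd{\mu_0}{\mu_1}$ with finite $\Phi$-energy and set $\mu^\eps_t:=\mu_t\ast k_\eps$, $\nnu^\eps_t:=\nnu_t\ast k_\eps$. Since convolution with $k_\eps$ commutes with spatial divergence and with the time derivative in the distributional sense, the convolved pair $(\mu^\eps,\nnu^\eps)$ still solves the continuity equation, with endpoints $\mu_0\ast k_\eps$ and $\mu_1\ast k_\eps$. Viewing $(\mu_t,\nnu_t)$ as a single $\R^{d+1}$-valued Radon measure with associated convex density $\phi$ (extended by $+\infty$ when the first coordinate is negative and by \eqref{eq:cap2:50} at $\rho=0$), I apply Theorem \ref{thm:mono_conv} pointwise in $t$ to obtain
\begin{equation*}
\Phi(\mu^\eps_t,\nnu^\eps_t\,|\,\gamma\ast k_\eps)\le \Phi(\mu_t,\nnu_t\,|\,\gamma)\quad\text{for a.e.\ }t\in(0,1).
\end{equation*}
Integrating in $t$ and taking the infimum over the admissible $(\mu,\nnu)$ yields \eqref{eq:wass_kernel}.

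For \eqref{eq:wass_kernel2}, the $\limsup\le$ side is immediate from \eqref{eq:wass_kernel}. For the $\liminf\ge$ side, I use that $\gamma\ast k_\eps\weaksto\gamma$ and $\mu_i\ast k_\eps\weaksto\mu_i$ in $\RPM(\Rd)$ as $\eps\downarrow 0$, and apply Theorem \ref{thm:2} with the constant (hence trivially monotone, pointwise convergent) sequence $\phi^n\equiv\phi$. This yields
\begin{equation*}
\liminf_{\eps\downarrow 0}\cW_{\phi,\gamma\ast k_\eps}(\mu_0\ast k_\eps,\mu_1\ast k_\eps)\ge \cW_{\phi,\gamma}(\mu_0,\mu_1),
\end{equation*}
closing the loop.

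The main technical point is the invocation of Theorem \ref{thm:mono_conv} for the action functional $\Phi$, which is stated there for integral functionals of a vector-valued Radon measure with convex density vanishing at the origin. Our $\phi:(0,+\infty)\times\Rd\to[0,+\infty)$, once extended in the natural way to $\R^{d+1}$, is such a density: it is convex, lower semicontinuous, and satisfies $\phi(0,0)=0$ by \eqref{eq:cap2:50}. Because the convolution kernel $k_\eps\ge 0$ preserves the nonnegativity of the first component, the extended functional coincides with the action functional both on $(\mu,\nnu)$ and on $(\mu^\eps,\nnu^\eps)$, and the proof of Theorem \ref{thm:mono_conv}, which rests on the convexity and positive $1$-homogeneity of the perspective map $(s,\rho,\ww)\mapsto s\,\phi(\rho/s,\ww/s)$, transfers verbatim. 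No further subtlety is required, since the integrability assumptions of Definition \ref{def:CE} and the finiteness of $\Phi$-energy pass to the convolved curve by the standard properties of $k_\eps$.
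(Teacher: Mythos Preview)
Your proof is correct and follows essentially the same route as the paper: convolve an (almost-)optimal curve, invoke Theorem~\ref{thm:mono_conv} (equation~\eqref{eq:cap2:57}) pointwise in $t$ to get~\eqref{eq:wass_kernel}, and then combine~\eqref{eq:wass_kernel} with the lower semicontinuity of Theorem~\ref{thm:2} to obtain~\eqref{eq:wass_kernel2}. The only cosmetic difference is that the paper takes an optimal curve (via Theorem~\ref{thm:0}) rather than an almost-optimal one, and your extra paragraph justifying the applicability of Theorem~\ref{thm:mono_conv} to the extended density $\phi$ on $\R^{1+d}$ is a welcome clarification that the paper leaves implicit.
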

\begin{proof}
  Let $(\mu,\nnu)\in \CCE\phi\gamma01\Rd{\mu_0}{\mu_1}$
  be an optimal connecting curve as in Theorem \ref{thm:3}
 and let us set $\mu^\eps_t=\mu_t\ast k_\eps,\nnu^\eps_t:=\nnu_t\ast k_\eps$.
 Since $(\mu^\eps,\nnu^\eps)\in
 \cce01\Rd{\mu_0^\eps}{\mu_1^\eps}$,
 \eqref{eq:wass_kernel} then follows by \eqref{eq:cap2:57}
 whereas
 \eqref{eq:wass_kernel2}
 is a consequence of Theorem \ref{thm:2}
\qed\end{proof}
\begin{remark}[Smooth approximations]
  \upshape
  For a given curve $(\mu,\nnu)\in \CE\phi\gamma01\Rd$
  the convolution technique of the previous Theorem exhibits an
  approximations $(\mu^\eps,\nnu^\eps)$ in
  $\CE\phi{\gamma^\eps}01\Rd$, $\gamma^\eps:=\gamma\ast k_\eps$,
  which enjoy some useful properties:
  \begin{enumerate}
  \item $\mu^\eps=\rho^\eps\Leb d$, $\nnu^\eps=\ww^\eps\Leb d$ with
    $\rho^\eps,\ww^\eps\in C^\infty(\Rd)$; if $\mu_0(\Rd)<+\infty$ and
    $\sftm_{-\kappa}(\gamma)<+\infty$ (recall Theorem
    \ref{thm:narrow_convergence}),
    then $\rho^\eps$ is also uniformly bounded.
  \item If $\supp(k)\subset \overline{\Ball 1}$ then $\rho^\eps,\ww^\eps$ are supported in
    $G^\eps:=\big\{x\in \Rd:{\rm dist}(x,G)\le \eps\big\}$, $G=\supp(\gamma)$.
  \item  $\rho^\eps,\ww^\eps$ are classical solution of the continuity equation
    \begin{displaymath}
      \partial_t\rho^\eps+\nabla \cdot\ww^\eps=0\quad
      \text{in }\Rd\times(0,1).
    \end{displaymath}
  \item If $(\mu,\nnu)$ is also a geodesic,
    $\displaystyle \int_\Rd
    \phi(\rho^\eps_t,\ww^\eps_t)\,\d\gamma^\eps\le
    \Phi(\rho_t,\nnu_t|\gamma)=\cW_{\phi,\gamma}^p(\mu_0,\mu_1)$ for
    every $t\in [0,1]$.
  \end{enumerate}
\end{remark}
\subsection{\bfseries Absolutely continuous curves and geodesics}
\label{subsec:curves}
We now study absolutely continuous curves with respect to
$\cW_{\phi,\gamma}$ and their length.
Let us first recall
(see e.g.\ \cite[Chap.\ 1]{Ambrosio-Gigli-Savare05})
that
a curve $t\mapsto \mu_t\in \Mloc(\Rd)$, $t\in [0,T]$, is absolutely
continuous w.r.t.\ $\cW_{\phi,\gamma}$
if there exists a function $m\in L^1(0,T)$ such that
\begin{equation}
  \label{eq:cap4:37}
  \cW_{\phi,\gamma}(\mu_{t_1},\mu_{t_0})\le
  \int_{t_0}^{t_1}m(t)\,\d t\quad
  \forall\, 0\le t_0<t_1\le T.
\end{equation}
The curve has finite $p$-energy if moreover $m\in L^p(0,T)$.
The metric derivative $|\mu'|$ of an absolutely continuous curve
is defined as
\begin{equation}
  \label{eq:cap4:38}
  |\mu_t'|:=\lim_{h\to0}\frac{\cW_{\phi,\gamma}(\mu_{t+h},\mu_t)}{|h|},
\end{equation}
and it is possible to prove that
$|\mu_t'|$ exists  and satisfies
$|\mu_t'|\le m(t)$ for $\Leb 1$-a.e.\ $t\in (0,T)$.
The length of $\mu$ is then defined as
the integral of $|\mu'|$ in the interval $(0,T)$.

\begin{theorem}[Absolutely continuous curves and
  their metric velocity]
  A curve $t\mapsto \mu_t$, $t\in [0,T]$, is absolutely continuous
  with respect to $\cW_{\phi,\gamma}$ if and only if
  there exists a Borel family of measures
  $(\nnu_t)_{t\in (0,T)}$ in $\Mloc(\Rd;\Rd)$ such that
  $(\mu,\nnu)\in \CE\phi\gamma0T\Rd$ and
  \begin{equation}
    \label{eq:cap4:39}
    \int_0^T \Big(\Phi(\mu_t,\nnu_t|\gamma)\Big)^{1/p}\,\d t<+\infty.
  \end{equation}
  In this case we have
  \begin{equation}
    \label{eq:cap4:40}
    |\mu_t'|^p\le \Phi(\mu_t,\nnu_t|\gamma)\quad
    \text{for $\Leb 1$-a.e.\ }t\in (0,T),
  \end{equation}
  and there exists a unique Borel family $\tilde\nnu_t$ such that
  \begin{equation}
    \label{eq:cap4:41}
    |\mu_t'|^p= \Phi(\mu_t,\tilde\nnu_t|\gamma)\quad
    \text{for $\Leb 1$-a.e.\ }t\in (0,T).
  \end{equation}
\end{theorem}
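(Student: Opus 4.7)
The statement is an equivalence paired with an optimal selection. I will address $(\Leftarrow)$ together with \eqref{eq:cap4:40}, then construct a witnessing $\nnu$ for $(\Rightarrow)$ by an arc-length approximation argument, and finally discuss existence and uniqueness of $\tilde\nnu$ attaining \eqref{eq:cap4:41}.

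For $(\Leftarrow)$, given $(\mu,\nnu)\in\CE\phi\gamma0T\Rd$ with $m(t):=\Phi^{1/p}(\mu_t,\nnu_t|\gamma)\in L^1(0,T)$, for any $0\le s<t\le T$ I rescale the restriction of $(\mu,\nnu)$ to $[s,t]$ to the unit interval via Lemma~\ref{le:time_rescaling} (which leaves $\int\Phi^{1/p}\,\d r$ invariant) and apply the characterization~\eqref{eq:cap3:29}, obtaining $\cW_{\phi,\gamma}(\mu_s,\mu_t)\le\int_s^t m(r)\,\d r$. Dividing by $t-s$ and passing to the limit at any Lebesgue point of $m$ yields absolute continuity of $\mu$ and the pointwise bound $|\mu_t'|\le m(t)$ a.e., i.e.\ \eqref{eq:cap4:40}.

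For $(\Rightarrow)$, given absolutely continuous $\mu$ with $m\in L^1(0,T)$, I approximate by piecewise geodesic curves after first passing to the arc-length parametrization: this is the crucial point, because the natural bound produced by the discrete construction is only on $\int\Phi^{1/p}$, whereas Corollary~\ref{cor:compactness_continuity2} demands a uniform bound on $\int\Phi$, and arc-length converts the first into the second. Set $L:=\int_0^T m(r)\,\d r$, $s(t):=\int_0^t m(r)\,\d r$, and $\hat\mu_s:=\mu_{t(s)}$; by the $(\Leftarrow)$ direction already proved, $\hat\mu$ is $1$-Lipschitz on $[0,L]$ for $\cW_{\phi,\gamma}$. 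Choose partitions $0=s_0^n<\cdots<s_{N_n}^n=L$ with vanishing mesh and, on each $[s_i^n,s_{i+1}^n]$, glue in (via Lemma~\ref{le:glueing}) the constant-speed minimal geodesic of Theorem~\ref{thm:0}: by \eqref{eq:cap3:37bis} the resulting $(\hat\mu^n,\hat\nnu^n)\in\CE\phi\gamma0L\Rd$ satisfies $\Phi(\hat\mu^n_s,\hat\nnu^n_s|\gamma)\le 1$ for a.e.\ $s$, whence $\int_0^L\Phi\,\d s\le L$ uniformly in $n$. Corollary~\ref{cor:compactness_continuity2} yields, up to a subsequence, $\hat\nnu^n\weaksto\hat\nnu$ and $\hat\mu^n_s\weaksto\hat\mu^\infty_s$ with $(\hat\mu^\infty,\hat\nnu)\in\CE\phi\gamma0L\Rd$; since both $\hat\mu^n$ and $\hat\mu$ are $1$-Lipschitz for $\cW_{\phi,\gamma}$, coincide at partition points, and partition points become dense, I get $\cW_{\phi,\gamma}(\hat\mu^n_s,\hat\mu_s)\to 0$ uniformly in $s$, so $\hat\mu^\infty=\hat\mu$. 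Applying Lemma~\ref{le:action_lsc} on every Borel subinterval $I\subset(0,L)$ gives $\int_I\Phi(\hat\mu_s,\hat\nnu_s|\gamma)\,\d s\le|I|$, hence $\Phi(\hat\mu_s,\hat\nnu_s|\gamma)\le 1$ for a.e.\ $s$. Reparametrizing back to $[0,T]$ via Lemma~\ref{le:time_rescaling}, set $\nnu_t:=m(t)\,\hat\nnu_{s(t)}$; the $p$-homogeneity of $\phi$ in $\ww$ then gives $\Phi^{1/p}(\mu_t,\nnu_t|\gamma)\le m(t)$ a.e., which is \eqref{eq:cap4:39}, and combined with \eqref{eq:cap4:40} yields the equality $\Phi(\mu_t,\nnu_t|\gamma)=|\mu_t'|^p$ a.e., i.e.\ the existence of a $\tilde\nnu$ as in \eqref{eq:cap4:41}.

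Uniqueness of $\tilde\nnu$ follows from the strict convexity of $\ww\mapsto\phi(\rho,\ww)$ (automatic whenever $\phi(\rho,\cdot)^{1/p}$ is a strictly convex norm, in particular in the $(\alpha$-$\theta)$-homogeneous examples of Section~\ref{subsec:phi_examples}): any two saturating families $\tilde\nnu^0,\tilde\nnu^1$ produce the admissible midpoint $\tfrac12(\tilde\nnu^0+\tilde\nnu^1)$ in the continuity equation with $\mu$, and strict convexity forces $\Phi(\mu_t,\tfrac12(\tilde\nnu^0_t+\tilde\nnu^1_t)|\gamma)<|\mu_t'|^p$ on a set of positive measure, contradicting the $(\Leftarrow)$ inequality established at the outset. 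The main obstacle in the whole argument is the mismatch between the $L^1$-bound on $\Phi^{1/p}$ that the discrete construction naturally produces and the $L^\infty$-bound on $\Phi$ required by the compactness corollary; the arc-length reparametrization trick is exactly what resolves this. Borel measurability of the family $\tilde\nnu_t$ is automatic: the limit $\nnu$ lies in $\Mloc(\Rd\times(0,T);\Rd)$, which disintegrates into a Borel family by the disintegration theorem recalled around \eqref{eq:DS:4}.
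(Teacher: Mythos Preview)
Your argument is correct and follows the same route as the paper: the easy direction via the characterization \eqref{eq:cap3:29}, and the hard direction via reparametrization to a Lipschitz curve, piecewise-geodesic approximation on a vanishing mesh, and the compactness Corollary~\ref{cor:compactness_continuity2}. Two minor remarks: to deduce the \emph{equality} \eqref{eq:cap4:41} from your pointwise bound $\Phi^{1/p}\le m$ together with \eqref{eq:cap4:40} you should take $m=|\mu'|$ (the minimal admissible choice in \eqref{eq:cap4:37}), whereas the paper argues via the integral chain $\int_0^T|\mu'|^p\ge\int_0^T\Phi\ge\int_0^T|\mu'|^p$; and the paper's proof block does not in fact treat uniqueness---that is deferred to the tangent-space discussion immediately following the theorem, which, like your midpoint argument, ultimately rests on strict convexity of $\nnu\mapsto\Phi(\mu,\nnu|\gamma)$.
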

\begin{proof}
  One implication is trivial: if $(\mu,\nnu)\in \CE\phi\gamma0T\Rd$
  and \eqref{eq:cap4:39} holds, then
  \eqref{eq:cap3:29} yields
  \begin{equation}
    \label{eq:cap4:42}
     \cW_{\phi,\gamma}(\mu_{t_1},\mu_{t_0})\le
     \int_{t_0}^{t_1} \Big(\Phi(\mu_t,\nnu_t|\gamma)\Big)^{1/p}\,\d t,
   \end{equation}
   so that $\mu$ is absolutely continuous and
   \eqref{eq:cap4:40} holds.

   Conversely, let us assume that $\mu$ is an absolutely continuous
   curve with length $L$. A standard reparametrization results
   \cite[Lemma 1.1.4]{Ambrosio-Gigli-Savare05}
   shows that it is not restrictive to assume
   that $\mu$ is a Lipschitz map.
   We fix an integer $N>0$, a step size $\tau:=2^{-N}T$,
   and a family of geodesics $(\mu^{k,N},\nnu^{k,N})\in
   \CCE\phi\gamma{(k-1)\tau}{k\tau}\Rd{\mu_{(k-1)\tau}}{\mu_{k\tau}}$,
   $k=1,\cdots, 2^N$, such that
   \begin{equation}
     \label{eq:cap4:43}
     \tau \Phi(\mu^{k,N}_t,\nnu^{k,N}_t|\gamma)
     =\tau^{1-p}\,W^p_{\phi,\gamma}(\mu_{(k-1)\tau},\mu_{k\tau})\le
     \int_{(k-1)\tau}^{k\tau}|\mu_t'|^p\,\d t.
   \end{equation}
   Let $(\mu^N,\nnu^N)\in \CE\phi\gamma0T\Rd$ be the curve
   obtained by gluing together all the geodesics
   $(\mu^{k,N},\nnu^{k,N})$.
   Applying Corollary \ref{cor:compactness_continuity2}, we can find
   a subsequence $(\mu^{N_h},\nnu^{N_h})$ and a couple
   $(\mu,\nnu)\in \CE\phi\gamma0T\Rd$ such that
   $\mu^{N_h}_t\weaksto \tilde\mu_t$ for every $t\in [0,T]$
   and $\nnu^{N_h}\weaksto \nnu$ in $\Mloc(\Rd\times(0,T);\Rd)$.
   It is immediate to check that $\mu_t\equiv\tilde\mu_t$ for every
   $t\in [0,T]$ and
   \begin{displaymath}
     \int_0^T|\mu_t'|^p\,\d t\ge
     \liminf_{h\up+\infty}
     \int_0^T \Phi(\mu^{N_h}_t,\nnu^{N_h}_t|\gamma)\,\d t\ge
     \int_0^T \Phi(\mu_t,\nnu_t|\gamma)\,\d t
     \ge \int_0^T |\tilde\mu_t'|^p\,\d t,
   \end{displaymath}
   which concludes the proof.
 \qed\end{proof}
 \begin{corollary}[Geodesics]
   For every $\mu\in \RPM(\Rd)$ the space
   $\mathcal M_{\phi,\gamma}[\mu]$ is a geodesic space,
   i.e.\ every couple $\mu_0,\mu_1\in \mathcal
   M_{\phi,\gamma}[\mu]$
   can be connected by a (minimal, constant speed)
   geodesic $t\in [0,1]\mapsto \mu_t\in \mathcal
   M_{\phi,\gamma}[\mu]$
   such that
   \begin{equation}
     \label{eq:cap5:7bis}
     \cW_{\phi,\gamma}(\mu_s,\mu_t)=|t-s|\,\cW_{\phi,\gamma}(\mu_0,\mu_1)
     \quad\forall\, s,t\in [0,1].
   \end{equation}
   All the (minimal, constant speed)
   geodesics satisfies the continuity equation
   \eqref{eq:continuity1}
   for
   a Borel family of vector valued measures $(\nnu_t)_{t\in (0,1)}$ such that
   \begin{equation}
     \label{eq:cap5:8}
     \Phi(\mu_t,\nnu_t|\gamma)=\cW_{\phi,\gamma}^p(\mu_0,\mu_1)\quad
     \text{for a.e. }t\in (0,1).
   \end{equation}
   If $\phi$ is strictly convex and sublinear, geodesics are unique.  
 \end{corollary}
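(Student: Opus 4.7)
The plan is to derive this corollary directly from the existence result Theorem \ref{thm:0}, the characterization of absolutely continuous curves from the preceding theorem, and the uniqueness result of Theorem \ref{thm:3}; no new heavy machinery should be needed. First I would fix $\mu_0,\mu_1\in \mathcal M_{\phi,\gamma}[\mu]$ and observe that by the triangle inequality (Theorem \ref{thm:1})
\begin{equation*}
\cW_{\phi,\gamma}(\mu_0,\mu_1)\le \cW_{\phi,\gamma}(\mu_0,\mu)+\cW_{\phi,\gamma}(\mu,\mu_1)<+\infty,
\end{equation*}
so that Theorem \ref{thm:0} applies and provides a minimizing couple $(\mu_t,\nnu_t)\in \CCE{\phi}{\gamma}{0}{1}{\Rd}{\mu_0}{\mu_1}$ with $\Phi(\mu_t,\nnu_t|\gamma)\equiv \cW_{\phi,\gamma}^p(\mu_0,\mu_1)$ for a.e.\ $t\in(0,1)$. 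This is exactly \eqref{eq:cap5:8}, while \eqref{eq:cap5:7bis} is already built into Theorem \ref{thm:0}.

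Next I would check that the curve remains in $\mathcal M_{\phi,\gamma}[\mu]$: applying the triangle inequality again together with \eqref{eq:cap5:7bis},
\begin{equation*}
\cW_{\phi,\gamma}(\mu_t,\mu)\le \cW_{\phi,\gamma}(\mu_t,\mu_0)+\cW_{\phi,\gamma}(\mu_0,\mu)= t\,\cW_{\phi,\gamma}(\mu_0,\mu_1)+\cW_{\phi,\gamma}(\mu_0,\mu)<+\infty,
\end{equation*}
which gives $\mu_t\in \mathcal M_{\phi,\gamma}[\mu]$ for every $t\in[0,1]$.

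For the second assertion, given any minimal constant-speed geodesic $(\mu_t)$, I would appeal to the preceding characterization of absolutely continuous curves: since $(\mu_t)$ is in particular absolutely continuous w.r.t.\ $\cW_{\phi,\gamma}$ with $|\mu_t'|\equiv \cW_{\phi,\gamma}(\mu_0,\mu_1)$ a.e., there exists a Borel family $(\nnu_t)_{t\in (0,1)}$ such that $(\mu,\nnu)\in \CE{\phi}{\gamma}{0}{1}{\Rd}$ and
\begin{equation*}
\Phi(\mu_t,\nnu_t|\gamma)= |\mu_t'|^p = \cW_{\phi,\gamma}^p(\mu_0,\mu_1)\quad\text{for a.e.\ }t\in(0,1),
\end{equation*}
establishing \eqref{eq:cap5:8} for an arbitrary minimal constant-speed geodesic, as required.

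Finally, under the strict convexity of $\phi$ and the sublinear growth of $\tilde\phi$, Theorem \ref{thm:3} gives uniqueness of the minimizing pair $(\mu,\nnu)$ of \eqref{eq:10}; combined with the preceding paragraph (which shows that every minimal constant-speed geodesic arises as the $\mu$-component of such a minimizer), this yields uniqueness of the geodesic $(\mu_t)$ and concludes the proof. The argument is essentially a packaging of earlier results: the only non-routine point worth flagging is the passage from ``minimal constant-speed geodesic'' to ``minimizer of the action integral'', which I handle through the metric-velocity characterization rather than a direct argument.
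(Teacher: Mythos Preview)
Your proof is correct and matches the paper's intended approach: the corollary is stated without proof because it is meant to follow immediately from Theorem \ref{thm:0} (existence, \eqref{eq:cap3:37bis}, and \eqref{eq:cap5:7}), the preceding theorem on absolutely continuous curves (equation \eqref{eq:cap4:41}), and Theorem \ref{thm:3} (uniqueness), exactly as you have packaged them. The only minor addition you make explicit---that the geodesic stays in $\mathcal M_{\phi,\gamma}[\mu]$ via the triangle inequality---is routine and left implicit in the paper.
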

 \begin{remark}[A formal differential characterization of geodesics]
   \upshape
   Arguing as in \cite[Chap. 3]{Otto-Villani00}, 
   it would not    be difficult to show that a geodesic 
   $\mu_t=\rho_t\Leb d$ with respect to 
   $W_{2,\alpha;\Leb d}$ should satisfy the system of 
   nonlinear PDE's in 
   $\Rd\times (0,1)$
   \begin{displaymath}
     \left\{\begin{aligned}
       \partial_t \rho+
       \nabla \cdot( \rho^\alpha\nabla      \psi)&=0,\\
       \partial_t\psi +\frac \alpha2               \rho^{\alpha-1}\big|\nabla \psi \big|^2&=0,      \end{aligned}
\right.
   \end{displaymath}
   for some potential $\psi$. 
   Unlike the    Wasserstein case, however, 
   the two equations are coupled, and it is not possible
   to solve the second Hamilton-Jacobi equation in 
   $\psi$ independently of the first scalar 
   cosnervation law. 
   In the present paper, we do not explore this 
   direction.
 \end{remark}
We can give a more precise description of the vector measure
$\tilde\nnu$ satisfying the optimality condition
\eqref{eq:cap4:41}.
For every measure $\mu\in \RPM(\Rd)$ we set
\begin{equation}
  \label{eq:cap4:46}
  \begin{aligned}
    \Tan\phi\gamma\mu:=\Big\{& \nnu\in \Mloc(\Rd;\Rd):
    \Phi(\mu,\nnu|\gamma)<+\infty,\\&
    \Phi(\mu,\nnu|\gamma)\le
    \Phi(\mu,\nnu+\eeta|\gamma)\quad \forall\, \eeta\in
    \Mloc(\Rd;\Rd):\nabla\cdot\eeta=0\Big\}.
  \end{aligned}
\end{equation}
Observe that for every $\nnu\in \Mloc(\Rd;\Rd)$ such that
$\Phi(\mu,\nnu|\gamma)<+\infty$ there exists a unique
$\tilde\nnu:=\Pi(\nnu)\in \Tan\phi\gamma\mu$ such that
$\nabla\cdot(\tilde\nnu-\nnu)=0$.
In fact, the set $K(\nnu):= \big\{\nnu'\in \Mloc(\Rd;\Rd):
\nabla(\nnu'-\nnu)=0\big\}$ is weakly$^*$ closed
and, by the estimate \eqref{eq:Cap3:1} the sublevels
of the functional $\nnu'\mapsto\Phi(\mu,\nnu'|\gamma)$
are weakly$^*$ relatively compact.
Therefore, a minimizer $\tilde\nnu$ exists and it is also unique,
being $\Phi(\mu,\cdot|\gamma)$ strictly convex.
\begin{corollary}
  \label{thm:tangent_measure}
  Let $(\mu,\nnu)\in \CE\phi\gamma0T\Rd$ so that 
  $\mu$ is absolutely continuous w.r.t.\
  $\cW_{\phi,\gamma}$.
  The vector measure $\nnu$ satisfies the
  optimality condition \eqref{eq:cap4:41} if and only if
  $\nnu_t\in \Tan\phi\gamma{\mu_t}$ for $\Leb 1$-a.e.\ $t\in (0,T)$.
\end{corollary}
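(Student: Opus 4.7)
The plan is to prove the two implications by exploiting the existence of a Borel family $\tilde\nnu_t$ realizing the metric derivative (Theorem on absolutely continuous curves, equation \eqref{eq:cap4:41}) together with the strict convexity and weak$^*$ relative compactness granted by \eqref{eq:Cap3:1}.

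First I would handle the easier implication $(\Leftarrow)$. Suppose $\nnu_t\in\Tan\phi\gamma{\mu_t}$ for $\Leb1$-a.e.\ $t\in(0,T)$. By \eqref{eq:cap4:41} there is a Borel family $\tilde\nnu_t$ such that $(\mu,\tilde\nnu)\in\CE\phi\gamma0T\Rd$ and $\Phi(\mu_t,\tilde\nnu_t|\gamma)=|\mu_t'|^p$ for a.e.\ $t$. Both $(\mu,\nnu)$ and $(\mu,\tilde\nnu)$ solve the continuity equation, so testing with $\zeta(x,t)=\eta(t)\psi(x)$ for $\eta\in C^\infty_{\rm c}(0,T)$ and $\psi$ in a countable dense subset of $C^1_{\rm c}(\Rd)$ yields
\begin{equation*}
  \int_\Rd\nabla\psi\cdot\,\d(\tilde\nnu_t-\nnu_t)=0\qquad\text{for a.e.\ }t\in(0,T),
\end{equation*}
i.e.\ $\nabla\cdot(\tilde\nnu_t-\nnu_t)=0$ for a.e.\ $t$. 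Then the tangent property applied pointwise gives $\Phi(\mu_t,\nnu_t|\gamma)\le\Phi(\mu_t,\tilde\nnu_t|\gamma)=|\mu_t'|^p$, and combining with the reverse inequality \eqref{eq:cap4:40} we obtain $\Phi(\mu_t,\nnu_t|\gamma)=|\mu_t'|^p$ for a.e.\ $t$.

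For the converse $(\Rightarrow)$, assume $\Phi(\mu_t,\nnu_t|\gamma)=|\mu_t'|^p$ a.e. The strategy is to apply the pointwise projection $\Pi$ (discussed before the statement) in a measurable way. Fix any $\eeta\in\Mloc(\Rd;\Rd)$ with $\nabla\cdot\eeta=0$ and any Borel set $I\subset(0,T)$; then $\nnu'_t:=\nnu_t+\chi_I(t)\eeta$ still yields $(\mu,\nnu')\in\ce0T\Rd$, and by \eqref{eq:cap4:40}
\begin{equation*}
  \int_I\Phi(\mu_t,\nnu_t|\gamma)\,\d t=\int_I|\mu_t'|^p\,\d t\le\int_I\Phi(\mu_t,\nnu_t+\eeta|\gamma)\,\d t.
\end{equation*}
Since $I$ is arbitrary, $\Phi(\mu_t,\nnu_t|\gamma)\le\Phi(\mu_t,\nnu_t+\eeta|\gamma)$ for a.e.\ $t$, with the null set depending on $\eeta$. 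Running over a countable family $\{\eeta_n\}$ that is dense in the set of smooth, compactly supported, divergence-free vector fields (regarded as vector measures), and exploiting joint lower semicontinuity of $\Phi$ in its second argument together with the $p$-coercivity estimate \eqref{eq:Cap3:1} (which makes the sublevels of $\Phi(\mu_t,\cdot|\gamma)$ weakly$^*$ compact, so that the inequality extends by approximation to every divergence-free $\eeta\in\Mloc(\Rd;\Rd)$ with $\Phi(\mu_t,\nnu_t+\eeta|\gamma)<+\infty$), we conclude $\nnu_t\in\Tan\phi\gamma{\mu_t}$ for a.e.\ $t$.

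The main obstacle is the density/approximation step in the forward direction: ensuring that testing against a countable family of smooth divergence-free fields suffices to characterize the pointwise tangent property against arbitrary $\eeta\in\Mloc(\Rd;\Rd)$ with $\nabla\cdot\eeta=0$. This will be handled by a standard mollification of $\eeta$ together with the weak$^*$ lower semicontinuity of $\Phi$ (Lemma \ref{le:action_lsc}) and the convolution monotonicity \eqref{eq:cap2:57}, which guarantee that the inequality passes to the limit. The strict convexity of $\phi(\rho,\cdot)$ for $p>1$ from Theorem \ref{le:phi_list} ensures uniqueness of the projection $\Pi$, and thus the characterization is sharp.
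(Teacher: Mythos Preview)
The paper does not supply a separate proof of this Corollary; it is meant to follow directly from the projection $\Pi$ introduced in the paragraph preceding the statement, together with the strict convexity already noted there. The implicit argument for $(\Rightarrow)$ is: given $\nnu$ satisfying \eqref{eq:cap4:41}, set $\tilde\nnu_t:=\Pi(\nnu_t)\in\Tan\phi\gamma{\mu_t}$, which has the same divergence as $\nnu_t$ and satisfies $\Phi(\mu_t,\tilde\nnu_t|\gamma)\le\Phi(\mu_t,\nnu_t|\gamma)=|\mu'_t|^p$; applying \eqref{eq:cap4:40} to $(\mu,\tilde\nnu)$ gives the reverse inequality, hence equality, and uniqueness of the minimizer of $\Phi(\mu_t,\cdot|\gamma)$ on $K(\nnu_t)$ forces $\nnu_t=\tilde\nnu_t$. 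Your proof of $(\Leftarrow)$ follows exactly this pattern and is correct.

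Your argument for $(\Rightarrow)$, however, has a genuine gap at the approximation step. After obtaining $\Phi(\mu_t,\nnu_t|\gamma)\le\Phi(\mu_t,\nnu_t+\eeta|\gamma)$ for all $\eeta$ in a countable family of smooth, compactly supported, divergence-free fields, you invoke the lower semicontinuity of $\Phi$ and the convolution estimate \eqref{eq:cap2:57} to extend to arbitrary divergence-free $\eeta\in\Mloc(\Rd;\Rd)$. Neither tool does this job. Lower semicontinuity along $\eeta_n\weaksto\eeta$ yields $\Phi(\mu_t,\nnu_t+\eeta|\gamma)\le\liminf_n\Phi(\mu_t,\nnu_t+\eeta_n|\gamma)$, which is the \emph{wrong} inequality---you would need a recovery sequence realizing $\limsup\le\Phi(\mu_t,\nnu_t+\eeta|\gamma)$. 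And \eqref{eq:cap2:57} mollifies $\mu_t$, $\nnu_t$ and $\gamma$ simultaneously, not $\eeta$ alone with the other data fixed. In general a strictly convex, weakly$^*$ l.s.c.\ functional can have its infimum over a closed convex set strictly below its infimum over a weakly$^*$ dense affine subset, so density alone cannot close the argument. The clean fix is the projection route above; the technical price is the Borel measurability of $t\mapsto\Pi(\nnu_t)$, which is the standard selection issue in this setting (cf.\ \cite[Chap.~8]{Ambrosio-Gigli-Savare05}). Alternatively, in the situation of Theorem~\ref{thm:charnnu} one can pass to the first-variation form and argue via the $L^q$-duality between gradients and divergence-free fields, but that is not what your cited tools provide.
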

Let us consider the particular case of Example \ref{ex:hp}
in the case of a differentiable norm
$\|\cdot\|$ with associated duality map $j_1={\mathrm D}\|\cdot\|$.
We denote by $j_p(\ww)=\|\ww\|^{p-2}j_1(\ww)$ the $p$-duality map,
i.e.
the differential of $\frac 1p\|\cdot\|^p$ and
we suppose that the concave function $h:[0,+\infty)\to [0,+\infty)$ satisfies
\begin{equation}
  \label{eq:cap4:49}
  \lim_{r\down0}h(r)=\lim_{r\up+\infty}r^{-1}h(r)=0.
\end{equation}
For every nonnegative Radon measure $\mu\in \RPM(\Rd)$
whose support is a subset of $\supp(\gamma)$,
we define the Radon measure $h(\mu|\gamma)$ by
\begin{equation}
  \label{eq:cap4:48}
  h(\mu|\gamma):=h(\rho)\cdot\gamma\quad
  \text{where }\rho:=\frac{\d \mu}{\d \gamma}.
\end{equation}
Observe that $h(\mu|\gamma)\ll\gamma$ even if $\mu$ is singular
w.r.t.\ $\gamma$.
\begin{theorem}
  \label{thm:charnnu}
  Let $\mu\in \RPM(\Rd)$ and $\phi$ as in \eqref{eq:cap2:39}
  with $h$ satisfying \eqref{eq:cap4:49}.
  A vector measure $\nnu$ satisfies $\Phi(\mu,\nnu|\gamma)<+\infty$
  iff $\nnu=\vv\,h(\mu|\gamma)$ for some vector field
  $\vv\in L^p_{h(\mu|\gamma)}(\Rd;\Rd)$.
  Moreover, $\nnu\in \Tan\phi\gamma\mu$ if and only if
  the vector field $\vv$ satisfies
  \begin{equation}
    \label{eq:cap4:50}
    j_p(\vv)\in \overline{\big\{\nabla\zeta:\zeta\in C^\infty_{\rm
        c}(\Rd)
      \big\}}^{L^p_{h(\mu|\gamma)}(\Rd;\Rd)}.
  \end{equation}
\end{theorem}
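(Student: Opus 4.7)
The plan is to reduce both parts of the theorem to a classical best-approximation problem in the weighted Lebesgue space $L^p_{\lambda}(\Rd;\Rd)$, where I set $\lambda:=h(\mu|\gamma)$.

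First I would prove the representation in Part~1. The two hypotheses \eqref{eq:cap4:49} have transparent consequences for $\tilde\phi(\rho,\zz)=h(\rho)\|\zz\|_*^q$: the condition $h(r)/r\to 0$ at infinity gives sublinear growth of $\tilde\phi$ in $\rho$, hence $\tilde\varphi^\infty\equiv 0$; the condition $h(0)=0$ gives $\tilde\phi(0,\cdot)\equiv 0$. Lemma~\ref{le:Phi_char} (formula \eqref{eq:scheme:16}) then forces $\nnu = \ww\gamma\ll\gamma$ whenever $\Phi(\mu,\nnu|\gamma)<+\infty$, and Remark~\ref{rem:w_abs} forces $\ww(x)=0$ on $\{\rho=0\}$. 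Setting $\vv:=\ww/h(\rho)$ on $\{\rho>0\}$ (and zero elsewhere) gives $\nnu=\vv\cdot\lambda$ together with
$$
 \Phi(\mu,\nnu|\gamma)=\int_\Rd \|\vv\|^p\,h(\rho)\,d\gamma=\|\vv\|_{L^p_\lambda}^p,
$$
and the converse implication is a direct verification.

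Next I would translate the tangent condition into a constrained minimization problem in $L^p_\lambda$. Applying Part~1 to $\nnu+\eeta$, any perturbation $\eeta\in\Mloc(\Rd;\Rd)$ with $\Phi(\mu,\nnu+\eeta|\gamma)<+\infty$ satisfies $\eeta\ll\lambda$; writing $\eeta=\uu\,\lambda$ with $\uu\in L^p_\lambda(\Rd;\Rd)$, the distributional condition $\nabla\cdot\eeta=0$ becomes $\int\nabla\zeta\cdot\uu\,d\lambda=0$ for every $\zeta\in C^\infty_{\rm c}(\Rd)$. Consequently $\nnu\in\Tan\phi\gamma\mu$ is equivalent to the statement that $\vv$ minimizes $\|\vv+\uu\|_{L^p_\lambda}$ over the closed linear subspace $K\subset L^p_\lambda(\Rd;\Rd)$ consisting of all such $\uu$, i.e.\ over the annihilator (in the $L^p_\lambda\times L^q_\lambda$ pairing) of $\{\nabla\zeta:\zeta\in C^\infty_{\rm c}(\Rd)\}$.

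The conclusion then follows from a standard duality argument. The functional $\ww\mapsto \tfrac{1}{p}\|\ww\|^p_{L^p_\lambda}$ is G\^ateaux differentiable on $L^p_\lambda$ with derivative at $\vv$ given by $\uu\mapsto \int j_p(\vv)\cdot\uu\,d\lambda$. Minimality of $\vv$ on the affine subspace $\vv+K$ is therefore equivalent to the orthogonality relation $\int j_p(\vv)\cdot\uu\,d\lambda=0$ for every $\uu\in K$; viewing $j_p(\vv)\in L^q_\lambda$ and invoking the bipolar theorem in the dual pair $L^p_\lambda\times L^q_\lambda$, this is precisely the statement that $j_p(\vv)$ lies in the closure of $\{\nabla\zeta:\zeta\in C^\infty_{\rm c}(\Rd)\}$ in the dual space, i.e.\ in \eqref{eq:cap4:50}.

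The main subtlety is the reduction in the second step: one must justify discarding perturbations $\eeta$ that fail $\eeta\ll\lambda$ (singular measures, or components supported where $h(\rho)=0$), by arguing that $\Phi(\mu,\nnu+\eeta|\gamma)=+\infty$ for all such $\eeta$ so they impose no effective constraint on optimality. Both hypotheses on $h$ in \eqref{eq:cap4:49} enter essentially here: without $h(0)=0$ one cannot discard perturbations concentrated on $\{\rho=0\}$, while without sublinearity at infinity the absolute continuity of $\eeta$ with respect to $\gamma$ is lost, and the reduction to a problem in $L^p_\lambda$ breaks down.
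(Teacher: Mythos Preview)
Your proposal is correct and follows essentially the same route as the paper's proof: represent $\Phi(\mu,\nnu|\gamma)$ as $\|\vv\|_{L^p_\lambda}^p$ via the sublinearity of $h$ and the vanishing $h(0)=0$, recast the tangent condition as minimality of $\vv$ over the annihilator of $\{\nabla\zeta\}$ in $L^p_\lambda$, and conclude by the first-order optimality condition $\int j_p(\vv)\cdot\uu\,d\lambda=0$ for $\uu$ in that annihilator, which is the double-annihilator characterization of the closure. The paper's argument is terser---it passes directly to perturbations $\zz\in L^p(h(\mu|\gamma))$ without spelling out, as you do, why perturbations $\eeta$ that are singular with respect to $\lambda$ impose no constraint---but the underlying mechanism is identical.
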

\begin{proof}
  Being $h$ sublinear, the functional $\Phi$ admits the representation
  \begin{equation}
    \label{eq:cap4:51}
    \Phi(\mu,\nnu|\gamma)=
    \int_\Rd \phi(\rho,\ww)\,\d\gamma=
    \int_\Rd h(\rho)\|\ww/h(\rho)\|^p\,\d\gamma=
    \int_\Rd \|\vv\|^p\,\d h(\mu|\gamma),
  \end{equation}
  where $\mu=\rho\gamma+\mu^\perp$ and
  $\nnu=\ww\gamma=h(\rho)\vv\,\gamma$.
  The condition $\nnu=\vv h(\mu|\gamma)\in
  \Tan\phi\gamma\mu$ is then
  equivalent to
  \begin{displaymath}
    \int_\Rd \phinorm{\vv}^p\,\d h(\mu|\gamma)\le
    \int_\Rd
    \phinorm{\vv+\zz}^p\,\d h(\mu|\gamma)\qquad
    \forall\, \zz\in L^p(h(\mu|\gamma)):
    \nabla\cdot\big(\zz\, h(\mu|\gamma)\big)=0.
  \end{displaymath}
  Thanks to the convexity of $||\cdot||^p$, the previous condition is
  equivalent
  to
  \begin{equation}
    \label{eq:cap4:53}
    \int_\Rd j_p(\vv)\cdot \zz\,\d h(\mu|\gamma)=0\quad
    \forall\, \zz\in L^p_{h(\mu|\gamma)}(\Rd;\Rd):\quad
    \int_\Rd \zz\cdot\nabla\zeta\,\d h(\mu|\gamma)=0,
  \end{equation}
  i.e.\ $j_p(\vv)$ belongs to the closure of $\{\nabla\zeta: \zeta\in
  C^\infty_{\rm c}(\Rd)\}$ in $L^p_{h(\mu|\gamma)}(\Rd;\Rd)$.
\qed\end{proof}

\subsection{\bfseries Comparison with Wasserstein and $\dot W^{-1,p}$
  distances.}
\label{subsec:comp}
\begin{theorem}
  \label{thm:4pre}
  If $\gamma(\Rd)<+\infty$ then for every $\mu_0,\mu_1\in
  \RPM(\Rd)$ and $\alpha<1$ we have
  \begin{equation}
    \label{eq:cap5:4}
    W_{p/\theta}(\mu_0,\mu_1)\le
    W_{p/\theta,1;\gamma}(\mu_0,\mu_1)\le \gamma(\Rd)^{1/\kappa}
    W_{p,\alpha;\gamma}(\mu_0,\mu_1),
  \end{equation}
  where, as usual, $\theta=(1-\alpha)p+\alpha.$
\end{theorem}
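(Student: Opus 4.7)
The plan is to treat the two inequalities separately. Denote by $\Phi_{p,\alpha}$ and $\Phi_{p/\theta,1}$ the action functionals built respectively from $\phi_{p,\alpha}$ and $\phi_{p/\theta,1}$ via~\eqref{eq:scheme:8}.

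For the first inequality, I exploit that $\phi_{p/\theta,1}(\rho,\ww)=\rho|\ww/\rho|^{p/\theta}$ is positively $1$-homogeneous: a direct computation (of the type leading to Lemma~\ref{le:Phi_char}) shows that for any pair $(\mu,\nnu)$ with $\nnu=\vv\mu$ and $\supp(\mu)\subset\supp(\gamma)$, the $\gamma$-absolutely continuous and singular parts of $\mu$ reassemble into the intrinsic Benamou--Brenier action
\[
\Phi_{p/\theta,1}(\mu,\nnu|\gamma)=\int_\Rd|\vv|^{p/\theta}\,d\mu,
\]
which does not depend on $\gamma$. Hence every admissible curve in~\eqref{eq:10} for $W_{p/\theta,1;\gamma}$ is also a competitor, with the same cost, for the dynamic formulation of the classical $W_{p/\theta}$, giving the first inequality.

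The second inequality rests on the pointwise identity
\[
\phi_{p/\theta,1}(\rho,\ww)=\bigl(\phi_{p,\alpha}(\rho,\ww)\bigr)^{1/\theta}\qquad\forall\,\rho>0,\ \ww\in\Rd,
\]
which I would verify by writing $\phi_{p,\alpha}(\rho,\ww)=\rho^{\alpha(1-p)}|\ww|^p$ and $\phi_{p/\theta,1}(\rho,\ww)=\rho^{1-p/\theta}|\ww|^{p/\theta}$ and using the defining relation $\theta-p=\alpha(1-p)$. Assuming $W_{p,\alpha;\gamma}(\mu_0,\mu_1)<+\infty$ (otherwise the bound is trivial), Theorem~\ref{thm:0} supplies a constant-speed geodesic $(\mu_t,\nnu_t)\in\CCE{\phi_{p,\alpha}}{\gamma}01\Rd{\mu_0}{\mu_1}$ with $\Phi_{p,\alpha}(\mu_t,\nnu_t|\gamma)\equiv W_{p,\alpha;\gamma}^{\,p}(\mu_0,\mu_1)$ for a.e.\ $t$. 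Since $\alpha<1$, the dual density $\tilde\phi_{q,\alpha}(\rho,\zz)=\rho^\alpha|\zz|^q$ has sublinear growth in $\rho$, so Lemma~\ref{le:Phi_char} forces $\nnu_t=\ww_t\gamma\ll\gamma$, while the finiteness of $\Phi_{p,\alpha}$ combined with the convention~\eqref{eq:cap1:17} forces $\supp(\mu_t)\subset\supp(\gamma)$; setting $\rho_t=d\mu_t/d\gamma$, the singular part of $\mu_t$ then contributes $0$ to $\Phi_{p/\theta,1}$ by $1$-homogeneity of $\phi_{p/\theta,1}$ together with $\ww^\perp=0$ and $\phi_{p/\theta,1}(1,0)=0$.

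Combining the pointwise identity with H\"older's inequality (exponents $\theta$ and $\theta/(\theta-1)$, admissible because $\theta>1$) yields for a.e.\ $t$
\[
\Phi_{p/\theta,1}(\mu_t,\nnu_t|\gamma)=\int_\Rd\phi_{p,\alpha}(\rho_t,\ww_t)^{1/\theta}\,d\gamma\le\gamma(\Rd)^{(\theta-1)/\theta}\,\Phi_{p,\alpha}(\mu_t,\nnu_t|\gamma)^{1/\theta}.
\]
Plugging this into the equivalent representation~\eqref{eq:cap3:29} of $W_{p/\theta,1;\gamma}$ (whose outer exponent is $\theta/p$) and using the constant action of the chosen geodesic,
\[
W_{p/\theta,1;\gamma}(\mu_0,\mu_1)\le\int_0^1\Phi_{p/\theta,1}(\mu_t,\nnu_t|\gamma)^{\theta/p}\,dt\le\gamma(\Rd)^{(\theta-1)/p}\,W_{p,\alpha;\gamma}(\mu_0,\mu_1),
\]
and the arithmetic identity $(\theta-1)/p=(1-\alpha)(p-1)/p=(1-\alpha)/q=1/\kappa$, which follows from $\theta=(1-\alpha)p+\alpha$ and the definition~\eqref{eq:cap4:71} of $\kappa$, matches the claimed bound. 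The main delicate point I expect is the careful bookkeeping of the singular part of $\mu_t$ against the $1$-homogeneous density $\phi_{p/\theta,1}$ together with the inclusion $\supp(\mu_t)\subset\supp(\gamma)$; both reduce to the observations that $\phi_{p,\alpha}^\infty$ is superlinear when $\alpha<1$ and that $\phi_{p/\theta,1}(1,0)=0$.
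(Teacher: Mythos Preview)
Your proof is correct and follows essentially the same route as the paper: both arguments reduce the second inequality to H\"older's inequality (exponents $\theta$ and $\theta/(\theta-1)$) applied to the action density, exploiting the key relation $\phi_{p/\theta,1}(\rho,\ww)=\phi_{p,\alpha}(\rho,\ww)^{1/\theta}$ (which the paper uses implicitly by expanding both densities). The only cosmetic difference is that the paper applies H\"older jointly in $(x,t)$ to the integral $\int_0^1\int_\Rd \rho_t^{1-p/\theta}|\ww_t|^{p/\theta}\,\d\gamma\,\d t$ via the definition~\eqref{eq:10}, whereas you apply it in $x$ alone at each fixed $t$ and then invoke the constant-speed representation~\eqref{eq:cap3:29}; both yield the same constant $\gamma(\Rd)^{(\theta-1)/p}=\gamma(\Rd)^{1/\kappa}$. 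You are also more explicit than the paper about the first inequality (which the paper leaves to Remark~\ref{rem:known_cases}) and about the bookkeeping of the singular part of $\mu_t$, which the paper handles silently.
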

\begin{proof}
  Let $(\mu,\nnu)\in
  \CCE{\phi_{p,\alpha}}\gamma01\Rd{\mu_0}{\mu_1}$
  be an optimal curve, so that
  \begin{equation}
    \label{eq:cap5:2}
    W_{p,\alpha;\gamma}^p(\mu_0,\mu_1)=
    \int_0^1 \Phi_{p,\alpha}(\mu_t,\nnu_t)\, \d t=
     \int_0^1 \int_\Rd (\rho_t)^{\theta-p}
    \left|\ww_t\right|^p\,\d\gamma
    \, \d t,
  \end{equation}
  where $\mu_t:=\rho_t\gamma+\mu_t^\perp$,
  $\nnu_t=\ww_t\gamma\ll\gamma$. 
  H\"older inequality yields
  \begin{displaymath}
    W_{p/\theta,1;\gamma}^{p/\theta}(\mu_0,\mu_1)\le 
    \int_0^1 \int_\Rd (\rho_t)^{1-p/\theta}
    \left|\ww_t\right|^{p/\theta}\,\d\gamma
    \, \d t\le
    \gamma(\Rd)^{1-1/\theta}W_{p,\alpha;\gamma}^{p/\theta}(\mu_0,\mu_1).
    \quad\qed
  \end{displaymath}
\end{proof}
\begin{theorem}
  \label{thm:4postpre}
  Let us suppose that $\sftm_{-k}(\gamma)<+\infty$,
  $\kappa=p/(\theta-1)=q/(1-\alpha)$,
  let $\mu_0,\mu_1\in \Probabilities\Rd$, and let
  $\kappa^*=\kappa/(\kappa-1)$
  be the H\"older's conjugate exponent of $\kappa$.
  Then 
  \begin{equation}
    \label{eq:cap5:11}
    \|\mu_0-\mu_1\|_{\dot W^{-1,\kappa^*}_\gamma}=
    W_{\kappa^*,0;\gamma}(\mu_0,\mu_1)\le
    W_{p,\alpha;\gamma}(\mu_0,\mu_1).
      \end{equation}
\end{theorem}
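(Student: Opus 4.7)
The plan is to bound $W_{\kappa^*,0;\gamma}$ from above by using an optimal curve for $W_{p,\alpha;\gamma}$ as a competitor and estimating its $\dot W^{-1,\kappa^*}_\gamma$-action via H\"older's inequality. The key bookkeeping is that the unit total mass of $\mu_t$, preserved along the geodesic thanks to the moment assumption on $\gamma$, exactly absorbs the difference of homogeneities between $\phi_{p,\alpha}$ and $\phi_{\kappa^*,0}$.

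We may assume $W_{p,\alpha;\gamma}(\mu_0,\mu_1)<+\infty$, otherwise the claim is trivial. By Theorem~\ref{thm:0} there exists a constant-speed geodesic $(\mu,\nnu)\in\CCE{\phi_{p,\alpha}}\gamma01\Rd{\mu_0}{\mu_1}$ with $\Phi_{p,\alpha}(\mu_t,\nnu_t|\gamma)=W^p_{p,\alpha;\gamma}(\mu_0,\mu_1)$ for a.e.\ $t\in(0,1)$. Since $\alpha<1$ the dual density $\tilde\phi_{q,\alpha}(\rho,\zz)=\rho^\alpha|\zz|^q$ has sublinear growth in $\rho$, so Lemma~\ref{le:Phi_char} yields $\nnu_t=\ww_t\gamma\ll\gamma$ and, setting $\rho_t:=\d\mu_t/\d\gamma$,
$$\Phi_{p,\alpha}(\mu_t,\nnu_t|\gamma)=\int_\Rd\rho_t^{\theta-p}|\ww_t|^p\,\d\gamma.$$
I would then apply H\"older's inequality with conjugate exponents $p/\kappa^*$ and $p/(p-\kappa^*)$ to the factorisation $|\ww_t|^{\kappa^*}=(\rho_t^{\theta-p}|\ww_t|^p)^{\kappa^*/p}\cdot\rho_t^{-(\theta-p)\kappa^*/p}$ to obtain
$$\int_\Rd|\ww_t|^{\kappa^*}\,\d\gamma\le \Phi_{p,\alpha}(\mu_t,\nnu_t|\gamma)^{\kappa^*/p}\Bigl(\int_\Rd\rho_t^{\beta}\,\d\gamma\Bigr)^{(p-\kappa^*)/p},\qquad \beta:=-\frac{(\theta-p)\kappa^*}{p-\kappa^*}.$$
A direct algebraic check using $\theta-p=-\alpha(p-1)$ and $\kappa^*=p/(1+\alpha(p-1))$ gives $\beta=1$; this is precisely the content of the definition $\kappa=p/(\theta-1)$. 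Since $\sftm_{-\kappa}(\gamma)<+\infty$ and $\mu_0\in\Probabilities\Rd$, Theorem~\ref{thm:narrow_convergence} gives $\mu_t\in\Probabilities\Rd$ with $\mu_t(\Rd)=1$, hence $\int_\Rd\rho_t\,\d\gamma\le 1$, collapsing the last factor and producing the pointwise estimate $\Phi_{\kappa^*,0}(\mu_t,\nnu_t|\gamma)^{1/\kappa^*}\le \Phi_{p,\alpha}(\mu_t,\nnu_t|\gamma)^{1/p}$.

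Finally, since the same curve $(\mu,\nnu)$ also belongs to the class used in the definition of $W_{\kappa^*,0;\gamma}$, the equivalent characterisation~\eqref{eq:cap3:29} combined with the constant-speed property of the geodesic yields
$$W_{\kappa^*,0;\gamma}(\mu_0,\mu_1)\le \int_0^1\Phi_{\kappa^*,0}(\mu_t,\nnu_t|\gamma)^{1/\kappa^*}\,\d t\le \int_0^1\Phi_{p,\alpha}(\mu_t,\nnu_t|\gamma)^{1/p}\,\d t= W_{p,\alpha;\gamma}(\mu_0,\mu_1).$$
The only non-routine point in this plan is the algebraic identity $\beta=1$, which is the arithmetic reason the exponent $\kappa^*$ in the statement is sharp; everything else is a direct application of the existence and constant-speed property of geodesics (Theorem~\ref{thm:0}), the absolute-continuity structure of $\Phi_{p,\alpha}$ for $\alpha<1$ (Lemma~\ref{le:Phi_char}), and the mass-preservation statement of Theorem~\ref{thm:narrow_convergence}.
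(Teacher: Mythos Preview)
Your proof is correct and follows essentially the same strategy as the paper: take an optimal curve for $W_{p,\alpha;\gamma}$, use it as a competitor for $W_{\kappa^*,0;\gamma}$, apply H\"older's inequality to split $|\ww_t|^{\kappa^*}$, and absorb the resulting density factor using the preservation of unit total mass along the geodesic. Your H\"older exponents $p/\kappa^*$ and $p/(p-\kappa^*)$ coincide with the paper's $\tau=1+p-\theta$ and $\tau^*=1+1/(p-\theta)$, and your algebraic check $\beta=1$ is exactly the paper's observation that $x\tau^*=1$ and $-x\tau=\theta-p$; the paper invokes Theorem~\ref{thm:moment_estimate2} directly for mass preservation whereas you go through Theorem~\ref{thm:narrow_convergence}, but the content is the same.
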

\begin{proof}
  We keep the same notation of the previous Theorem, setting
  \begin{displaymath}
    \tau= p/r:=1+p-\theta,\quad
    \tau^*:=\frac{\tau}{\tau-1}=1+\frac1{p-\theta},\quad
    x=(\tau^*)^{-1}=\frac{p-\theta}{1+p-\theta}.
  \end{displaymath}
  Observing that
  $\mu_t\in \Probabilities\Rd$ thanks to Theorem
  \ref{thm:moment_estimate2}, we obtain 
  \begin{align*}
    &W_{r,0;\gamma}^r(\mu_0,\mu_1) \le
     \int_0^1\int_\Rd |\ww_t|^r\,\d\gamma\,\d t=
     \int_0^1\int_\Rd \big(\rho_t\big)^{x}
     \big(\rho_t\big)^{-x}
    |\ww_t|^r\,\d\gamma\,\d t\\
     &\le  \int_0^1 \Big(\int_\Rd
     \rho^{-x\tau}|\ww_t|^{r\tau}\,\d\gamma\Big)^{1/\tau}\,\d t=
     \Big(\int_0^1 \int_\Rd
     \rho^{\theta -p}|\ww_t|^{p}\,\d\gamma\,\d t
       \Big)^{1/\tau}
     =W^{r}_{p,\alpha;\gamma}(\mu_0,\mu_1).
     \quad\qed
  \end{align*}
\end{proof}
\begin{theorem}[Comparison with $W_p$]
  \label{thm:4}
  Assume that $\gamma\in \RPM(\Rd)$ is 
  a bounded perturbation of a log-concave measure
  (e.g.\ $\gamma=fe^{-V}\Leb d$, where $V$ is a convex function and
  $f$ nonnegative and bounded).
  If $\mu_i=s_i\gamma\in \Probabilities\Rd$ with $s_i\in L^\infty(\gamma)$ and
  $\sfm_p(\mu_i)\le L<+\infty$ then $\cW_{\phi,\gamma}(\mu_0,\mu_1)<+\infty$
  and there exists a constant
  $\sfC$ only depending on $L,\ \phi,$ and $\gamma$ such that
  \begin{equation}
   \label{eq:upper_bound}
   \cW_{\phi,\gamma}(\mu_0,\mu_1)\le 
   \sfC\, W_p(\mu_0,\mu_1).
  \end{equation} 
\end{theorem}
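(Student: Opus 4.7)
The plan is to use a $p$-Wasserstein geodesic as a competitor for $\cW_{\phi,\gamma}(\mu_0,\mu_1)$ and estimate its $(\phi,\gamma)$-action by a multiple of $W_p^p(\mu_0,\mu_1)$. Since $\sfm_p(\mu_i)\le L$ gives $W_p(\mu_0,\mu_1)<+\infty$, the Benamou-Brenier formula \eqref{eq:cap1:2} produces an optimal curve $(\mu_t)_{t\in[0,1]}$ together with a Borel vector field $(\vv_t)$ satisfying $\partial_t\mu_t+\nabla\cdot(\vv_t\mu_t)=0$ and
\begin{equation*}
\int_0^1\!\!\int_\Rd|\vv_t|^p\,\d\mu_t\,\d t=W_p^p(\mu_0,\mu_1).
\end{equation*}
Setting $\nnu_t:=\vv_t\mu_t$, the pair $(\mu,\nnu)$ belongs to $\cce01\Rd{\mu_0}{\mu_1}$ and serves as the candidate competitor.

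The crucial step is to show that $\mu_t\ll\gamma$ with density $\rho_t:=\d\mu_t/\d\gamma$ uniformly bounded in $L^\infty(\gamma)$ along this geodesic. Write $\gamma=f\gamma_0$ with $\gamma_0:=e^{-V}\Leb d$ log-concave and $0<c\le f\le C$ on $\supp\gamma_0$, and set $\bar\rho_t:=\d\mu_t/\d\gamma_0=f\rho_t$; at $t=0,1$ one has $\|\bar\rho_i\|_{L^\infty(\gamma_0)}\le C\|s_i\|_{L^\infty(\gamma)}$. Since $\mu_0\ll\Leb d$, the $W_p$-geodesic admits the displacement representation $\mu_t=T_{t\,\#}\mu_0$ with $T_t(x)=(1-t)x+tT(x)$ and $T$ the $p$-optimal transport map, and McCann-type displacement convexity of the generalized entropies $\mu\mapsto\int(\d\mu/\d\gamma_0)^m\,\d\gamma_0$, valid for all $m>1$ because $V$ is convex, yields
\begin{equation*}
\|\bar\rho_t\|_{L^m(\gamma_0)}^m\le(1-t)\|\bar\rho_0\|_{L^m(\gamma_0)}^m+t\|\bar\rho_1\|_{L^m(\gamma_0)}^m\le M_0^{m-1},\qquad M_0:=\max_i\|\bar\rho_i\|_\infty,
\end{equation*}
where the last inequality uses $\|\bar\rho_i\|_{L^m}^m\le\|\bar\rho_i\|_\infty^{m-1}\int\bar\rho_i\,\d\gamma_0=\|\bar\rho_i\|_\infty^{m-1}$. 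Letting $m\uparrow+\infty$ gives $\|\bar\rho_t\|_\infty\le M_0$, and dividing by $f\ge c$ yields $\|\rho_t\|_{L^\infty(\gamma)}\le M:=M_0/c$ uniformly in $t$.

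With $\mu_t\ll\gamma$ and $\nnu_t=\rho_t\vv_t\cdot\gamma$, the $p$-homogeneity of $\phi$ gives $\Phi(\mu_t,\nnu_t|\gamma)=\int_\Rd\rho_t^{p}\,\phi(\rho_t,\vv_t)\,\d\gamma$, and \eqref{eq:upper_bound} reduces to the elementary bound $\rho^{p-1}\phi(\rho,\vv)\le C_\phi(M)|\vv|^p$ for $\rho\in(0,M]$. For $\rho\in[\rho_0,M]$ with $\rho_0$ as in \eqref{eq:cap2:12} this is contained in \eqref{eq:cap3:45}; for $\rho\in(0,\rho_0]$ the concavity of $\tilde\phi(\cdot,\zz)$ together with $\tilde\phi(0,\zz)\ge 0$ gives $\tilde\phi(\rho,\zz)\ge(\rho/\rho_0)\tilde\phi(\rho_0,\zz)$, which via the duality \eqref{eq:cap3:47} is equivalent to $\phi(\rho,\vv)\le(\rho_0/\rho)^{p-1}\phi(\rho_0,\vv)$. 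Multiplying by $\rho$ and integrating in space and time,
\begin{equation*}
\mathscr E_{\phi,\gamma}(\mu,\nnu)\le C_\phi(M)\int_0^1\!\!\int_\Rd|\vv_t|^p\,\d\mu_t\,\d t=C_\phi(M)\,W_p^p(\mu_0,\mu_1),
\end{equation*}
which is \eqref{eq:upper_bound} with $\sfC:=C_\phi(M)^{1/p}$. The main obstacle is the density bound of the second paragraph: without log-concavity of $\gamma$, $\rho_t$ may concentrate along the Wasserstein geodesic and make $\int\phi(\rho_t,\rho_t\vv_t)\,\d\gamma$ diverge, and displacement convexity is precisely what rules out such concentration.
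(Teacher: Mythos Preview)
Your proof is correct and follows essentially the same approach as the paper's: use the $W_p$-geodesic as a competitor, invoke displacement convexity with respect to the log-concave reference to bound the density $\rho_t$ uniformly in $t$, and then estimate the $(\phi,\gamma)$-action by $C\int|\vv_t|^p\,\d\mu_t$. The paper compresses your $L^m\to L^\infty$ argument into a citation of \cite{Ambrosio-Gigli-Savare05} and replaces your duality-based bound $\rho^{p-1}\phi(\rho,\vv)\le C|\vv|^p$ by the one-line convexity estimate $\phi(s_t,s_t\vv_t)\le (s_t/L)\phi(L,L\vv_t)$ coming from $\phi(0,0)=0$; your added hypothesis $f\ge c>0$ mirrors the paper's unjustified reduction ``it is not restrictive to assume that $\gamma$ is log-concave.''
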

\begin{proof}
 It is not restrictive to assume that $\gamma$ is log-concave.
 We can then consider the optimal plan $\Sigma\in \FPM(\Rd\times \Rd)$
 induced by the $p$-Wasserstein distance \eqref{defwaspre}
 between $\mu_0$ and $\mu_1$ and the interpolant $\mu_t$
 defined as
 \begin{equation}
  \mu_t(A)=\Sigma\big(\{(x_0,x_1)\in \Rd\times\Rd:
  (1-t)x_0+tx_1\in A\}\big)\quad
  \forall\, A\in \BorelSets\Rd.
 \end{equation}
 It is possible to prove (see e.g.\ \cite[Theorems 7.2.2, 8.3.1,
 9.4.12]{Ambrosio-Gigli-Savare05})
 that $\mu_t$ is the geodesic interpolant
 between $\mu_0$ and $\mu_1$,
 it satisfies the continuity equation
 \begin{displaymath}
  \partial_t\mu_t+\nabla\cdot \nnu_t=0\quad
  \text{in }\Rd\times(0,1)
 \end{displaymath}
 with respect to a vector valued measure $\nnu_t=\vv_t\mu_t\ll\mu_t$ 
 where the vector field $\vv_t$ satisfies
 \begin{displaymath}
  \int_0^1 \Phi_p(\mu_t,\nnu_t)\,\d t=
  \int_0^1 \int_\Rd |\vv_t(x)|^p\,\d\mu_t(x)\,\d t=W_p^p(\mu_0,\mu_1),
 \end{displaymath}
 and finally 
 $\mu_t=s_t\gamma$ with $\|s_t\|_{L^\infty(\gamma)}\le 
 L:=\max\big(\|s_0\|_{L^\infty(\gamma)},\|s_1\|_{L^\infty(\gamma)}\big)$.
 Observe that, being $s_t(x)\le L $
 for $\gamma$-a.e.\ $x\in \Rd$ and $\phi(0,0)=0$,
 Theorem \ref{le:phi_list} yields
 \begin{displaymath}
  \phi(s_t,s_t\vv_t)\le
  \frac{s_t}{L}\phi(L,L\vv_t)\le \sfC_L s_t |\vv_t|^p\quad
  \gamma\text{-a.e.},
 \end{displaymath}
 so that
 \begin{displaymath}
   \Phi(\mu_t,\nnu_t)=
   \int_\Rd \phi(s_t,s_t\vv_t)\,\d \gamma(x)\le 
   \sfC_L \int_\Rd |\vv_t|^p s_t\,\d \gamma=
    \sfC_L \int_\Rd |\vv_t|^p \,\d \mu_t,
 \end{displaymath}
 and therefore
 \begin{displaymath}
   \cW_{\phi,\gamma}^p(\mu_0,\mu_1)\le
   \int_0^1 \Phi(\mu_t,\nnu_t)\,\d t\le 
  \sfC_L \int_0^1 \int_\Rd |\vv_t|^p \,\d \mu_t\,\d t=
  \sfC_L W_p^p(\mu_0,\mu_1).
  \quad\qed
 \end{displaymath}
\end{proof}
\begin{corollary}
  \label{cor:very_obvious}
  If $\mu_i=s_i\Leb d\in \Probabilities\Rd$ have
  $L^\infty$-densities with compact support (or, more generally,
  finite $p$-momentum), then $\cW_{\phi,\Leb
    d}(\mu_0,\mu_1)<+\infty$.
\end{corollary}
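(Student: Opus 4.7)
The statement is a direct specialization of Theorem \ref{thm:4} to the case $\gamma = \Leb d$, so the plan is essentially to verify that the hypotheses of that theorem are met in the present setting. First, I would observe that the Lebesgue measure itself is log-concave in the sense required, since $\Leb d = e^{-V}\Leb d$ with $V \equiv 0$, and the zero function is trivially convex. Thus Theorem \ref{thm:4} applies with $\gamma = \Leb d$ and trivial perturbation factor $f \equiv 1$.

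Second, I would check the two remaining hypotheses on the $\mu_i$. The assumption that $\mu_i = s_i \Leb d$ with $s_i \in L^\infty(\Rd)$ is exactly the density hypothesis $s_i \in L^\infty(\gamma)$ of Theorem \ref{thm:4}. It remains to produce a uniform bound $\sfm_p(\mu_i) \le L < +\infty$: if $\supp(\mu_i)$ is compact, this is immediate from
\[
\sfm_p(\mu_i) = \int_\Rd |x|^p\,\d\mu_i \le \bigl(\diam \supp(\mu_i)\bigr)^p \mu_i(\Rd) = \bigl(\diam\supp(\mu_i)\bigr)^p,
\]
while in the more general case finiteness of the $p$-moment is assumed outright.

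Third, given $\sfm_p(\mu_0), \sfm_p(\mu_1) < +\infty$, standard Kantorovich--Rubinstein--Wasserstein theory guarantees $W_p(\mu_0,\mu_1) < +\infty$ (one may just use the product coupling and the inequality $|x-y|^p \le 2^{p-1}(|x|^p + |y|^p)$). Theorem \ref{thm:4} then yields the constant $\sfC = \sfC(L,\phi,\Leb d)$ such that
\[
\cW_{\phi,\Leb d}(\mu_0,\mu_1) \le \sfC\, W_p(\mu_0,\mu_1) < +\infty,
\]
which is the desired conclusion. There is no genuine obstacle here: the only non-obvious point is recognizing that $\Leb d$ qualifies as log-concave in the sense of Theorem \ref{thm:4} despite having infinite mass, but this is immediate from the definition via a convex potential $V$ without any finite-mass requirement.
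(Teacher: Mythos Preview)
Your proposal is correct and follows exactly the route the paper intends: the corollary is stated immediately after Theorem~\ref{thm:4} with no proof, precisely because it is the specialization $\gamma=\Leb d=e^{-0}\Leb d$, and your verification of the hypotheses (log-concavity via $V\equiv0$, $L^\infty$ densities, finite $p$-moment from compact support, and $W_p(\mu_0,\mu_1)<+\infty$ via a product coupling) is the expected argument.
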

\begin{theorem}
\label{thm:4bis}
  If $\mu_i=s_i\gamma$ with $s_i\ge
  L>0$ $\gamma$-a.e.\ in $\Rd$, then there exists a constant
  $C$ depending on $L$ and $\phi$ such that
  \begin{equation}
  \label{eq:thm4bis}
   \cW_{\phi,\gamma}(\mu_0,\mu_1)\le C_L
   \|\mu_0-\mu_1\|_{\dot W^{-1,p}_\gamma}.
  \end{equation}
\end{theorem}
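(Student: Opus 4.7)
The plan is to build an explicit competitor in $\CCE\phi\gamma01\Rd{\mu_0}{\mu_1}$ by coupling the linear interpolation of densities with a constant-in-time near-optimal vector field for the dual Sobolev norm. The lower bound $s_i\ge L$ ensures that the interpolating density never drops below $L$, so the monotonicity \eqref{eq:cap2:49} and the local equivalence \eqref{eq:cap3:45} of $\phi$ with $|\cdot|^p$ (Theorem \ref{le:phi_list}) give a uniform comparison between the action $\Phi(\mu_t,\nnu_t|\gamma)$ and the Sobolev-type integrand $\int|\ww|^p\,\d\gamma$.

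First I may assume $\|\mu_0-\mu_1\|_{\dot W^{-1,p}_\gamma}<+\infty$, otherwise \eqref{eq:thm4bis} is trivial. Recalling from Remark \ref{rem:known_cases} that $\|\mu_0-\mu_1\|_{\dot W^{-1,p}_\gamma}=W_{p,0;\gamma}(\mu_0,\mu_1)$, the primal (Benamou--Brenier) side of this identity gives, for every $\eps>0$, a Borel vector field $\ww\in L^p(\gamma;\Rd)$ satisfying
\begin{equation*}
  \nabla\cdot(\ww\gamma)=\mu_0-\mu_1\quad\text{distributionally in }\Rd,
  \qquad
  \int_\Rd|\ww|^p\,\d\gamma\le \|\mu_0-\mu_1\|_{\dot W^{-1,p}_\gamma}^p+\eps.
\end{equation*}
Setting $\mu_t:=(1-t)\mu_0+t\mu_1=s_t\gamma$, with $s_t:=(1-t)s_0+ts_1\ge L$ $\gamma$-a.e., and $\nnu_t\equiv\ww\gamma$ (time-independent), the couple $(\mu,\nnu)$ belongs to $\cce01\Rd{\mu_0}{\mu_1}$: the continuity equation reduces to $(\mu_1-\mu_0)+\nabla\cdot(\ww\gamma)=0$, and the integrability condition \eqref{eq:integrability_of_vt} follows from $\ww\in L^p(\gamma;\Rd)$ and the local finiteness of $\gamma$.

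Next I invoke two facts from Theorem \ref{le:phi_list}: the monotonicity \eqref{eq:cap2:49} gives $\phi(s_t,\ww)\le \phi(L,\ww)$ for every $t\in[0,1]$ and $\gamma$-a.e.\ $x$; the local bound \eqref{eq:cap3:45} applied at $\rho=L$ yields $\phi(L,\ww)\le C_L|\ww|^p$ for a constant $C_L$ depending only on $L$ and $\phi$. Therefore
\begin{equation*}
 \Phi(\mu_t,\nnu_t|\gamma)=\int_\Rd \phi(s_t,\ww)\,\d\gamma
 \le C_L\int_\Rd |\ww|^p\,\d\gamma
 \le C_L\bigl(\|\mu_0-\mu_1\|_{\dot W^{-1,p}_\gamma}^p+\eps\bigr).
\end{equation*}
Integrating in $t\in(0,1)$ and using the definition \eqref{eq:10},
\begin{equation*}
 \cW_{\phi,\gamma}^p(\mu_0,\mu_1)\le \int_0^1\Phi(\mu_t,\nnu_t|\gamma)\,\d t\le C_L\bigl(\|\mu_0-\mu_1\|_{\dot W^{-1,p}_\gamma}^p+\eps\bigr),
\end{equation*}
and sending $\eps\downarrow 0$ then taking $p$-th roots delivers \eqref{eq:thm4bis} with constant $C_L^{1/p}$.

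No serious obstacle is expected. The only delicate point is producing the stationary near-optimal field $\ww$; this is precisely the primal side of the duality behind the identity $\|\cdot\|_{\dot W^{-1,p}_\gamma}=W_{p,0;\gamma}$, and can alternatively be obtained by direct minimization of $\int|\ww|^p\,\d\gamma$ under the linear constraint $\nabla\cdot(\ww\gamma)=\mu_0-\mu_1$ via a standard reflexivity and lower semicontinuity argument in $L^p(\gamma;\Rd)$.
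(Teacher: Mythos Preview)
Your proof is correct and follows essentially the same route as the paper: linear interpolation $\mu_t=(1-t)\mu_0+t\mu_1$ coupled with a time-independent vector field $\ww$ realizing (or nearly realizing) the $\dot W^{-1,p}_\gamma$ norm, followed by the monotonicity \eqref{eq:cap2:49} and the bound $\phi(L,\ww)\le C_L|\ww|^p$. The only cosmetic difference is in producing the static field $\ww$: the paper obtains it directly via Hahn--Banach and Riesz representation applied to the linear functional $\D\zeta\mapsto\int\zeta\,\d(\mu_1-\mu_0)$ on the gradient subspace of $L^q_\gamma$, which yields an exact minimizer and avoids your $\eps$-limit; your appeal to Remark~\ref{rem:known_cases} and the alternative direct-minimization argument lead to the same conclusion.
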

\begin{proof}
Let us first observe that if $\|\mu_0-\mu_1\|_{\dot W^{-1,p}_\gamma}<+\infty$
then there exists $\ww\in L^p_\gamma(\Rd;\Rd)$ such that 
\begin{equation}
 \label{eq:dual_representation}
 -\nabla\cdot \nnu=\mu_1-\mu_0,\quad
 \nnu:=\ww\gamma,\quad
 \int_\Rd |\ww|^p\,\d\gamma=\|\mu_0-\mu_1\|_{\dot W^{-1,p}_\gamma}^p.
\end{equation} 
In fact, in the Banach space $X:=L^q_\gamma(\Rd;\Rd)$ 
we can consider the linear space 
$ Y:=\big\{\D\zeta:\zeta\in C^1_{\rm c}(\Rd)\big\}$
and the linear functional
\begin{displaymath}
 \la \ell,\yy\ra:=\int_\Rd \zeta\,\d(\mu_1-\mu_0)\quad
 \text{if }\yy=\D\zeta\quad\text{for some $\zeta\in C^1_{\rm c}(\Rd)$}.
\end{displaymath}
$\ell$ is well defined and satisfies
$ \big|\la\ell,\yy\ra\big|\le 
\|\mu_0-\mu_1\|_{\dot W^{-1,p}_\gamma}\|\yy\|_{L^q_\gamma(\Rd;\Rd)}$
for every $\yy\in Y.$
Hahn-Banach Theorem and Riesz representation Theorem yield 
the existence of $\ww\in L^p(\Rd;\Rd)$ such that 
$ \la \ell,\yy\ra=\int_\Rd \ww\cdot\yy\,d\gamma,$
which yields \eqref{eq:dual_representation}.
Setting $\mu_t=(1-t)\mu_0+t\mu_1$, it is then immediate to check that 
$(\mu_t,\nnu)\in \cce01\Rd{\mu_0}{\mu_1}$;
we can then compute
\begin{displaymath}
 \cW_{\phi,\gamma}^p(\mu_0,\mu_1)\le 
 \int_0^1 \int_\Rd \phi((1-t)s_0+ts_1,\ww)\,\d\gamma\,\d t\le
 \int_\Rd \phi(L,\ww)\,\d\gamma\le C_L \int_\Rd |\ww|^p\,\d\gamma,
\end{displaymath}
where we used the fact that $(1-t)s_0+ts_1\ge L$ $\gamma$-almost everywhere
and the map $\rho\mapsto \phi(\rho,\ww)$ is nonincreasing.
\qed\end{proof}
\subsection{\bfseries The case $\gamma=\Leb d$ and the Heat equation as
  gradient flow}
\label{subsec:Leb}
One of the most interesting cases corresponds to the choice
\begin{equation}
  \label{eq:cap4:54}
  \gamma:=\Leb d,\quad
  h_\alpha(\rho):=\rho^\alpha,\ 0<\alpha<1,\qquad
  \phi_{p,\alpha}(\rho,\ww):=\rho^{\alpha}|\ww/\rho^\alpha|^p.
\end{equation}
In this case 
the expression of the weighted Wasserstein distance becomes
\begin{displaymath}
  \begin{aligned}
    W_{p,\alpha;\Leb d}^p(\mu_0,\mu_1):=\min\Big\{& \int_0^1\!\!\int_\Rd
    \rho_t^\alpha |\vv_t|^p\,\d x\,\d t:\ 
    \partial_t\mu+\nabla\!\cdot(\rho^\alpha\vv)=0\ \text{in
    }\Rd\times(0,1)\\&
    \mu_t=\rho_t\Leb d+\mu_t^\perp,\qquad
    \mu\restr{t=0}=\mu_0,\ \mu\restr{t=1}=\mu_1\Big\}.
  \end{aligned}
\end{displaymath}
The metric $W_{p,\alpha;\Leb d}$ restricted to $\Probabilities\Rd$ is
complete
if 
$  d<\kappa=\frac{p}{\theta-1}=\frac{q}{1-\alpha}.$
\begin{remark}[$\Probabilities\Rd$ is not complete w.r.t.\
  $W_{p,\alpha;\Leb d}$ if $d>\kappa$]
  \label{rem:LebComp}
  \upshape
  The above condition
  is almost sharp; here is a simple
  counterexample in the case $d>\kappa$.
  We consider an initial probability measure with
  compact support
  $\mu_0=\rho_0\Leb d$, $\rho_0\in L^\infty(\Rd)$, 
  and, for $t\ge 0$,
  the family
  \begin{equation}
    \label{eq:cap5:13}
    \mu_t:=\rho_t\Leb d,\quad
    \rho_t(x):=e^{-dt }\rho_0(e^{-t}x),\quad
    \nnu_t:=x\mu_t=x\rho_t(x)\Leb d.
  \end{equation}
  It is easy to check that
  $(\mu,\nnu)\in \ce0{+\infty}\Rd$, $\mu_t(\Rd)=1$.
  Evaluating the functional $\Phi_t:=\Phi_{p,\alpha}(\mu_t,\nnu_t|\Leb d)$ we get
  \begin{align*}
    \Phi_t&=\int_\Rd \rho_t^{\theta-p}|\rho_t x|^p\,\d x=
    \int_\Rd e^{-d \theta t}\rho_0^\theta(e^{-dt} x)|x|^p\,\d x
    \\&=
    e^{dt -d\theta t +pt}
    \int_\Rd  e^{-dt}\rho_0^\theta(e^{-dt} x)|e^{-dt}
    x|^p\,\d x=
    e^{(d(1-\theta)+p)t}\int_\Rd \rho_0^\theta(y)|y|^p\,\d y
  \end{align*}
  so that
  \begin{displaymath}
    \Phi_t^{1/p}=\sfc\, e^{(1-d/\kappa)t},\quad
    \int_0^{+\infty}\Phi_t^{1/p}\,\d t=
    \sfc \frac\kappa{d-\kappa}<+\infty\quad
    \text{if }d>\kappa.
  \end{displaymath}
  If $d>\kappa$ we obtain a curve $t\mapsto \mu_t\in \Probabilities\Rd$ of finite length
  w.r.t. $W_{p,\alpha;\Leb d}$ (in particular
  $(\mu_n)_{n\in \N}$ is a Cauchy sequence) such that
  $\lim\limits_{t\up+\infty}\mu_t=0$
  in the weak$^*$ topology.  
\end{remark}
In the remaining part of this section,
we want to study the properties
of $W_{p,\alpha;\Leb d}$ with respect to the
heat flow.
We thus introduce
\begin{displaymath}
  g(x)=g_1(x)=\frac{1}{(4\pi)^{d/2}}e^{-|x|^2/4},\qquad
  g_t(x):=\frac{1}{(4\pi t)^{d/2}}e^{-|x|^2/4t}=
  t^{-d/2}g_1(x/\sqrt t),
\end{displaymath}
and for every $\mu\in  \RPM(\Rd)$ with $\sftm_\delta(\mu)<+\infty$
for some $\delta\le 0$, we set
\begin{equation}
  \label{eq:cap4:59}
  \mathcal S_t[\mu]=\mu\ast g_t=u_t\Leb d,\quad
  u_t(x)=S_t[\mu](x):=\int_\Rd g_t(x-y)\,\d\mu(y).
\end{equation}
It is well known that $u\in
C^\infty(\Rd\times(0,+\infty))$ and
\begin{equation}
  \label{eq:cap4:60}
  \partial_t u-\Delta u=0\quad\text{in }\Rd\times (0,+\infty),\qquad
  \mathcal S_t[\mu]\weaksto \mu\quad\text{as }t\down0.
\end{equation}
\begin{theorem}[Contraction property]
  \label{thm:heat_contracting}
  Let $\mu^0,\mu^1\in \RPM(\Rd)$ with
  $\sftm_\delta(\mu^i)<+\infty$
  and $\cW_{\phi,\Leb d}(\mu^1,\mu^2)<+\infty$.
  If $\mu^i_t:=\mathcal S_t[\mu^i]$ are the 
  corresponding solutions
  of the heat flow,
  then
  \begin{equation}
    \label{eq:cap4:61}
    \cW_{\phi,\Leb d}(\mu^1_t,\mu^2_t)\le 
    \cW_{\phi,\Leb d}(\mu^1,\mu^2)\quad
    \forall\, t>0.
  \end{equation}
\end{theorem}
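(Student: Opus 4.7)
\begin{Proof}[Proof plan for Theorem \ref{thm:heat_contracting}]
The plan is to build a competitor curve for $\cW_{\phi,\Leb d}(\mu^1_t,\mu^2_t)$ by convolving an optimal curve between $\mu^1$ and $\mu^2$ with the heat kernel $g_t$, and then invoking the convolution monotonicity of $\Phi$ together with the crucial identity $\Leb d \ast g_t = \Leb d$, valid since $g_t$ is a probability density.

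More precisely, since $\cW_{\phi,\Leb d}(\mu^1,\mu^2)<+\infty$, Theorem \ref{thm:0} provides an optimal curve $(\mu_s,\nnu_s)_{s\in[0,1]}\in\CCE{\phi}{\Leb d}01\Rd{\mu^1}{\mu^2}$ realizing the distance. I define
\begin{equation*}
  \tilde\mu_s := \mu_s \ast g_t, \qquad \tilde\nnu_s := \nnu_s \ast g_t, \qquad s\in[0,1].
\end{equation*}
Since convolution commutes with spatial differentiation and distributional divergence, $(\tilde\mu_s,\tilde\nnu_s)$ solves the continuity equation in $\Rd\times(0,1)$, with $\tilde\mu_0 = \mu^1\ast g_t = \mu^1_t$ and $\tilde\mu_1 = \mu^2\ast g_t = \mu^2_t$. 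The moment hypothesis $\sftm_\delta(\mu^i)<+\infty$ guarantees that the convolutions are well-defined Radon measures and that the finite-energy structure is preserved.

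The core estimate is then, for each $s\in[0,1]$ and using $\Leb d \ast g_t = \Leb d$ (as $g_t$ integrates to $1$),
\begin{equation*}
  \Phi(\tilde\mu_s,\tilde\nnu_s\,|\,\Leb d) \;=\; \Phi(\mu_s\ast g_t,\nnu_s\ast g_t\,|\,\Leb d\ast g_t) \;\le\; \Phi(\mu_s,\nnu_s\,|\,\Leb d),
\end{equation*}
by the convolution monotonicity of Theorem \ref{thm:mono_conv}. Integrating over $s$ and using the definition of $\cW_{\phi,\Leb d}$ yields
\begin{equation*}
  \cW^p_{\phi,\Leb d}(\mu^1_t,\mu^2_t) \;\le\; \int_0^1 \Phi(\tilde\mu_s,\tilde\nnu_s\,|\,\Leb d)\,\d s \;\le\; \int_0^1 \Phi(\mu_s,\nnu_s\,|\,\Leb d)\,\d s \;=\; \cW^p_{\phi,\Leb d}(\mu^1,\mu^2).
\end{equation*}

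The main obstacle is that Theorem \ref{thm:mono_conv} was stated for compactly supported kernels $\kernel\in C^\infty_{\rm c}(\Rd)$, whereas $g_t$ is merely Schwartz. To bridge this gap I would truncate: let $\chi_R\in C^\infty_{\rm c}(\Rd)$ be a cutoff equal to $1$ on $\Ball R$ and set $k_R := c_R\, g_t\chi_R \in C^\infty_{\rm c}(\Rd)$ with $c_R$ chosen so that $\int k_R=1$, and $c_R\to 1$ as $R\up\infty$. Applying Theorem \ref{thm:mono_conv} to $k_R$ gives $\Phi(\mu_s\ast k_R,\nnu_s\ast k_R\,|\,\Leb d\ast k_R)\le \Phi(\mu_s,\nnu_s\,|\,\Leb d)$. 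Since $\mu_s\ast k_R \weaksto \mu_s\ast g_t$, $\nnu_s\ast k_R \weaksto \nnu_s\ast g_t$ and $\Leb d\ast k_R \weaksto \Leb d$ as $R\up\infty$, the lower semicontinuity property of Theorem \ref{thm:lsc_functional_measures} (equivalently Lemma \ref{le:action_lsc} applied jointly) passes the inequality to the limit, delivering the desired bound for $g_t$.
\end{Proof}
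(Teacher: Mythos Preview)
Your proof is correct and follows essentially the same strategy as the paper's: the paper simply invokes Theorem \ref{thm:wass_conv} (whose proof is exactly the convolution-of-an-optimal-curve argument you wrote out) together with $k^n\ast\Leb d=\Leb d$, and then passes from compactly supported approximations $k^n$ of $g_t$ to $g_t$ itself via the weak$^*$ lower semicontinuity of $\cW_{\phi,\Leb d}$ (Theorem \ref{thm:2}). You have unrolled that argument at the level of $\Phi$ rather than $\cW$, and handled the truncation step explicitly, which amounts to the same thing.
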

\begin{proof}
  It sufficient to approximate the Gaussian kernel $g$ by a family
  of $C^\infty$ kernels $k^n$ with compact support and then apply
  Theorem \ref{thm:wass_conv}, observing that
  $k^n\ast\Leb d=\Leb d$.
\qed\end{proof}
We consider now the particular case of the 
$W_{2,\alpha;\Leb d}$ weighted
distance
with $\alpha>1-2/d$.
Let us first introduce the convex density function
(recall \eqref{eq:newclass_times:1}) 
\begin{equation}
  \label{eq:cap4:64}
  \psi_\alpha(\rho):=\frac
  1{(2-\alpha)(1-\alpha)}\rho^{2-\alpha},\quad
  \text{such that}\quad
  \psi_\alpha''(\rho)=\frac 1{h(\rho)}=\rho^{-\alpha},
\end{equation}
and the corresponding entropy functional
\begin{equation}
  \label{eq:cap4:65}
  \Psi_\alpha(\mu)=\Psi_\alpha(\mu|\Leb d):=
  \int_\Rd \psi_\alpha(\rho)\,\d x,\quad
  \text{if }\mu=\rho\Leb d\ll\Leb d.
\end{equation}
We also introduce the set
$  \mathcal Q:=\big\{\mu\in \Probabilities\Rd:\Psi(\mu)<+\infty\big\}.$
\begin{theorem}
  \label{le:energy_estimate}
  If $\mu\in \Probabilities\Rd$ then
  $\mu_t=\mathcal S_t[\mu]=u_t\Leb d\in \mathcal Q$ for every $t>0$,
  the map $t\mapsto \Psi_\alpha(\mu_t)$ is nonincreasing, and 
  it satisfies the
  energy identity 
  \begin{equation}
    \label{eq:cap4:67}
    \Psi_\alpha(\mu_t)+\int_s^t\Phi_{2,\alpha}(u_r,\nabla u_r)\,\d r=
    \Psi_\alpha(\mu_s)\quad\forall\, 0<s\le t<+\infty;
  \end{equation}
  when $\mu\in \mathcal Q$ then the previous identity holds even for
  $s=0$.
  Moreover, $\mu_t$ satisfies the Evolution Variational Inequality
  \begin{equation}
    \label{eq:cap5:15}
    \frac 12{\frac {\d}{\d t}\!\!}^+W_{2,\alpha;\Leb d}^2(\mu_t,\sigma)+\Psi_\alpha(\mu_t)\le
    \Psi_\alpha(\sigma)\quad \forall\, t\ge0,\ 
    \forall\, \sigma\in \mathcal Q.
  \end{equation}
\end{theorem}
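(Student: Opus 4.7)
\emph{Plan.} For $t>0$, smoothing by the heat kernel makes $u_t\in C^\infty(\Rd)$ with $0\le u_t\le (4\pi t)^{-d/2}$ and Gaussian-type spatial decay, whence $\int_{\Rd} u_t^{2-\alpha}\,\d x<+\infty$ and $\mu_t\in\mathcal Q$. Using \eqref{eq:cap4:64} (so that $\psi_\alpha''(\rho)=\rho^{-\alpha}$) together with the decay to justify differentiation under the integral and integration by parts, I would compute
\begin{equation*}
\frac{\d}{\d t}\Psi_\alpha(\mu_t)=\int_{\Rd}\psi_\alpha'(u_t)\,\Delta u_t\,\d x=-\int_{\Rd}u_t^{-\alpha}|\nabla u_t|^2\,\d x=-\Phi_{2,\alpha}(u_t,\nabla u_t),
\end{equation*}
and then integrate on $[s,t]\subset(0,+\infty)$ to obtain the energy identity \eqref{eq:cap4:67}; the monotonicity of $t\mapsto\Psi_\alpha(\mu_t)$ is immediate since $\Phi_{2,\alpha}\ge 0$.

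For the extension to $s=0$ when $\mu\in\mathcal Q$, the superlinearity of $\psi_\alpha$ (as $2-\alpha>1$) forces $\mu=\rho\Leb d$, and I would apply Jensen's inequality to the convex $\psi_\alpha$ and the probability kernel $g_t(x-\cdot)\Leb d$:
\begin{equation*}
\psi_\alpha(u_t(x))\le\int_{\Rd}g_t(x-y)\,\psi_\alpha(\rho(y))\,\d y,
\end{equation*}
whose integration in $x$ gives $\Psi_\alpha(\mu_t)\le\Psi_\alpha(\mu)$. Combined with the weak$^*$ lower semicontinuity of $\Psi_\alpha$ (a particular case of Theorem \ref{thm:lsc_functional_measures}) and the fact that $\mu_t\weaksto\mu$ as $t\downarrow 0$, this yields $\lim_{t\downarrow 0}\Psi_\alpha(\mu_t)=\Psi_\alpha(\mu)$; passing to the limit $s\downarrow 0$ in the already-proved identity (the integrand being nonnegative, monotone convergence applies to the dissipation term) extends \eqref{eq:cap4:67} to $s=0$.

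The EVI \eqref{eq:cap5:15} is the delicate point, which I would attack through the minimizing-movement scheme: for $\tau>0$, set $\mu_\tau^0:=\mu$ and recursively choose
\begin{equation*}
\mu_\tau^{n+1}\in\arg\min_{\mu'}\Big\{\tfrac{1}{2\tau}W_{2,\alpha;\Leb d}^2(\mu',\mu_\tau^n)+\Psi_\alpha(\mu')\Big\},
\end{equation*}
whose existence rests on the lower semicontinuity and weak$^*$ compactness established in Section \ref{subsec:top}. By comparing $\mu_\tau^{n+1}$ with the points of a minimal $W_{2,\alpha;\Leb d}$-geodesic (Theorem \ref{thm:0}) joining $\mu_\tau^n$ to $\sigma$, one derives the discrete EVI
\begin{equation*}
\tfrac{1}{2\tau}\bigl[W_{2,\alpha;\Leb d}^2(\mu_\tau^{n+1},\sigma)-W_{2,\alpha;\Leb d}^2(\mu_\tau^n,\sigma)\bigr]+\Psi_\alpha(\mu_\tau^{n+1})\le\Psi_\alpha(\sigma).
\end{equation*}
The piecewise-constant interpolants $t\mapsto\mu_\tau^{\lfloor t/\tau\rfloor}$ would then be shown (via Corollary \ref{cor:compactness_continuity2}) to converge to the heat-flow solution $\mu_t$, and both sides of the discrete EVI would pass to the limit thanks to the lower semicontinuity of $W_{2,\alpha;\Leb d}^2$ (Theorem \ref{thm:2}) and of $\Psi_\alpha$, yielding \eqref{eq:cap5:15}. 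The main obstacle is the derivation of the discrete EVI itself, which requires convexity of $\Psi_\alpha$ along \emph{generalized} $W_{2,\alpha;\Leb d}$-geodesics---a strengthening of McCann's displacement-convexity adapted to the weighted transport framework; the dimensional threshold $\alpha>1-2/d$ is precisely what guarantees completeness of $(\mathcal P(\Rd),W_{2,\alpha;\Leb d})$ (Remark \ref{rem:LebComp}) and rules out mass loss along geodesics, without which this convexity property cannot be closed.
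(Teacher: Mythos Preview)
Your treatment of the energy identity \eqref{eq:cap4:67} and of its extension to $s=0$ is correct and matches the paper's computation (with slightly more care on the $s\downarrow0$ limit than the paper itself provides).

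The gap is in the EVI. Your minimizing-movement route requires, as you yourself flag, convexity of $\Psi_\alpha$ along (generalized) $W_{2,\alpha;\Leb d}$-geodesics, and you do not establish it; the remark about $\alpha>1-2/d$ concerns only completeness and mass conservation, not convexity. In the paper the logic runs the other way: the EVI \eqref{eq:cap5:15} is proved \emph{directly}, and geodesic convexity of $\Psi_\alpha$ (Corollary~\ref{thm:entropy_convexity}) is deduced from it via \cite{Daneri-Savare08}. So your scheme is circular unless you have an independent proof of the convexity---which, in this weighted setting, is exactly the hard and novel point.

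The paper's direct argument avoids this entirely. Given a geodesic $(\sigma_s,\nnu_s)_{s\in[0,1]}$ from $\sigma$ to $\mu$, one applies the heat semigroup with the \emph{variable} time $\eps+st$, i.e.\ sets $\sigma^\eps_{s,t}:=\mathcal S_{\eps+st}[\sigma_s]$ and the corresponding regularized momenta. A short calculation shows that the shifted curve still solves a continuity equation in $s$ (with an extra drift $-t\nabla u^\eps_{s,t}$), so its $\Phi_{2,\alpha}$-action dominates $W_{2,\alpha;\Leb d}^2(\mu_{\eps+t},\sigma)$. Expanding that action, the cross term becomes exactly $-2t\,\partial_s\Psi_\alpha(\sigma^\eps_{s,t})$, the $|\nabla u|^2$ term is dropped (it has the right sign), and the regularized kinetic term is controlled by the original one via the convolution-contraction property of Theorem~\ref{thm:mono_conv}. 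Integrating in $s$ telescopes the entropy and yields \eqref{eq:cap5:15} after $\eps\downarrow0$. No prior convexity is needed, and no identification of a discrete limit with the heat flow is required.
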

\begin{proof}
  Since $\psi_\alpha''(u)=u^{-\alpha},$ a direct computation shows
  \begin{displaymath}
    \frac \d{\d t}\int_\Rd \psi_\alpha(u_t)\,\d x=
    -\frac \d{\d t}\int_\Rd \nabla u_t\cdot \nabla \psi_\alpha'(u_t)\,\d x=
    \int_\Rd |\nabla u_t|^2 u_t^{-\alpha}\,\d x=
    \Phi_{2,\alpha}(u_t,\nabla u_t).
  \end{displaymath}
  Concerning \eqref{eq:cap5:15}, we use the technique introduced by
  \cite[\S\,2]{Daneri-Savare08}: we consider
  a geodesic $(\sigma_s,\nnu_s)_{s\in [0,1]}
  \in \cce01\Rd\sigma\mu$, which satisfies
   $\sigma_s(\Rd)=1$ by Theorem \ref{thm:moment_estimate2}. 
   We set
  \begin{displaymath}
    \sigma^\eps_{s,t}=u^\eps_{s,t}\Leb d:=\mathcal
    S_{\eps+st}[\sigma_s],\quad
    \tilde\nnu^\eps_{s,t}=\tilde\ww_{s,t}^\eps\Leb d:=\mathcal
    S_{\eps+st}[\nnu_s],\quad
    \ww^\eps_{s,t}:=\tilde\ww^\eps_{s,t}  -t\nabla u^\eps_{s,t}  .
  \end{displaymath}
  It is not difficult to check that
  \begin{equation}
    \label{eq:cap5:17}
    \partial_s u^\eps_{s,t}+\nabla\cdot \ww^\eps_{s,t}=0\quad
    \text{in }\Rd\times (0,1),
  \end{equation}
  so that
  \begin{displaymath}
    W_{2,\alpha;\Leb d}^2(\mu_{\eps+t},\sigma)\le
    \int_0^1 A^\eps_{s,t}\,\d s,\quad
    A^\eps_{s,t}:=\int_\Rd
    \big(u^\eps_{s,t}\big)^{-\alpha}|\ww^\eps_{s,t}|^2\,\d x
    =\Phi_{2,\alpha}(\sigma^\eps_{s,t},\nnu^\eps_{s,t}|\Leb d).
  \end{displaymath}
  We thus evaluate
  \begin{align}
    \notag
    A^\eps_{s,t}&=
    \int_\Rd \big(u^\eps_{s,t}\big)^{-\alpha}
    \Big(-2t\nabla u^\eps_{s,t}\cdot \ww^\eps_{s,t}+
    |\tilde\ww^\eps_{s,t}|^2-t^2 |\nabla u^\eps_{s,t}|^2\Big)\,\d x
    \\\notag&\le
    -2t \int_\Rd \big(u^\eps_{s,t}\big)^{-\alpha}
    \nabla u^\eps_{s,t}\cdot \ww^\eps_{s,t}\,\d x+
    \int_\Rd \big(u^\eps_{s,t}\big)^{-\alpha}
    |\tilde\ww^\eps_{s,t}|^2\,\d x
    \\
    \label{eq:last}
    &\le
    -2t \, \partial_s\, \Psi(\sigma^\eps_{s,t})+
    \Phi_{2,\alpha}(\sigma_s,\nnu_s|\Leb d),
  \end{align}
  where we used the facts
  \begin{displaymath}
    \partial_s\int_\Rd\psi_\alpha(u^\eps_{s,t})\,\d x
    \topref{eq:cap5:17}=
    \int_\Rd \nabla \psi'_\alpha(u^\eps_{s,t})\cdot \ww^\eps_{s,t}\,\d
    x\topref{eq:cap4:64}=\int_\Rd \big(u^\eps_{s,t}\big)^{-\alpha}
    \nabla u^\eps_{s,t}\cdot \ww^\eps_{s,t}\,\d x,
  \end{displaymath}
  \begin{displaymath}
     \int_\Rd \big(u^\eps_{s,t}\big)^{-\alpha}
     |\tilde\ww^\eps_{s,t}|^2\,\d x=
     \Phi_{2,\alpha}(\sigma_s\ast g_{\eps+st},\nnu_s\ast
     g_{\eps+st}|\Leb d)\le
     \Phi_{2,\alpha}(\sigma_s,\nnu_s|\Leb d)
  \end{displaymath}
  thanks to the convolution
  contraction property
  of Theorem \ref{thm:mono_conv}.
  Integrating \eqref{eq:last}
  with respect to $s$ from $0$ to $1$ and recalling that
  $(\sigma_s,\nnu_s)_{s\in [0,1]}$ is a minimal geodesic and that 
  $\sigma^\eps_{1,t}=\mu_{\eps+t}$ and $\sigma^\eps_{0,t}=\sigma$, we get
  \begin{equation}
    \label{eq:cap5:21}
    \int_0^1 A^\eps_{s,t}\,\d s+2t\, \Psi_\alpha(\mu_{\eps+t})
    \le
    2t\,\Psi_\alpha(\sigma)+W_{2,\alpha;\Leb d}^2(\mu,\sigma).
  \end{equation}
  We deduce that
  \begin{equation}
    \label{eq:cap5:22}
    \tfrac12W^2_{2,\alpha}(\mu_{\eps+t},\sigma)+t\,\Psi(\mu_{\eps+t})\le
    t\,\Psi(\sigma)+\tfrac 12W_{2,\alpha}^2(\mu,\sigma).
  \end{equation}
  Passing to the limit as $\eps\down0$ and then as $t\down0$
  after dividing the inequality by $t$ we get
  \eqref{eq:cap5:15} at $t=0$. Recalling
  the semigroup property of the heat equation, we
  obtain \eqref{eq:cap5:15} for every time $t\ge0$.
\qed\end{proof}
\eqref{eq:cap5:15} is the metric formulation of 
the gradient flow of the (geodesically convex) functional
$\Psi_\alpha$ in the metric space $(\mathcal Q,W_{2,\alpha;\Leb d})$, see 
\cite[Chap.~4]{Ambrosio-Gigli-Savare05}.
Applying \cite[Theorem 3.2]{Daneri-Savare08} we eventually obtain:
\begin{corollary}[Geodesic convexity of $\Psi_\alpha$]
  \label{thm:entropy_convexity}
  Let $\alpha>1-2/d$, $\mu_i=\rho_i\Leb d\in \Probabilities\Rd$ with
  $W_{2,\alpha;\Leb d}(\mu_0,\mu_1)<+\infty$ and
  $   \int_\Rd \rho_i^{2-\alpha}\,\d x<+\infty,$
  and let $\mu_t$, $t\in [0,1]$, be the minimal
  speed geodesic connecting $\mu_0$ to $\mu_1$ w.r.t.\
  $W_{2,\alpha;\Leb d}$.
  Then for every $t\in [0,1]$
  $\mu_t=\rho_t\Leb d\ll\Leb d$ 
  \begin{equation}
    \label{eq:cap4:63}
    \int_\Rd \rho_t^{2-\alpha}\,\d x\le (1-t)
    \int_\Rd \rho_0^{2-\alpha}\,\d x+
    t \int_\Rd \rho_1^{2-\alpha}\,\d x.
  \end{equation}
\end{corollary}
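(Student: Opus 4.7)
The plan is to deduce geodesic convexity of $\Psi_\alpha$ along $W_{2,\alpha;\Leb d}$-geodesics as a direct consequence of the Evolution Variational Inequality \eqref{eq:cap5:15} established for the heat flow in Theorem \ref{le:energy_estimate}, by invoking the abstract principle of \cite[Theorem 3.2]{Daneri-Savare08} that converts an EVI$_\lambda$-gradient flow into $\lambda$-geodesic convexity of its driving functional.

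First I would set up the metric framework. The restriction $\alpha>1-2/d$ amounts to $d<\kappa=q/(1-\alpha)$, so by Remark \ref{rem:LebComp} and Theorems \ref{thm:completeness}, \ref{thm:narrow_convergence}, the space $(\Probabilities\Rd, W_{2,\alpha;\Leb d})$ is a complete pseudo-metric space; Theorem \ref{thm:0} provides constant-speed minimal geodesics between any two measures at finite $W_{2,\alpha;\Leb d}$-distance, and Theorem \ref{thm:moment_estimate2} ensures that such geodesics remain in $\Probabilities\Rd$. In particular the geodesic $(\mu_t)_{t\in[0,1]}$ between $\mu_0$ and $\mu_1$ exists and each $\mu_t$ is a probability measure.

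Next I would invoke Theorem \ref{le:energy_estimate}: for every $\mu\in\Probabilities\Rd$ the curve $t\mapsto\mathcal S_t[\mu]$ lies in $\mathcal Q$ for $t>0$ and satisfies the EVI$_0$ inequality \eqref{eq:cap5:15} tested against every reference measure $\sigma\in\mathcal Q$. This is precisely the hypothesis of \cite[Theorem 3.2]{Daneri-Savare08}, whose conclusion is that the existence of an EVI$_\lambda$-gradient flow of a lower semicontinuous functional on a complete geodesic metric space forces that functional to be $\lambda$-geodesically convex along every constant-speed minimal geodesic of the ambient distance. Specialising to $\lambda=0$, $\mathcal F=\Psi_\alpha$, $X=\mathcal M_{\phi,\Leb d}[\mu_0]$, and applying the conclusion to the geodesic $(\mu_t)$ yields exactly \eqref{eq:cap4:63}; the density $\rho_t$ is meaningful because $\Psi_\alpha(\mu_t)<\infty$ forces $\mu_t\ll\Leb d$.

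The main subtlety I expect to handle is purely bookkeeping: since both endpoints $\mu_i$ are assumed to lie in $\mathcal Q$ (i.e.\ to have finite entropy), no boundary approximation is required and the abstract machinery applies directly. The analytically delicate content has already been discharged inside Theorem \ref{le:energy_estimate}, where the EVI was obtained by differentiating a smoothed perturbation of a geodesic and exploiting the convolution monotonicity of Theorem \ref{thm:mono_conv}; modulo that, the present corollary is a black-box consequence of the Daneri--Savar\'e equivalence between EVI-flows and geodesic convexity.
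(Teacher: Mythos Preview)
Your proposal is correct and follows essentially the same route as the paper: the paper simply states that the corollary is obtained by applying \cite[Theorem 3.2]{Daneri-Savare08} to the EVI established in Theorem~\ref{le:energy_estimate}, and your write-up fills in exactly this argument together with the metric-space prerequisites (completeness via $\alpha>1-2/d$, existence of geodesics) that the paper leaves implicit.
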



\bibliographystyle{siam}
\bibliography{bibliografia}

\end{document}